\newtheorem{lemma}{Lemma}[section]
\newtheorem{theorem}{Theorem}[section]
\newtheorem{proposition}{Proposition}[section]
\newtheorem{remark}{Remark}[section]
\newtheorem{corollary}{Corollary}[section]
\numberwithin{equation}{section}
\begin{document}
\title[Boundary layer ansatz for the steady MHD equations]{Verification of Prandtl boundary layer ansatz for the steady electrically conducting fluids with a moving physical boundary}
\thanks{$^*$Corresponding author}
\thanks{{\it Keywords}: Prandtl boundary layer expansion; steady viscous incompressible magnetohydrodynamics; High Reynolds number limit; Sobolev spaces.}
\thanks{{\it AMS Subject Classification}: 76N10, 35Q30, 35R35}%
\author[Shijin Ding]{Shijin Ding}
\address[S. Ding]{South China Research Center for Applied Mathematics and Interdisciplinary Studies, South China Normal University,
Guangzhou, 510631, China}\address{School of Mathematical Sciences, South China Normal University,
Guangzhou, 510631, China}
\email{dingsj@scnu.edu.cn}
\author[Zhilin Lin]{Zhilin Lin}
\address[Z. Lin]{School of Mathematical Sciences, South China Normal University,
Guangzhou, 510631, China}
\email{zllin@m.scnu.edu.cn}
\author[Feng Xie]{Feng Xie$^*$}
\address[Corresponding author: F. Xie]{School of Mathematical Sciences and LSC-MOE, Shanghai Jiao Tong University,
Shanghai, 200240, China}
\email{tzxief@sjtu.edu.cn}


\begin{abstract}
In this paper, we are concerned with the validity of Prandtl boundary layer expansion for the solutions to two dimensional (2D) steady viscous incompressible magnetohydrodynamics (MHD) equations in a domain $\{(X, Y)\in[0, L]\times\mathbb{R}_+\}$ with a moving flat boundary $\{Y=0\}$. As a direct consequence, even though there exist strong boundary layers, the inviscid type limit is still established for the solutions of 2D steady viscous incompressible MHD equations in Sobolev spaces provided that the following three assumptions hold: the hydrodynamics and magnetic Reynolds numbers take the same order in term of the reciprocal of a small parameter $\epsilon$, the tangential component of the magnetic field does not degenerate near the boundary and the ratio of the  strength of tangential component of  magnetic field and  tangential component of velocity is suitably small. And the error terms are estimated in $L^\infty$ sense.
\end{abstract}

\maketitle

\vspace{-5mm}

\section{Introduction}
In this paper, we consider the Prandtl boundary layer expansion for 2D incompressible viscous electrically conducting fluid with high Reynolds numbers in a domain with a moving flat boundary, which is described by the following 2D steady incompressible viscous MHD equations in the domain $(X,Y)\in\Omega:=[0,L]\times[0,+\infty)$:
\begin{equation}\label{1.1}
\left \{
\begin{array}{lll}
(U \partial_X +V\partial_Y)U-(H \partial_X +G\partial_Y)H+\partial_X P=\nu\epsilon (\partial_{XX}+\partial_{YY})U,\\
(U \partial_X +V\partial_Y)V-(H \partial_X +G\partial_Y)G+\partial_Y P=\nu\epsilon (\partial_{XX}+\partial_{YY})V,\\
(U \partial_X +V\partial_Y)H-(H \partial_X +G\partial_Y)U=\kappa\epsilon (\partial_{XX}+\partial_{YY})H,\\
(U \partial_X +V\partial_Y)G-(H \partial_X +G\partial_Y)V=\kappa\epsilon (\partial_{XX}+\partial_{YY})G,\\
\partial_XU+\partial_YV=0,\qquad \partial_XH+\partial_YG=0,
\end{array}
\right.
\end{equation}
where $(U, V)$ and $(H, G)$ are velocity and magnetic field respectively. The viscosity and resistivity coefficients take the same order of a small parameter $\epsilon$. $\nu$ and $\kappa$ are two positive constants. The boundary condition of velocity is imposed as follows:
\begin{align}
\label{BCV}
(U, V)|_{Y=0}=(u_b,0),
\end{align}
with $u_b>0$. It shows that the physical boundary is non-penetration and moves with a speed of $u_b$. The magnetic field satisfies the perfect conducting boundary condition on the boundary:
\begin{align}
\label{BCB}
(\partial_YH, G)|_{Y=0}=(0,0).
\end{align}
We are interested in the inviscid type limit problem of (\ref{1.1}). Precisely, when the small parameter $\epsilon$ in (\ref{1.1}) goes to zero, what is the exact limit state of solutions to (\ref{1.1})-(\ref{BCB}). Such a kind of limit process is one of central problems in magnetic hydrodynamics. Moreover, the mathematical analysis of this singular limit process is extremely challenging due to the appearance of strong boundary layers.

Formally, let $\epsilon=0$, the system (\ref{1.1}) is reduced into the ideal MHD equations:
\begin{equation}\label{IMHD}
\left \{
\begin{array}{lll}
(U_0 \partial_X +V_0\partial_Y)U_0-(H_0 \partial_X +G_0\partial_Y)H_0+\partial_X P_0=0,\\
(U_0 \partial_X +V_0\partial_Y)V_0-(H_0 \partial_X +G_0\partial_Y)G_0+\partial_Y P_0=0,\\
(U_0 \partial_X +V_0\partial_Y)H_0-(H_0 \partial_X +G_0\partial_Y)U_0=0,\\
(U_0 \partial_X +V_0\partial_Y)G_0-(H_0 \partial_X +G_0\partial_Y)V_0=0,\\
\partial_XU_0+\partial_YV_0=0,\qquad \partial_XH_0+\partial_YG_0=0.
\end{array}
\right.
\end{equation}
For the well-posedness of solutions to this ideal MHD equations (\ref{IMHD}), it suffices to impose non-penetration conditions for both velocity and magnetic field on the boundary $\{Y=0\}$. That is,
\begin{align*}
V_0|_{Y=0}=0,\qquad G_0|_{Y=0}=0.
\end{align*}
In general, there exists an obvious mismatch between the tangential components $(U, H)$ and $(U_0, H_0)$ in the study of inviscid limit process from (\ref{1.1}) to (\ref{IMHD}). According to the Prandtl boundary layer ansatz \cite{Prandtl}, it is necessary to introduce the boundary layer corrector functions.  These boundary layer functions change from the boundary conditions of tangential components $(U, H)$ in (\ref{BCV}) to the trace of $(U_0, H_0)$ on the boundary $\{Y=0\}$ in a thin layer near the physical boundary with width of $\sqrt{\epsilon}$. Moreover, the boundary layer functions decay to zero rapidly outside this thin layer. It is obvious that the vorticity of these boundary layer functions becomes uncontrollable when $\epsilon$ goes to zero. This is indeed one of the essential difficulties to achieve the vanishing viscosity limit problem with strong boundary layers. The main goal of this paper is to study the inviscid type limit problem of (\ref{1.1})-(\ref{BCB}). To this end, we introduce the Prandtl fast variables:
\begin{align*}
x=X,\qquad y=\frac{Y}{\sqrt{\epsilon}},
\end{align*}
and the new unknowns:
\begin{equation}\label{1.5}
\left \{
\begin{array}{lll}
U^\epsilon(x,y)=U(X,Y),\ \ V^\epsilon(x,y)=\frac{1}{\sqrt{\epsilon}}V(X,Y),\\
H^\epsilon(x,y)=H(X,Y),\ \ G^\epsilon(x,y)=\frac{1}{\sqrt{\epsilon}}G(X,Y),\\
P^\epsilon(x,y)=P(X,Y).
\end{array}
\right.
\end{equation}
Rewrite the viscous MHD equations (\ref{1.1}) in the Prandtl fast variables as follows:
\begin{equation}\label{1.6}
\left \{
\begin{aligned}
&(U^\epsilon\partial_x +V^\epsilon\partial_y)U^\epsilon-(H^\epsilon\partial_x+G^\epsilon\partial_y)H^\epsilon+\partial_x P^\epsilon=\nu\epsilon \partial_{xx}U^\epsilon+\nu\partial_{yy}U^\epsilon,\\
&(U^\epsilon\partial_x +V^\epsilon\partial_y)V^\epsilon-(H^\epsilon\partial_x+G^\epsilon\partial_y)G^\epsilon+\frac{\partial_y P^\epsilon}{\epsilon}=\nu\epsilon \partial_{xx}V^\epsilon+\nu\partial_{yy}V^\epsilon,\\
&(U^\epsilon\partial_x +V^\epsilon\partial_y)H^\epsilon-(H^\epsilon\partial_x+G^\epsilon\partial_y)U^\epsilon=\kappa\epsilon \partial_{xx}H^\epsilon+\kappa\partial_{yy}H^\epsilon,\\
&(U^\epsilon\partial_x +V^\epsilon\partial_y)G^\epsilon-(H^\epsilon\partial_x+G^\epsilon\partial_y)V^\epsilon=\kappa\epsilon \partial_{xx}G^\epsilon+\kappa\partial_{yy}G^\epsilon,\\
&\partial_xU^\epsilon+\partial_yV^\epsilon=0,\qquad \partial_xH^\epsilon+\partial_yG^\epsilon=0.
\end{aligned}
\right.
\end{equation}
In this paper, we only focus on the case that the inner ideal MHD flow is a special but important shear flow. Precisely, the solutions to (\ref{IMHD}) take the following form:
\begin{equation}\label{1.2}
(U_0(Y),0,H_0(Y),0),\qquad (\partial_X P_0, \partial_Y P_0)=(0, 0).
\end{equation}
According to the classical Prandtl boundary layer ansatz, the solutions to (\ref{1.6}) should be decomposed into the following three parts:
\begin{align*}
(U^\epsilon, V^\epsilon, H^\epsilon, G^\epsilon)=&(U^0(\sqrt{\epsilon}y),0,H^0(\sqrt{\epsilon}y),0)\\
&+(u(x,y), v(x,y), h(x,y), g(x,y))+o(1),
\end{align*}
where $(u,v,h,g)(x,y)$ are the leading order boundary layer corrector functions. And the error terms $o(1)$ tend to zero, as $\epsilon$ goes to zero.

To justify the validity of above Prandtl boundary layer ansatz, it is usual to construct a higher order approximation solution to (\ref{1.6}). Precisely, we search for the solutions $(U^\epsilon, V^\epsilon, H^\epsilon, G^\epsilon,P^\epsilon)$ to the scaled viscous MHD equations (\ref{1.6}) in the following form:
\begin{equation}\label{1.8}
\begin{array}{lll}
(U^\epsilon,V^\epsilon,H^\epsilon,G^\epsilon,P^\epsilon)=
(u_{app},v_{app},h_{app},g_{app},p_{app})+\epsilon^{\frac{1}{2}+\gamma}(u^\epsilon,v^\epsilon,h^\epsilon,g^\epsilon,p^\epsilon)
\end{array}
\end{equation}
for some $\gamma>0$, where
\begin{equation}\label{1.9}
\left \{
\begin{array}{lll}
u_{app}=u^0_e(\sqrt{\epsilon}y)+u^0_p(x,y)+\sqrt{\epsilon}\left(u^1_e(x,\sqrt{\epsilon}y)+u^1_p(x,y)\right),\\
v_{app}=v^0_p(x,y)+v^1_e(x,\sqrt{\epsilon}y)+\sqrt{\epsilon}v^1_p(x,y),\\
h_{app}=h^0_e(\sqrt{\epsilon}y)+h^0_p(x,y)+\sqrt{\epsilon}\left(h^1_e(x,\sqrt{\epsilon}y)+h^1_p(x,y)\right),\\
g_{app}=g^0_p(x,y)+g^1_e(x,\sqrt{\epsilon}y)+\sqrt{\epsilon}g^1_p(x,y),\\
p_{app}=\sqrt{\epsilon}\left(p^1_e(x,\sqrt{\epsilon}y)+p^1_p(x,y)\right)+\epsilon p^2_p(x,y),
\end{array}
\right.
\end{equation}
in which $(u^j_e,v^j_e,h^j_e,g^j_e,p^j_e)$ and $(u^j_p,v^j_p,h^j_p,g^j_p,p^j_p)$ with $j=0,1$ denote the inner flows and boundary layer profiles respectively. It should be emphasized that the leading order inner flow $(u^0_e,v^0_e,h^0_e,g^0_e,p^0_e)=(U_0(Y),0,H_0(Y),0,0)$ and the inner flows are always evaluated at $(x,Y)=(x,\sqrt{\epsilon}y)$, while the boundary layer profiles are at $(x,y)$. And $(u^\epsilon,v^\epsilon,h^\epsilon,g^\epsilon,p^\epsilon)$ stand for the error terms.

\subsection{Boundary conditions}\label{subsec1.1}

To construct the approximate solutions in (\ref{1.9}), it is necessary to determine the boundary conditions for the profiles $(u^j_e,v^j_e,h^j_e,g^j_e,p^j_e)$ and $(u^j_p,v^j_p,h^j_p,g^j_p,p^j_p)\ (j=0,1)$ in term of the order of $\sqrt{\epsilon}$.

(I) Since the zeroth-order inner ideal MHD flows $(u^0_e,0,h^0_e,0)$ are given, the boundary values for the zeroth-order inner flow are $u_e=u^0_e(0),\ h_e=h^0_e(0)$, which are different from the ``slip" boundary condition of velocity $U^\epsilon(x,0)=u_b$ in (\ref{BCV}) and the perfectly conducting boundary condition $\partial_Y H^\epsilon(x,0)=0$ in (\ref{BCB}). By the first equality in (\ref{1.9}), we impose the boundary conditions for the zeroth-order boundary layer profile $u^0_p(x,y)$ as follows:
$$u_e+u^0_p(x,0)=u_b,  \ \ \ \ \lim_{y\to \infty} u^0_p=0.$$
And the boundary conditions of the zeroth-order boundary layer profile $h^0_p(x,y)$ are given by
$$\partial_y h^0_p(x,0)=0, \ \ \ \ \lim_{y\to \infty} h^0_p=0.$$
For the vertical components of velocity and magnetic fields, the boundary conditions satisfy that
$$v^1_e(x,0)+v^0_p(x,0)=g^1_e(x,0)+g^0_p(x,0)=0.$$
Since $u_b>0$, the $x$ variable direction can be regarded as a ``time" variable. So, the zeroth-order boundary layer profiles satisfy a parabolic-type system of equations. To solve this system, the ``initial data" (in fact, the value on the boundary $\{x=0\}$) are also needed:
$$(u^0_p,h^0_p)(0,y)=(\overline{u}_0(y),\overline{h}_0(y)).$$

(II) For the first-order inner ideal MHD flows, the profiles obey an elliptic system, see Subsection \ref{subsec4.1}. For this, we impose the following boundary conditions:
\begin{equation}\nonumber
\left \{
\begin{aligned}
&(u^1_e,h^1_e)(0,Y)=(u^1_b,h^1_b)(Y),\\
&(v^1_e,g^1_e)(x,0)=-(v^0_p,g^0_p)(x,0),\\
&(v^1_e,g^1_e)(0,Y)=(V_{b0},G_{b0})(Y),\\
&(v^1_e,g^1_e)(L,Y)=(V_{bL},G_{bL})(Y),\\
&(v^1_e,g^1_e) \to (0,0) \ \ \mathrm{as} \ \ Y\to \infty,
\end{aligned}
\right.
\end{equation}
with compatibility conditions at corners
\begin{equation}\nonumber
\left \{
\begin{aligned}
&(V_{b0},G_{b0})(0)=-(v^0_p,g^0_p)(0,0),\\
&(V_{bL},G_{bL})(0)=-(v^0_p,g^0_p)(L,0).
\end{aligned}
\right.
\end{equation}

(III) The first-order boundary profiles satisfy a linearized MHD boundary layer system of equations, which is a also parabolic type system. The boundary conditions on $\{y=0\}$ and the ``initial data" on $\{x=0\}$ we impose are listed as follows:
\begin{equation}\nonumber
\left \{
\begin{aligned}
&(u^1_p,h^1_p)(0,y)=(\overline{u}_1,\overline{h}_1)(y),\\
&(u^1_p,\partial_y h^1_p)(x,0)=-(u^1_e,\partial_Y h^0_e)(x,0),\\
&(v^1_p,g^1_p)(x,0)=(0,0),\\
&(u^1_p,h^1_p) \to (0,0) \ \ \ \mathrm{as} \ \ y \to \infty.
\end{aligned}
\right.
\end{equation}

(IV) As a consequence of the construction above and the Prandtl boundary layer expansion (\ref{1.8})-(\ref{1.9}), the boundary conditions for the remainder $(u^\epsilon,v^\epsilon,h^\epsilon,g^\epsilon)$ are thus given as
$$(u^\epsilon,v^\epsilon,\partial_y h^\epsilon,g^\epsilon)|_{y=0}=(0,0,0,0),\ (u^\epsilon,v^\epsilon, h^\epsilon,g^\epsilon)|_{x=0}=(0,0,0,0),$$
$$p^\epsilon-2\nu\epsilon \partial_x u^\epsilon=0, \ \partial_y u^\epsilon+\nu\epsilon \partial_x v^\epsilon=h^\epsilon=\partial_x g^\epsilon=0\ \ \mathrm{on} \ \ \{x=L\}.$$
It should be emphasized that the boundary condition of the magnetic field is still the perfectly conducting boundary condition due to the construction of $h_e^1$ in Subsection \ref{IMP}.
\subsection{Main result and comments}
\

The main results in this paper are stated as follows.
\begin{theorem}\label{mainresult}
Let $ u_b>0$ be a constant tangential velocity of the viscous MHD flow on the boundary $\{Y=0\}$ and $u^0_e(Y),h^0_e(Y)$ be the given smooth positive inner ideal MHD flows such that $\partial_Y u^0_e,\partial_Yh^0_e$ and their derivatives decay exponentially fast to zero at infinity. Assume that the data $u^1_b,h^1_b,V_{b0},V_{bL},G_{b0},G_{bL}$ and $\overline{u}_0,\overline{h}_0,\overline{u}_1,\overline{h}_1$ are smooth and decay exponentially fast at infinity in their arguments. Moreover, suppose that $|(V_{bL}-V_{b0},G_{bL}-G_{b0})(Y)|\lesssim L$ for some small $L>0$, $\Vert   \langle Y  \rangle \partial_Y (u^0_e, h^0_e)\Vert_{L^\infty}<\delta_0$ for some suitably small $\delta_0>0$, and $u^0_e(Y)>>h^0_e(Y)$ uniformly in $Y$, and
\begin{equation}\label{main0}
u_e+\overline{u}_0(y)>h_e+\overline{h}_0(y)\geq \vartheta_0>0
\end{equation}
uniform in $y$ for some $\vartheta_0>0$, also, if
\begin{equation}\label{main1}
\begin{aligned}
&|u_e^0(\sqrt{\epsilon}y)+\overline{u}_0(y)|>>|h_e^0(\sqrt{\epsilon}y)+\overline{h}_0(y)|,\\
&| \langle y\rangle^{l+1}\partial_y (u_e+\overline{u}_0,h_e+\overline{h}_0)(y)| \leq \frac{1}{2}\sigma_0,\\
&| \langle y\rangle^{l+1}\partial_y^2 (u_e+\overline{u}_0,h_e+\overline{h}_0)(y)| \leq \frac{1}{2}\vartheta_0^{-1},
\end{aligned}
\end{equation}
uniform in $y$, here $\sigma_0>0$ is another suitably small constant. Then, there exists a constant $L_0>0$ that depends only on the given data such that the boundary layer expansion (\ref{1.8}) holds on $[0,L]\times [0,+\infty)$ for $\gamma \in (0,\frac{1}{4})$ and $0< L \leq L_0$. Furthermore, the following estimate holds:
\begin{equation}\label{main2}
\begin{aligned}
&\Vert \nabla_\epsilon u^\epsilon\Vert_{L^2}+\Vert \nabla_\epsilon v^\epsilon\Vert_{L^2}+\Vert \nabla_\epsilon h^\epsilon\Vert_{L^2}+\Vert \nabla_\epsilon g^\epsilon\Vert_{L^2}\\
&+\epsilon^{\frac{\gamma}{2}}\Vert u^\epsilon\Vert_{L^\infty}+\epsilon^{\frac{\gamma}{2}+\frac{1}{2}}\Vert v^\epsilon\Vert_{L^\infty}+\epsilon^{\frac{\gamma}{2}}\Vert h^\epsilon\Vert_{L^\infty}+\epsilon^{\frac{\gamma}{2}+\frac{1}{2}}\Vert g^\epsilon\Vert_{L^\infty}\leq C_0,
\end{aligned}
\end{equation}
where the constant $C_0>0$ depends only on the given data. Here, $\nabla_\epsilon=(\sqrt{\epsilon} \partial_x,\partial_y)$ and $\Vert \cdot \Vert_{L^p}$ denotes the usual $L^p$ norm over $[0,L]\times [0,\infty)$.
\end{theorem}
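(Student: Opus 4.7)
The plan is to follow the classical three-stage scheme for the justification of boundary layer asymptotics: first construct the approximate profiles listed in (\ref{1.9}), then derive the remainder system for $(u^\epsilon,v^\epsilon,h^\epsilon,g^\epsilon,p^\epsilon)$, and finally close a uniform estimate on the remainder matching the scaling in (\ref{main2}). Each stage will use the structural assumptions (\ref{main0})--(\ref{main1}) in a distinct way.

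\emph{Stage 1: Constructing the profiles.} I would start by solving the zeroth order boundary layer corrector $(u^0_p,v^0_p,h^0_p,g^0_p)$ from the MHD Prandtl system obtained by matching the $O(1)$ contribution of (\ref{1.6}) under the ansatz (\ref{1.9}). Since $u_b>0$ and $u_e+\overline{u}_0\geq \vartheta_0>0$, the variable $x$ plays the role of time and this system is parabolic in $x$; the non-degeneracy $u_e+\overline{u}_0 > h_e+\overline{h}_0 \geq \vartheta_0$ in (\ref{main0}) provides the positivity required for local well-posedness. A Crocco-type transformation or a direct weighted energy method yields the profile on $[0,L]\times\R_+$ with exponential $y$-decay for $L$ small. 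Next, the first order inner corrector $(u^1_e,v^1_e,h^1_e,g^1_e,p^1_e)$ is solved from the elliptic div-curl system of Subsection \ref{subsec4.1}, with solvability guaranteed by $|(V_{bL}-V_{b0},G_{bL}-G_{b0})|\lesssim L$. Finally, $(u^1_p,v^1_p,h^1_p,g^1_p)$ is produced by the linear MHD Prandtl system linearized around $(u^0_e+u^0_p,h^0_e+h^0_p)$, which is solvable by the same scheme as the zeroth order problem.

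\emph{Stage 2: The remainder system and energy functional.} Plugging (\ref{1.8})--(\ref{1.9}) into (\ref{1.6}) and subtracting the profile equations produces a quasilinear system for the remainder of the schematic form
\begin{align*}
\nu(\epsilon\partial_{xx}+\partial_{yy})u^\epsilon &-(u_{app}\partial_x+v_{app}\partial_y)u^\epsilon+(h_{app}\partial_x+g_{app}\partial_y)h^\epsilon-\partial_x p^\epsilon\\
&= \mathcal{L}^u+\mathcal{R}^u+\epsilon^{1/2+\gamma}\mathcal{N}^u,
\end{align*}
together with three companion equations. Here $\mathcal{L}^u$ collects the linear reaction terms $u^\epsilon\partial_x u_{app}+v^\epsilon\partial_y u_{app}$ and their magnetic analogues, $\mathcal{R}^u$ is the residual of the approximate solution (arranged by the construction to be of order $\epsilon^\gamma$ after dividing by the amplitude $\epsilon^{1/2+\gamma}$), and $\mathcal{N}^u$ collects the genuinely nonlinear terms. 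I would close the analysis on the functional
\begin{align*}
\mathcal{E}(\epsilon):=\|\nabla_\epsilon(u^\epsilon,v^\epsilon,h^\epsilon,g^\epsilon)\|_{L^2}^2+\epsilon^{\gamma}\|(u^\epsilon,h^\epsilon)\|_{L^\infty}^2+\epsilon^{\gamma+1}\|(v^\epsilon,g^\epsilon)\|_{L^\infty}^2.
\end{align*}
Step (i): the basic $\nabla_\epsilon$-$L^2$ estimate comes from multiplying the $u^\epsilon$ and $h^\epsilon$ equations by $u^\epsilon$ and $h^\epsilon$ respectively and integrating by parts, exploiting the antisymmetric cancellation $H\partial_xU-U\partial_xH$ characteristic of ideal MHD; the ratio smallness $u_e^0+\overline{u}_0\gg h_e^0+\overline{h}_0$ in (\ref{main1}) is exactly what dominates the residual velocity-magnetic coupling and produces coercivity. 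Step (ii): the vertical remainders $v^\epsilon,g^\epsilon$ are controlled via the divergence-free relations $\partial_yv^\epsilon=-\epsilon\partial_x u^\epsilon$ and $\partial_y g^\epsilon=-\epsilon\partial_x h^\epsilon$ together with Hardy--Poincar\'e inequalities, with a small Poincar\'e constant in $x$ coming from $L\leq L_0$. Step (iii): the $L^\infty$ bounds follow from a higher-order weighted estimate (testing against $-\partial_y^2$ of the unknowns plus a tangential derivative) combined with Gagliardo--Nirenberg interpolation on the rescaled domain; the prefactors $\epsilon^{\gamma/2}$ and $\epsilon^{\gamma/2+1/2}$ in (\ref{main2}) are dictated by the $\nabla_\epsilon$-anisotropy.

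\emph{Main obstacle.} The heart of the difficulty is the strong boundary layer derivative $\partial_y u_{app}$, which is of amplitude $O(1)$ but concentrated in a layer of width $\sqrt{\epsilon}$. The linear transport term $v^\epsilon\partial_y u_{app}$ in $\mathcal{L}^u$, estimated by Cauchy--Schwarz alone, loses a power of $\epsilon^{-1/2}$ and destroys any hope of closure. The remedy is to test against a carefully chosen multiplier adapted to the boundary layer scale and to integrate by parts so as to transfer one derivative onto that multiplier, extracting positivity from the pointwise lower bound $u^0_e+\overline{u}_0-h^0_e-\overline{h}_0\geq \vartheta_0$ and from the decay hypotheses in (\ref{main1}). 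This is the MHD analogue of the Guo--Iyer type positivity estimate for the steady Navier--Stokes boundary layer, and the ratio smallness $u\gg h$ is indispensable to perform this matching argument simultaneously on the velocity and magnetic remainders. Once this estimate is in hand, the nonlinear contribution $\epsilon^{1/2+\gamma}\mathcal{N}^\bullet$ can be absorbed into $\mathcal{E}(\epsilon)$ by a standard continuity argument, closed by taking $L\leq L_0$ sufficiently small.
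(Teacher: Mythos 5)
Your outline reproduces the paper's general scheme (profiles, remainder system, closure), but the two places where the actual mathematical content lies are either wrong as stated or missing. The most serious is your Step (ii): in the scaled variables the divergence-free conditions read $\partial_y v^\epsilon=-\partial_x u^\epsilon$ and $\partial_y g^\epsilon=-\partial_x h^\epsilon$ (no factor $\epsilon$), and even granting that, $\Vert\partial_x u^\epsilon\Vert_{L^2}\le \epsilon^{-1/2}\Vert\nabla_\epsilon u^\epsilon\Vert_{L^2}$, so this route loses $\epsilon^{-1/2}$, and the components $\sqrt{\epsilon}\partial_x v^\epsilon$, $\sqrt{\epsilon}\partial_x g^\epsilon$ are not controlled by incompressibility at all. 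The paper closes this through the positivity estimate of Lemma \ref{positivity} (Appendix \ref{ap3}), obtained by testing the four remainder equations with $\partial_y\left(\frac{v^\epsilon}{u_s}\right)$, $-\epsilon\partial_x\left(\frac{v^\epsilon}{u_s}\right)$, $\partial_y\left(\frac{g^\epsilon}{u_s}\right)$, $-\epsilon\partial_x\left(\frac{g^\epsilon}{u_s}\right)$; and the coercivity there comes from the smallness of $\Vert h_s/u_s\Vert_{L^\infty}$ and $\Vert y\partial_y(u_s,h_s)\Vert_{L^\infty}$ — not, as your ``main obstacle'' paragraph asserts, from the pointwise lower bound $u-h\ge\vartheta_0$, which plays no role in that lemma. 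Without naming this multiplier and this smallness mechanism, Lemma \ref{standardenergy} alone leaves the terms $L\Vert\nabla_\epsilon(v^\epsilon,g^\epsilon)\Vert_{L^2}^2$ unabsorbed and the estimate (\ref{main2}) cannot be closed. (Your Step (iii) also differs from the paper, which gets the $L^\infty$ bounds from Stokes/elliptic regularity, Lemma \ref{linfity}, rather than a higher-order energy estimate; that part is a plausible alternative but is left unsubstantiated.)

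In Stage 1 the gaps are of the same kind. The zeroth-order MHD boundary layer system loses an $x$-derivative through $v^0_p=-\partial_y^{-1}\partial_x u^0_p$ and $g^0_p=-\partial_y^{-1}\partial_x h^0_p$; a Crocco/von Mises reduction is not available here (no monotonicity is assumed and the velocity and magnetic equations are coupled), and the paper's resolution (Subsection \ref{0th}, Appendix \ref{ap1}) introduces the magnetic stream function $\psi$ and the cancellation unknowns $u_\beta=\partial_x^\beta u-\eta_1\partial_x^\beta\psi$, $h_\beta=\partial_x^\beta h-\eta_2\partial_x^\beta\psi$, which are well defined only because $h_e+h^0_p\ge\vartheta_0/2>0$; this is the real use of the lower bound in (\ref{main0}), beyond mere ``parabolicity''. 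For the first-order inner corrector, solvability is not ``guaranteed by $|(V_{bL}-V_{b0},G_{bL}-G_{b0})|\lesssim L$'' (that hypothesis only keeps the boundary interpolants $B_v,B_g$ bounded); the key point in Subsection \ref{subsec4.1} is the relation $\partial_Y(u^0_e g^1_e-h^0_e v^1_e)=0$, hence $g^1_e=\frac{h^0_e}{u^0_e}v^1_e+\frac{b(x)}{u^0_e}$, which reduces the coupled elliptic system to one governed by the positive operator $-u^0_e\Delta+\partial_Y^2u^0_e$ perturbed by terms small with $h^0_e/u^0_e$; without this reduction the Guo--Nguyen positivity is destroyed by the magnetic coupling. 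Finally, the consistency error is $O(\epsilon^{3/4-\zeta})$ (Proposition \ref{approximatesolution}), so after dividing by the amplitude $\epsilon^{1/2+\gamma}$ the forcing is $O(\epsilon^{1/4-\gamma-\zeta})$, not ``$O(\epsilon^\gamma)$''; it is exactly this bookkeeping that forces the restriction $\gamma<\frac14$ in the theorem.
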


With the above theorem in hand, we have the following corollary.
\begin{corollary}
Under the assumptions in Theorem \ref{mainresult}, there exists a solution $(U,V,H,G)$ to the original viscous MHD equations (\ref{1.1}) in $\Omega=[0,L]\times [0, \infty)$ with $L>0$ being as what is described in Theorem \ref{mainresult}, so that
\begin{equation}\label{main3}
\begin{aligned}
\sup_{(X,Y)\in \Omega}&\left|U(X,Y)-u^0_e(Y)-u^0_p\left(X,\frac{Y}{\sqrt{\epsilon}}\right)\right|\lesssim \sqrt{\epsilon},\\
\sup_{(X,Y)\in \Omega}&\left|V(X,Y)-\sqrt{\epsilon}v^0_p\left(X,\frac{Y}{\sqrt{\epsilon}}\right)-\sqrt{\epsilon}v^1_e(X,Y)\right|\lesssim \epsilon^{\frac{\gamma}{2}+\frac{1}{2}},\\
\sup_{(X,Y)\in \Omega}&\left|H(X,Y)-h^0_e(Y)-h^0_p\left(X,\frac{Y}{\sqrt{\epsilon}}\right)\right|\lesssim \sqrt{\epsilon},\\
\sup_{(X,Y)\in \Omega}&\left|G(X,Y)-\sqrt{\epsilon}g^0_p\left(X,\frac{Y}{\sqrt{\epsilon}}\right)-\sqrt{\epsilon}g^1_e(X,Y)\right|\lesssim \epsilon^{\frac{\gamma}{2}+\frac{1}{2}},
\end{aligned}
\end{equation}
for $\epsilon \in [0, \epsilon_0)$ with $\epsilon_0$ being suitably small, where $(u^0_e,0,h^0_e,0)$ is the given inner ideal MHD flows, $(v^1_e, g^1_e)$  and $(u^0_p,\sqrt{\epsilon}v^0_p,h^0_p,\sqrt{\epsilon}g^0_p)$ are the the ideal MHD flows and boundary layer profiles constructed in approximate solution (\ref{1.9}).
\end{corollary}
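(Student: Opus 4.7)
The plan is to rewrite Theorem \ref{mainresult} in the original unscaled variables and then read off the claimed $L^\infty$ bounds term by term. Recall the scaling (\ref{1.5}): $U(X,Y)=U^\epsilon(x,y)$, $V(X,Y)=\sqrt{\epsilon}\,V^\epsilon(x,y)$, $H(X,Y)=H^\epsilon(x,y)$, $G(X,Y)=\sqrt{\epsilon}\,G^\epsilon(x,y)$, with $x=X$, $y=Y/\sqrt{\epsilon}$. Inserting the ansatz (\ref{1.8})--(\ref{1.9}), each unknown in original variables decomposes into a \emph{leading profile} (the zeroth-order Euler part plus zeroth-order Prandtl corrector, and in the vertical components also the first-order Euler term that matches the boundary trace of $v^0_p$, $g^0_p$), a \emph{subleading profile} carrying a prefactor $\sqrt{\epsilon}$ or $\epsilon$, and a \emph{remainder} carrying the prefactor $\epsilon^{1/2+\gamma}$.

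First, I would isolate the subleading profiles. For $U$, the terms beyond $u^0_e(Y)+u^0_p(X,Y/\sqrt{\epsilon})$ are $\sqrt{\epsilon}\bigl(u^1_e(X,Y)+u^1_p(X,Y/\sqrt{\epsilon})\bigr)+\epsilon^{1/2+\gamma}u^\epsilon$. The construction of $u^1_e,u^1_p$ in the elliptic/parabolic systems of Subsections (II)--(III) yields classical smooth profiles, uniformly bounded in $L^\infty([0,L]\times[0,\infty))$ by constants depending only on the given data, so the subleading part is $O(\sqrt{\epsilon})$ uniformly in $(X,Y)$. Similarly, for $V$, beyond $\sqrt{\epsilon}v^0_p(X,Y/\sqrt{\epsilon})+\sqrt{\epsilon}v^1_e(X,Y)$ one has $\epsilon\,v^1_p(X,Y/\sqrt{\epsilon})+\epsilon^{1+\gamma}v^\epsilon$, with the smooth part bounded by $\epsilon$. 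Entirely analogous statements hold for $H$ and $G$.

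Next, I would bound the remainder contributions using (\ref{main2}). For the velocity: $\epsilon^{\gamma/2}\|u^\epsilon\|_{L^\infty}\leq C_0$ gives
\begin{equation*}
\epsilon^{1/2+\gamma}\|u^\epsilon\|_{L^\infty}\;\leq\;C_0\,\epsilon^{1/2+\gamma/2},
\end{equation*}
which is $o(\sqrt{\epsilon})$ since $\gamma>0$; for the vertical velocity,
\begin{equation*}
\sqrt{\epsilon}\cdot\epsilon^{1/2+\gamma}\|v^\epsilon\|_{L^\infty}\;\leq\;C_0\,\epsilon^{\gamma/2+1/2},
\end{equation*}
using $\epsilon^{\gamma/2+1/2}\|v^\epsilon\|_{L^\infty}\leq C_0$. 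The magnetic estimates work identically. Combining the bounds on the smooth subleading profiles with these remainder estimates produces exactly (\ref{main3}). The $L^\infty$ norms over $(x,y)\in[0,L]\times[0,\infty)$ translate one-to-one into $L^\infty$ norms over $(X,Y)\in\Omega$ because the change of variables $Y=\sqrt{\epsilon}y$ is a bijection of $[0,\infty)$ onto itself.

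The only non-routine point is ensuring the uniform smoothness of the first-order profiles $u^1_e,u^1_p,v^1_p,h^1_e,h^1_p,g^1_p$, which is where I expect the most care; however this is already built into the construction invoked by Theorem \ref{mainresult} (the existence of approximate profiles with the stated regularity is a hypothesis used in the main theorem and is established in the subsections referenced by (\ref{1.9})). Existence of $(U,V,H,G)$ on $\Omega$ then follows by setting $U=U^\epsilon$, $V=\sqrt{\epsilon}V^\epsilon$, $H=H^\epsilon$, $G=\sqrt{\epsilon}G^\epsilon$, which solves (\ref{1.1}) by the equivalence (\ref{1.6}). Choosing $\epsilon_0$ so that the thresholds implicit in the hypotheses of Theorem \ref{mainresult} are satisfied on $[0,\epsilon_0)$ completes the proof.
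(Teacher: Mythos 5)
Correct, and this is precisely the (unwritten) argument the paper intends: the corollary follows from Theorem \ref{mainresult} by undoing the scaling (\ref{1.5}) in the expansion (\ref{1.8})--(\ref{1.9}), bounding the $\sqrt{\epsilon}$-order correctors by their sup-norm estimates, and absorbing the remainder via the weighted $L^\infty$ bounds in (\ref{main2}), exactly as you do. One small inaccuracy: the first-order boundary layer profiles are not uniformly bounded in the paper's construction --- Proposition \ref{1thBoundary} and (\ref{4.048}) only give $\Vert (u^1_p,v^1_p,h^1_p,g^1_p)\Vert_{L^\infty}\leq C(L,\zeta)\epsilon^{-\zeta}$ for arbitrarily small $\zeta>0$ --- so strictly speaking the tangential bounds come out as $\epsilon^{1/2-\zeta}$ rather than $\sqrt{\epsilon}$; this matches the paper's own level of precision and does not affect the substance of your argument.
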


\begin{remark}\label{rk1}
The condition that $u^0_e(Y)>>h^0_e(Y)$ uniform in $Y$ implies that
$$\sup_Y \left|\frac{h^0_e}{u^0_e}\right|<<1.$$
This is an important condition in the analysis. First, it ensures the elliptic system of the first-order ideal MHD correctors is non-degenerate, see Subsection \ref{subsec4.1} for details. Second, it play a key role to achieve the uniform estimates of the first-order ideal MHD correctors $(u^1_e,v^1_e,h^1_e,g^1_e,p^1_e)$, see Subsection \ref{subsec4.1} for details.
\end{remark}

\begin{remark}\label{rk2}
The condition (\ref{main0}), $u_e+\overline{u}_0(y)>h_e+\overline{h}_0(y)$ uniform in $y$,
is needed to guarantee that the systems, such as the MHD boundary layer equations, are forward ``parabolic" system. Without magnetic diffusion, Wang and Ma \cite{JWang} showed the steady MHD boundary layer equations have no global-in-$x$ variable solution when the strength of magnetic field is larger than the the strength of velocity. At this moment, we have no idea that this condition is needed only for technical reason, or it is an essential one.

In addition, the condition that $|u_e^0(\sqrt{\epsilon}y)+\overline{u}_0(y)|>>|h_e^0(\sqrt{\epsilon}y)+\overline{h}_0(y)|$ uniform in $y$ will imply the following conclusion:
\begin{align}\label{RM1}\left\Vert \frac{h_s}{u_s}\right\Vert_{L^\infty}<<1,
\end{align}
provided that $0<L<<1$. Here $u_s,\ h_s$ are approximate solutions defined in Section \ref{sec3}. And the condition (\ref{RM1}) is needed in the proof of the validity of the Prandtl layer ansatz. It is noted that the first condition in (\ref{main1}) is satisfied by two different cases: (1) the strength of tangential magnetic field is small enough; (2) the strength of tangential velocity is much larger than the strength of tangential magnetic field.
\end{remark}
Before proceeding, we first review some related works on the Prandtl boundary layer theories and high Reynolds number limit problem in fluid dynamics. In fact, without the magnetic field $(H, G)$ in \eqref{1.1},  the system is reduced into the two dimensional steady incompressible Navier-Stokes equations. As is well-known that the rigorous justification of the Prandtl boundary layer expansion for unsteady Navier-Stokes equations has a long history. The key issue is to verify the
solution to the Navier-Stokes equations can be written as the superposition of
solutions to Euler and Prandtl systems with small error term in the study of vanishing viscosity limit. To our knowledge, the rigorous mathematical results were
achieved only in some infinite regularity function spaces, for example, in the analytic framework \cite{Sammartino2, Wang, Maekawa} and in Gevrey class \cite{GMM} recently.
However, for the steady incompressible Navier-Stokes equations, there are some satisfactory and interesting results about the validity of Prandtl boundary layer ansatz in Sobolv spaces. This kind of result was initiated by Guo and  Nguyen in \cite{YGuo2}. Under the assumption that the physical boundary is moving with a speed of $u_b>0$, then error terms can be estimated in Sobolev spaces by finding some positive estimates for the linearized Navier-Stokes operator. And this result was extended to the rotating disk case in \cite{Iyer1}. Very recently, the moving boundary condition was removed by Guo and Iyer in \cite{YGuo1} by using the classical Blasius profile as the leading order boundary layer corrector function. At the same time, Ger\'ard and Maekawa also established the convergence result for steady Navier-Stokes equations with some additional force term for no-slip boundary boundary case in \cite{Gerard2}. Li and the first author in \cite{Li} studied the Prandtl boundary layer expansions of steady Navier-Stokes equations on bounded domain. In addition, Iyer established a $x$-global steady Prandtl expansion with a moving boundary under the assumption that the Euler flow is $(1,0)$ in a series of works \cite{Iyer2,Iyer3,Iyer4}.

Motivated by the fifteenth open problem in Oleinik-Samokhin's classical book \cite{Oleinik1} (page 500-503),
{\it ``15. For the equations of the magnetohydrodynamic boundary layer, all problems of the above type are still open,"}
efforts have been made to study the well-posedness of solutions to the unsteady MHD boundary layer equations and to justify the MHD boundary layer expansion for the two dimensional unsteady MHD equations in \cite{CLiu2, CLiu3}. Precisely, when the hydrodynamic  and magnetic Reynolds numbers have the same order, the well-posedness of solutions to the unsteady MHD boundary layer equations and the validity of the Prandtl ansatz were established without any monotone condition imposed on the velocity in \cite{CLiu2, CLiu3}. Also refer to \cite{G-P} for the derivation of MHD boundary layer equations in different physical regime.  The long-time existence of solutions to the unsteady MHD boundary layer equations in analytic settings was also studied in \cite{X-Y2}.

The inviscid type limit problem of the steady MHD equations (\ref{1.1})-(\ref{BCB}) is considered in this paper, it is necessary to compare the assumptions and main results in this paper with the case of steady incompressible Navier-Stokes equations in \cite{YGuo2} and the case of unsteady MHD equations in \cite{CLiu2, CLiu3}, which include the following four main aspects:

(a) For the well-posedness of solutions to the zeroth-order boundary layer profiles $(u^0_p,v^0_p,h^0_p,g^0_p)$, which satisfy a nonlinear parabolic system with nonlocal terms. The main difficulty in analysis in Sobolev spaces lies in the terms of loss of $x$-derivatives. This feature is similar as that of the unsteady Prandtl equations. Unlike the idea in \cite{YGuo2}, whose authors used
the classical von Mises transformation for the steady Prandtl equations, here, we apply a modified energy method to deal with steady MHD boundary layer equations directly, instead of searching for a nonlinear coordinate transformation to reduce the MHD boundary layer equations. This method is inspired by the work due to Liu, Yang and the third author in \cite{CLiu2}. We can also find a function transformation to cancel the terms including the loss of $x$-derivatives by introducing the equation of stream function for the magnetic fields. And the condition that the tangential component of magnetic field has a lower positive bound is essentially used to ensure the cancellation functions are well-defined. See Subsection \ref{0th} and Appendix \ref{ap1} for more details.

(b) As we will see in the Subsection \ref{subsec4.1}, the first-order inner ideal MHD flows $(u^1_e,v^1_e,h^1_e,g^1_e)$ obey a coupled elliptic system and the condition in Theorem \ref{mainresult} that $u^0_e(Y)>>h^0_e(Y)$ uniform in $Y$ guarantees that the system is non-degenerate. It is remarked that the positive type estimate in \cite{YGuo2} can not be obtained directly here by applying the vorticity formulations for both velocity and magnetic fields as in \cite{YGuo2}. The reason is due to the coupling effects of velocity and magnetic fields. To overcome the difficulties, one key observation is that the third equation in (\ref{4.09}) for the tangential magnetic field can be rewritten as
$$\partial_Y(u^0_eg^1_e-h^0_ev^1_e)=0,$$
which gives that
\begin{equation}\nonumber
g_e^1=\frac{h^0_e}{u_e^0}v^1_e+\frac{1}{u^0_e}(h_e\overline{v^0_p}-u_e\overline{g^0_p}),
\end{equation}
where the notation of $\overline{f}:=f(x,0)$ denotes the trace of a function $f(x,y)$ on the boundary $\{y=0\}$.

This yields a direct algebraic relation between $v^1_e$ and $g^1_e$. Consequently, the following intrinsic structure can be used:
\begin{equation}\nonumber
\left \{
\begin{aligned}
&-u_e^0\Delta v^1_e+\partial_Y^2 u^0_e\cdot v^1_e+\left(h^0_e\Delta g^1_e-\partial_Y^2h^0_e \cdot g^1_e\right)=\cdots,\\
&g_e^1=\frac{h^0_e}{u_e^0}v^1_e+\frac{1}{u^0_e}(h_e\overline{v^0_p}-u_e\overline{g^0_p}).
\end{aligned}
\right.
\end{equation}
In this way, the key part of the above formulation still has following positive operator part:
$$-u_e^0\Delta +\partial_Y^2 u^0_e \ \ , $$
provided that $\left|\frac{h^0_e}{u^0_e}\right|$ is sufficiently small. This is exact the reason why we can obtain the positivity estimates for MHD case. Moreover, for the higher order weighted estimates, one can modify the arguments slightly to establish the desired estimates. It should be pointed out that the key structure and positive part is preserved in the formulation for higher order weighted estimates. Refer to Subsection \ref{subsec4.1} for details.

(c) For the equations of first-order boundary layer profiles $(u^1_p,v^1_p,h^1_p,g^1_p)$, it is also difficult to apply the formulation proposed in \cite{YGuo2} due to the fact that the equation of magnetic fields will destroy the structure (the fourth order equation) introduced in \cite{YGuo2}. Therefore, with the help of the stream function of the magnetic field similar as in (a) above, we avoid using the fourth PDE formulation of \cite{YGuo2}. See Subsection \ref{1stmhdbdprofile} for more details.

(d) The key estimates for equations of the remainder terms rest with the positive estimates for $\nabla_\epsilon v^\epsilon,\nabla_\epsilon g^\epsilon$. Under the small assumptions imposed on the ratio of magnetic field and velocity, the positive estimates as in \cite{YGuo2} also can be achieved for the MHD equations. See Section \ref{sec3} for more details (also refer to Remark \ref{rk2}).

The rest of this paper is organized as follows: In Section \ref{sec2}, we will construct a
suitable approximation solution and derive some key estimates. In Section \ref{sec3}, the error terms are estimated in $L^\infty$-norm
for the proof of Theorem \ref{mainresult}. Finally, to make the paper self-contained, we will provide several key proofs and computations in the Appendix \ref{ap1}-- \ref{ap3}.

\section{Construction of approximate solution}\label{sec2}
The strategy to prove Theorem \ref{mainresult} usually includes the following two steps: (1) Construct a suitable approximate solution in the form of (\ref{1.9}), which satisfies the equations (\ref{1.6}) with high order error terms of $\epsilon$; (2) Suppose the viscous solutions to (\ref{1.6}) can be written as a superposition of approximate solution and remainder terms, it suffices to estimate the remainder terms in $L^\infty$-norm.

In this section, the approximate solution will be constructed step by step: (a) Plug the form of (\ref{1.9}) into the system of equations (\ref{1.6}); (b) Compare the order of $\sqrt{\epsilon}$, we obtain the systems of ideal inner flows and the systems of boundary layer functions for each order of $\sqrt{\epsilon}$; (c) Prove the well-posedness of solution to each system of ideal inner flows and system of boundary layer functions, and derive the key estimates of solutions.  Based on the estimates achieved in (c), the error terms can be estimated in $L^\infty$ sense.

Plugging the ansatz (\ref{1.8}) with (\ref{1.9}) into the scaled viscous MHD equations (\ref{1.6}), and match the order of $\epsilon$ to determine the equations of the corresponding profiles. With the definition given in (\ref{1.9}), one can calculate the error caused by the approximation solutions as follows.
\begin{equation}\label{2.5}
\left \{
\begin{aligned}
R^{1}_{app}=&[u_{app}\partial_x+v_{app}\partial_y]u_{app}+\partial_x p_{app}\\
&-[h_{app}\partial_x+g_{app}\partial_y]h_{app}-\nu\Delta_\epsilon u_{app},\\
R^{2}_{app}=&[u_{app}\partial_x+v_{app}\partial_y]v_{app}+\frac{1}{\epsilon}\partial_y p_{app}\\
&-[h_{app}\partial_x+g_{app}\partial_y]g_{app}-\nu\Delta_\epsilon v_{app},\\
R^3_{app}=&[u_{app}\partial_x+v_{app}\partial_y]h_{app}-[h_{app}\partial_x+g_{app}\partial_y]u_{app}-\kappa\Delta_\epsilon h_{app},\\
R^{4}_{app}=&[u_{app}\partial_x+v_{app}\partial_y]g_{app}-[h_{app}\partial_x+g_{app}\partial_y]v_{app}-\kappa\Delta_\epsilon g_{app},
\end{aligned}
\right.
\end{equation}
in which $\Delta_\epsilon=\epsilon\partial_x^2+\partial_y^2.$
\subsection{Zeroth-order boundary layer}\label{0th}
In this subsection, we will construct the zeroth-order boundary layer profiles $(u^0_p,v^0_p,h^0_p,g^0_p,0)$. Keep in mind that the trace of a function $f$ on $\{y=0\}$ is denoted as $\overline{f}:=f(x,0).$
The leading order terms in the first, third and fourth equations in (\ref{2.5}) can be written as follows:
\begin{equation}\nonumber
\left \{
\begin{aligned}
R^{1,0}=&\left[(u^0_e+u^0_p)\partial_x +(v^0_p+v^1_e)\partial_y\right](u^0_e+u^0_p)\\
&-\left[(h^0_e+h^0_p)\partial_x +(g^0_p+g^1_e)\partial_y\right](h^0_e+h^0_p)-\nu \partial_y^2(u^0_e+u^0_p),\\
R^{3,0}=&\left[(u^0_e+u^0_p)\partial_x +(v^0_p+v^1_e)\partial_y\right](h^0_e+h^0_p)\\
&-\left[(h^0_e+h^0_p)\partial_x +(g^0_p+g^1_e)\partial_y\right](u^0_e+u^0_p)-\kappa \partial_y^2(h^0_e+h^0_p),\\
R^{4,0}=&\left[(u^0_e+u^0_p)\partial_x +(v^0_p+v^1_e)\partial_y\right](g^0_p+g^1_e)\\
&-\left[(h^0_e+h^0_p)\partial_x +(g^0_p+g^1_e)\partial_y\right](v^0_p+v^1_e)-\kappa \partial_y^2(g^0_p+g^1_e).
\end{aligned}
\right.
\end{equation}
Recall that $\partial_x u^0_e=\partial_x h^0_e=0$ and the ideal MHD flows $(u^0_e, h^0_e)$ are evaluated at $(0,Y)=(0,\sqrt{\epsilon}y)$, direct calculations lead to
\begin{equation}\nonumber
\begin{array}{lll}
\left(v^0_p+v^1_e\right)\partial_y u^0_e=\sqrt{\epsilon}\left(v^0_p+v^1_e\right)\partial_Y u^0_{e},\ \  \left(g^0_p+g^1_e\right)\partial_y h^0_e=\sqrt{\epsilon}\left(g^0_p+g^1_e\right)\partial_Y h^0_{e},\\
\left(v^0_p+v^1_e\right)\partial_y h^0_e=\sqrt{\epsilon}\left(v^0_p+v^1_e\right)\partial_Y h^0_{e},\ \  \left(g^0_p+g^1_e\right)\partial_y u^0_e=\sqrt{\epsilon}\left(g^0_p+g^1_e\right)\partial_Y u^0_{e},\\
\left(v^0_p+v^1_e\right)\partial_y g^1_e=\sqrt{\epsilon}\left(v^0_p+v^1_e\right)\partial_Y g^1_{e},\ \  \left(g^0_p+g^1_e\right)\partial_y v^1_e=\sqrt{\epsilon}\left(g^0_p+g^1_e\right)\partial_Y v^1_{e}.
\end{array}
\end{equation}
Consequently,
\begin{equation}\nonumber
\begin{array}{lll}
&u^0_e\partial_x u^0_p+v^1_e\partial_y u^0_p-h^0_e\partial_x h^0_p-g^1_e\partial_y h^0_p\\
=&u_e\partial_x u^0_p+\overline{v^1_e}\partial_y u^0_p+\sqrt{\epsilon}y\left( u^0_{eY}(\sqrt{\epsilon}y)\partial_x u^0_p+v^1_{eY}(x,\sqrt{\epsilon}y)\partial_y u^0_p\right)\\
&-h_e\partial_x h^0_p-\overline{g^1_e}\partial_y h^0_p-\sqrt{\epsilon}y\left( h^0_{eY}(\sqrt{\epsilon}y)\partial_x h^0_p+g^1_{eY}(x,\sqrt{\epsilon}y)\partial_y h^0_p\right)+E_1,\\
\end{array}
\end{equation}
and
\begin{equation}\nonumber
\begin{array}{lll}
&u^0_e\partial_x h^0_p+v^1_e\partial_y h^0_p-h^0_e\partial_x u^0_p-g^1_e\partial_y u^0_p\\
=&u_e\partial_x h^0_p+\overline{v^1_e}\partial_y h^0_p+\sqrt{\epsilon}y\left( u^0_{eY}(\sqrt{\epsilon}y)\partial_x h^0_p+v^1_{eY}(x,\sqrt{\epsilon}y)\partial_y h^0_p\right)\\
&-h_e\partial_x u^0_p-\overline{g^1_e}\partial_y u^0_p-\sqrt{\epsilon}y\left( h^0_{eY}(\sqrt{\epsilon}y)\partial_x u^0_p+g^1_{eY}(x,\sqrt{\epsilon}y)\partial_y u^0_p\right)+E_3,\\
\end{array}
\end{equation}
and
\begin{equation}\nonumber
\begin{array}{lll}
&u^0_e\partial_x g^0_p+v^1_e\partial_y g^0_p+u^0_p\partial_x g^1_e-h^0_e\partial_x v^0_p-g^1_e\partial_y v^0_p-h^0_p\partial_x v^1_e\\
=&u_e\partial_x g^0_p+\overline{v^1_e}\partial_y g^0_p+\sqrt{\epsilon}y\left( u^0_{eY}(\sqrt{\epsilon}y)\partial_x g^0_p+v^1_{eY}(x,\sqrt{\epsilon}y)\partial_y g^0_p\right)\\
&-h_e\partial_x v^0_p-\overline{g^1_e}\partial_y v^0_p-\sqrt{\epsilon}y\left( h^0_{eY}(\sqrt{\epsilon}y)\partial_x v^0_p+g^1_{eY}(x,\sqrt{\epsilon}y)\partial_y v^0_p\right)\\
&+u^0_p\overline{\partial_x g^1_e}-h^0_p\overline{\partial_x v^1_e}+E_4,
\end{array}
\end{equation}
where $(u_e,h_e):=(u^0_e,h^0_e)(0)$, and
\begin{equation}\label{2.6a}
\left \{
\begin{aligned}
E_1=&\epsilon\partial_x u^0_p \int_0^y \int_y^\theta \partial_Y^2u^0_e(\sqrt{\epsilon}\tau)\mathrm{d}\tau\mathrm{d}\theta+\epsilon\partial_y u^0_p\int_0^y\int_y^\theta \partial_Y^2v^1_e(x,\sqrt{\epsilon}\tau)\mathrm{d}\tau\mathrm{d}\theta\\
&-\epsilon\partial_x h^0_p \int_0^y \int_y^\theta \partial_Y^2h^0_e(\sqrt{\epsilon}\tau)\mathrm{d}\tau\mathrm{d}\theta-\epsilon\partial_y h^0_p\int_0^y\int_y^\theta \partial_Y^2g^1_e(x,\sqrt{\epsilon}\tau)\mathrm{d}\tau\mathrm{d}\theta,\\
E_3=&\epsilon\partial_x h^0_p \int_0^y \int_y^\theta \partial_Y^2u^0_e(\sqrt{\epsilon}\tau)\mathrm{d}\tau\mathrm{d}\theta+\epsilon\partial_y h^0_p\int_0^y\int_y^\theta \partial_Y^2v^1_e(x,\sqrt{\epsilon}\tau)\mathrm{d}\tau\mathrm{d}\theta\\
&-\epsilon\partial_x u^0_p \int_0^y \int_y^\theta \partial_Y^2h^0_e(\sqrt{\epsilon}\tau)\mathrm{d}\tau\mathrm{d}\theta-\epsilon\partial_y u^0_p\int_0^y\int_y^\theta \partial_Y^2g^1_e(x,\sqrt{\epsilon}\tau)\mathrm{d}\tau\mathrm{d}\theta,\\
E_4=&\epsilon\partial_x g^0_p \int_0^y \int_y^\theta \partial_Y^2u^0_e(\sqrt{\epsilon}\tau)\mathrm{d}\tau\mathrm{d}\theta+\epsilon\partial_y g^0_p\int_0^y\int_y^\theta \partial_Y^2v^1_e(x,\sqrt{\epsilon}\tau)\mathrm{d}\tau\mathrm{d}\theta\\
&-\epsilon\partial_x v^0_p \int_0^y \int_y^\theta \partial_Y^2h^0_e(\sqrt{\epsilon}\tau)\mathrm{d}\tau\mathrm{d}\theta-\epsilon\partial_y v^0_p\int_0^y\int_y^\theta \partial_Y^2g^1_e(x,\sqrt{\epsilon}\tau)\mathrm{d}\tau\mathrm{d}\theta\\
&+\sqrt{\epsilon}u^0_p\int_0^y \partial_{Yx}g^1_e(x,\sqrt{\epsilon}\tau)\mathrm{d}\tau-\sqrt{\epsilon}h^0_p\int_0^y \partial_{Yx}v^1_e(x,\sqrt{\epsilon}\tau)\mathrm{d}\tau.
\end{aligned}
\right.
\end{equation}
Collecting the leading order terms, we obtain the nonlinear MHD boundary layer equations as follows.
\begin{equation}\label{2.6i}
\left \{
\begin{aligned}
&(u_e+u^0_p)\partial_x u^0_p+(v^0_p+v^1_e(x,0))\partial_y u^0_p -(h_e+h^0_p)\partial_x h^0_p\\
&\quad -(g^0_p+g^1_e(x,0))\partial_y h^0_p=\nu\partial_y^2u^0_p,\\
&(u_e+u^0_p)\partial_x h^0_p+(v^0_p+v^1_e(x,0))\partial_y h^0_p-(h_e+h^0_p)\partial_x u^0_p\\
&\quad -(g^0_p+g^1_e(x,0))\partial_y u^0_p=\kappa\partial_y^2h^0_p,\\
&(u_e+u^0_p)\partial_x (g^0_p+\overline{g^1_e})+(v^0_p+v^1_e(x,0))\partial_y (g^0_p+\overline{g^1_e})\\
&\quad -(h_e+h^0_p)\partial_x (v^0_p+\overline{v^1_e})-(g^0_p+g^1_e(x,0))\partial_y (v^0_p+\overline{v^1_e})=\kappa\partial_y^2g^0_p,\\
&(v^0_p,g^0_p)(x,y)=\int_y^\infty \partial_x (u^0_p,h^0_p)(x,z)\mathrm{d}z,\\
&(v^1_e,g^1_e)(x,0)= -\int_0^\infty\partial_x (u^0_p,h^0_p)(x,z)\mathrm{d}z,\\
&(u^0_p,\partial_y h^0_p)(x,0)=(u_b-u_e,0), \\
&(v^0_p,g^0_p)(x,0)=-(v^1_e,g^1_e)(x,0),\  (u^0_p,h^0_p)(0,y)=(\overline{u}_0(y),\overline{h}_0(y)).
\end{aligned}
\right.
\end{equation}
Since the third equation in (\ref{2.6i}) is a direct consequence of the second equation in (\ref{2.6i}), the divergence-free conditions and the boundary conditions, in this way, it suffices to consider the following initial-boundary value problem:
\begin{equation}\label{2.6}
\left \{
\begin{aligned}
&(u_e+u^0_p)\partial_x u^0_p+(v^0_p+v^1_e(x,0))\partial_y u^0_p -(h_e+h^0_p)\partial_x h^0_p\\
&\quad -(g^0_p+g^1_e(x,0))\partial_y h^0_p=\nu\partial_y^2u^0_p,\\
&(u_e+u^0_p)\partial_x h^0_p+(v^0_p+v^1_e(x,0))\partial_y h^0_p-(h_e+h^0_p)\partial_x u^0_p\\
&\quad -(g^0_p+g^1_e(x,0))\partial_y u^0_p=\kappa\partial_y^2h^0_p,\\
&(v^0_p,g^0_p)(x,y)=\int_y^\infty \partial_x (u^0_p,h^0_p)(x,z)\mathrm{d}z,\\
&(v^1_e,g^1_e)(x,0)= -\int_0^\infty\partial_x (u^0_p,h^0_p)(x,z)\mathrm{d}z,\\
&(u^0_p,\partial_y h^0_p)(x,0)=(u_b-u_e,0), \\
&(v^0_p,g^0_p)(x,0)=-(v^1_e,g^1_e)(x,0),\  (u^0_p,h^0_p)(0,y)=(\overline{u}_0(y),\overline{h}_0(y)).
\end{aligned}
\right.
\end{equation}
After extracting the equations (\ref{2.6}) satisfied by the leading order boundary layer profiles, the accuracy of the error terms of $R^{1,0},R^{3,0}$ and $R^{4,0}$ can be improved as the following forms:
\begin{equation}\label{2.6b}
\left \{
\begin{aligned}
R^{1,0}=&\sqrt{\epsilon}\left(v^0_p+v^1_e\right)\partial_Yu^0_e+\sqrt{\epsilon}y\left(\partial_Yu^0_e(\sqrt{\epsilon}y)\partial_x u^0_p+\partial_Yv^1_e\partial_yu^0_p\right)-\nu\epsilon \partial_Y^2 u^0_e\\
&-\sqrt{\epsilon}\left(g^0_p+g^1_e\right)\partial_Yh^0_e-\sqrt{\epsilon}y\left(\partial_Yh^0_e(\sqrt{\epsilon}y)\partial_x h^0_p+\partial_Yg^1_e\partial_yh^0_p\right) +E_1,\\
R^{3,0}=&\sqrt{\epsilon}\left(v^0_p+v^1_e\right)\partial_Yh^0_e+\sqrt{\epsilon}y\left(\partial_Yu^0_e(\sqrt{\epsilon}y)\partial_x h^0_p+\partial_Yv^1_e\partial_yh^0_p\right)-\kappa\epsilon \partial_Y^2 h^0_e\\
&-\sqrt{\epsilon}\left(g^0_p+g^1_e\right)\partial_Yu^0_e-\sqrt{\epsilon}y\left(\partial_Yh^0_e(\sqrt{\epsilon}y)\partial_x u^0_p+\partial_Yg^1_e\partial_yu^0_p\right) +E_3,\\
R^{4,0}=&E_4.
\end{aligned}
\right.
\end{equation}
Further improvement of accuracy of $R^{1,0},R^{3,0}$ in term of $\epsilon$ will be left to the next subsection by solving the equations of next order inner flows and boundary layer profiles in the first and third equalities in (\ref{2.5}) with the source terms coming from $R^{1,0}$ and $R^{3,0}$ in (\ref{2.6b}).

To solve the problem (\ref{2.6}), it is convenient to homogenize the boundary conditions at $\{y=0\}$ and at infinity. To this end,  let us introduce the cut-off function
\begin{equation}\label{cutoff}
\phi(y)=
\left \{
\begin{array}{lll}
1,& y \geq 2R_0,\\
0,& 0\leq y \leq R_0
\end{array}
\right.
\end{equation}
for some constant $R_0>0$. It is reasonable to assume that the derivatives of $\phi(y)$ are bounded. Define the new unknowns as follows.
\begin{equation}\label{newunknown}
\left \{
\begin{array}{lll}
u=u^0_p+u_e-u_e\phi(y)-u_b(1-\phi(y)),\\
v=v^1_e(x,0)+v^0_p,\\
h=h^0_p+h_e-h_e\phi(y),\\
g=g^1_e(x,0)+g^0_p.
\end{array}
\right.
\end{equation}
Consequently, we have
\begin{equation}\label{bc}
\left \{
\begin{array}{lll}
(u,v,\partial_y h,g)|_{y=0}=(0,0,0,0),\\
(u,h) \to (0,0) \ \ \mathrm{as} \ y \to \infty,\\
\partial_x u +\partial_y v =\partial_x h+\partial_y g=0,
\end{array}
\right.
\end{equation}
and the new unknowns satisfy the following initial-boundary value problem:
\begin{equation}\label{newequation}
\left \{
\begin{array}{lll}
\big[\big(u+u_e\phi(y)+u_b(1-\phi(y))\big)\partial_x +v\partial_y\big]u-\big[(h+h_e\phi(y))\partial_x+g\partial_y\big]h\\
\quad \quad \quad -\nu \partial_y^2u
-gh_e\phi'(y)+v(u_e-u_b)\phi'(y)=r_1,\\
\big[\big(u+u_e\phi(y)+u_b(1-\phi(y))\big)\partial_x +v\partial_y\big]h-\big[(h+h_e\phi(y))\partial_x+g\partial_y\big]u\\
\quad \quad \quad -\kappa \partial_y^2 h
-g(u_e-u_b)\phi'(y)+vh_e\phi'(y)=r_2,\\
\partial_x u+\partial_y v=\partial_x h+\partial_y g=0,\\
(u,v,\partial_y h,g)|_{y=0}=(0,0,0,0),\\
(u,h) \to (0,0) \ \ \mathrm{as} \ \ y\to \infty,\\
(u,h)|_{x=0}=(\overline{u}_0(y)+(u_e-u_b)(1-\phi(y)),\overline{h}_0(y)+h_e(1-\phi(y))
\triangleq (u_0,h_0)(y),
\end{array}
\right.
\end{equation}
with
$$r_1=\nu (u_b-u_e)\phi''(y), \qquad\qquad  r_2=\kappa h_e\phi''(y).$$

Let us define the weighted Sobolev spaces used in this subsection. For $l \in \mathbb{R}$, denote
$$L^2_l:=\left\{f(x,y):[0,L)\times [0,\infty)\to \mathbb{R}, \ \Vert f\Vert_{L^2_l}^2=\int_0^\infty \langle y \rangle^{2l}|f(y)|^2\mathrm{d}y<\infty\right\},$$
where $\langle y\rangle =\sqrt{1+y^2}$.
For $\alpha=(\beta,k)\in \mathbb{N}^2$, $D^\alpha=\partial_x^\beta\partial_y^k$, we define
$$H^m_l:=\left\{f(x,y):[0,L]\times [0,\infty)\to \mathbb{R}, \ \Vert f\Vert_{H^m_l}^2<\infty\right\}$$
with the norm
$$\Vert f\Vert_{H^m_l}^2=\sum_{|\alpha| \leq m}\Vert  \langle y\rangle^{l+k}D^\alpha f \Vert_{L^2_y(0,\infty)}^2.$$
First, one deduces from the definition of $\phi(y)$ and (\ref{newunknown}) that
\begin{equation}\label{newunknown1}
\Vert (u,h)\Vert_{H^m_l} -C(u_e,u_b,h_e)\leq \Vert (u^0_p,h^0_p)\Vert_{H^m_l} \leq \Vert (u,h)\Vert_{H^m_l} +C(u_e,u_b,h_e).
\end{equation}
Moreover, for any $x\in [0,L], \lambda \geq 0$ and $|\alpha|\leq m$, it holds that
\begin{equation}\label{r}
\Vert \langle y\rangle^\lambda D^\alpha (r_1,r_2)\Vert_{L^2} \leq C(u_e,h_e,u_b).
\end{equation}
Similarly,
\begin{equation}\label{initialdata}
\begin{aligned}
\Vert (u_0,h_0)\Vert_{H^m_l} -C(u_e,u_b,h_e)\leq &\Vert (u^0_p,h^0_p)(0,y)\Vert_{H^m_l} \\
 \leq & \Vert (u_0,h_0)\Vert_{H^m_l} +C(u_e,u_b,h_e).
\end{aligned}
\end{equation}

The well-posedness of solutions to the system of equations for the leading order boundary layers (\ref{newequation}) is stated as follows.
\begin{proposition}\label{wellposedness}
Let $m\geq 5$ and $l\geq 0$. Suppose that $u_e^0,h_e^0$ are the smooth ideal MHD flows so that $\partial_Y u_e^0,\partial_Yh_e^0$ and their derivatives decay exponentially fast to zero at infinity. Moreover, suppose that
\begin{equation}\label{c1}
 u_e+u^0_p (0,y)>h_e+h^0_p (0,y)\geq \vartheta_0>0
\end{equation}
uniform in $y$ for some $\vartheta_0>0$, also if
\begin{equation}\label{c2}
\begin{aligned}
&|u_e^0(\sqrt{\epsilon}y)+u^0_p(0,y)|>>|h_e^0(\sqrt{\epsilon}y)+h^0_p(0,y)|,\\
&| \langle y\rangle^{l+1}\partial_y (u_e+u^0_p,h_e+h^0_p)(0,y)| \leq \frac{1}{2}\sigma_0, \\
&| \langle y\rangle^{l+1}\partial_y^2 (u_e+u^0_p,h_e+h^0_p)(0,y)| \leq \frac{1}{2}\vartheta_0^{-1},
\end{aligned}
\end{equation}
uniform in $y$, here $\sigma_0>0$ is also another suitably small constant. Then there exists $L_1>0$, such that the problem (\ref{newequation}) admits local-in-$x$ smooth solutions $u,v,h,g$ in $[0,L_1]\times [0,\infty)$ with
\begin{equation}\label{c3}
\begin{aligned}
\sup\limits_{0\leq x\leq L_1}&\Vert (u,h)\Vert_{H^m_l(0,\infty)}\\
&+\Vert  \partial_y(u,h)\Vert_{L^2(0,L;H^m_l(0,\infty))}\leq    C(l,m,u_0,h_0),
\end{aligned}
\end{equation}
and for any $x \in[0,L_1]$. Moreover, for $(x,y)\in [0,L_1]\times [0,\infty)$,
\begin{equation}\label{c3a}
\begin{aligned}
&h_e+h^0_p (x,y)\geq \frac{\vartheta_0}{2}>0,\\
&|u_e^0(\sqrt{\epsilon}y)+u^0_p(x,y)|>>|h_e^0(\sqrt{\epsilon}y)+h^0_p(x,y)|,\\
&| \langle y\rangle^{l+1}\partial_y  (u_e+u^0_p,h_e+h^0_p)(x,y)| \leq \sigma_0,\\
&| \langle y\rangle^{l+1}\partial_y^2  (u_e+u^0_p,h_e+h^0_p)(x,y)| \leq \vartheta_0^{-1}.
\end{aligned}
\end{equation}
\end{proposition}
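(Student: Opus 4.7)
The plan is to build local-in-$x$ solutions of (\ref{newequation}) on a short interval $[0, L_1]$ by a Picard-type iteration, to close uniform bounds via a weighted $H^m_l$ energy estimate, and then to propagate the positivity and smallness conditions (\ref{c3a}) by a continuity / bootstrap argument. Treating $x$ as a time variable is legitimate because $u_b > 0$ and $u_s := u + u_e\phi + u_b(1-\phi)$ stays positive for small $x$, so (\ref{newequation}) is parabolic-type in $x$. The essential obstruction is the Prandtl-style loss of $x$-derivatives: from $\partial_x u + \partial_y v = 0$, $\partial_x h + \partial_y g = 0$ and $v|_{y=0} = g|_{y=0} = 0$, both $v$ and $g$ are nonlocal-in-$x$ functionals of $\partial_x u$ and $\partial_x h$, so the transport terms $v\partial_y u$, $g\partial_y h$, $v\partial_y h$, $g\partial_y u$ each formally cost one $x$-derivative. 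A naive energy estimate therefore fails and the hypothesis $h_s := h + h_e\phi \geq \vartheta_0 > 0$ must be put to work.

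The way out, following \cite{CLiu2}, is to replace $(u, h)$ by modified unknowns constructed from the magnetic stream function. Introducing $\psi$ with $\partial_y \psi = h - h_e\phi$ (so that $\partial_x \psi = -g$), I would work with new variables of the form
\[
w_1 = \partial_y u - \frac{\partial_y u_s}{h_s}\,\psi, \qquad w_2 = \partial_y h - \frac{\partial_y h_s}{h_s}\,\psi,
\]
whose coefficients are well-defined precisely because $h_s \geq \vartheta_0/2$. Differentiating the first two equations in (\ref{newequation}) in $y$ and combining them with the induction equation for $\partial_x h$, one checks that the dangerous contributions $v\,\partial_y^2 u_s$ and $g\,\partial_y^2 h_s$ recombine and cancel; the surviving system for $(w_1, w_2)$ is genuinely parabolic, with smooth coefficients controlled by (\ref{c2}) and by $\vartheta_0^{-1}$.

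Granted this cancellation, I would perform weighted energy estimates in $H^m_l$: for each $\alpha = (\beta, k)$ with $|\alpha| \leq m$, apply $D^\alpha$ to the system for $(w_1, w_2)$, test against $\langle y\rangle^{2(l+k)} D^\alpha(w_1, w_2)$, and integrate over $[0, x]\times [0, \infty)$. The parabolic structure provides dissipation for $\partial_y D^\alpha (w_1, w_2)$; the source terms arising from $r_1, r_2$ are bounded via (\ref{r}); commutators with the weight $\langle y\rangle^{l+k}$ and with $\partial_y u_s/h_s$, $\partial_y h_s/h_s$ are absorbed using the smallness of $\sigma_0$ together with Sobolev embedding, which is available since $m \geq 5$. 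A Gronwall argument on $[0, L_1]$ with $L_1$ small closes the estimate, and the comparison (\ref{newunknown1}) transfers the bound back to $\Vert (u_p^0, h_p^0)\Vert_{H^m_l}$, giving (\ref{c3}). Contractivity of the iteration in a slightly weaker norm yields existence and uniqueness for the nonlinear system.

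Finally, the propagation of (\ref{c3a}) is a standard continuation argument: the $H^m_l$ control just obtained bounds $\partial_x h_s$ as well as $\langle y\rangle^{l+1}\partial_x \partial_y(u_s, h_s)$ and $\langle y\rangle^{l+1}\partial_x\partial_y^2(u_s, h_s)$ in $L^\infty$ by Sobolev embedding, so integrating in $x$ from the initial values (\ref{c1})-(\ref{c2}) and shrinking $L_1$ further (depending only on $\vartheta_0, \sigma_0$ and the data) closes the bootstrap. The principal obstacle is not any individual step but verifying that the cancellation uncovered at the base level persists under every $D^\alpha$: each higher derivative, via commutation with the transformation coefficients, can in principle reintroduce a loss-of-derivative term, and one must track the algebraic structure carefully in the weighted $H^m_l$ framework. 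This is exactly where the lower bound $h_s \geq \vartheta_0/2$ enters in an essential, nonperturbative way.
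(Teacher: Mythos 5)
Your global strategy (treat $x$ as a time variable using $u_b>0$, homogenize, do weighted $H^m_l$ energy estimates, remove the loss of $x$-derivatives through the magnetic stream function and the lower bound $h_s=h+h_e\phi\geq\vartheta_0/2$, then Picard iteration and a continuity argument for (\ref{c3a})) is indeed the paper's strategy, which is modeled on \cite{CLiu2}. However, the key technical step as you wrote it does not work. You define the good unknowns at the \emph{base} level, $w_1=\partial_y u-\frac{\partial_y u_s}{h_s}\psi$, $w_2=\partial_y h-\frac{\partial_y h_s}{h_s}\psi$, and claim that after differentiating the momentum/induction equations in $y$ the dangerous terms $v\,\partial_y^2u_s$ and $g\,\partial_y^2h_s$ cancel. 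They do not: the stream-function equation (\ref{SE}) carries the nonlocal term $v\,h_s$ (and $u_s\partial_x\psi=-u_sg$), so subtracting $\frac{\partial_y u_s}{h_s}$ times it produces a term $v\,\frac{h\,\partial_y u_s}{h_s}$, not $v\,\partial_y^2 u_s$; the coefficient that cancels $v\,\partial_y^2u_s$ would have to be $\partial_y^2u_s/h_s$, and even then further nonlocal terms ($g$-terms and $\psi\,(u_s\partial_x+v\partial_y)\eta_1$) survive. Moreover, even granting some base-level identity, your plan to then apply every $D^\alpha$, $|\alpha|\leq m$, to the $(w_1,w_2)$ system reintroduces commutators of the type $\partial_x^m v\cdot\partial_y w_1$ with $\partial_x^m v=-\partial_y^{-1}\partial_x^{m+1}u$ — exactly the loss of derivative the construction was meant to eliminate. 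You acknowledge this ("can in principle reintroduce a loss-of-derivative term") but do not resolve it, and it is the heart of the proof, so the argument is incomplete at its decisive point.

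The paper's resolution is structurally different: no substitution is made at low orders. All derivatives $D^\alpha=\partial_x^\beta\partial_y^k$ with $\beta\leq m-1$ are estimated directly (Lemma \ref{estimate1}), since there the commutators only involve at most $m$ derivatives of $(u,h)$ and no loss occurs. Only at the top tangential order $\beta=m$ does one pass to $u_\beta=\partial_x^\beta u-\eta_1\partial_x^\beta\psi$ and $h_\beta=\partial_x^\beta h-\eta_2\partial_x^\beta\psi$, with $\eta_1=\frac{\partial_y u+(u_e-u_b)\phi'}{h+h_e\phi}$, $\eta_2=\frac{\partial_y h+h_e\phi'}{h+h_e\phi}$ (your coefficients, but multiplying $\partial_x^m\psi$ rather than $\psi$); there the algebra is exact, since the coefficient of $\partial_x^\beta v$ in (\ref{1}) is $\partial_y u_s$ while in (\ref{phi}) it is $h_s$, and $\eta_1h_s=\partial_y u_s$, so the closed system (\ref{3.02}) for $(u_\beta,h_\beta)$ has no loss-of-derivative terms. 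One then needs the norm-equivalence Lemma \ref{equivalence} (with the quantity $M(x)$ controlled by the a priori assumptions) to convert bounds on $(u_\beta,h_\beta)$ back into bounds on $\partial_x^m(u,h)$ before closing the Gronwall argument and the bootstrap for (\ref{c3a}); this step is absent from your proposal and is needed in any version of the substitution. To repair your write-up you should move the good-unknown substitution to the highest tangential order and add the equivalence-of-norms step.
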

Based on Proposition \ref{wellposedness}, we have the following corollary.
\begin{corollary}\label{coro1}
Under the assumptions in Proposition \ref{wellposedness}, then $u^0_p,v^0_p,h^0_p,g^0_p$ enjoy that
\begin{equation}\label{c4}
\begin{aligned}
&\sup\limits_{0\leq x\leq L_1}\Vert \langle y\rangle^l D^\alpha (u_p^0,v_p^0,h_p^0,g_p^0)\Vert_{L^2(0,\infty)}\leq    C,\\
&| \langle y\rangle^{l+1}\partial_y  (u^0_p,h^0_p)(x,y)| \leq \sigma_0 , \  (x,y)\in [0,L_1]\times [0,\infty),\\
&|u_e^0(\sqrt{\epsilon}y)+u^0_p(x,y)|>>|h_e^0(\sqrt{\epsilon}y)+h^0_p(x,y)|,\  (x,y)\in [0,L_1]\times [0,\infty),\\
\end{aligned}
\end{equation}
for any $\alpha,m,l$ with $|\alpha|\leq m$.
\end{corollary}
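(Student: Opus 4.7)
The plan is to derive each of the three conclusions of Corollary~\ref{coro1} directly from Proposition~\ref{wellposedness} through three essentially mechanical manipulations: reversing the homogenization (\ref{newunknown}), applying a normal-direction Hardy inequality to handle the vertical components, and reading off the pointwise bounds.

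First, for the estimates on $(u_p^0, h_p^0)$, I invert the change of unknowns (\ref{newunknown}). Since $\phi(y)$ and its derivatives are smooth and bounded and $u_e, u_b, h_e$ are constants, one has $D^\alpha (u_p^0, h_p^0) = D^\alpha(u, h)$ plus bounded smooth terms arising from derivatives of $\phi$. The equivalence (\ref{newunknown1}) and its analogue for each derivative then give $\sup_x \Vert \langle y\rangle^l D^\alpha (u_p^0, h_p^0)\Vert_{L^2} \le \sup_x \Vert (u, h)\Vert_{H^m_l} + C$, so the bound (\ref{c3}) from Proposition~\ref{wellposedness} yields the required estimates for $u_p^0$ and $h_p^0$.

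For $(v_p^0, g_p^0)$ I use the integral representation from (\ref{2.6}): $(v_p^0, g_p^0)(x, y) = \int_y^\infty \partial_x(u_p^0, h_p^0)(x, z)\, \mathrm{d}z$. When $D^\alpha = \partial_x^\beta \partial_y^k$ has $k \ge 1$, the divergence-free relation gives $\partial_y^k v_p^0 = -\partial_y^{k-1}\partial_x u_p^0$ (and similarly for $g_p^0$), so these derivatives are controlled directly by the bounds on $(u_p^0, h_p^0)$ just obtained. For the pure $x$-derivatives (the case $k = 0$), I invoke the one-dimensional Hardy-type inequality
\[
\int_0^\infty \langle y\rangle^{2l}\left(\int_y^\infty f(z)\, \mathrm{d}z\right)^2 \mathrm{d}y \leq C_l \int_0^\infty \langle y\rangle^{2l + 2} f(y)^2\, \mathrm{d}y,
\]
valid for $l \geq 0$, applied to $f = \partial_x^{\beta+1} u_p^0$ (and analogously for the magnetic component). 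To make the right-hand side available I apply Proposition~\ref{wellposedness} at the larger weight index $l + 1$ rather than $l$; this is legitimate because the hypotheses of Theorem~\ref{mainresult} impose exponential decay of $\overline{u}_0, \overline{h}_0$ and of $\partial_Y u_e^0, \partial_Y h_e^0$, so membership of the data in every $H^m_{l'}$ is automatic.

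Finally, the two pointwise assertions in (\ref{c4}) are inherited immediately from (\ref{c3a}). Since $u_e$ and $h_e$ are $y$-independent constants, one has $\partial_y(u_e + u_p^0) = \partial_y u_p^0$ and $\partial_y(h_e + h_p^0) = \partial_y h_p^0$, so $| \langle y\rangle^{l+1} \partial_y(u_p^0, h_p^0)| \le \sigma_0$ is exactly the third line of (\ref{c3a}); likewise the dominance $|u_e^0(\sqrt{\epsilon} y) + u_p^0| \gg |h_e^0(\sqrt{\epsilon}y) + h_p^0|$ is the second line of (\ref{c3a}) verbatim. I expect the only mildly delicate step to be the Hardy inequality for $v_p^0$ and $g_p^0$, which forces the application of Proposition~\ref{wellposedness} at a weight index one higher than nominally asked by the corollary; all other parts of the proof reduce to bookkeeping of constants against the cutoff $\phi$.
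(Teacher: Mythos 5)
Your route is exactly the one the paper leaves implicit: Corollary \ref{coro1} is stated without proof as an immediate consequence of Proposition \ref{wellposedness}, and your three steps --- undoing the homogenization (\ref{newunknown}) (the correction terms $(u_e-u_b)(1-\phi)$ and $h_e(1-\phi)$ are smooth, compactly supported in $y$, hence harmless in every weighted norm), controlling $(v^0_p,g^0_p)$ through $\partial_y(v^0_p,g^0_p)=-\partial_x(u^0_p,h^0_p)$ and the tail representation in (\ref{2.6}) with a weighted Hardy inequality, and reading the two pointwise statements off (\ref{c3a}) (legitimate since $u_e,h_e$ are constants) --- are the intended bookkeeping. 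The Hardy inequality you quote is indeed valid for $l\geq 0$, but only via the integration-by-parts version (a naive Cauchy--Schwarz in the inner integral loses a logarithm), with the boundary term at $y=\infty$ killed by the rapid decay; it is worth saying this, since it is the one genuinely quantitative step.

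There is one concrete gap. For pure tangential derivatives of top order, $D^\alpha=\partial_x^{\beta}$ with $\beta=|\alpha|=m$ and $k=0$, your representation gives $\partial_x^{m}v^0_p=\int_y^\infty \partial_x^{m+1}u^0_p\,\mathrm{d}z$ (similarly for $g^0_p$), so the Hardy step consumes $m+1$ tangential derivatives of $(u^0_p,h^0_p)$, which the bound (\ref{c3}) at level $m$ does not control. You must invoke Proposition \ref{wellposedness} at order $m+1$, not only at the higher weight $l+1$; this is the same index-shifting mechanism you already use for the weight, and it is available because the data are smooth with exponential decay, but it should be stated explicitly. Moreover, invoking the proposition at the shifted indices $(m+1,l+1)$ tacitly requires the hypotheses (\ref{c1})--(\ref{c2}) at those indices (in particular the $\sigma_0$- and $\vartheta_0^{-1}$-smallness with the heavier weight $\langle y\rangle^{l+2}$), and the resulting $L_1$ and the constant $C$ in (\ref{c4}) then depend on the shifted indices. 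This is consistent with the corollary's loose phrasing ``for any $\alpha,m,l$'', but it is not free, and your proof should record it. With that amendment the argument is complete.
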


The proof of the Proposition \ref{wellposedness} is relatively tricky and long, and it is left to be handled in the Appendix \ref{ap1}.

\subsection{First-order correctors}\label{sec1st}

Next, by collecting all terms with a factor of $\sqrt{\epsilon}$ from $R^{1}_{app}$ and $R^{3}_{app}$, together with the $\sqrt{\epsilon}$-order terms from $R^{1,0}$ and $R^{3,0}$, we have
\begin{equation}\label{4.001}
\left \{
\begin{aligned}
R^{u,1}=&\big[(u^1_e+u^1_p)\partial_x +v^1_p\partial_y\big](u^0_e+u^0_p)+\big[(u^0_e+u^0_p)\partial_x+(v^0_p+v^1_e)\partial_y\big](u^1_e+u^1_p)\\
&+\partial_x( p^1_e+p^1_p)-\nu\partial_y^2 (u^1_e+u^1_p)+(y\partial_x u^0_p+v^0_p+v^1_e)\partial_Yu^0_e+y\partial_Y v^1_e\partial_y u^0_p\\
&-\big[(h^1_e+h^1_p)\partial_x +g^1_p\partial_y\big](h^0_e+h^0_p)-\big[(h^0_e+h^0_p)\partial_x+(g^0_p+g^1_e)\partial_y\big](h^1_e+h^1_p)\\
&-(y\partial_x h^0_p+g^0_p+g^1_e)\partial_Yh^0_e-y\partial_Y g^1_e\partial_y h^0_p,\\
R^{h,1}=&\big[(u^1_e+u^1_p)\partial_x +v^1_p\partial_y\big](h^0_e+h^0_p)+\big[(u^0_e+u^0_p)\partial_x+(v^0_p+v^1_e)\partial_y\big](h^1_e+h^1_p)\\
&-\kappa\partial_y^2 (h^1_e+h^1_p)+(v^0_p+v^1_e)\partial_Y h^0_e+y\partial_Yu^0_e\partial_x h^0_p+y\partial_Y v^1_e\partial_y h^0_p\\
&-\big[(h^1_e+h^1_p)\partial_x +g^1_p\partial_y\big](u^0_e+u^0_p)-\big[(h^0_e+h^0_p)\partial_x+(g^0_p+g^1_e)\partial_y\big](u^1_e+u^1_p)\\
&-(g^0_p+g^1_e)\partial_Yu^0_e-y\partial_Yh^0_e\partial_x u^0_p-y\partial_Y g^1_e\partial_y u^0_p.
\end{aligned}
\right.
\end{equation}
Note that when the operator $\partial_y$ acts on the inner flow terms which are evaluated at $Y=\sqrt{\epsilon}y$, there will produce a factor of $\sqrt{\epsilon}$. In this way, these terms should be moved into the next order of $\sqrt{\epsilon}$. For example,
$$[v^0_p+v^1_e]\partial_y u^1_e=\sqrt{\epsilon}[v^0_p+v^1_e]\partial_Y u^1_e, \qquad \partial_y^2 u^1_e=\epsilon \partial_Y^2 u^1_e.$$
Consequently, the terms of related $\epsilon^{\frac12}$-order MHD inner flow satisfy
\begin{equation}\label{4.002}
\left \{
\begin{aligned}
&u^0_e\partial_x u^1_e+v^1_e\partial_Y u^0_e-h^0_e\partial_x h^1_e-g^1_e\partial_Y h^0_e+\partial_x p^1_e=0,\\
&u^0_e\partial_x h^1_e+v^1_e\partial_Y h^0_e-h^0_e\partial_x u^1_e-g^1_e\partial_Y u^0_e=0,
\end{aligned}
\right.
\end{equation}
and the $\epsilon^{\frac12}$-order boundary layer terms are governed by the following system of equations:
\begin{equation}\label{4.01}
\left \{
\begin{aligned}
(&u^1_e+u^1_p)\partial_x u^0_p+u^0_p\partial_x u^1_e+(u^0_e+u^0_p)\partial_x u^1_p+v^1_p(\partial_y u^0_p+\sqrt{\epsilon}\partial_Y u^0_e)\\
&+(v^0_p+v^1_e)\partial_y u^1_p+\partial_x p^1_p-\nu \partial_{y}^2 u^1_p+(y\partial_x u^0_p+v^0_p)\partial_Y u^0_e+y\partial_Y v^1_e\partial_y u^0_p\\
&-(h^1_e+h^1_p)\partial_x h^0_p-h^0_p\partial_x h^1_e-(h^0_e+h^0_p)\partial_x h^1_p-g^1_p(\partial_y h^0_p+\sqrt{\epsilon}\partial_Y h^0_e)\\
&-(g^0_p+g^1_e)\partial_y h^1_p-(y\partial_x h^0_p+g^0_p)\partial_Yh^0_e-y\partial_Y g^1_e \partial_y h^0_p=0,\\
(&u^1_e+u^1_p)\partial_x h^0_p+u^0_p\partial_x h^1_e+(u^0_e+u^0_p)\partial_x h^1_p+v^1_p(\partial_y h^0_p+\sqrt{\epsilon}\partial_Yh^0_e)\\
&+(v^0_p+v^1_e)\partial_y h^1_p-\kappa \partial_y^2 h^1_p+v^0_p\partial_Y h^0_e+y\partial_Y u^0_e\partial_x h^0_p+y\partial_Y v^1_e\partial_y h^0_p\\
&-(h^1_e+h^1_p)\partial_x u^0_p-h^0_p\partial_x u^1_e-(h^0_e+h^0_p)\partial_x u^1_p-g^1_p(\partial_y u^0_p+\sqrt{\epsilon}\partial_Y u^0_e)\\
&-(g^0_p+g^1_e)\partial_y u^1_p-g^0_p\partial_Y u^0_e-y\partial_Y h^0_e\partial_x u^0_p-y\partial_Y g^1_e\partial_y u^0_p=0.
\end{aligned}
\right.
\end{equation}
After constructing the above profiles, the errors of $R^{u,1}$ and $R^{h,1}$ in (\ref{4.001}) are further reduced to
\begin{equation}\label{4.02}
\sqrt{\epsilon}\left[(v^0_p+v^1_e)\partial_Y u^1_e-(g^0_p+g^1_e)\partial_Y h^1_e\right]-\nu \epsilon \partial_Y^2u^1_e
\end{equation}
and
\begin{equation}\label{4.03}
\sqrt{\epsilon}\left[(v^0_p+v^1_e)\partial_Y h^1_e-(g^0_p+g^1_e)\partial_Y u^1_e\right]-\kappa \epsilon \partial_Y^2h^1_e.
\end{equation}

Next, we consider the approximation of normal components, which correspond to the second and fourth equalities in (\ref{2.5}). Clearly, the leading order term in the second equality in (\ref{2.5}) yields  $\frac{1}{\sqrt{\epsilon}}\partial_y p^1_p=0$.  Therefore, it holds that
\begin{equation}\label{4.04}
p^1_p(x,y)=p^1_p(x).
\end{equation}
Furthermore, the next order terms in the second and fourth equalities in (\ref{2.5}) consists of
\begin{equation}\label{4.05}
\left \{
\begin{aligned}
R^{v,0}=&\big[(u^0_e+u^0_p)\partial_x +(v^0_p+v^1_e)\partial_y\big](v^0_p+v^1_e)+\partial_Y p^1_e+\partial_y p^2_p\\
&-\nu \partial_y^2(v^0_p+v^1_e)-\big[(h^0_e+h^0_p)\partial_x +(g^0_p+g^1_e)\partial_y\big](g^0_p+g^1_e),\\
R^{g,0}=&\big[(u^0_e+u^0_p)\partial_x +(v^0_p+v^1_e)\partial_y\big](g^0_p+g^1_e)-\kappa \partial_y^2(g^0_p+g^1_e)\\
&-\big[(h^0_e+h^0_p)\partial_x +(g^0_p+g^1_e)\partial_y\big](v^0_p+v^1_e).\\
\end{aligned}
\right.
\end{equation}
As above, we enforce $R^{v,0}=R^{g,0}=0$. It should be noted that some terms in $R^{g,0}$ have been determined in the leading order boundary layer equations in Subsection \ref{0th}, see (\ref{2.6i}) for details. The rest terms in $R^{g,0}$ read as
\begin{equation}\label{4.05}
\begin{aligned}
R^{g,0}=&u^0_e\partial_x g^1_e-h^0_e\partial_x v^1_e+\sqrt{\epsilon}\big[(v^0_p+v^1_e)\partial_Y g^1_e-(g^0_p+g^1_e)\partial_Y v^1_e\big]-\kappa\epsilon \partial_Y^2 g^1_e.\\
\end{aligned}
\end{equation}
And $(v^1_p,g^1_p)$ is determined by the divergence-free conditions and $(u^1_p,h^1_p)$. Consequently, the first order inner flow profiles $(u^1_e,v^1_e,h^1_e,g^1_e,p^1_e)$ enjoy
\begin{equation}\label{4.06}
\left \{
\begin{aligned}
&u^0_e\partial_x v^1_e-h^0_e\partial_x g^1_e+\partial_Y p^1_e=0,\\
&u^0_e\partial_x g^1_e-h^0_e\partial_x v^1_e=0.
\end{aligned}
\right.
\end{equation}
Whereas, the next order boundary layer of pressure $p^2_p$ is taken as
\begin{equation}\label{4.07}
\begin{aligned}
p^2_p(x,y)=&\int_y^\infty \bigg[(u^0_e+u^0_p)\partial_x v^0_p+u^0_p\partial_x v^1_e+(v^0_p+v^1_e)\partial_y v^0_p\\
&-(h^0_e+h^0_p)\partial_x g^0_p-h^0_p\partial_x g^1_e-(g^0_p+g^1_e)\partial_y g^0_p-\nu \partial_y^2 v^0_p\bigg](x,\theta)\mathrm{d}\theta.
\end{aligned}
\end{equation}
As a consequence, the error terms $R^{v,0},R^{g,0}$ can be reduced into the following forms:
\begin{equation}\label{4.08}
\left \{
\begin{aligned}
R^{v,0}=&\sqrt{\epsilon}\big[(v^0_p+v^1_e)\partial_Y v^1_e-(g^0_p+g^1_e)\partial_Y g^1_e\big]-\nu\epsilon \partial_Y^2 v^1_e,\\
R^{g,0}=&\sqrt{\epsilon}\big[(v^0_p+v^1_e)\partial_Y g^1_e-(g^0_p+g^1_e)\partial_Y v^1_e\big]-\kappa\epsilon \partial_Y^2 g^1_e.
\end{aligned}
\right.
\end{equation}

\subsection{The ideal MHD correctors}\label{subsec4.1}
Based on the above deduction, we solve the following system of equations with some suitable boundary conditions to obtain the $\sqrt{\epsilon}$ order ideal inner MHD correctors $(u^1_e,v^1_e,h^1_e,g^1_e,p^1_e)$:
\begin{equation}\label{4.09}
\left \{
\begin{aligned}
&u^0_e\partial_x u^1_e+v^1_e\partial_Yu^0_e-h^0_e\partial_x h^1_e-g^1_e\partial_Y h^0_e+\partial_x p^1_e=0,\\
&u^0_e\partial_x v^1_e-h^0_e\partial_x g^1_e+\partial_Y p^1_e=0,\\
&u^0_e\partial_x h^1_e+v^1_e\partial_Y h^0_e-h^0_e\partial_x u^1_e-g^1_e\partial_Y u^0_e=0,\\
&u^0_e\partial_x g^1_e-h^0_e\partial_x v^1_e=0,\\
&\partial_x u^1_e+\partial_Y v^1_e=\partial_x h^1_e+\partial_Y g^1_e=0,
\end{aligned}
\right.
\end{equation}
with the boundary conditions
\begin{equation}\label{4.09a}
\left \{
\begin{aligned}
&(v^1_e,g^1_e)(x,0)=-(v^0_p,g^0_p)(x,0),\\
&(v^1_e,g^1_e)(0,Y)=(V_{b0},G_{b0})(Y),\\
&(v^1_e,g^1_e)(L,Y)=(V_{bL},G_{bL})(Y),\\
&(v^1_e,g^1_e) \to (0,0) \ \ \mathrm{as} \ \ Y\to \infty.
\end{aligned}
\right.
\end{equation}
And the compatibility conditions hold at corners:
\begin{equation}\label{4.09b}
\left \{
\begin{aligned}
&(V_{b0},G_{b0})(0)=-(v^0_p,g^0_p)(0,0),\\
&(V_{bL},G_{bL})(0)=-(v^0_p,g^0_p)(L,0).
\end{aligned}
\right.
\end{equation}

To solve the problem (\ref{4.09})-(\ref{4.09b}), by the divergence-free conditions for velocity and magnetic fields, the third equation in (\ref{4.09}) can be rewritten as
$$\partial_Y(u^0_eg^1_e-h^0_ev^1_e)=0,$$
which gives that
\begin{equation}\label{4.09c}
g_e^1=\frac{h^0_e}{u_e^0}v^1_e+\frac{1}{u^0_e}(h_e\overline{v^0_p}-u_e\overline{g^0_p}):=\frac{h^0_e}{u_e^0}v^1_e+\frac{b(x)}{u^0_e}
\end{equation}
with
\begin{align*}
b(x)=(h_e\overline{v^0_p}-u_e\overline{g^0_p}).
\end{align*}
Moreover, we deduce from the first, second and fifth equations in (\ref{4.09}) that
\begin{equation}\label{4.010}
\begin{aligned}
-u_e^0\Delta v^1_e&+\partial_Y^2 u^0_e\cdot v^1_e+\left(h^0_e\Delta g^1_e-\partial_Y^2h^0_e \cdot g^1_e\right)=0.
\end{aligned}
\end{equation}

To avoid the singularity resulted from the presence of corners, we consider the following modified elliptic problem instead of (\ref{4.010}):
\begin{equation}\label{4.011}
\begin{aligned}
-u_e^0\Delta v^1_e&+\partial_Y^2 u^0_e\cdot v^1_e+\left(h^0_e\Delta g^1_e-\partial_Y^2h^0_e\cdot g^1_e\right)=E_b,
\end{aligned}
\end{equation}
with the boundary conditions (\ref{4.09a}). Here $E_b$ will be defined later.

To define $E_b$, we introduce
\begin{equation}\label{4.013}
\left \{
\begin{aligned}
B_v(x,Y):=&\left(1-\frac{x}{L}\right)\frac{V_{b0}(Y)}{v^0_p(0,0)}v^0_p(x,0)+\frac{x}{L}\frac{V_{bL}(Y)}{v^0_p(L,0)}v^0_p(x,0),\\
B_g(x,Y):=&\left(1-\frac{x}{L}\right)\frac{G_{b0}(Y)}{g^0_p(0,0)}g^0_p(x,0)+\frac{x}{L}\frac{G_{bL}(Y)}{g^0_p(L,0)}g^0_p(x,0)
\end{aligned}
\right.
\end{equation}
for the case that all of $v^0_p(0,0), v^0_p(L,0),g^0_p(0,0),g^0_p(L,0)$ are nonzero. If this is not the case, for example, if $v^0_p(0,0)=0$, we would replace $\frac{V_{b0}(Y)}{v^0_p(0,0)}v^0_p(x,0)$ by $V_{b0}(Y)-v^0_p(x,0)$. Then the definition of $B_v$ is changed into the following form:
\begin{equation}\nonumber
B_v(x,Y)=\left(1-\frac{x}{L}\right)\left[V_{b0}(Y)-v^0_p(x,0)\right]+\frac{x}{L}\frac{V_{bL}(Y)}{v^0_p(L,0)}v^0_p(x,0).
\end{equation}
Similar modifications can be done for the case that some of $v^0_p(L,0),g^0_p(0,0)$ and $g^0_p(L,0)$ are zero.

According to the compatibility assumptions (\ref{4.09b}) at corners, it is easy to check that the $B_v,B_g$ satisfy the boundary conditions (\ref{4.09a}).
Additionally, it follows that $B_v,B_g \in W^{k,p}$ for arbitrary $k,p$, provided that $|\partial_Y^k(V_{bL}-V_{b0},G_{bL}-G_{b0})(Y)|\leq CL$.

Then, we introduce the function $F_e$ with the definition being given as follows:
\begin{equation}\label{4.014}
\begin{aligned}
-u_e^0\Delta B_v+\partial_Y^2 u^0_e\cdot B_v+\left(h^0_e\Delta B_g-\partial_Y^2h^0_e\cdot B_g\right)=F_e,
\end{aligned}
\end{equation}
and $F_e$ is arbitrarily smooth and it holds that
\begin{equation}\label{4.015a}
\Vert \langle Y\rangle^n F_e\Vert_{W^{k,q}} \leq C
\end{equation}
for any $n,k \geq 0$ with $q\in [1,\infty]$, where the constant $C>0$ is independent of small $L$.
Set
$$v_e^1=B_v+w_1,\ \ g_e^1=B_g+w_2,$$
then it follows that
\begin{equation}\label{4.016}
\left \{
\begin{aligned}
&-u_e^0\Delta w_1+\partial_Y^2 u^0_e\cdot w_1+\left(h^0_e\Delta w_2-\partial_Y^2 h^0_e\cdot w_2\right)=E_b-F_e,\\
&w_2=\frac{h^0_e}{u^0_e}(w_1+B_v)+\frac{b}{u^0_e}-B_g,\\
&w_i|_{\partial\Omega}=0, \ \ i=1,2.
\end{aligned}
\right.
\end{equation}
Therefore, it suffices to establish the estimates for $w_1$.
It is convenient to introduce the boundary layer corrector $E_{b}$ as follows to obtain the high regularity
\begin{equation}\label{4.0170a}
E_b:=\chi\left(\frac{Y}{\epsilon}\right)F_e(x,0),
\end{equation}
where $\chi(\cdot)$ is a smooth cut-off function with support in $[0,1]$ with $\chi(0)=1$. Then we have
\begin{equation}\label{4.017a}
\Vert \langle Y\rangle^n\partial_Y^k E_b\Vert_{L^q} \lesssim \epsilon^{-k+\frac{1}{q}}, \ \ q\geq 1, \ \ n,k\geq0.
\end{equation}
\begin{lemma}\label{1stMHD}
Suppose that $V_{b0},V_{bL},G_{b0}$ and $G_{bL}$ are sufficiently smooth, decay rapidly as $Y\to \infty$, and satisfy $|\partial_Y^k(V_{bL}-V_{b0},G_{bL}-G_{b0})(Y)|\leq CL$ for any $k\geq 0$, uniformly in $Y$. Then, there exist unique smooth solutions $v^1_e,g^1_e$ to the elliptic problem (\ref{4.09a}), (\ref{4.09c}), (\ref{4.011}) and (\ref{4.0170a}), and it holds that
\begin{equation}\nonumber
\begin{aligned}
&\Vert (v^1_e,g^1_e)\Vert_{L^\infty}+\Vert \langle Y\rangle^n(v^1_e,g^1_e)\Vert_{H^2}\leq C_0,\\
&\Vert \langle Y\rangle^n(v^1_e,g^1_e)\Vert_{H^3}\leq C_0\epsilon^{-1/2},\\
&\Vert \langle Y\rangle^n(v^1_e,g^1_e)\Vert_{H^4}\leq C_0\epsilon^{-3/2},
\end{aligned}
\end{equation}
for any $n\geq 0$ and some constant $C_0>0$ independent of small $L>0$. Moreover,
\begin{equation}\nonumber
\begin{aligned}
&\Vert \langle Y \rangle^n(v^1_e,g^1_e)\Vert_{W^{2,q}}\leq C(L),\\
&\Vert \langle Y\rangle^n(v^1_e,g^1_e)\Vert_{W^{3,q}}\leq C(L)\epsilon^{-1+1/q},\\
&\Vert \langle Y\rangle^n(v^1_e,g^1_e)\Vert_{W^{4,q}}\leq C(L)\epsilon^{-2+1/q},
\end{aligned}
\end{equation}
for $n\geq 0$ and $q\in(1,\infty)$, and for some $C(L)>0$, which depends only on $L>0$.
\end{lemma}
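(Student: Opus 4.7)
The plan is to reduce the coupled elliptic system (\ref{4.016}) to a single scalar second-order elliptic equation for $w_1$ and then invoke weighted elliptic theory. Substituting the algebraic relation $w_2 = \frac{h_e^0}{u_e^0}(w_1+B_v) + \frac{b}{u_e^0} - B_g$ into the PDE for $w_1$, the principal part becomes
\begin{equation*}
-\Bigl(u_e^0 - \tfrac{(h_e^0)^2}{u_e^0}\Bigr)\Delta w_1 \;=\; -u_e^0\Bigl(1-(h_e^0/u_e^0)^2\Bigr)\Delta w_1,
\end{equation*}
which is uniformly elliptic because the hypothesis $u_e^0\gg h_e^0$ forces $\|h_e^0/u_e^0\|_{L^\infty}\ll 1$. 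All remaining first- and zeroth-order terms factor through either derivatives of $h_e^0/u_e^0$, which are small by the decay hypothesis $\|\langle Y\rangle\partial_Y(u_e^0,h_e^0)\|_{L^\infty}<\delta_0$, or through forcing built from $B_v, B_g, b, E_b, F_e$. By the construction of $B_v, B_g$ in (\ref{4.013}) together with $|\partial_Y^k(V_{bL}-V_{b0},G_{bL}-G_{b0})|\lesssim L$, all contributions not involving $E_b$ are uniformly bounded in any weighted Sobolev norm independently of small $L$.

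For existence and the $H^2$ estimate, I would apply Lax-Milgram in $H^1_0(\Omega)$: the bilinear form for the principal part is coercive once $\|h_e^0/u_e^0\|_{L^\infty}$ and $L$ are small enough that lower-order perturbations are absorbed via a Poincar\'e-type inequality in the $x$-direction. Elliptic regularity then gives $\|w_1\|_{H^2}\lesssim \|E_b-F_e\|_{L^2}+\text{l.o.t.}\leq C_0$, using $\|E_b\|_{L^2}\lesssim \epsilon^{1/2}$ from (\ref{4.017a}) and the uniform bound (\ref{4.015a}) on $F_e$. The $L^\infty$ bound follows by $H^2\hookrightarrow L^\infty$ in two dimensions, and the weighted estimates $\|\langle Y\rangle^n w_1\|_{H^2}$ are obtained by testing against $\langle Y\rangle^{2n}w_1$ and commuting $\langle Y\rangle^n$ through the operator; commutators that carry $\partial_Y u_e^0$ or $\partial_Y h_e^0$ are controlled by their exponential decay at infinity. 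All bounds transfer from $w_1$ to $w_2$, hence to $v_e^1$ and $g_e^1$, through the algebraic identity (\ref{4.09c}).

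For the higher-order norms, I would differentiate the reduced equation in $x$ and $Y$ and repeat the $H^2$ or $W^{2,q}$ estimate for each derivative. The only source that fails to be uniformly bounded is $E_b$; the scalings (\ref{4.017a}) give $\|E_b\|_{H^1}\lesssim \epsilon^{-1/2}$, $\|E_b\|_{H^2}\lesssim \epsilon^{-3/2}$ and $\|E_b\|_{W^{k-2,q}}\lesssim \epsilon^{-k+2+1/q}$, which produces exactly the claimed $H^3$, $H^4$, and $W^{k,q}$ bounds after elliptic regularity. The main obstacle is preserving the positive principal-part structure once the algebraic relation for $g_e^1$ is substituted: the smallness hypotheses $u_e^0\gg h_e^0$ and $\|\langle Y\rangle\partial_Y(u_e^0,h_e^0)\|_{L^\infty}<\delta_0$ are both essential, the former to keep the effective principal coefficient $u_e^0\bigl(1-(h_e^0/u_e^0)^2\bigr)$ bounded below and the latter to keep all the lower-order corrections arising from $\partial_Y^k(h_e^0/u_e^0)$ small enough to be absorbed into the leading dissipative term.
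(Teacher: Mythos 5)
Your reduction is sound and lands close to the paper's own argument, but it is organized differently. You substitute the algebraic relation (\ref{4.09c}) (equivalently the second equation of (\ref{4.016})) into the PDE first, producing one scalar equation whose principal coefficient $u^0_e\bigl(1-(h^0_e/u^0_e)^2\bigr)$ is uniformly elliptic, and then run Lax--Milgram and elliptic regularity. The paper never forms this reduced operator: it tests the coupled equation (\ref{4.016}) with $w_1/u^0_e$, controls the zeroth-order term $\partial_Y^2u^0_e\,w_1$ through the Guo--Nguyen positivity identity $\iint\bigl(|\partial_Yw_1|^2+\tfrac{\partial_Y^2u^0_e}{u^0_e}|w_1|^2\bigr)=\iint|u^0_e|^2\bigl|\partial_Y(w_1/u^0_e)\bigr|^2\ge\theta_0\iint|\partial_Yw_1|^2$, and only afterwards uses the algebraic relation to convert $\nabla w_2$ into $\nabla w_1$ plus data, absorbing the coupling by smallness of $h^0_e/u^0_e$, $\partial_Yh^0_e$ and $L$. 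The two routes are essentially interchangeable at the $H^2$ level; yours gives a cleaner scalar formulation, while the paper's keeps the positive-operator structure that it reuses verbatim for the weighted and higher-order estimates. Your bookkeeping of the $E_b$ scalings from (\ref{4.017a}) reproduces exactly the stated $H^3$, $H^4$ and $W^{k,q}$ rates; note, however, that ``differentiate and repeat the estimate'' relies on the boundary compatibility built into the corrector, namely $E_b(x,0)-F_e(x,0)=0$, hence $G_e=0$ on $\{Y=0\}$ and the condition (\ref{4.021}), and for the $L^q$ theory the paper passes through an odd extension across $\{Y=0\}$; you are implicitly using the same construction, so this should be said.

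Two points need repair. First, your claim that all remaining lower-order terms ``factor through derivatives of $h^0_e/u^0_e$, which are small'' is not accurate: after substitution one still faces $\partial_Y^2u^0_e\,w_1$ and terms carrying $\partial_Y^2h^0_e$, and no smallness is assumed on second derivatives of the background (only $\Vert\langle Y\rangle\partial_Y(u^0_e,h^0_e)\Vert_{L^\infty}<\delta_0$). These terms are merely bounded and must be absorbed either by the positivity identity above or, as you also suggest, by the Poincar\'e inequality in the $x$-direction together with $L$ small; the smallness must be attributed to $L$ (or to the positivity structure), not to the coefficients themselves. Second, deducing the $L^\infty$ bound from $H^2\hookrightarrow L^\infty$ does not yield a constant independent of small $L$: the embedding constant for the strip $(0,L)\times(0,\infty)$ degenerates like $L^{-1/2}$ as $L\to0$, whereas the lemma asserts $C_0$ independent of small $L$. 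The paper instead uses $w_i|_{x=0}=0$ and the fundamental theorem of calculus as in (\ref{4.026}) to get $\Vert w_i\Vert_{L^\infty}\lesssim\sqrt{L}$ from the uniform $H^2$ bound. With these two adjustments your outline matches the lemma.
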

\begin{proof}
The condition that $u^0_e(Y)>>h^0_e(Y)$ uniform in $Y$ yields that the system is non-degenerate. The existence of $w_1$ and $w_2$ can be obtained by the classical elliptic theory and we only focus on the derivation of estimates of solutions that will be used later.

Multiplying the first equation in (\ref{4.016}) by $\frac{w_1}{u^0_e}$ and integrating by parts, we get that
\begin{equation}\nonumber
\begin{aligned}
\iint &|\nabla w_1|^2+\iint \frac{\partial_Y^2u^0_e}{u^0_e}|w_1|^2\\
=&\iint\frac{h^0_e}{u^0_e}\nabla w_2\cdot\nabla w_1+\iint\partial_Y\left(\frac{h^0_e}{u^0_e}\right)\partial_Yw_2\cdot w_1\\
&+\iint\frac{\partial_Y^2h^0_e}{u^0_e}w_1\cdot w_2+\iint\frac{(E_b-F_e)}{u^0_e}w_1=:\sum_{i=1}^4 s_i.
\end{aligned}
\end{equation}
Each of $s_i\ (i=1,2,3,4)$ can be estimated as follows:
\begin{equation}\nonumber
\begin{aligned}
 s_1=\iint\frac{h^0_e}{u^0_e}\nabla w_2\cdot\nabla w_1\lesssim |\frac{h^0_e}{u^0_e}|\Vert\nabla w_1\Vert_{L^2}\Vert\nabla w_2\Vert_{L^2},
\end{aligned}
\end{equation}

\begin{equation}\nonumber
\begin{aligned}
s_2=&\iint\partial_Y\left(\frac{h^0_e}{u^0_e}\right)\partial_Yw_2\cdot w_1\\
 \lesssim& (|\frac{h^0_e}{u^0_e}||\frac{\partial_Y u^0_e}{u^0_e}|+|\frac{\partial_Y h^0_e}{u^0_e}|)\Vert w_1\Vert_{L^2}\Vert\partial_Y w_2\Vert_{L^2}\\
  \lesssim& L(|\frac{h^0_e}{u^0_e}||\frac{\partial_Y u^0_e}{u^0_e}|+|\frac{\partial_Y h^0_e}{u^0_e}|)\Vert \partial_x w_1\Vert_{L^2}\Vert\partial_Y w_2\Vert_{L^2},
\end{aligned}
\end{equation}
and
\begin{equation}\nonumber
\begin{aligned}
s_3=&\iint\frac{\partial_Y^2h^0_e}{u^0_e} w_1\cdot w_2\\
 \lesssim& \Vert w_1\Vert_{L^2}\Vert w_2\Vert_{L^2}\\
  \lesssim& L^2\Vert \partial_x w_1\Vert_{L^2}\Vert\partial_x w_2\Vert_{L^2}.
\end{aligned}
\end{equation}

In addition,
$$s_4=\iint \frac{E_b-F_e}{u^0_e}w_1 \lesssim L\Vert \partial_x w_1\Vert_{L^2}\Vert E_b-F_e\Vert_{L^2}.$$

Applying the crucial positivity estimate (see \cite{YGuo2} for details), it follows that
$$\iint \left(|\partial_Y w_1|^2+\frac{\partial_Y^2 u^0_e}{u^0_e}|w_1|^2\right)=\iint |u^0_e|^2\left|\partial_Y\left(\frac{w_1}{u^0_e}\right)\right|^2\geq \theta_0 \iint |\partial_Y w_1|^2$$
for some constant $\theta_0$ independent of $\epsilon,L$.

On the other hand, according to the second equation in (\ref{4.016}), one has
\begin{equation}\nonumber
\begin{aligned}
\nabla  w_2=\begin{pmatrix} \frac{h^0_e}{u^0_e}\partial_x w_1\\ \frac{h^0_e}{u^0_e}\partial_Y w_1+\partial_Y\left(\frac{h^0_e}{u^0_e}\right)w_1\end{pmatrix}+\nabla \left(\frac{h^0_e}{u^0_e}B_v+\frac{b}{u^0_e}-B_g\right).
\end{aligned}
\end{equation}
Therefore, we have
\begin{equation}\nonumber
\begin{aligned}
\Vert \nabla  w_2\Vert_{L^2}\lesssim 1+ \sup_Y\left|\frac{h^0_e}{u^0_e}\right|\Vert \nabla w_1\Vert_{L^2}+L\left(\sup_Y \left|\frac{h^0_e}{u^0_e}\right|+\Vert \partial_Y h^0_e\Vert_{L^\infty}\right)\Vert \partial_x w_1\Vert_{L^2}.
\end{aligned}
\end{equation}

Combining the above estimates and using the smallness of  $\sup\limits_Y\left|\frac{h^0_e}{u^0_e}\right|$ and $|\partial_Y h^0_e|$, one gets that
\begin{equation}\label{4.019}
\Vert w_1\Vert_{H^1} \leq C,
\end{equation}
in which we have used the Poincare's inequality and Young's inequality.
Moreover, we have
\begin{equation}\label{4.018}
\Vert w_2\Vert_{H^1} \leq C.
\end{equation}

Now we establish the higher order energy estimates. To this end, we rewrite the equation as follows.
\begin{equation}\label{4.020}
-\Delta w_1+\frac{h^0_e}{u^0_e}\Delta w_2=G_e,
\end{equation}
where
$$G_e:=\frac{1}{u^0_e}\left(E_b-F_e-\partial_Y^2u^0_e\cdot w_1+\partial_Y^2h^0_e\cdot w_2\right).$$
It is direct to calculate that $\Vert G_e\Vert_{L^2}\leq C$.

Since $E_b(x,0)-F_e(x,0)=0$, therefore $G_e=0$ on $\{Y=0\}$, and it follows from the first equation in (\ref{4.016}) that
\begin{equation}\label{4.021}
-\partial_Y^2 w_1+\frac{h_e}{u_e}\partial_Y^2 w_2=0, \  \  \mathrm{on} \ \ \{Y=0 \}.
\end{equation}

Applying the operator $\partial_Y$ on (\ref{4.020}) to get that
\begin{equation}\label{4.022}
-\Delta \partial_Y w_1+\frac{h^0_e}{u^0_e}\Delta \partial_Y w_2+\partial_Y \left(\frac{h^0_e}{u^0_e}\right)\Delta  w_2=\partial_Y G_e.
\end{equation}
Multiplying (\ref{4.022}) by $\partial_Y w_1$, one has
\begin{equation}\nonumber
\begin{aligned}
\iint &|\nabla \partial_Y w_1|^2+\int_0^L \left(-\partial_Y^2w_1+\frac{h_e}{u_e}\partial_Y^2 w_2\right)\partial_Y w_1(x,0)\\
&-\int_0^\infty \partial_{xY}w_1\cdot \partial_Yw_1\big|_{x=0}^{x=L}+\int_0^\infty \frac{h^0_e}{u^0_e}\partial_{xY}w_2\cdot \partial_Y w_1\big|_{x=0}^{x=L}\\
=&\iint \frac{h^0_e}{u^0_e}\nabla \partial_Y w_1\cdot\nabla \partial_Yw_2+
\iint\partial_Y\left(\frac{h^0_e}{u^0_e}\right)\partial_{YY}w_2\cdot\partial_Y w_1\\
&-\int_0^L\partial_Y \left(\frac{h^0_e}{u^0_e}\right)\partial_Y w_1\cdot \partial_Yw_2\big|_{Y=0} +\iint \partial_Y\left(\frac{h^0_e}{u^0_e}\right)\nabla w_2\cdot \nabla \partial_Yw_1\\
&+\iint \partial_Y^2\left(\frac{h^0_e}{u^0_e}\right)\partial_Y w_1\cdot \partial_Yw_2-\iint G_e\partial_{YY} w_1.
\end{aligned}
\end{equation}
All of boundary integrals on the left-hand side of above equality disappear due to the zero boundary conditions in (\ref{4.016}) and (\ref{4.021}). Hence, it follows that
\begin{equation}\nonumber
\begin{aligned}
\iint& |\nabla \partial_Y w_1|^2
=\iint \frac{h^0_e}{u^0_e}\nabla \partial_Y w_1\cdot\nabla \partial_Yw_2+
\iint\partial_Y\left(\frac{h^0_e}{u^0_e}\right)\partial_{YY}w_2\cdot\partial_Y w_1\\
&-\int_0^L\partial_Y \left(\frac{h^0_e}{u^0_e}\right)\partial_Y w_1\cdot \partial_Yw_2\big|_{Y=0} +\iint \partial_Y\left(\frac{h^0_e}{u^0_e}\right)\nabla w_2\cdot \nabla \partial_Yw_1\\
&+\iint \partial_Y^2\left(\frac{h^0_e}{u^0_e}\right)\partial_Y w_1\cdot \partial_Yw_2-\iint G_e\partial_{YY} w_1\triangleq \sum_{i=1}^6 I_i.
\end{aligned}
\end{equation}
Each term in the right-hand side is estimated as follows. First
$$I_1=\iint \frac{h^0_e}{u^0_e}\nabla \partial_Y w_1\cdot\nabla \partial_Yw_2 \lesssim \sup_Y \left|\frac{h^0_e}{u^0_e}\right|\Vert \nabla \partial_Yw_1\Vert_{L^2}\Vert \nabla \partial_Yw_2\Vert_{L^2},$$
and
\begin{equation}\nonumber
\begin{aligned}
I_2=\iint\partial_Y\left(\frac{h^0_e}{u^0_e}\right)\partial_{YY}w_2\cdot\partial_Y w_1\lesssim &\left\Vert \partial_Y\left(\frac{h^0_e}{u^0_e}\right)\right\Vert_{L^\infty}\Vert \nabla \partial_Yw_2\Vert_{L^2}\Vert  \partial_Y w_1\Vert_{L^2}\\
\lesssim &\Vert \nabla \partial_Yw_2\Vert_{L^2}\Vert  w_1\Vert_{H^1},
\end{aligned}
\end{equation}
By trace theorem, we have
\begin{equation}\nonumber
\begin{aligned}
I_3=\bigg|\int_0^L&\partial_Y \left(\frac{h^0_e}{u^0_e}\right)\partial_Y w_1\cdot \partial_Yw_2\big|_{Y=0} \bigg|\\
\lesssim &\left\Vert \partial_Y\left(\partial_Y \left(\frac{h^0_e}{u^0_e}\right)\partial_Y w_1\right)\right\Vert_{L^2}\Vert \partial_Yw_2\Vert_{L^2}\\
&+\left\Vert \partial_Y \left(\frac{h^0_e}{u^0_e}\right)\partial_Y w_1\right\Vert_{L^2}\Vert \partial_Y^2 w_2\Vert_{L^2}\\
\lesssim &\Vert \nabla \partial_Y w_1\Vert_{L^2}\Vert \partial_Y w_2\Vert_{L^2}+\Vert \partial_Y w_1\Vert_{L^2}\Vert \nabla \partial_Y w_2\Vert_{L^2}\\
\lesssim&\Vert \nabla \partial_Y w_1\Vert_{L^2}\Vert   w_2\Vert_{H^1}+\Vert  w_1\Vert_{H^1}\Vert \nabla \partial_Y w_2\Vert_{L^2}.
\end{aligned}
\end{equation}
The Cauchy-Schwatz inequality gives that
\begin{equation}\nonumber
\begin{aligned}
I_4=\iint \partial_Y\left(\frac{h^0_e}{u^0_e}\right)\nabla w_2\cdot \nabla \partial_Yw_1\lesssim\Vert \nabla \partial_Yw_1\Vert_{L^2}\Vert  w_2\Vert_{H^1},
\end{aligned}
\end{equation}
\begin{equation}\nonumber
\begin{aligned}
I_5=\iint \partial_Y^2\left(\frac{h^0_e}{u^0_e}\right)\partial_Y w_1\cdot \partial_Yw_2 \lesssim\Vert  w_1\Vert_{H^1}\Vert  w_2\Vert_{H^1},
\end{aligned}
\end{equation}
and
\begin{equation}\nonumber
\begin{aligned}
I_6=-\iint G_e\partial_{YY} w_1\lesssim \Vert G_e\Vert_{L^2}\Vert \partial_Y^2 w_1\Vert_{L^2}.
\end{aligned}
\end{equation}
Therefore, we deduce that
\begin{equation}\label{4.023}
\begin{aligned}
\Vert \nabla \partial_Y w_1\Vert_{L^2}^2\lesssim &\sup_Y \left|\frac{h^0_e}{u^0_e}\right|\Vert \nabla \partial_Yw_1\Vert_{L^2}\Vert \nabla \partial_Yw_2\Vert_{L^2}\\
&+\Vert \nabla \partial_Y w_1\Vert_{L^2}\Vert   w_2\Vert_{H^1}+\Vert  w_1\Vert_{H^1}\Vert \nabla \partial_Y w_2\Vert_{L^2}\\
&+\Vert  w_1\Vert_{H^1}\Vert  w_2\Vert_{H^1}+\Vert G_e\Vert_{L^2}\Vert \partial_Y^2 w_1\Vert_{L^2}.
\end{aligned}
\end{equation}
On the other hand, we find that
\begin{equation}\label{4.023a}
\begin{aligned}
\nabla \partial_Y w_2=&\begin{pmatrix} \partial_Y \left(\frac{h^0_e}{u^0_e}\right)\partial_x w_1 +\frac{h^0_e}{u^0_e}\partial_{xY}w_1\\ 2\partial_Y\left(\frac{h^0_e}{u^0_e}\right)\partial_Yw_1+\partial_Y^2\left(\frac{h^0_e}{u^0_e}\right)w_1+\frac{h^0_e}{u^0_e}\partial_Y^2w_1\end{pmatrix}\\
&+\nabla\partial_Y \left(\frac{h^0_e}{u^0_e}B_v+\frac{b}{u^0_e}-B_g\right).
\end{aligned}
\end{equation}
Thus we have
\begin{equation}\label{4.024}
\begin{aligned}
\Vert \nabla \partial_Y w_2 \Vert_{L^2}\lesssim 1+\Vert w_1\Vert_{H^1}+\sup_Y\left|\frac{h^0_e}{u^0_e}\right|\Vert \nabla \partial_Y w_1\Vert_{L^2}.
\end{aligned}
\end{equation}
Combining (\ref{4.023}) and (\ref{4.024}), using the Young's inequality and the smallness of $\sup\limits_Y\left|\frac{h^0_e}{u^0_e}\right|$, we have
\begin{equation}\label{4.025}
\begin{aligned}
\Vert \nabla \partial_Y w_i \Vert_{L^2}\leq C, \ \ i=1,2.
\end{aligned}
\end{equation}
Similar arguments yield the estimates for $\partial_{xx} w_i\ (i=1,2)$ by using the equations (\ref{4.016}). Thus, the estimates of $w_i\ (i=1,2)$ in $H^2$-norms are obtained, and hence of $v^1_e,g^1_e$, uniformly in small $L>0$.

For the $L^\infty$ norms, since $w_i=0$ on boundaries, then for $i=1,2$, one gets that
\begin{equation}\label{4.026}
\begin{aligned}
|w_i(x,z)| \leq &\int_0^x|\partial_x w_i(s,z)|\mathrm{d}s\\
 \lesssim &\int_0^x\left(\int_0^z |\partial_x w_i \partial_{xY}w_i|(s,\eta)\mathrm{d}\eta\right)^{1/2}\mathrm{d}s\\
 \lesssim&\sqrt{x}\Vert \partial_x w_i\Vert_{L^2}^{\frac{1}{2}}\Vert \partial_{xY} w_i\Vert_{L^2}^{\frac{1}{2}} \lesssim \sqrt{L},
\end{aligned}
\end{equation}
where the uniform $H^2$ bounds of $w_i\ (i=1,2)$ are used in the last inequality.

For $n\geq 1$, to derive the weighted estimates, we consider the following elliptic problem for $\langle Y \rangle^n w_i\ (i=1,2)$, which satisfies
\begin{equation}\label{4.027}
\left \{
\begin{aligned}
&-\Delta \left(\langle Y\rangle^nw_1\right)+\frac{h^0_e}{u^0_e}\Delta \left(\langle Y\rangle^nw_2\right)=\frac{\langle Y\rangle^n}{u^0_e}\big(E_b-F_e-\partial_Y^2 u^0_e\cdot w_1\\
&\quad \quad +\partial_Y^2h^0_e\cdot w_2\big)-w_1\partial_Y^2\langle Y\rangle^n-2\partial_Yw_1\cdot \partial_Y \langle Y\rangle^n\\
&\quad\quad +\frac{h^0_e}{u^0_e}\big(w_2\partial_Y^2\langle Y\rangle^n+2\partial_Yw_2\cdot\partial_Y \langle Y\rangle^n\big),\\
&\langle Y\rangle^nw_2=\frac{h^0_e}{u^0_e}\langle Y\rangle^n(w_1+B_v)+\frac{b}{u^0_e}\langle Y\rangle^n-\langle Y\rangle^nB_g,\\
\end{aligned}
\right.
\end{equation}
with the homogenous boundary conditions.

By the induction, suppose $\langle Y\rangle^{n-1}w_1,\langle Y\rangle^{n-1}w_2$ are uniformly bounded in $H^2$-norm and then the right-hand side of the two equations in (\ref{4.027}) are uniformly bounded in $H^1$. Following the same arguments as above for the unweighed estimates yield that
$$\Vert \langle Y\rangle^n (w_1,w_2)\Vert_{H^2}\leq  C\quad \hbox{for\ any}\quad n\geq 1.$$

Next, we turn to derive higher regularity estimates for $w_i\ (i=1,2)$. It is direct to see that
$$\Vert \langle Y \rangle^n\partial_Y^k(E_b-F_e)\Vert_{L^q} \leq C (1+\epsilon^{-k+\frac{1}{q}}), \ \ q\geq1, \ \ n,k\geq 0.$$
Consider the following problems for $\partial_Y w_i$ and $\partial_Y^2 w_i\ (i=1,2)$ respectively:
\begin{equation}\label{4.028}
\left \{
\begin{aligned}
&-\Delta \partial_Y w_1+\frac{h^0_e}{u^0_e}\Delta \partial_Y w_2+\partial_Y \left(\frac{h^0_e}{u^0_e}\right)\Delta  w_2=\partial_Y G_e,\\
&w_2=\frac{h^0_e}{u^0_e}(w_1+B_v)+\frac{b}{u^0_e}-B_g,\\
&\partial_Y w_i|_{x=0,L}=\left(-\partial_Y^2 w_1+\frac{h_e}{u_e}\partial_Y^2 w_2\right)\bigg|_{Y=0}=0
\end{aligned}
\right.
\end{equation}
and
\begin{equation}\label{4.029}
\left \{
\begin{aligned}
&-\Delta \partial_Y^2 w_1+\frac{h^0_e}{u^0_e}\Delta \partial_Y^2 w_2\\
&\quad \quad =\partial_Y^2 G_e-2\partial_Y\left(\frac{h^0_e}{u^0_e}\right)\Delta \partial_Y w_2-\Delta w_2\cdot\partial_Y^2\left(\frac{h^0_e}{u^0_e}\right),\\
&\partial_Y w_i|_{x=0,L}=\left(-\partial_Y^2 w_1+\frac{h_e}{u_e}\partial_Y^2 w_2\right)\bigg|_{Y=0}=0.
\end{aligned}
\right.
\end{equation}
Notice that the right-hand side terms in the first equations in both (\ref{4.028}) and (\ref{4.029}) are bounded by $C\epsilon^{-1/2}$ and $C\epsilon^{-3/2}$ in $L^2$-norm, respectively. Therefore, one can deduce that
\begin{equation}\label{4.030}
\Vert \langle Y\rangle^n\partial_Y^kw_i\Vert_{H^2}\leq    C \epsilon^{-k+1/2}, \ \ k=1,2, \ i=1,2,\ \ n\geq0.
\end{equation}
It is noted that the above estimates (\ref{4.030}) can be achieved by the above standard arguments. For example, testing the first equation in (\ref{4.028}) by $\langle Y\rangle^{2n}\partial_Y w_1$ and combining the second equation in (\ref{4.028}), using the smallness of $\sup\limits_Y\left|\frac{h^0_e}{u^0_e}\right|$, one can obtain the desired estimates for $\Vert \langle Y\rangle^n\partial_Yw_i\Vert_{H^2}$ in (\ref{4.030}). For the case that $k=2$, the arguments are similar.

To obtain the desired $H^3, H^4$ norms of $w_i\ (i=1,2)$, it remains to estimate $\partial_x^3 w_i$ in $L^2$ and $H^1$, respectively. Recall that
\begin{equation}\label{4.031}
\begin{aligned}
-\partial_x^3w_1+\frac{h^0_e}{u^0_e}\partial_x^3w_2&=\partial_{xYY}w_1-\frac{h^0_e}{u^0_e}\partial_{xYY}w_2+(-\Delta \partial_xw_1+\frac{h^0_e}{u^0_e}\Delta \partial_xw_2)\\
&=\partial_{xYY}w_1-\frac{h^0_e}{u^0_e}\partial_{xYY}w_2+\partial_xG_e.
\end{aligned}
\end{equation}
Applying $\partial_x^3$ on the second equation in (\ref{4.028}) and combining (\ref{4.031}), one can obtain the weighted $L^2$ and $H^1$ norms on $\partial_x^3 w_i\ (i=1,2)$ by using the similar arguments as above, and hence the full weighted $H^3$ and $H^4$ estimates on $w_i\ (i=1,2)$.

To derive the $W^{k,q}$ estimates, after making the odd extension to $Y<0$ for (\ref{4.020}), the estimates are derived by the standard elliptic theory in $[0,L]\times \mathbb{R}$. It should be pointed out that the construction of boundary layer corrector guarantees that odd extension of $G_e\in W^{2,q}$ is possible. Other arguments are similar to those in \cite{YGuo2}, which are standard in the elliptic theory, see \cite{YGuo2} and the references therein for details.
\end{proof}

\subsection{Ideal MHD profiles}\label{IMP}
After constructing $(v^1_e,g^1_e)$, we define the ideal MHD profiles $(u^1_e,v^1_e,h^1_e,g^1_e,p^1_e)$ as follows. Let $(v^1_e,g^1_e)$ be determined in the Lemma \ref{1stMHD}, thanks to (\ref{4.09}) and the divergence-free conditions, we take
\begin{equation}\label{4.032}
\left \{
\begin{aligned}
u^1_e&:=u^1_b(Y)-\int_0^x\partial_Y v^1_e(s,Y)\mathrm{d}s,\\
h^1_e&:=h^1_b(Y)-\int_0^x\partial_Y g^1_e(s,Y)\mathrm{d}s,\\
p^1_e&:=\int_Y^\infty \left[u^0_e\partial_x v^1_e-h^0_e\partial_x g^1_e\right](x,\theta)\mathrm{d}\theta,
\end{aligned}
\right.
\end{equation}
where $(u^1_b,h^1_b)(Y):=(u^1_e,h^1_e)(0,Y).$

Let $\eta(y)$ be a smooth cut-off function satisfying
\begin{equation}\nonumber
\eta(y)=\left \{
\begin{aligned}
&1,y\in [0,1],\\
&0,y\in [2,\infty).
\end{aligned}
\right.
\end{equation}
To ensure that the boundary conditions of the magnetic field can be preserved in the construction of the approximate solutions, we introduce a boundary corrector $\rho(x,y)$ as follows
\begin{equation}\nonumber
\rho(x,y):=-\overline{\partial_Y h^1_e}(x)\cdot y\eta(y).
\end{equation}
It is direct to check that
$$\partial_y\rho(x,0)=-\overline{\partial_Y h^1_e}(x).$$
Then, we define
\begin{equation}\label{4.032i}
\left \{
\begin{aligned}
\tilde{h^1_e}&=h^1_e+\sqrt{\epsilon}\rho,\\
\tilde{g^1_e}&=g^1_e-\epsilon\int_0^y\partial_x \rho(x,s)\mathrm{d}s.
\end{aligned}
\right.
\end{equation}
It is easy to get that
$$\partial_x \tilde{h^1_e}+\partial_Y \tilde{g^1_e}=0, \ \ (\partial_Y \tilde{h^1_e},\tilde{g^1_e})(x,0)=(0,\overline{g^1_e}(x)).$$
Without causing confusion, we still use $h^1_e, g^1_e$ to replace $\tilde{h^1_e}, \tilde{g^1_e}$ for the simplicity of the presentation from now on.

By the definitions of $(u^1_e,h^1_e)$ in (\ref{4.032}) and (\ref{4.032i}) and Lemma \ref{1stMHD}, we have
\begin{equation}\label{4.033}
\Vert (u_e^1,h^1_e)\Vert_{L^\infty}+\Vert \langle Y\rangle^n(u^1_e,h^1_e)\Vert_{H^1}\leq C_0, \ \ n\geq 0.
\end{equation}

By a direct calculation, the ideal MHD profiles constructed above satisfy
\begin{equation}\label{4.034}
\begin{aligned}
u^0_e\partial_x u^1_e&+v^1_e\partial_Yu^0_e-h^0_e\partial_x h^1_e-g^1_e\partial_Y h^0_e+\partial_x p^1_e\\
&=-\int_Y^\infty E_b(x,\theta)\mathrm{d}\theta-\sqrt{\epsilon}h^0_e\partial_x\rho-\epsilon\int_0^y\partial_x \rho(x,s)\mathrm{d}s\cdot\partial_Yh^0_e,
\end{aligned}
\end{equation}
where (\ref{4.011}) is used. And a new error term is created, which will be merged into (\ref{4.001}), that is
\begin{equation}\label{4.035}
\begin{aligned}
R^{u,1}=&\sqrt{\epsilon}\left[(v^0_p+v^1_e)\partial_Y u^1_e-(g^0_p+g^1_e)\partial_Y h^1_e\right]-\nu \epsilon \partial_Y^2u^1_e\\
&+\int_Y^\infty E  _b(x,\theta)\mathrm{d}\theta-\sqrt{\epsilon}h^0_e\partial_x\rho-\epsilon\int_0^y\partial_x \rho(x,s)\mathrm{d}s\cdot\partial_Yh^0_e.
\end{aligned}
\end{equation}

Next, we estimate each error term in (\ref{4.035}). Keep in mind that we work with the coordinates $(x,y)$, whereas the ideal MHD flows are evaluated at $(x,Y)=(x,\sqrt{\epsilon}y)$. Thanks to Lemma \ref{1stMHD}, we have
\begin{equation}\nonumber
\begin{aligned}
\sqrt{\epsilon}&\|\left[(v^0_p+v^1_e)\partial_Y u^1_e-(g^0_p+g^1_e)\partial_Y h^1_e\right]\|_{L^2} \\
\lesssim &\sqrt{\epsilon}\bigg(\Vert v^0_p+v^1_e\Vert_{L^\infty}\Vert \partial_Y u^1_e(x,\sqrt{\epsilon}y)\Vert_{L^2}\\
&+\Vert g^0_p+g^1_e\Vert_{L^\infty}\Vert \partial_Y h^1_e(x,\sqrt{\epsilon}y)\Vert_{L^2}\bigg)\\
\lesssim&\epsilon^{\frac{1}{4}}\left(\Vert \partial_Y u^1_e(x,Y)\Vert_{L^2}+\Vert \partial_Y h^1_e(x,y)\Vert_{L^2}\right).
\end{aligned}
\end{equation}
Similarly,
\begin{equation}\nonumber
\begin{aligned}
\nu \epsilon \Vert \partial_Y^2u^1_e(x,\sqrt{\epsilon}y)\Vert_{L^2} \lesssim \epsilon^{\frac{3}{4}}\Vert \partial_Y^2u^1_b(Y)\Vert_{L^2}+\epsilon^{\frac{3}{4}}\Vert \langle Y\rangle^n\partial_Y^3v^1_e\Vert_{L^2} \lesssim \epsilon^{\frac{1}{4}},
\end{aligned}
\end{equation}
\begin{equation}\nonumber
\begin{aligned}
\left\Vert \int_{\sqrt{\epsilon}y}^\infty E_b(x,\theta)\mathrm{d}\theta\right\Vert_{L^2}\lesssim\epsilon^{-\frac{1}{4}}\Vert \langle Y\rangle^nE_b\Vert_{L^2}\lesssim\epsilon^{\frac{1}{4}},
\end{aligned}
\end{equation}
\begin{equation}\nonumber
\begin{aligned}
\sqrt{\epsilon}\Vert h^0_e\partial_x\rho \Vert_{L^2} \lesssim &\sqrt{\epsilon}\Vert h^0_e\Vert_{L^\infty}\Vert \overline{\partial_Y^2g^1_e}(x)\Vert_{L^2(0,L)}\Vert y\eta(y)\Vert_{L^2(0,\infty)}\\
\lesssim & \sqrt{\epsilon}\Vert\partial_Y^3 g^1_e\Vert_{L^2}^{\frac{1}{2}}\Vert\partial_Y^2 g^1_e\Vert_{L^2}^{\frac{1}{2}} \lesssim\epsilon^{\frac{1}{4}},
\end{aligned}
\end{equation}
and also,
\begin{equation}\nonumber
\begin{aligned}
\epsilon\left\Vert \int_0^y\partial_x \rho(x,s)\mathrm{d}s\cdot\partial_Yh^0_e \right\Vert_{L^2}\lesssim \epsilon^{\frac{1}{4}},
\end{aligned}
\end{equation}
in which, the trace theorem, (\ref{4.017a}) and Lemma \ref{1stMHD} have been used in the above estimates.

Therefore, we conclude that
\begin{equation}\label{4.036}
\begin{aligned}
\Vert R^{u,1}\Vert_{L^2}\leq C\epsilon^{\frac{1}{4}}.
\end{aligned}
\end{equation}

Similar arguments yield that
\begin{equation}\label{4.037}
\begin{aligned}
\Vert (R^{h,1},R^{v,0},R^{g,0})\Vert_{L^2}\leq C\epsilon^{\frac{1}{4}}.
\end{aligned}
\end{equation}

Finally, $E_1,E_3$ and $E_4$ defined in (\ref{2.6a}) are estimated as follows:
\begin{equation}\label{4.038}
\begin{aligned}
\Vert (E_1,E_3) \Vert_{L^2} \lesssim\epsilon^{3/4},\ \ \
\Vert E_4 \Vert_{L^2} \lesssim\epsilon^{1/4},
\end{aligned}
\end{equation}
which is resulted from the estimates obtained before and a similar argument as that in \cite{YGuo2}.

\subsection{First-order MHD boundary layer correctors}\label{1stmhdbdprofile}

In this subsection, we are devoted to constructing the MHD boundary layer correctors $(u^1_p,v^1_p,h^1_p,g^1_p,p^1_p)$ by solving (\ref{4.01}).
For convenience, we define
$$u^0:=u^0_e(\sqrt{\epsilon}y)+u^0_p(x,y), \qquad \qquad h^0:=h^0_e(\sqrt{\epsilon}y)+h^0_p(x,y).$$
Without causing confusion, the superscript 1 is omitted here for simplification of notation,
then $(u_p,v_p,h_p,g_p,p^1_p)$ enjoy that
\begin{equation}\label{4.039}
\left \{
\begin{aligned}
u^0 \partial_x &u_p+u_p \partial_x u^0+v_p\partial_y u^0+(v^0_p+v^1_e)\partial_y u_p-\nu \partial_y^2 u_p\\
&-h^0\partial_x h_p-h_p\partial_x h^0- g_p\partial_y h^0-(g^0_p+g^1_e)\partial_y h_p\\
=&-\partial_Y u^0_e[y\partial_x u^0_p+v^0_p]-y\partial_Y v^1_e\partial_y u^0_p-u^1_e\partial_x u^0_p-u^0_p\partial_x u^1_e-\partial_x p^1_p\\
&+\partial_Yh^0_e[y\partial_x h^0_p+g^0_p]+y\partial_Y g^1_e \partial_y h^0_p+h^1_e\partial_x h^0_p+h^0_p\partial_x h^1_e=:F^1_p,\\
u^0 \partial_x &h_p+u_p \partial_x h^0+v_p\partial_y h^0+(v^0_p+v^1_e)\partial_y h_p-\kappa \partial_y^2 h_p\\
&-h^0\partial_x u_p-h_p\partial_x u^0- g_p\partial_y u^0-(g^0_p+g^1_e)\partial_y u_p\\
=&-\partial_Y h^0_ev^0_p-y\partial_x h^0_p\partial_Yu^0_e-y\partial_Y v^1_e\partial_y h^0_p-u^0_p\partial_x h^1_e-u^1_e\partial_x h^0_p\\
&+\partial_Y u^0_e g^0_p+y\partial_Y h^0_e\partial_x u^0_p+y\partial_Y g^1_e\partial_y u^0_p+h^1_e\partial_x u^0_p+h^0_p\partial_x u^1_e=:F^2_p,
\end{aligned}
\right.
\end{equation}
Notice that the source term $F^1_p$ includes the unknown pressure $p^1_p$. Since
$$p^1_p=p^1_p(x),$$
then evaluating the first equation in (\ref{4.039}) at $y=\infty$ yields that $\partial_x p^1_p=0$. The system of equations (\ref{4.039}) should be solved  with the divergence free conditions:
\begin{align}
\label{DF}\partial_x u_p+\partial_y v_p=0,\qquad \qquad \partial_x h_p+\partial_y g_p=0,
\end{align}
and the following boundary conditions.
\begin{equation}\label{4.040}
\left \{
\begin{aligned}
&(u_p,h_p)(0,y)=(\overline{u}_1,\overline{h}_1)(y),\\
&(u_p,\partial_y h_p)(x,0)=-(u^1_e,\partial_Y h^0_e)(x,0),\\
&(v_p,g_p)(x,0)=(0,0),\\
&(u_p,h_p) \to (0,0) \ \ \ \mathrm{as} \ \ y \to \infty.
\end{aligned}
\right.
\end{equation}

To determine $(u_p,v_p,h_p,g_p)$, we will study the following system of equations, instead of (\ref{4.039}):
\begin{equation}\label{4.041}
\left \{
\begin{aligned}
(u_e&+u_p^0)\partial_x u_p+u_p\partial_x u^0_p+v_p\partial_y u^0_p+(v^0_p+\overline{v^1_e})\partial_y u_p-\nu \partial_y^2 u_p\\
&-(h_e+h^0_p)\partial_x h_p-h_p\partial_x h^0_p-g_p\partial_y h^0_p-(g^0_p+\overline{g^1_e})\partial_y h_p\\
=&-\overline{\partial_Y u_e^0}[y\partial_x u^0_p+v^0_p]-y\overline{\partial_Yv^1_e}\partial_y u^0_p-\overline{u_e^1}\partial_x u^0_p-u^0_p\overline{\partial_x u^1_e}\\
&+\overline{\partial_Y h_e^0}[y\partial_x h^0_p+g^0_p]+y\overline{\partial_Yg^1_e}\partial_y h^0_p+\overline{h_e^1}\partial_x h^0_p+h^0_p\overline{\partial_x h^1_e}=:\tilde{F}^1_p,\\
(u_e&+u^0_p)\partial_x h_p+u_p\partial_x h^0_p+v_p\partial_y h^0_p+(v^0_p+\overline{v^1_e})\partial_y h_p-\kappa \partial_y^2 h_p\\
&-(h_e+h^0_p)\partial_x u_p-h_p\partial_x u^0_p-g_p\partial_y u^0_p-(g^0_p+\overline{g^1_e})\partial_y u_p\\
=&-\overline{\partial_Y h^0_e}v^0_p-y\overline{\partial_Yu^0_e}\partial_x h^0_p-y\overline{\partial_Y v^1_e}\partial_y h^0_p-\overline{\partial_x h^1_e}u^0_p-\overline{u^1_e}\partial_xh^0_p\\
&+\overline{\partial_Yu^0_e}g^0_p+y\overline{\partial_Yh^0_e}\partial_x u^0_p+y\overline{\partial_Yg^1_e}\partial_yu^0_p+\overline{h^1_e}\partial_x u^0_p+\overline{\partial_x u^1_e}h^0_p=:\tilde{F}^2_p.
\end{aligned}
\right.
\end{equation}
Consequently, there will produce some new additional error terms in the order of $O(\sqrt{\epsilon})$, which can be summarized as follows:
\begin{equation}\label{4.042}
\left \{
\begin{aligned}
Er_1:=&\sqrt{\epsilon}\bigg\{(u^0_e-u_e)\partial_x u_p+v_p\partial_y u^0_e+(v^1_e-\overline{v^1_e})\partial_y u_p-(h^0_e-h_e)\partial_x h_p\\
&-g_p\partial_y h^0_e-(g^1_e-\overline{g^1_e})\partial_y h_p+(\partial_Y u^0_e -\overline{\partial_Yu^0_e})(y\partial_x u^0_p+v^0_p)\\
&+y(\partial_Yv^1_e-\overline{\partial_Yv^1_e})\partial_yu^0_p+(u^1_e-\overline{u^1_e})\partial_xu^0_p+(\partial_x u^1_e-\overline{\partial_x u^1_e})u^0_p\\
&-(\partial_Yh^0_e-\overline{\partial_Yh^0_e})(y\partial_x h^0_p+g^0_p)-y(\partial_Yg^1_e-\overline{\partial_Y g^1_e})\partial_yh^0_p\\
&-(h^1_e-\overline{h^1_e})\partial_x u^0_p-(\partial_xh^1_e-\overline{\partial_x h^1_e})h^0_p\bigg\},\\
Er_2:=&\sqrt{\epsilon}\bigg\{(u^0_e-u_e)\partial_x h_p+v_p\partial_yh^0_e+(v^1_e-\overline{v^1_e})\partial_y h_p\\
&-(h^0_e-h_e)\partial_x u_p-g_p\partial_y u^0_e-(g^1_e-\overline{g^1_e})\partial_yu_p\\
&+(\partial_Yh^0_e-\overline{\partial_Yh^0_e})v^0_p+y\partial_x h^0_p(\partial_Yu^0_e-\overline{\partial_Y u^0_e})\\
&+y\partial_y h^0_p(\partial_Y v^1_e-\overline{\partial_Yv^1_e})+(\partial_x h^1_e-\overline{\partial_xh^1_e})u^0_p+(u^1_e-\overline{u^1_e})\partial_x h^0_p\\
&-(\partial_Yu^0_e-\overline{\partial_Y u^0_e})g^0_p-y(\partial_Yh^0_e-\overline{\partial_Yh^0_e})\partial_x u^0_p\\
&-y(\partial_Yg^1_e-\overline{\partial_Yg^1_e})\partial_yu^0_p-(h^1_e-\overline{h^1_e})\partial_x u^0_p-(\partial_x u^1_e-\overline{\partial_xu^1_e})h^0_p\bigg\}.
\end{aligned}
\right.
\end{equation}

We will establish the well-posedness of the solution $(u_p,v_p,h_p,g_p)$ to the problem (\ref{4.041}) and (\ref{DF})-(\ref{4.040}). To this end, the similar ideas used to achieve the well-posedness of $(u^0_p,v^0_p,h^0_p,g^0_p)$ in Subsection \ref{0th} are needed again.

More precisely, from the divergence free conditions, we rewrite the second equation in (\ref{4.041}) as
\begin{equation}\label{4.043}
\begin{aligned}
&\partial_y \big[-(u_e+u^0_p)g_p-(g^0_p+\overline{g^1_e})u_p+(v^0_p+\overline{v^1_e})h_p+(h_e+h^0_p)v_p\big]-\kappa \partial_y^2 h_p\\
=&\partial_y\big[-y\overline{\partial_Yh^0_e}v^0_p+y\overline{\partial_Yu^0_e}g^0_p-y\overline{\partial_Yv^1_e}h^0_p
+y\overline{\partial_Yg^1_e}u^0_p+\overline{u^1_e}g^0_p-\overline{h^1_e}v^0_p\big].
\end{aligned}
\end{equation}
Introduce the stream function $\tilde{\psi}$, such that
$$\tilde{\psi}=\int_0^y h_p(x,s)\mathrm{d}s,$$
then from $\partial_x h_p+\partial_y g_p=0$ and $g_p(x,0)=0$, we have
$$\partial_x \tilde{\psi}=-g_p.$$
Integrating the equation (\ref{4.043}) over $[0,y]$, one has
\begin{equation}\label{4.044}
\begin{aligned}
&(u_e+u^0_p)\partial_x \tilde{\psi}+(v^0_p+\overline{v^1_e})\partial_y\tilde{\psi}-(g^0_p+\overline{g^1_e})u_p+(h_e+h^0_p)v_p\big]-\kappa \partial_y^2 \tilde{\psi}\\
=&-y\overline{\partial_Yh^0_e}v^0_p+y\overline{\partial_Yu^0_e}g^0_p-y\overline{\partial_Yv^1_e}h^0_p
+y\overline{\partial_Yg^1_e}u^0_p+\overline{u^1_e}g^0_p-\overline{h^1_e}v^0_p\\
&+\overline{u^1_eg^1_e-h^1_ev^1_e+\kappa\partial_Yh^0_e},
\end{aligned}
\end{equation}
where the following boundary conditions are used.
$$(v^0_p,g^0_p,v_p,g_p,\partial_yh_p)(x,0)=(-\overline{v^1_e},-\overline{g^1_e},0,0,-\overline{\partial_Yh^0_e}).$$

Recall from Subsection \ref{0th} that for small $L>0$, we have
$$u_e+u^0_p-(h_e+h^0_p)\geq c_0>0,$$
then the system (\ref{4.041}) is non-degenerate.

For simplicity, here we only give the outline about the application of the energy estimates method used in Subsection \ref{0th}, also see \cite{CLiu2,CLiu3}.

First, the weighted $L^2$-estimates of solution to (\ref{4.041})
$$D^\alpha (u_p,h_p), \ \ D^\alpha=\partial_x^\beta\partial_y^k, \ \alpha=(\beta,k) \in \mathbb{N}^2, \ |\alpha|\leq m, \ \beta \leq m-1 $$
can be done by the standard energy methods.

Next, to derive the weighted $L^2$-estimates of highest order tangential derivatives
$$\partial_x^\alpha (u_p,h_p), \ \ |\alpha|=m, $$
from Proposition \ref{wellposedness}, it holds, for small $L>0$, that
$$h_e+h^0_p\geq \frac{\vartheta_0}{2}>0.$$
Introduce the new unknowns as follows.
$$u^\alpha_p:=\partial_x^\alpha u_p-\frac{\partial_y u^0_p}{h_e+h^0_p}\partial_x^\alpha \tilde{\psi},\ \ h^\alpha_p:=\partial_x^\alpha h_p-\frac{\partial_y h^0_p}{h_e+h^0_p}\partial_x^\alpha \tilde{\psi},$$
combining (\ref{4.041}) and  (\ref{4.044}), one can derive the equations of $u^\alpha_p,h^\alpha_p$, in which the terms involving $\partial_x^\alpha(v_p,g_p)$ vanish. Therefore, it is possible to obtain the weighted $L^2$-estimates of $u^\alpha_p,h^\alpha_p$.

Finally, similar as that in the Appendix \ref{ap1}, one need to prove the equivalence of $L^2$-norms between $\partial_x^\alpha (u_p,h_p)$ and $u^\alpha_p,h^\alpha_p$ for $|\alpha|=m$. With this, we will obtain the desired estimates of $\partial_x^\alpha (u_p,h_p)$ and close the whole energy estimates. Therefore, the well-posedness and weighted estimates of the solutions $(u_p,v_p,h_p,g_p)$ can be concluded as follows.
\begin{proposition}\label{1thBoundary}
Let $(u^0_p,v^0_p,h^0_p,g^0_p)$ and $(u^1_e,v^1_e,h^1_e,g^1_e)$ be the solutions constructed in Proposition \ref{wellposedness} and Subsection \ref{1stmhdbdprofile}, respectively. Then there exists $0<L_2\leq L_1$, such that the problem (\ref{4.041})and (\ref{DF})-(\ref{4.040}) admits a local-in-$x$ smooth solution $(u_p,v_p,h_p,g_p)$ in $[0,L_2]\times [0,\infty)$ with
\begin{equation}\label{4.045}
\begin{aligned}
\Vert &(u_p,v_p,h_p,g_p)\Vert_{L^\infty}+\sup\limits_{0\leq x \leq L_2}\Vert\langle y\rangle^l\partial_{yy} (v_p,g_p)\Vert_{L^2(0,\infty)}\\
&+\Vert \langle y\rangle^l\partial_{xy}(v_p,g_p)\Vert_{L^2}\leq C(L,\zeta)\epsilon^{-\zeta},\\
\sup\limits_{0\leq x \leq L_2}&\Vert \langle y\rangle^l\partial_{xyy}(v_p,g_p)\Vert_{L^2(0,\infty)}+\Vert \langle y\rangle^l\partial_{xxy}(v_p,g_p)\Vert_{L^2}\lesssim C(L)\epsilon^{-1},
\end{aligned}
\end{equation}
here $\zeta=1-\frac{1}{q}>0$ is arbitrary small constant provided that $q$ is close to 1 enough.
\end{proposition}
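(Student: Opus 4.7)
The overall approach is to treat the linearized MHD boundary layer system (\ref{4.041}) as a forward parabolic system in the $x$-direction, for which the weighted energy method developed in Proposition \ref{wellposedness} and Appendix \ref{ap1} can be adapted. The non-degeneracy $u_e+u^0_p-(h_e+h^0_p)\geq c_0>0$ on $[0,L_2]\times[0,\infty)$, inherited from Corollary \ref{coro1} for $L_2\leq L_1$ chosen sufficiently small, makes the linear system parabolic-type in $x$. The main structural obstacle, exactly as in the zeroth-order layer problem, is the apparent loss of one $x$-derivative in the convective terms $v_p\partial_y u^0_p$ and $g_p\partial_y h^0_p$ at the top order, because $v_p=-\int_0^y\partial_x u_p\,ds$ and $g_p=-\int_0^y\partial_x h_p\,ds$ carry an additional $x$-derivative that cannot be absorbed by the diffusive terms $\nu\partial_y^2 u_p$ and $\kappa\partial_y^2 h_p$.

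For derivatives $D^\alpha(u_p,h_p)$ with $|\alpha|\leq m$ and $\beta\leq m-1$ (where $\alpha=(\beta,k)$), I would test the first equation in (\ref{4.041}) against $\langle y\rangle^{2(l+k)}D^\alpha u_p$ and the second against $\langle y\rangle^{2(l+k)}D^\alpha h_p$, integrate by parts, and control the at-most-$m$ tangential derivatives in the sources $\tilde F^1_p, \tilde F^2_p$ using the uniform bounds of $(u^0_p,v^0_p,h^0_p,g^0_p)$ from Corollary \ref{coro1} and of $(u^1_e,v^1_e,h^1_e,g^1_e)$ from Lemma \ref{1stMHD}. Because $\beta\leq m-1$, the loss-of-derivative phenomenon is not triggered: at least one $\partial_y$ falls on $v_p$ or $g_p$, allowing an integration by parts onto factors with strictly fewer $x$-derivatives.

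The decisive step is the $L^2_l$-estimate of the top tangential derivatives $\partial_x^m(u_p,h_p)$. Following the idea of Subsection \ref{0th} and \cite{CLiu2}, I introduce the magnetic stream function $\tilde\psi$ with $\partial_y\tilde\psi=h_p$ and $\partial_x\tilde\psi=-g_p$, derive its parabolic equation (\ref{4.044}) by integrating the $h_p$-equation in $y$, and then define the cancellation functions
\[
u^\alpha_p := \partial_x^m u_p - \frac{\partial_y u^0_p}{h_e+h^0_p}\,\partial_x^m\tilde\psi,\qquad h^\alpha_p := \partial_x^m h_p - \frac{\partial_y h^0_p}{h_e+h^0_p}\,\partial_x^m\tilde\psi,
\]
which are well defined thanks to the lower bound $h_e+h^0_p\geq\vartheta_0/2$ from Proposition \ref{wellposedness}. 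A direct computation using (\ref{4.041}) and (\ref{4.044}) shows that every occurrence of $\partial_x^m v_p$ and $\partial_x^m g_p$ cancels in the evolution equations for $(u^\alpha_p,h^\alpha_p)$, producing a closed parabolic system with a source controllable by lower-order quantities and by the inner corrector bounds; standard weighted $L^2$-energy estimates then yield uniform bounds for $(u^\alpha_p,h^\alpha_p)$. This cancellation mechanism is, in my view, the main obstacle, and its success hinges critically on the positive lower bound of $h_e+h^0_p$ provided by the smallness assumption on $L$ and on the initial data.

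Finally, to transfer the estimates back to $\partial_x^m(u_p,h_p)$, I would establish the equivalence
\[
\Vert\partial_x^m(u_p,h_p)\Vert_{L^2_l}\lesssim \Vert(u^\alpha_p,h^\alpha_p)\Vert_{L^2_l}+\Vert\partial_x^m\tilde\psi\Vert_{L^2_l},
\]
controlling $\partial_x^m\tilde\psi$ by a Hardy/Poincar\'e-type inequality exploiting the decay of $\partial_y(u^0_p,h^0_p)$ from Corollary \ref{coro1}; closing the energy estimate then requires taking $L_2$ small. The $L^\infty$ bounds and the derivative estimates for $(v_p,g_p)$ in (\ref{4.045}) follow by Sobolev embedding, the divergence-free relations $v_p=-\int_0^y\partial_x u_p$ and $g_p=-\int_0^y\partial_x h_p$, and the equations themselves read as expressions for $\partial_y^2 u_p,\partial_y^2 h_p$. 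The factors $\epsilon^{-\zeta}$ and $\epsilon^{-1}$ in (\ref{4.045}) arise because the right-hand sides $\tilde F^1_p,\tilde F^2_p$ and the additional errors $Er_1,Er_2$ in (\ref{4.042}) ultimately depend on $W^{k,q}$ and $H^k$ norms of the first-order ideal corrector $(v^1_e,g^1_e)$, which by Lemma \ref{1stMHD} are of order $\epsilon^{-1+1/q}$ and $\epsilon^{-1/2}$ respectively, due to the boundary-layer-scale corrector $E_b$.
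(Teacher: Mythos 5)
Your proposal follows essentially the same route as the paper: lower-order weighted estimates by standard energy methods, the magnetic stream function $\tilde\psi$ and the cancellation unknowns $u^\alpha_p,h^\alpha_p$ built from the lower bound $h_e+h^0_p\geq\vartheta_0/2$ to kill the $\partial_x^m(v_p,g_p)$ terms, the norm-equivalence step to close the estimate for small $L_2$, and the $\epsilon^{-\zeta}$, $\epsilon^{-1}$ factors traced back to the $W^{k,q}$ and $H^k$ bounds on $(v^1_e,g^1_e)$ from Lemma \ref{1stMHD}. This matches the paper's (outlined) argument, including the attribution of the loss-of-$x$-derivative difficulty and its resolution.
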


Moreover, by performing the similar arguments as those in the Subsection \ref{IMP}, we derived the estimates for the error terms $Er_i(i=1,2)$ as follows:
\begin{equation}\label{3.046}
\begin{aligned}
\Vert (Er_1,Er_2)\Vert_{L^2} \leq C(L,\kappa)\epsilon^{3/4-\zeta}.
\end{aligned}
\end{equation}

\subsection{Cut-off boundary layers}\label{cutofflayer}
Next, we will introduce the modified boundary layer functions which will be used in the subsequent analysis. Let $(u_p,v_p,h_p,g_p)$ be constructed as in the Subsection \ref{1stmhdbdprofile}.
Define a cut-off function $\chi(\cdot)$ with its support being contained in $[0,1]$ and introduce
\begin{equation}\label{4.047}
\left \{
\begin{aligned}
(u^1_p,h^1_p)&:=\chi(\sqrt{\epsilon}y)(u_p,h_p)+\sqrt{\epsilon}\chi'(\sqrt{\epsilon}y)\int_0^y (u_p,h_p)(x,s)\mathrm{d}s,\\
(v^1_p,g^1_p)&:=\chi(\sqrt{\epsilon}y)(v_p,g_p).
\end{aligned}
\right.
\end{equation}
It is direct to check that
$$\partial_xu^1_p+\partial_yv^1_p=\partial_x h^1_p+\partial_y g^1_p=0.$$
Based on the estimates for $u_p,h_p$ in Proposition \ref{1thBoundary}, we have
$$\left|\sqrt{\epsilon}\chi'(\sqrt{\epsilon}y)\int_0^y u_p(x,s)\mathrm{d}s\right|\leq \sqrt{\epsilon}y|\chi'|\Vert u_p\Vert_{L^\infty} \leq C(L,\zeta)\epsilon^{-\zeta}$$
and
$$\left|\sqrt{\epsilon}\chi'(\sqrt{\epsilon}y)\int_0^y h_p(x,s)\mathrm{d}s\right|\leq \sqrt{\epsilon}y|\chi'|\Vert h_p\Vert_{L^\infty} \leq C(L,\zeta)\epsilon^{-\zeta}.$$
Therefore,
\begin{equation}\label{4.048}
\begin{aligned}
\Vert (u^1_p,v^1_p,h^1_p,g^1_p)\Vert_{L^\infty}+&\sup\limits_{0\leq x \leq L_2}\Vert\langle y\rangle^l\partial_{yy} (v^1_p,g^1_p)\Vert_{L^2(0,\infty)}\\
&+\Vert \langle y\rangle^l\partial_{xy}(v^1_p,g^1_p)\Vert_{L^2}\leq C(L,\zeta)\epsilon^{-\zeta},\\
\sup\limits_{0\leq x \leq L_2}\Vert \langle y\rangle^l\partial_{xyy}(v^1_p,g^1_p)&\Vert_{L^2(0,\infty)}+\Vert \langle y\rangle^l\partial_{xxy}(v^1_p,g^1_p)\Vert_{L^2}\lesssim C(L)\epsilon^{-1}.
\end{aligned}
\end{equation}
Thanks to the compact support of the cut-off function $\chi(\cdot)$, it holds that
\begin{equation}\label{4.049a}
\begin{aligned}
\Vert \partial_x v^1_p\Vert_{L^2}^2 \leq &\iint_{\{\sqrt{\epsilon}y\leq 1\}} | \partial_x v^1_p|^2\mathrm{d}x\mathrm{d}y \\
&\lesssim\epsilon^{-1/2}\iint \langle y\rangle^l |\partial_{xy}v^1_p|^2\mathrm{d}x\mathrm{d}y \leq C(L,\zeta)\epsilon^{-1/2-2\zeta}
\end{aligned}
\end{equation}
due to the weighted estimates (\ref{4.048}). Similarly,
\begin{equation}\label{4.049}
\begin{aligned}
\Vert \partial_x (v^1_p,g^1_p)\Vert_{L^2}^2 \leq& C(L,\zeta)\epsilon^{-1/2-2\zeta},\\
\Vert \partial_y (v^1_p,g^1_p)\Vert_{L^2}^2 \leq &C(L,\zeta)\epsilon^{-1/2-2\zeta},\\
\Vert \partial_{xx} (v^1_p,g^1_p)\Vert_{L^2}^2 \leq& C(L)\epsilon^{-5/2}.
\end{aligned}
\end{equation}

Putting $(u^1_p,v^1_p,h^1_p,g^1_p)$ into (\ref{4.039}), or equivalently into (\ref{4.01}), it will produce other new error terms due to the cut-off function $\chi(\cdot)$. Denoted by $R^{u,1}_p$ and $R^{h,1}_p$
\begin{equation}\label{4.050}
\left \{
\begin{aligned}
R^{u,1}_p:=&\bigg(u^0\partial_x +\partial_xu^0+(v^0_p+v^1_e)\partial_y-\nu\partial_y^2\bigg)\left(\sqrt{\epsilon}\chi'(\sqrt{\epsilon }y)\int_0^y u_p\mathrm{d}s\right)\\
&-2\nu\sqrt{\epsilon}\chi'(\sqrt{\epsilon}y)\partial_y u_p+u_p[(v^0_p+v^1_e)\sqrt{\epsilon}\chi'(\sqrt{\epsilon}y)-\nu\epsilon\chi''\sqrt{\epsilon}y)]\\
&+(1-\chi(\sqrt{\epsilon}y))\big(u^0_{eY}(yu^0_{px}+v^0_p)+yv^1_{eY} u^0_{py}+u^1_e u^0_{px}+u^0_pu^1_{ex}\big)\\
&-\bigg(h^0\partial_x +\partial_xh^0+(g^0_p+g^1_e)\partial_y\bigg)\left(\sqrt{\epsilon}\chi'(\sqrt{\epsilon }y)\int_0^y h_p\mathrm{d}s\right)\\
&-h_p(g^0_p+g^1_e)\sqrt{\epsilon}\chi'(\sqrt{\epsilon}y)\\
&-(1-\chi(\sqrt{\epsilon}y))\big(h^0_{eY}(yh^0_{px}+g^0_p)+yg^1_{eY} h^0_{py}+h^1_e h^0_{px}+h^0_ph^1_{ex}\big),\\
\
R^{h,1}_p:=&\bigg(u^0\partial_x+(v^0_p+v^1_e)\partial_y-\kappa\partial_y^2\bigg)\left(\sqrt{\epsilon}\chi'(\sqrt{\epsilon}y)\int_0^y h_p\mathrm{d}s\right)\\
&+\partial_xh^0\left(\sqrt{\epsilon}\chi'(\sqrt{\epsilon}y)\int_0^y u_p\mathrm{d}s\right)+h_p[(v^0_p+v^1_e)\sqrt{\epsilon}\chi'-\kappa \epsilon\chi'']\\
&-2\kappa\sqrt{\epsilon}\chi'\partial_yh_p\\
&+(1-\chi)( h^0_{eY}v^0_p+yh^0_{px}u^0_{eY}+y v^1_{eY} h^0_{py}+u^0_p h^1_{ex}+u^1_e  h^0_{px})\\
&-\bigg(h^0\partial_x+(g^0_p+g^1_e)\partial_y\bigg)\left(\sqrt{\epsilon}\chi'(\sqrt{\epsilon}y)\int_0^y u_p\mathrm{d}s\right)\\
&-\partial_xu^0\left(\sqrt{\epsilon}\chi'(\sqrt{\epsilon}y)\int_0^y h_p\mathrm{d}s\right)-u_p(g^0_p+g^1_e)\sqrt{\epsilon}\chi'\\
&-(1-\chi)(u^0_{eY} g^0_p+y  h^0_{eY}  u^0_{px}+y  g^1_{eY}  u^0_{py}+h^1_e u^0_{px}+h^0_p u^1_{ex}),
\end{aligned}
\right.
\end{equation}
These terms will contribute to $R^{u,1},R^{h,1}$, and hence $R^1_{app},R^3_{app}$. Note that the $u^0_p,h^0_p$ are rapidly decaying at $\infty$ and then $u^0_x,h^0_x$ also decay rapidly at $\infty$. Therefore the integrals of $u^0_x\int_0^y u_p\mathrm{d}s, u^0_x\int_0^y h_p\mathrm{d}s, h^0_x\int_0^y u_p\mathrm{d}s$ and $h^0_x\int_0^y h_p\mathrm{d}s$ are uniformly bounded by $\epsilon^{-\zeta}$.

Applying the estimates for the ideal MHD and boundary layer profiles, we know that the $L^2$ norms for the terms involving $u_p,h_p$ in $R^{u,1}_p,R^{h,1}_p$ are bounded by
\begin{equation}\label{4.051}
\begin{aligned}
C\epsilon^{-\zeta}\sqrt{\epsilon}\Vert \chi(\sqrt{\epsilon}y)\Vert_{L^2} \leq C\epsilon^{1/4-\zeta}.
\end{aligned}
\end{equation}
Now we estimate the rest terms of the form $(1-\chi)(\cdots)$ in $R^{u,1}_p,R^{h,1}_p$. Recall that the boundary layers $u^0_p,v^0_p,h^0_p,g^0_p$ decay rapidly as $y\to \infty$, which are of order $\epsilon^l$ when $\sqrt{\epsilon}y\geq 1$, for large enough $l\geq 0$. Therefore we obtain
\begin{equation}\label{4.052}
\begin{aligned}
\Vert (R^{u,1}_p,R^{h,1}_p)\Vert_{L^2}\leq C(L,\zeta)\epsilon^{1/4-\zeta}.
\end{aligned}
\end{equation}

For $p^2_p$, it is defined as follows:
\begin{equation}\label{4.053}
\begin{aligned}
\partial_x p^2_p(x,y)=&\int_y^\infty \partial_x\bigg[(u^0_e+u^0_p)\partial_x v^0_p+u^0_p\partial_x v^1_e+(v^0_p+v^1_e)\partial_y v^0_p\\
&-(h^0_e+h^0_p)\partial_x g^0_p-h^0_p\partial_x g^1_e-(g^0_p+g^1_e)\partial_y g^0_p-\nu \partial_y^2 v^0_p\bigg](x,\theta)\mathrm{d}\theta\\
=&\int_y^\infty\bigg[(u^0_e+u^0_p)\partial_{xx}v^0_p+u^0_p\partial_{xx}v^1_e+(v^0_p+v^1_e)\partial_{xy}v^0_p-\nu\partial_{xyy}v^0_p\\
&-(h^0_e+h^0_p)\partial_{xx}g^0_p-h^0_p\partial_{xx}g^1_e-(g^0_p+g^1_e)\partial_{xy}g^0_p\bigg](x,\theta)\mathrm{d}\theta,
\end{aligned}
\end{equation}
where we used the divergence-free conditions for $(u^0_p,v^0_p)$ and $(h^0_p,g^0_p)$ in the second equality.

Each term in the right hand side of (\ref{4.053}) can be estimated as follows:
$$\int_y^\infty (u^0_e+u^0_p)\partial_{xx}v^0_p \leq C\langle y\rangle^{-(n-1)}\Vert u^0\Vert_{L^\infty}\Vert \langle y\rangle^n\partial_x^2v^0_p\Vert_{L^2(0,\infty)},$$
$$\int_y^\infty u^0_p\partial_x^2 v^1_e \leq C\langle y\rangle^{-(n-1)}\Vert \langle y\rangle^n u^0_p\Vert_{L^2(0,\infty)}\Vert \partial_x^2v^1_e(x,\sqrt{\epsilon}y)\Vert_{L^2(0,\infty)},$$
$$\int_y^\infty (v^0_p+v^1_e)\partial_{xy} v^0_p \leq C\langle y\rangle^{-(n-1)}\Vert v^0_p+v^1_e\Vert_{L^\infty}\Vert \langle y\rangle^n\partial_{xy}v^0_p \Vert_{L^2(0,\infty)},$$
and
$$\int_y^\infty \nu\partial_{xyy} v^0_p \leq C\langle y\rangle^{-(n-1)}\Vert \langle y\rangle^n\partial_{xyy}v^0_p \Vert_{L^2(0,\infty)},$$
with $n\geq 2$. The similar arguments can be applied on the estimates for the other terms involving $h^0_p,g^0_p$ in (\ref{4.053}). Recall that the ideal MHD flows are evaluated at $(x,Y)$, we have
\begin{equation}\label{4.054}
\begin{aligned}
\Vert \partial_x p^2_p\Vert_{L^2}\lesssim \epsilon^{-1/4}.
\end{aligned}
\end{equation}
Base on the above constructions and energy estimates, the main results of this section can be concluded into the following proposition.
\begin{proposition}\label{approximatesolution}
Under the assumptions in the Theorem \ref{mainresult}, it holds that
\begin{equation}\nonumber
\begin{array}{lll}
\Vert R^1_{app}\Vert_{L^2}+\Vert R^3_{app}\Vert_{L^2}+\sqrt{\epsilon}\left(\Vert R^2_{app}\Vert_{L^2}+\Vert R^4_{app}\Vert_{L^2}\right)\leq C(L,\zeta)\epsilon^{3/4-\zeta},
\end{array}
\end{equation}
for arbitrarily small $\zeta>0$.
\end{proposition}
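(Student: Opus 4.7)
The plan is that this proposition is, at heart, a bookkeeping statement: every piece of each residual $R^i_{app}$ has already been estimated in Subsections 2.1--2.6, and it remains only to decompose each $R^i_{app}$ into those pieces, keeping careful track of the $\sqrt{\epsilon}$ powers coming from the $\sqrt{\epsilon}$-slots in the ansatz (\ref{1.9}).

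First I would decompose $R^1_{app}$ and $R^3_{app}$. After plugging (\ref{1.9}) into (\ref{2.5}) and using the cancellations enforced by the Prandtl system (\ref{2.6i}), the first-order inner MHD system (\ref{4.09}), the first-order boundary layer system (\ref{4.041}), and the cut-off redefinition (\ref{4.047}), what remains takes the form
\begin{equation}\nonumber
R^1_{app}=\sqrt{\epsilon}\,R^{u,1}+\sqrt{\epsilon}\,R^{u,1}_p+Er_1+E_1,\qquad R^3_{app}=\sqrt{\epsilon}\,R^{h,1}+\sqrt{\epsilon}\,R^{h,1}_p+Er_2+E_3.
\end{equation}
The factor $\sqrt{\epsilon}$ multiplying $R^{u,1},R^{h,1}$ and $R^{u,1}_p,R^{h,1}_p$ reflects that those quantities were defined in (\ref{4.001}) and (\ref{4.050}) as the $\sqrt{\epsilon}$-order \emph{coefficients} with the bookkeeping factor already pulled out, while $E_1,E_3$ and $Er_1,Er_2$ are direct expansion errors defined in (\ref{2.6a}) and (\ref{4.042}) which already carry their own small prefactors. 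Inserting the bounds (\ref{4.036})--(\ref{4.037}), (\ref{4.052}), (\ref{3.046}), (\ref{4.038}), I obtain
\begin{equation}\nonumber
\|R^1_{app}\|_{L^2}+\|R^3_{app}\|_{L^2}\lesssim \sqrt{\epsilon}\cdot\epsilon^{1/4}+\sqrt{\epsilon}\cdot\epsilon^{1/4-\zeta}+\epsilon^{3/4-\zeta}+\epsilon^{3/4}\lesssim C(L,\zeta)\,\epsilon^{3/4-\zeta}.
\end{equation}

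Next I would treat $R^2_{app}$ and $R^4_{app}$. The $O(1)$ normal-momentum and normal-induction contributions are removed by the choice (\ref{4.07}) of $p^2_p$ together with the first-order inner MHD equations (\ref{4.09}); what is left reduces to $R^{v,0},R^{g,0}$ from (\ref{4.08}), the $\epsilon$-order pressure contribution $\epsilon\,\partial_x p^2_p$ controlled by (\ref{4.054}), and Taylor remainders of the $E_4$-type (\ref{2.6a}). Applying (\ref{4.037}) and (\ref{4.038}) and weighting by the prescribed $\sqrt{\epsilon}$ from the statement,
\begin{equation}\nonumber
\sqrt{\epsilon}\bigl(\|R^2_{app}\|_{L^2}+\|R^4_{app}\|_{L^2}\bigr)\lesssim \sqrt{\epsilon}\cdot\epsilon^{1/4}=\epsilon^{3/4}.
\end{equation}
Combining with the tangential estimate gives the desired bound.

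I expect the main obstacle to be purely the accounting, not the analysis: one has to verify that the formal expansion of $R^i_{app}$ at each order is \emph{exactly} the sum of what was canceled by the equations solved in Subsections 2.1--2.6 plus the listed error terms. The subtle point is that the explicit $\sqrt{\epsilon}$ factors appearing \emph{inside} expressions like (\ref{4.035}) or (\ref{4.050})---which arise from the identity $\partial_y f(\sqrt{\epsilon}y)=\sqrt{\epsilon}\,\partial_Y f$ on outer profiles, or from the correction $\sqrt{\epsilon}\chi'(\sqrt{\epsilon}y)\int_0^y u_p\,ds$ in (\ref{4.047})---must be distinguished from the outer $\sqrt{\epsilon}$ bookkeeping factor in (\ref{1.9}). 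Once this accounting is settled, no new estimates are needed and the proposition follows by a direct application of the earlier bounds.
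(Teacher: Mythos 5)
Your overall strategy is the same as the paper's: the proof of Proposition \ref{approximatesolution} really is bookkeeping plus the earlier bounds. But your decomposition of the residuals is incomplete, and the omissions are exactly the terms that force the $\epsilon^{-\zeta}$ loss in the statement. For the tangential residuals, the paper's identity (\ref{4.055}) contains, besides $E_1,E_3$, $\sqrt{\epsilon}(R^{u,1}+R^{u,1}_p)$, $\sqrt{\epsilon}(R^{h,1}+R^{h,1}_p)$ and $Er_1,Er_2$, also the genuinely leftover $\epsilon$-order quadratic self-interactions of the first-order correctors, $\epsilon\big[(u^1_e+u^1_p)\partial_x+v^1_p\partial_y\big](u^1_e+u^1_p)-\epsilon\big[(h^1_e+h^1_p)\partial_x+g^1_p\partial_y\big](h^1_e+h^1_p)$ (and their analogues for $R^3_{app}$), the tangential viscosity terms $\nu\epsilon\partial_x^2\big(u^0_p+\sqrt{\epsilon}(u^1_e+u^1_p)\big)$, $\kappa\epsilon\partial_x^2\big(h^0_p+\sqrt{\epsilon}(h^1_e+h^1_p)\big)$, the terms $\nu\epsilon\partial_{YY}u^0_e$, $\kappa\epsilon\partial_{YY}h^0_e$, and the pressure contribution $\epsilon\partial_x p^2_p$ (which, note, sits in $R^1_{app}$, not in $R^2_{app}$ as in your accounting). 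None of these is canceled by any of the profile equations, and they require the separate estimates (\ref{4.057}) built on the $\epsilon^{-\zeta}$-losing bounds of Proposition \ref{1thBoundary} and (\ref{4.048})--(\ref{4.049}); they are not covered by the four pieces you list.

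The gap is more serious for the normal components. No evolution equation is ever imposed on $(v^1_p,g^1_p)$ — they are only slaved to $(u^1_p,h^1_p)$ through the divergence-free conditions — so the residuals $R^2_{app},R^4_{app}$ in (\ref{4.058}) retain all the $\sqrt{\epsilon}$-order terms in which the full approximate flow transports $v^1_p,g^1_p$, in which $(u^1_e+u^1_p,v^1_p)$ and $(h^1_e+h^1_p,g^1_p)$ transport $(v^0_p+v^1_e,g^0_p+g^1_e)$, plus the diffusion terms $\nu\sqrt{\epsilon}\partial_{yy}v^1_p$, $\kappa\sqrt{\epsilon}\partial_{yy}g^1_p$ and $\nu\epsilon\partial_x^2(v^0_p+v^1_e+\sqrt{\epsilon}v^1_p)$, $\kappa\epsilon\partial_x^2(g^0_p+g^1_e+\sqrt{\epsilon}g^1_p)$. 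Your reduction to ``$R^{v,0},R^{g,0}$, the pressure term, and $E_4$-type remainders'' drops all of them. They are estimated in (\ref{4.059})--(\ref{4.060}) using $\Vert\partial_x(v^1_p,g^1_p)\Vert_{L^2}\lesssim\epsilon^{-1/4-\zeta}$ and $\Vert(v^1_p,g^1_p)\Vert_{L^\infty}\lesssim\epsilon^{-\zeta}$, giving $\Vert R^2_{app}\Vert_{L^2}+\Vert R^4_{app}\Vert_{L^2}\leq C(L,\zeta)\epsilon^{1/4-\zeta}$, which is precisely why the proposition asserts $\epsilon^{3/4-\zeta}$ after multiplying by $\sqrt{\epsilon}$; your claimed clean bound $\epsilon^{3/4}$ is not what the construction yields. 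To repair the proof you must write out the full residual identities (\ref{4.055}) and (\ref{4.058}) and estimate every term, not only the four you isolated.
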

\begin{proof}
Collecting all error terms from the constructions of the approximate ideal MHD flows and approximate boundary layer profiles in Subsections \ref{0th}--\ref{cutofflayer}, we get that
\begin{equation}\label{4.055}
\left \{
\begin{aligned}
R^{1}_{app}:=&E_1-\nu\epsilon\partial_{YY}u^0_e+\sqrt{\epsilon}(R^{u,1}+R^{u,1}_p)+\epsilon\big[(u^1_e+u^1_p)\partial_x\\
&+v^1_p\partial_y\big](u^1_e+u^1_p)-\epsilon\big[(h^1_e+h^1_p)\partial_x +g^1_p\partial_y\big](h^1_e+h^1_p)\\
&+\epsilon\partial_xp^2_p-\nu\epsilon\partial_x^2\big(u^0_p+\sqrt{\epsilon}(u^1_e+u^1_p)\big]+Er_1,\\
R^{3}_{app}:=&E_3-\kappa\epsilon\partial_{YY}h^0_e+\sqrt{\epsilon}(R^{h,1}+R^{h,1}_p)+\epsilon\big[(u^1_e+u^1_p)\partial_x\\
&+v^1_p\partial_y\big](h^1_e+h^1_p)-\big[(h^1_e+h^1_p)\partial_x+g^1_p\partial_y](h^1_e+h^1_p)\\
&-\kappa\epsilon \partial_x^2\big[h^0_p+\sqrt{\epsilon}(h^1_e+h^1_p)]+Er_2.
\end{aligned}
\right.
\end{equation}
Based on the estimates of $E_1,E_3,R^{u,1},R^{u,1}_p,R^{h,1},R^{h,1}_p,Er_1,Er_2$ above, we have
\begin{equation}\label{4.056}
\begin{aligned}
\Vert E_1-\nu\epsilon\partial_{YY}u^0_e+\sqrt{\epsilon}(R^{u,1}+R^{u,1}_p)+\epsilon\partial_xp^2_p+Er_1\Vert_{L^2}\leq &C(L,\zeta)\epsilon^{3/4-\zeta},\\
\Vert E_3-\kappa\epsilon\partial_{YY}h^0_e+\sqrt{\epsilon}(R^{h,1}+R^{h,1}_p)+Er_2\Vert_{L^2}\leq &C(L,\zeta)\epsilon^{3/4-\zeta}.\\
\end{aligned}
\end{equation}
From the estimates of the inner ideal MHD flows and boundary layer profiles, it follows that
\begin{equation}\label{4.057}
\begin{aligned}
\epsilon\Vert (u^1_e+u^1_p)\partial_x(u^1_e+u^1_p)\Vert_{L^2}\leq &(\Vert u^1_e\Vert_{L^\infty}+\Vert u^1_p\Vert_{L^\infty})\\
&\times(\Vert\partial_xu^1_e\Vert_{L^2}+\Vert \partial_x u^1_p\Vert_{L^2})\\
\leq &C(L,\zeta)\epsilon^{3/4-\zeta},\\
\epsilon\Vert v^1_p\partial_y(u^1_e+u^1_p)\Vert_{L^2} \leq &\epsilon \Vert v^1_p\Vert_{L^\infty}(\sqrt{\epsilon}\Vert \partial_Y u^1_e\Vert_{L^2}+\Vert\partial_y u^1_p\Vert_{L^2})\\
\leq &C(L,\zeta)\epsilon^{1-\zeta},\\
\epsilon\Vert (h^1_e+h^1_p)\partial_x(h^1_e+h^1_p)\Vert_{L^2}\leq &(\Vert h^1_e\Vert_{L^\infty}+\Vert h^1_p\Vert_{L^\infty})\\
&\times (\Vert\partial_xh^1_e\Vert_{L^2}+\Vert \partial_x h^1_p\Vert_{L^2})\\
\leq &C(L,\zeta)\epsilon^{3/4-\zeta},\\
\epsilon\Vert g^1_p\partial_y(h^1_e+h^1_p)\Vert_{L^2} \leq &\epsilon \Vert g^1_p\Vert_{L^\infty}(\sqrt{\epsilon}\Vert \partial_Y h^1_e\Vert_{L^2}+\Vert\partial_y h^1_p\Vert_{L^2})\\
\leq &C(L,\zeta)\epsilon^{1-\zeta},\\
\epsilon\Vert \nu\partial_x^2\big(u^0_p+\sqrt{\epsilon}(u^1_e+u^1_p)\big]\Vert_{L^2} \leq &\epsilon \Vert \partial_x^2 u^0_p\Vert_{L^2}\\
&+\epsilon^{3/2}(\Vert \partial_x^2 u^1_e\Vert_{L^2}+\Vert \partial_x^2 u^1_p\Vert_{L^2})\\
\leq &C(L)\epsilon,
\end{aligned}
\end{equation}
here $\zeta>0$ is arbitrarily small constant. Therefore, we deduce that
$$\Vert R^1_{app}\Vert_{L^2}\leq C(L,\zeta)\epsilon^{3/4-\zeta}.$$
Similar arguments yield that
$$\Vert R^3_{app}\Vert_{L^2}\leq C(L,\zeta)\epsilon^{3/4-\zeta}.$$

We turn to estimate $R^2_{app}$ and $R^4_{app}$. Collecting all error terms together give that
\begin{equation}\label{4.058}
\left \{
\begin{aligned}
R^{2}_{app}:=&R^{v,0}+\sqrt{\epsilon}\big[(u^0_e+u^0_p+\sqrt{\epsilon}(u^1_e+u^1_p))\partial_x+(v^0_p+v^1_e+\sqrt{\epsilon}v^1_p)\partial_y\big]v^1_p\\
&-\sqrt{\epsilon}\big[(h^0_e+h^0_p+\sqrt{\epsilon}(h^1_e+h^1_p))\partial_x+(g^0_p+g^1_e+\sqrt{\epsilon}g^1_p)\partial_y\big]g^1_p\\
&+\sqrt{\epsilon}\big[(u^1_e+u^1_p)\partial_x+v^1_p\partial_y\big](v^0_p+v^1_e)-\nu\sqrt{\epsilon}\partial_{yy}v^1_p\\
&-\sqrt{\epsilon}\big[(h^1_e+h^1_p)\partial_x+g^1_p\partial_y\big](g^0_p+g^1_e)-\nu\epsilon \partial_x^2(v^0_p+v^1_e+\sqrt{\epsilon}v^1_p),\\
R^{4}_{app}:=&R^{g,0}+\sqrt{\epsilon}\big[(u^0_e+u^0_p+\sqrt{\epsilon}(u^1_e+u^1_p))\partial_x+(v^0_p+v^1_e+\sqrt{\epsilon}v^1_p)\partial_y\big]g^1_p\\
&-\sqrt{\epsilon}\big[(h^0_e+h^0_p+\sqrt{\epsilon}(h^1_e+h^1_p))\partial_x+(g^0_p+g^1_e+\sqrt{\epsilon}g^1_p)\partial_y\big]v^1_p\\
&+\sqrt{\epsilon}\big[(u^1_e+u^1_p)\partial_x+v^1_p\partial_y\big](g^0_p+g^1_e)-\kappa\sqrt{\epsilon}\partial_{yy}g^1_p\\
&-\sqrt{\epsilon}\big[(h^1_e+h^1_p)\partial_x+g^1_p\partial_y\big](v^0_p+v^1_e)\\
&-\kappa\epsilon \partial_x^2(g^0_p+g^1_e+\sqrt{\epsilon}g^1_p)+E_4.
\end{aligned}
\right.
\end{equation}
Recall the estimates of $R^{v,0}, R^{g,0},E_4$ in (\ref{4.037})-(\ref{4.038}), one has
$$\Vert ( R^{v,0},R^{g,0}, E_4)\Vert_{L^2}\lesssim \epsilon^{1/4}.$$
And other terms can be estimated as follows:
\begin{equation}\label{4.059}
\begin{aligned}
\sqrt{\epsilon}\bigg\Vert&\big[(u^0_e+u^0_p+\sqrt{\epsilon}(u^1_e+u^1_p))\partial_x+(v^0_p+v^1_e+\sqrt{\epsilon}v^1_p)\partial_y\big]v^1_p\bigg\Vert_{L^2}\\
&\lesssim\sqrt{\epsilon}\Vert (u^0_e,u^0_p,v^0_p,u^1_e,v^1_e)\Vert_{L^\infty}\Vert \nabla v^1_p\Vert_{L^2}\\
&\leq C(L,\zeta)\epsilon^{1/4-\zeta},\\
\sqrt{\epsilon}\bigg\Vert&\big[(h^0_e+h^0_p+\sqrt{\epsilon}(h^1_e+h^1_p))\partial_x+(g^0_p+g^1_e+\sqrt{\epsilon}g^1_p)\partial_y\big]g^1_p\bigg\Vert_{L^2}\\
&\lesssim\sqrt{\epsilon}\Vert (h^0_e,h^0_p,g^0_p,h^1_e,g^1_e)\Vert_{L^\infty}\Vert \nabla g^1_p\Vert_{L^2}\\
&\leq C(L,\zeta)\epsilon^{1/4-\zeta},\\
\end{aligned}
\end{equation}
and
\begin{equation}\label{4.060}
\begin{aligned}
\sqrt{\epsilon}\bigg\Vert&\big[(u^1_e+u^1_p)\partial_x+v^1_p\partial_y\big](v^0_p+v^1_e)\bigg\Vert_{L^2}\\
&\lesssim\sqrt{\epsilon}\Vert (u^1_e,u^1_p,v^1_p)\Vert_{L^\infty}(\Vert \partial_x v^0_p+\partial_x v^1_e\Vert_{L^2}+\Vert \partial_y v^0_p+\sqrt{\epsilon}\partial_Y v^1_e\Vert_{L^2})\\
&\leq C(L,\zeta)\epsilon^{1/4-\zeta},\\
\sqrt{\epsilon}\bigg\Vert&\big[(h^1_e+h^1_p)\partial_x+g^1_p\partial_y\big](g^0_p+g^1_e)\bigg\Vert_{L^2}\\
&\lesssim\sqrt{\epsilon}\Vert (h^1_e,h^1_p,g^1_p)\Vert_{L^\infty}(\Vert \partial_x g^0_p+\partial_x g^1_e\Vert_{L^2}+\Vert \partial_y g^0_p+\sqrt{\epsilon}\partial_Y g^1_e\Vert_{L^2})\\
&\leq C(L,\zeta)\epsilon^{1/4-\zeta},
\end{aligned}
\end{equation}
where we have used the facts that
$$\Vert \partial_x (v^1_p,g^1_p)\Vert_{L^2} \leq C(L,\zeta)\epsilon^{-1/4-\zeta}$$
and
$$\Vert\partial_x(v^1_e,g^1_e)(x,\sqrt{\epsilon}\cdot)\Vert_{L^2}\lesssim \epsilon^{-1/4}.$$
It is direct to calculate that
$$\Vert \nu\sqrt{\epsilon}\partial_{yy}v^1_p+\nu\epsilon \partial_x^2(v^0_p+v^1_e+\sqrt{\epsilon}v^1_p)\Vert_{L^2} \lesssim C(L)\epsilon^{1/4-\zeta},$$
where the estimates for $v^1_p,v^0_p,v^1_e$ are also used. Therefore,
$$\Vert R^2_{app}\Vert_{L^2} \leq C(L,\zeta)\epsilon^{1/4-\zeta}.$$
Similar arguments imply that
$$\Vert R^4_{app}\Vert_{L^2} \leq C(L,\zeta)\epsilon^{1/4-\zeta}.$$
The proof is completed.
\end{proof}

\section{convergence estimates: proof of the main theorem}\label{sec3}
After the construction of approximate solutions in Section \ref{sec2}, we will turn to prove the main theorem in this section.
\subsection{Linear stability estimates}
First, we focus on the energy estimates for the linearized problem. To state the linearized problem, we denote
\begin{equation}\label{5.01}
\left \{
\begin{aligned}
u_s(x,y)=&u^0_e(\sqrt{\epsilon}y)+u^0_p(x,y)+\sqrt{\epsilon}u^1_e(x,\sqrt{\epsilon}y),\\
v_s(x,y)=&v^0_p(x,y)+v^1_e(x,\sqrt{\epsilon }y),\\
h_s(x,y)=&h^0_e(\sqrt{\epsilon}y)+h^0_p(x,y)+\sqrt{\epsilon}h^1_e(x,\sqrt{\epsilon}y),\\
g_s(x,y)=&g^0_p(x,y)+g^1_e(x,\sqrt{\epsilon }y),
\end{aligned}
\right.
\end{equation}
then we linearize the scaled viscous MHD equations (\ref{1.6}) around  the approximate solution $(u_s, v_s, h_s, g_s)$ to obtain that
\begin{equation}\label{5.02}
\left \{
\begin{aligned}
u_s\partial_x u^\epsilon&+u^\epsilon\partial_x u_s+v_s\partial_y u^\epsilon+v^\epsilon\partial_y u_s+\partial_x p^\epsilon-\nu \Delta_\epsilon u^\epsilon\\
& -(h_s\partial_x h^\epsilon+h^\epsilon\partial_x h_s+g_s\partial_y h^\epsilon+g^\epsilon\partial_y h_s)=f_1,\\
u_s\partial_x v^\epsilon&+u^\epsilon\partial_x v_s+v_s\partial_y v^\epsilon+v^\epsilon\partial_y v_s+\frac{\partial_y p^\epsilon}{\epsilon}-\nu \Delta_\epsilon v^\epsilon\\
&-(h_s\partial_x g^\epsilon+h^\epsilon\partial_x g_s+g_s\partial_y g^\epsilon+g^\epsilon\partial_y g_s)=f_2,\\
u_s\partial_x h^\epsilon&+u^\epsilon\partial_x h_s+v_s\partial_y h^\epsilon+v^\epsilon\partial_y h_s-\kappa \Delta_\epsilon h^\epsilon\\
& -(h_s\partial_x u^\epsilon+h^\epsilon\partial_x u_s+g_s\partial_y u^\epsilon+g^\epsilon\partial_y u_s)=f_3,\\
u_s\partial_x g^\epsilon&+u^\epsilon\partial_x g_s+v_s\partial_y g^\epsilon+v^\epsilon\partial_y g_s-\kappa \Delta_\epsilon g^\epsilon\\
& -(h_s\partial_x v^\epsilon+h^\epsilon\partial_x v_s+g_s\partial_y v^\epsilon+g^\epsilon\partial_y v_s)=f_4,\\
\partial_x u^\epsilon+&\partial_yv^\epsilon=\partial_xh^\epsilon+\partial_yg^\epsilon=0,\\
(u^\epsilon,v^\epsilon,&\partial_y h^\epsilon,g^\epsilon)|_{y=0}=(0,0,0,0),\ (u^\epsilon,v^\epsilon, h^\epsilon,g^\epsilon)|_{x=0}=(0,0,0,0),\\
p^\epsilon-2&\nu\epsilon \partial_x u^\epsilon=0, \ \partial_y u^\epsilon+\nu\epsilon \partial_x v^\epsilon=h^\epsilon=\partial_x g^\epsilon=0 \ \ \mathrm{on} \ \ \{x=L\}.
\end{aligned}
\right.
\end{equation}
According to the arguments stated in the previous sections, we have
$$\Vert y\partial_y(u_s,h_s)\Vert_{L^\infty}<C\sigma_0$$
for some suitably small constant $\sigma_0$ and for some small $0<L<<1$.

The linear stability of (\ref{5.02}) can be stated as the following Proposition \ref{linearstability}.
\begin{proposition}\label{linearstability}
For any given $f_i \in L^2\ (i=1,2,3,4)$, there exists $L>0$ such that the linearized problem (\ref{5.02}) has a unique solution $(u^\epsilon,v^\epsilon,h^\epsilon,g^\epsilon,p^\epsilon)$ on $[0,L]\times [0,\infty)$, satisfying the following estimate
\begin{equation}\label{5.02a}
\begin{aligned}
\Vert \nabla_\epsilon u^\epsilon\Vert_{L^2}&+\Vert \nabla_\epsilon v^\epsilon\Vert_{L^2}+\Vert \nabla_\epsilon h^\epsilon\Vert_{L^2}+\Vert \nabla_\epsilon g^\epsilon\Vert_{L^2}\\
&\lesssim \Vert f_1\Vert_{L^2}+\Vert f_3\Vert_{L^2}+\sqrt{\epsilon}(\Vert  f_2\Vert_{L^2}+\Vert f_4\Vert_{L^2}).
\end{aligned}
\end{equation}
\end{proposition}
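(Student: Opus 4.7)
The plan is to establish the a priori estimate (\ref{5.02a}) first, and then deduce existence and uniqueness by a standard Galerkin approximation combined with that bound, in the spirit of \cite{YGuo2}. The a priori estimate itself adapts the positivity method of Guo--Nguyen, extended to MHD by exploiting the smallness condition $\Vert h_s/u_s\Vert_{L^\infty}\ll 1$ from Remark \ref{rk2} to control the magnetic--velocity coupling.

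First, I would introduce stream functions $\Psi,\Phi$ so that $u^\epsilon=\partial_y\Psi$, $v^\epsilon=-\partial_x\Psi$, $h^\epsilon=\partial_y\Phi$, $g^\epsilon=-\partial_x\Phi$, which automatically enforces the divergence-free constraints, and then take the curl of the momentum pair and of the magnetic pair in (\ref{5.02}) to eliminate $p^\epsilon$ and produce scalar equations for the vorticity and the magnetic current. The key multipliers are $-v^\epsilon/u_s$ for the vorticity equation and $-g^\epsilon/u_s$ for the magnetic current equation: as in the Guo--Nguyen identity, the principal diffusive part then generates the positive quadratic forms $\iint |u_s\nabla_\epsilon(v^\epsilon/u_s)|^2$ and $\iint |u_s\nabla_\epsilon(g^\epsilon/u_s)|^2$, which, together with Hardy's inequality and the profile bound $\Vert y\partial_y u_s\Vert_{L^\infty}<\sigma_0$, coerce $\Vert\nabla_\epsilon v^\epsilon\Vert_{L^2}^2+\Vert\nabla_\epsilon g^\epsilon\Vert_{L^2}^2$. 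The coupling between the velocity and magnetic equations produces cross terms essentially of the form $\iint (h_s/u_s)\,\nabla_\epsilon g^\epsilon\cdot\nabla_\epsilon v^\epsilon$, which are absorbed into the left-hand side by the smallness of $\Vert h_s/u_s\Vert_{L^\infty}$. Once $\nabla_\epsilon v^\epsilon$ and $\nabla_\epsilon g^\epsilon$ are controlled, $\nabla_\epsilon u^\epsilon$ and $\nabla_\epsilon h^\epsilon$ follow from a direct energy estimate on the first and third equations of (\ref{5.02}): test by $u^\epsilon$ and $h^\epsilon$, cancel the transport terms via the divergence-free constraints and boundary conditions, apply Poincar\'e in $x$ to lower-order terms for small $L$, and invoke the bounds already obtained for $v^\epsilon, g^\epsilon$.

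The main technical obstacle is the simultaneous handling of three issues. First, the integration-by-parts boundary contributions must either vanish or be controlled: this requires the no-penetration $v^\epsilon|_{y=0}=g^\epsilon|_{y=0}=0$, the perfect-conducting condition $\partial_y h^\epsilon|_{y=0}=0$, and the outflow conditions at $\{x=L\}$; in particular, the Bernoulli-type condition $p^\epsilon-2\nu\epsilon\partial_x u^\epsilon=0$ is precisely what kills the pressure contribution at $x=L$, while $h^\epsilon|_{x=L}=\partial_x g^\epsilon|_{x=L}=0$ kills the magnetic boundary terms. Second, the zeroth-order terms containing $\partial_y(u_s,v_s,h_s,g_s)$ must be absorbed, which relies on the weighted decay $|\langle y\rangle^{l+1}\partial_y(u_s,h_s)|\le \sigma_0$ from Corollary \ref{coro1} combined with Hardy's inequality applied to $v^\epsilon/y$ and $g^\epsilon/y$. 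Third, one must choose $L$ and $\sigma_0$ small enough so that the magnetic--velocity cross coupling, the $O(L)$ perturbations coming from $(v_s,g_s)$, and the contributions of the first-order correctors $\sqrt{\epsilon}(u^1_e,h^1_e)$ can all be absorbed on the left. The careful quantitative book-keeping that makes these three conditions simultaneously compatible, uniformly in $\epsilon$, is the heart of the proof; the forcing terms $f_i$ are then absorbed via Cauchy--Schwarz against the coercive norms, yielding exactly the scaling $\Vert f_1\Vert_{L^2}+\Vert f_3\Vert_{L^2}+\sqrt{\epsilon}(\Vert f_2\Vert_{L^2}+\Vert f_4\Vert_{L^2})$ on the right-hand side.
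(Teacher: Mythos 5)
Your proposal follows essentially the same route as the paper: the a priori bound is obtained by combining a positivity estimate with multipliers built from $v^\epsilon/u_s$ and $g^\epsilon/u_s$ (your curl/stream-function formulation is just the weak form of the paper's choice of test functions $\partial_y(v^\epsilon/u_s)$, $-\epsilon\partial_x(v^\epsilon/u_s)$, $\partial_y(g^\epsilon/u_s)$, $-\epsilon\partial_x(g^\epsilon/u_s)$ in Lemma \ref{positivity}), absorbing the magnetic--velocity coupling by the smallness of $\Vert h_s/u_s\Vert_{L^\infty}$ and $\Vert y\partial_y(u_s,h_s)\Vert_{L^\infty}$, together with a standard energy estimate for $\nabla_\epsilon(u^\epsilon,h^\epsilon)$ (Lemma \ref{standardenergy}) and a final absorption using $0<L\ll1$, existence then following from a standard approximation/fixed-point argument as in \cite{YGuo2}. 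One correction to your second step: you cannot test only the first and third equations by $u^\epsilon$ and $h^\epsilon$, since the term $\iint \partial_x p^\epsilon\, u^\epsilon$ would then be left uncontrolled; as in Lemma \ref{standardenergy} one must simultaneously test the second and fourth equations by $\epsilon v^\epsilon$ and $\epsilon g^\epsilon$, so that the pressure pairs with the divergence-free condition and reduces to a boundary term annihilated by $p^\epsilon-2\nu\epsilon\partial_x u^\epsilon=0$ on $\{x=L\}$, and this is also what produces the $\sqrt{\epsilon}\,(\Vert f_2\Vert_{L^2}+\Vert f_4\Vert_{L^2})$ scaling in (\ref{5.02a}).
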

The existence of the solution can be obtained via the standard fixed point arguments and the estimates of classical elliptic system, we refer to \cite{YGuo2} for details. Here we only focus on the derivation of uniform a priori energy estimates.
\begin{lemma}\label{standardenergy}
Let $(u^\epsilon,v^\epsilon,h^\epsilon,g^\epsilon)$ be the solutions to the problem (\ref{5.02}), and suppose that $\epsilon<<L$, then there holds that
\begin{equation}\label{5.03}
\begin{aligned}
\nu\Vert \nabla_\epsilon u^\epsilon\Vert_{L^2}^2+\kappa \Vert \nabla_\epsilon h^\epsilon\Vert_{L^2}^2+\int_{x=L}u_s( |u^\epsilon|^2+\epsilon|v^\epsilon|^2+\epsilon|g^\epsilon|^2)\\
\lesssim L(\Vert \nabla_\epsilon v^\epsilon\Vert_{L^2}^2+\Vert \nabla_\epsilon g^\epsilon\Vert_{L^2}^2)+\Vert (f_1,f_3)\Vert_{L^2}^2+\epsilon\Vert (f_2,f_4)\Vert_{L^2}^2.
\end{aligned}
\end{equation}
\end{lemma}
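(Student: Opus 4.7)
The plan is to derive a closed $L^2$-energy identity by testing the four equations in (\ref{5.02}) against the multipliers $u^\epsilon$, $\epsilon v^\epsilon$, $h^\epsilon$, and $\epsilon g^\epsilon$, respectively, and summing. The $\epsilon$-weight on the vertical equations is dictated both by the $1/\epsilon$-scaled pressure in the second equation and by the Robin-type conditions on $\{x=L\}$; it is precisely what produces the coercive boundary gain $\frac{1}{2}\int_{x=L}u_s(|u^\epsilon|^2+\epsilon|v^\epsilon|^2+\epsilon|g^\epsilon|^2)$ from the transport parts after integration by parts in $x$. Throughout I exploit the fact that the approximate profiles $(u_s,v_s)$ and $(h_s,g_s)$ built in Section \ref{sec2} are both divergence free and satisfy $v_s|_{y=0}=g_s|_{y=0}=0$, together with the remainder boundary data $u^\epsilon|_{x=0,y=0}=v^\epsilon|_{x=0,y=0}=g^\epsilon|_{x=0,y=0}=0$ and $h^\epsilon|_{x=0}=h^\epsilon|_{x=L}=0$.

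After the integrations by parts, three cancellations carry the argument. First, the interior pressure contributions from the first and second tested equations combine into $-\int p^\epsilon(\partial_x u^\epsilon+\partial_y v^\epsilon)=0$, leaving only a boundary trace at $\{x=L\}$ which is controllable via the condition $p^\epsilon=2\nu\epsilon\partial_x u^\epsilon$. Second, the top-order magnetic cross terms from Eq 1$\cdot u^\epsilon$ plus Eq 3$\cdot h^\epsilon$ sum to $-\int h_s\partial_x(u^\epsilon h^\epsilon)-\int g_s\partial_y(u^\epsilon h^\epsilon)$, which vanishes after integration by parts using $\partial_x h_s+\partial_y g_s=0$, $g_s|_{y=0}=0$, and $h^\epsilon|_{x=L}=0$; the analogous cancellation between Eq 2$\cdot\epsilon v^\epsilon$ and Eq 4$\cdot\epsilon g^\epsilon$ leaves only a residual magnetic trace $\epsilon\int_{x=L}h_s v^\epsilon g^\epsilon$. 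Third, the standard transport identity combined with divergence-freeness and $v_s|_{y=0}=g_s|_{y=0}=0$ converts the convective derivatives into the desired boundary gains at $\{x=L\}$ plus controllable lower-order contributions.

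The remaining stretching cross terms $\int v^\epsilon\partial_y u_s\,u^\epsilon$, $\int g^\epsilon\partial_y h_s\,u^\epsilon$, $\int g^\epsilon\partial_y u_s\,h^\epsilon$, and $\int v^\epsilon\partial_y h_s\,h^\epsilon$ are bounded by Hardy's inequality applied to $v^\epsilon/y$ and $g^\epsilon/y$ (legitimate since $v^\epsilon|_{y=0}=g^\epsilon|_{y=0}=0$) together with the smallness $\|y\partial_y(u_s,h_s)\|_{L^\infty}\le C\sigma_0$; each such term is then dominated by $C\sigma_0\|\nabla_\epsilon v^\epsilon\|_{L^2}\|u^\epsilon\|_{L^2}$ or its analog. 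The factor $\|u^\epsilon\|_{L^2}\le L\|\partial_y v^\epsilon\|_{L^2}$, which follows from the divergence-free identity $\partial_x u^\epsilon=-\partial_y v^\epsilon$ combined with Poincar\'e in $x$ via $u^\epsilon|_{x=0}=0$ (and analogously $\|h^\epsilon\|_{L^2}\le L\|\partial_y g^\epsilon\|_{L^2}$), supplies the extra $L$-smallness appearing on the right-hand side of (\ref{5.03}). Forcing terms are handled through $\int f_1 u^\epsilon\le \|f_1\|_{L^2}\|u^\epsilon\|_{L^2}\le \|f_1\|_{L^2}^2+L\|\nabla_\epsilon v^\epsilon\|_{L^2}^2$ and $\epsilon\int f_2 v^\epsilon\lesssim \epsilon\|f_2\|_{L^2}^2+L\|\nabla_\epsilon v^\epsilon\|_{L^2}^2$ after using $\|v^\epsilon\|_{L^2}\lesssim (L/\sqrt{\epsilon})\|\nabla_\epsilon v^\epsilon\|_{L^2}$ and $\epsilon\ll L$ to dispose of stray $\epsilon$-powers.

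The main obstacle lies in the bookkeeping on $\{x=L\}$: the pressure trace, the two diffusion trace integrals (from $-\nu\Delta_\epsilon u^\epsilon\cdot u^\epsilon$ and $-\nu\epsilon\Delta_\epsilon v^\epsilon\cdot v^\epsilon$), and the Robin condition $\partial_y u^\epsilon+\nu\epsilon\partial_x v^\epsilon=0$ must be combined so that the net boundary stress is either non-negative or dominated by a small multiple of $\int_{x=L}u_s|u^\epsilon|^2+\epsilon\int_{x=L}u_s|v^\epsilon|^2$ via one-dimensional trace/Young inequalities and the lower bound $u_s\ge \vartheta_0/2>0$. A second delicate point is the residual magnetic trace $\epsilon\int_{x=L}h_s v^\epsilon g^\epsilon$, where the hypothesis $|h_s/u_s|\ll 1$ enters crucially via $|h_s v^\epsilon g^\epsilon|\le \frac{1}{2}|h_s/u_s|\,u_s(|v^\epsilon|^2+|g^\epsilon|^2)$, which absorbs it into the boundary gain on the left. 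Once these traces are tamed, summing the four tested identities and choosing $L$ sufficiently small (relative to $\nu,\kappa,\sigma_0$) yields (\ref{5.03}), with the $\epsilon\nu\|\nabla_\epsilon v^\epsilon\|_{L^2}^2+\epsilon\kappa\|\nabla_\epsilon g^\epsilon\|_{L^2}^2$ produced on the left harmlessly absorbed into the $L\|\nabla_\epsilon v^\epsilon\|_{L^2}^2+L\|\nabla_\epsilon g^\epsilon\|_{L^2}^2$ on the right through $\epsilon\ll L$.
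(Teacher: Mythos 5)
Your multipliers, cancellations, Poincar\'e-in-$x$ and Hardy estimates, and the use of the smallness of $L$, $\Vert y\partial_y(u_s,h_s)\Vert_{L^\infty}$ (and, in your variant, of $\Vert h_s/u_s\Vert_{L^\infty}$) coincide with the paper's Appendix \ref{ap2} argument; your by-parts cancellation of the $h_s,g_s$ cross terms, with the residual trace $\epsilon\int_{x=L}h_sv^\epsilon g^\epsilon$ absorbed into the boundary gain, is a harmless variant of the paper's direct bulk estimates (\ref{5.011})--(\ref{5.025}), and your treatment of the forcing matches (\ref{5.08}).

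The one step that would fail as you describe it is the $\{x=L\}$ bookkeeping for the velocity equations. After testing with $u^\epsilon$ and $\epsilon v^\epsilon$, the leftover outflow terms involve traces of $\partial_x u^\epsilon$, $\partial_x v^\epsilon$ (equivalently $\partial_y v^\epsilon$ after using $\partial_xu^\epsilon+\partial_yv^\epsilon=0$), and these are \emph{not} controlled by anything in the energy: neither $\int_{x=L}u_s(|u^\epsilon|^2+\epsilon|v^\epsilon|^2)$ nor $\Vert\nabla_\epsilon(u^\epsilon,v^\epsilon)\Vert_{L^2}^2$ gives a trace bound for first derivatives on $\{x=L\}$, so a ``trace/Young'' absorption into a small multiple of the boundary gain cannot work, and the net term (which after substituting $p^\epsilon=2\nu\epsilon\partial_xu^\epsilon$ and the Robin condition reduces to an expression of the type $\epsilon\int_{x=L}u^\epsilon\partial_yv^\epsilon$) has no sign. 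The correct mechanism is exact cancellation: integrate by parts in stress form, writing $-\nu\Delta_\epsilon u^\epsilon+\partial_x p^\epsilon=-\partial_x\bigl(2\nu\epsilon\partial_x u^\epsilon-p^\epsilon\bigr)-\partial_y\bigl(\nu(\partial_y u^\epsilon+\epsilon\partial_x v^\epsilon)\bigr)$ and the analogous identity for the $v^\epsilon$-equation (both valid by $\partial_xu^\epsilon+\partial_yv^\epsilon=0$), so that the imposed conditions $p^\epsilon-2\nu\epsilon\partial_xu^\epsilon=0$ and $\partial_yu^\epsilon+\nu\epsilon\partial_xv^\epsilon=0$ on $\{x=L\}$ are precisely the natural (stress-free) ones and the boundary contributions drop out; this is exactly why the dissipation in the paper's identity (\ref{5.05}) appears in the symmetrized form $2\epsilon|u^\epsilon_x|^2+(u^\epsilon_y+\epsilon v^\epsilon_x)^2+2\epsilon|v^\epsilon_y|^2$, which is then bounded below as in (\ref{5.06})--(\ref{5.07}). (For the magnetic equations no such care is needed, since $h^\epsilon|_{x=L}=0$ and $\partial_xg^\epsilon|_{x=L}=0$ kill the corresponding traces directly.) With this replacement your argument closes and reproduces (\ref{5.03}).
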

The proof of this lemma will be left to Appendix \ref{ap2}.

Before giving the following positivity estimates, by the definitions of $u_s, h_s$, using the facts that $u^0_e>>h^0_e$ and $|u^0_e+u^0_p|>>|h^0_e+h^0_p|$ uniform in $y$ and the estimates for $u^0_p,u^1_e,h^0_p,h^1_e$ and the smallness of $\epsilon$, we have
$$\left\Vert \frac{h_s}{u_s}\right\Vert_{L^\infty}<<1$$
uniform in $x\in [0,L]$ for $0<L<<1$.
\begin{lemma}[Positivity estimates]\label{positivity}
If  $\left\Vert \frac{h_s}{u_s}\right\Vert_{L^\infty}<<1$ and $\Vert y \partial_y(u_s,h_s)\Vert_{L^\infty}<C\sigma_0$ uniform in $0<L<<1$ for suitably small $\sigma_0$, then there holds that
\begin{equation}\label{5.026}
\begin{aligned}
&\Vert \nabla_\epsilon v^\epsilon\Vert_{L^2}^2+ \Vert \nabla_\epsilon g^\epsilon\Vert_{L^2}^2+\int_{x=0}\frac{\epsilon^2 (\nu|v^\epsilon_x|^2+\kappa|g^\epsilon_x|^2)}{u_s}+\epsilon\int_{x=L}\frac{|v^\epsilon_y|^2}{u_s}\\
\leq &C\bigg[\Vert (f_1,f_3)\Vert_{L^2}^2+\epsilon\Vert (f_2,f_4)\Vert_{L^2}^2+\Vert \nabla_\epsilon u^\epsilon\Vert_{L^2}^2+\Vert \nabla_\epsilon h^\epsilon\Vert_{L^2}^2\\
&+\left(L+\left\Vert \frac{h_s}{u_s}\right\Vert_{L^\infty}+\Vert y\partial_y (u_s,h_s)\Vert_{L^\infty}\right)(\Vert \nabla_\epsilon v^\epsilon\Vert_{L^2}^2+\Vert \nabla_\epsilon g^\epsilon\Vert_{L^2}^2)\bigg]
\end{aligned}
\end{equation}
for some constant $C$ which is independent of $\epsilon$ and $L$.
\end{lemma}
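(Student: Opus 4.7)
The plan is to adapt the Guo--Nguyen positivity argument used for Navier--Stokes to the MHD coupling. I would test the second equation of the linearized system against $v^\epsilon/u_s$ and the fourth equation against $g^\epsilon/u_s$, integrate over $[0,L]\times[0,\infty)$, and sum. The expected main positive contributions are the weighted viscous dissipations $\nu\int |\nabla_\epsilon v^\epsilon|^2/u_s$ and $\kappa\int |\nabla_\epsilon g^\epsilon|^2/u_s$, which after the Hardy/trace step and using the fact that $u_s$ is bounded above and below, absorb the $L^2$ norms on the left of \eqref{5.026}. The convection terms $u_s\partial_x v^\epsilon\cdot v^\epsilon/u_s=\frac12\partial_x|v^\epsilon|^2$ and $u_s\partial_x g^\epsilon\cdot g^\epsilon/u_s$ integrate exactly to the boundary traces at $x=L$ (the $x=0$ traces vanish by the homogeneous initial data), giving the required $\int_{x=L}$ contributions.

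\textbf{Pressure handling.} The only pressure term, $\partial_y p^\epsilon/\epsilon$ in the $v^\epsilon$-equation, is treated by integration by parts in $y$; together with $v^\epsilon|_{y=0}=0$ and the divergence-free identity $\partial_y v^\epsilon=-\partial_x u^\epsilon$, this produces $\frac{1}{\epsilon}\int p^\epsilon\,\partial_x u^\epsilon/u_s+\frac{1}{\epsilon}\int p^\epsilon v^\epsilon\,\partial_y u_s/u_s^2$. The second of these is small thanks to $\|y\partial_y u_s\|_{L^\infty}<C\sigma_0$ combined with a Hardy inequality. For the first, I would integrate by parts in $x$, substitute $\partial_x p^\epsilon$ from the first equation of \eqref{5.02} and use the Neumann relations at $x=L$ (namely $p^\epsilon=2\nu\epsilon\partial_x u^\epsilon$ and $\partial_y u^\epsilon+\nu\epsilon\partial_x v^\epsilon=0$) to recombine with the viscous boundary traces. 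The $x=0$ homogeneous trace, coupled with $\partial_x u^\epsilon=-\partial_y v^\epsilon$, is what generates the $\epsilon^2\int_{x=0}\nu|v^\epsilon_x|^2/u_s$ term on the RHS of \eqref{5.026}; the analogous treatment of the fourth equation together with the boundary condition $\partial_x g^\epsilon|_{x=L}=0$ delivers the corresponding $\kappa$-trace.

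\textbf{MHD coupling.} The new feature relative to the Navier--Stokes case is the pair of Lorentz terms $-(h_s\partial_x g^\epsilon+g_s\partial_y g^\epsilon)$ in the $v^\epsilon$-equation and $-(h_s\partial_x v^\epsilon+g_s\partial_y v^\epsilon)$ in the $g^\epsilon$-equation. When tested against $v^\epsilon/u_s$ and $g^\epsilon/u_s$ respectively and integrated by parts using the approximate divergence-free property of $(h_s,g_s)$, these produce bilinear forms in $\nabla_\epsilon v^\epsilon$ and $\nabla_\epsilon g^\epsilon$ multiplied by factors of $h_s/u_s$ (plus lower-order weights involving $\nabla(h_s/u_s)$ and $\nabla g_s$). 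Since $\|h_s/u_s\|_{L^\infty}\ll 1$, Cauchy--Schwarz absorbs them into the main positive dissipation terms. The remaining bad terms $u^\epsilon\partial_x v_s,\ v^\epsilon\partial_y v_s,\ h^\epsilon\partial_x g_s,\ g^\epsilon\partial_y g_s$ and their analogues are estimated via Hardy in $y$ (where the $\|y\partial_y(u_s,h_s)\|_{L^\infty}$ smallness is decisive) or by Cauchy--Schwarz against $\|\nabla_\epsilon u^\epsilon\|_{L^2}+\|\nabla_\epsilon h^\epsilon\|_{L^2}$, producing precisely the RHS terms of \eqref{5.026}.

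\textbf{Main obstacle.} The central difficulty is bookkeeping: after every integration by parts in the pressure manipulation one incurs either a singular $1/\epsilon$ weight, or a boundary integral, or a commutator with $u_s^{-1}$, and it must be verified that each such term either cancels against a counterpart from the viscous term, contributes precisely the asserted trace on the RHS of \eqref{5.026}, or is absorbed by one of the three small factors $L$, $\|h_s/u_s\|_{L^\infty}$, $\|y\partial_y(u_s,h_s)\|_{L^\infty}$. In particular, when $\partial_x p^\epsilon$ is replaced via the tangential momentum equation, a term $\nu\epsilon^{-1}\int u^\epsilon\Delta_\epsilon u^\epsilon/u_s$ appears whose $\epsilon^{-1}$ weight is only compensated by the $\epsilon^2$-scaled $x=0$ boundary term; identifying this cancellation, and the symmetric one in the $g^\epsilon$-equation, is the delicate core of the argument. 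Once these are tracked, choosing $L$ sufficiently small and invoking the two smallness hypotheses yields \eqref{5.026}.
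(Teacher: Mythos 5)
Your proposal diverges from the paper's argument at the decisive point, and the divergence is fatal: the pressure. You test the second equation against $v^\epsilon/u_s$ (with no factor of $\epsilon$) so as to extract the full dissipation $\nu\iint|\nabla_\epsilon v^\epsilon|^2/u_s$, but then the term $\frac{1}{\epsilon}\iint \partial_y p^\epsilon\,\frac{v^\epsilon}{u_s}$ enters with a genuinely singular weight. After your integration by parts it becomes $\frac{1}{\epsilon}\iint p^\epsilon\,\frac{\partial_x u^\epsilon}{u_s}+\frac{1}{\epsilon}\iint p^\epsilon\,\frac{v^\epsilon\partial_y u_s}{u_s^2}$, and neither piece is controllable: there is no bound on $\Vert p^\epsilon\Vert_{L^2}$ anywhere in the scheme (the lemma deliberately contains no pressure norm), the smallness $\Vert y\partial_y u_s\Vert_{L^\infty}<C\sigma_0$ does not compensate a factor $\epsilon^{-1}$, and substituting $\partial_x p^\epsilon$ from the first equation produces interior terms such as $\frac{\nu}{\epsilon}\iint \frac{u^\epsilon}{u_s}\Delta_\epsilon u^\epsilon$ and $\frac{1}{\epsilon}\iint \frac{u^\epsilon}{u_s}\bigl(u_s\partial_x u^\epsilon+v_s\partial_y u^\epsilon+\cdots\bigr)$, all of size $O(\epsilon^{-1})\Vert\nabla_\epsilon u^\epsilon\Vert_{L^2}^2$ or $O(\epsilon^{-1})$ boundary traces, which cannot be absorbed by the $O(1)$ right-hand side of \eqref{5.026}. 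The claimed cancellation against the $\epsilon^2$-weighted $x=0$ trace cannot happen at that order (and note that trace sits on the \emph{left} of \eqref{5.026} as a good term, not on the right). Also, in your scheme the convection term yields the trace $\frac12\int_{x=L}|v^\epsilon|^2$, not the stated $\epsilon\int_{x=L}|v^\epsilon_y|^2/u_s$, so even the boundary structure of \eqref{5.026} would not be reproduced.

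The paper avoids estimating the pressure altogether: it tests the \emph{first} equation against $\partial_y\bigl(\frac{v^\epsilon}{u_s}\bigr)$, subtracts $\epsilon$ times the second equation tested against $\partial_x\bigl(\frac{v^\epsilon}{u_s}\bigr)$, and does the same with the third and fourth equations and $g^\epsilon/u_s$. With this antisymmetric (curl-type) multiplier the pressure contribution $\iint\bigl[\partial_x p^\epsilon\,\partial_y\bigl(\tfrac{v^\epsilon}{u_s}\bigr)-\partial_y p^\epsilon\,\partial_x\bigl(\tfrac{v^\epsilon}{u_s}\bigr)\bigr]$ cancels identically up to boundary terms killed by the conditions at $y=0$, $x=0$ and the stress-type conditions at $x=L$ (including $p^\epsilon=2\nu\epsilon\partial_x u^\epsilon$ there). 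The positive terms $\Vert\nabla_\epsilon v^\epsilon\Vert_{L^2}^2$, $\Vert\nabla_\epsilon g^\epsilon\Vert_{L^2}^2$ then come from the convection terms of the tangential equations (via $\partial_x u^\epsilon=-\partial_y v^\epsilon$, $\partial_x h^\epsilon=-\partial_y g^\epsilon$) together with the viscous terms, and only afterwards is the smallness of $\Vert h_s/u_s\Vert_{L^\infty}$ and $\Vert y\partial_y(u_s,h_s)\Vert_{L^\infty}$ used to absorb the MHD coupling — that last part of your plan is in the right spirit, but it is built on a pressure treatment that does not close.
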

The proof of this lemma will be shown in Appendix \ref{ap3}.

In order to study the nonlinear problem, the $L^\infty$ estimates of the solutions to the nonlinear problem are needed.
\begin{lemma}\label{linfity}
Suppose that $u^\epsilon,v^\epsilon,h^\epsilon,g^\epsilon,p^\epsilon$ are the solutions to the system
\begin{equation}\label{5.027}
\left\{
\begin{aligned}
&-\nu\Delta_\epsilon u^\epsilon+\partial_x p^\epsilon=F_1,\\
&-\nu\Delta_\epsilon v^\epsilon+\frac{\partial_y p^\epsilon}{\epsilon}=F_2,\\
&-\kappa \Delta_\epsilon h^\epsilon=F_3,\\
&-\kappa\Delta_\epsilon g^\epsilon=F_4,\\
&\partial_xu^\epsilon+\partial_y v^\epsilon=\partial_x h^\epsilon+\partial_yg^\epsilon=0,
\end{aligned}
\right.
\end{equation}
with the following boundary conditions
\begin{equation}\label{5.028}
\left\{
\begin{aligned}
&(u^\epsilon,v^\epsilon,\partial_y h^\epsilon,g^\epsilon)|_{y=0}=(0,0,0,0),\\
&(u^\epsilon,v^\epsilon,h^\epsilon,g^\epsilon)|_{x=0}=(0,0,0,0),\\
&p^\epsilon-2\nu\epsilon \partial_x u^\epsilon=0, \ \partial_y u^\epsilon+\nu\epsilon \partial_x v^\epsilon=h^\epsilon=\partial_x g^\epsilon=0 \ \ \mathrm{on} \ \ \{x=L\}.
\end{aligned}
\right.
\end{equation}
Then it holds that
\begin{equation}\label{5.029}
\begin{aligned}
\epsilon^{\frac{\gamma}{4}}&(\Vert u^\epsilon\Vert_{L^\infty}+\Vert h^\epsilon\Vert_{L^\infty})+\epsilon^{\frac{\gamma}{4}+\frac{1}{2}}(\Vert v^\epsilon\Vert_{L^\infty}+\Vert g^\epsilon\Vert_{L^\infty}) \\
\leq & C_{\gamma,L} \bigg[\Vert\nabla_\epsilon u^\epsilon\Vert_{L^2}+\Vert\nabla_\epsilon h^\epsilon\Vert_{L^2}+\Vert\nabla_\epsilon v^\epsilon\Vert_{L^2}+\Vert \nabla_\epsilon g^\epsilon\Vert_{L^2}\\
&+(\Vert F_1\Vert_{L^2}+\Vert F_3\Vert_{L^2})+\sqrt{\epsilon}(\Vert F_2\Vert_{L^2}+\Vert F_4\Vert_{L^2})\bigg].
\end{aligned}
\end{equation}
\end{lemma}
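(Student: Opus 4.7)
The plan is to reduce \eqref{5.027} to a standard elliptic/Stokes system by the isotropic rescaling $Y=\sqrt{\epsilon}\,y$. Setting $U(x,Y):=u^\epsilon$, $V(x,Y):=\sqrt{\epsilon}\,v^\epsilon$, $H(x,Y):=h^\epsilon$, $G(x,Y):=\sqrt{\epsilon}\,g^\epsilon$, $P(x,Y):=p^\epsilon$ (all evaluated at $(x,Y/\sqrt{\epsilon})$), the operator $\Delta_\epsilon$ becomes $\epsilon\Delta_{x,Y}$, so the system turns into a standard Stokes system for $(U,V,P)$ with forcing $(\tilde F_1,\sqrt{\epsilon}\tilde F_2)$ together with two decoupled Poisson equations $-\kappa\epsilon\Delta H=\tilde F_3$, $-\kappa\epsilon\Delta G=\sqrt{\epsilon}\tilde F_4$ on the half-strip $[0,L]\times[0,\infty)$, all carrying the natural rescaled boundary conditions inherited from \eqref{5.028}. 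The identities $\|U\|_{L^\infty}=\|u^\epsilon\|_{L^\infty}$, $\|V\|_{L^\infty}=\sqrt{\epsilon}\|v^\epsilon\|_{L^\infty}$ (and analogously for $H,G$) convert \eqref{5.029} into a uniform $\epsilon^{\gamma/4}$-weighted $L^\infty$ bound on $(U,V,H,G)$ in the rescaled variables.

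For the scalar magnetic equations I plan to invoke $W^{2,q}$ Calder\'on--Zygmund regularity with $q>1$; the two-dimensional Sobolev embedding $W^{2,q}\hookrightarrow L^\infty$ then gives
\[
\|H\|_{L^\infty}+\|G\|_{L^\infty}\ \lesssim\ C_q\epsilon^{-1}\bigl(\|\tilde F_3\|_{L^q}+\sqrt{\epsilon}\|\tilde F_4\|_{L^q}\bigr)+\text{lower order}.
\]
For the Stokes pair I plan to use Cattabriga's theorem in the same $L^q$ form, after eliminating $P$ via $\Delta P=\partial_x\tilde F_1+\sqrt{\epsilon}\partial_Y\tilde F_2$. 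The $L^q$ source norms are then interpolated between the $L^2$ bounds available on the right-hand side of \eqref{5.029} (transformed via the scaling identities $\|\tilde F_i\|_{L^2(dxdY)}=\epsilon^{1/4}\|F_i\|_{L^2(dxdy)}$ and $\|\nabla U\|_{L^2(dxdY)}=\epsilon^{-1/4}\|\nabla_\epsilon u^\epsilon\|_{L^2(dxdy)}$) and the higher regularity furnished by the equation itself through Gagliardo--Nirenberg. Choosing $q=q(\gamma)>1$ so as to balance the explicit $\epsilon^{-1}$ against these rescaling weights produces the $\epsilon^{-\gamma/4}$ prefactor for $\|u^\epsilon\|_{L^\infty},\|h^\epsilon\|_{L^\infty}$ and the additional $\epsilon^{-1/2}$ for $\|v^\epsilon\|_{L^\infty},\|g^\epsilon\|_{L^\infty}$ coming from the definitions of $V,G$, matching exactly \eqref{5.029}.

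The principal obstacle is the careful bookkeeping of $\epsilon$-powers through the chain of rescaling, interpolation, and Sobolev embedding, coupled with the mixed boundary data on the half-strip: homogeneous Dirichlet at $\{x=0\}$ and on parts of $\{Y=0\}$, Neumann for $H$ at $\{Y=0\}$, and the stress-type outflow conditions at $\{x=L\}$. The corners at $(0,0)$ and $(L,0)$ are right angles and therefore admit $W^{2,q}$ regularity for $q$ close to $1$; the unboundedness in $Y$ is absorbed via a cutoff exploiting the decay of the solution inherited from the data, or by a direct weighted estimate. This argument is a direct extension of the $L^\infty$ step of Guo--Nguyen \cite{YGuo2} for the steady Navier--Stokes system, with the two decoupled scalar Poisson equations for the magnetic components handled in parallel to the Stokes pair; the required smallness of the magnetic-to-velocity ratio $\|h_s/u_s\|_{L^\infty}\ll 1$ plays no direct role here since the linear system \eqref{5.027} is already decoupled from the transport structure.
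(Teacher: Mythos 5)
Your overall architecture (isotropic rescaling $Y=\sqrt{\epsilon}\,y$, $W^{2,q}$ regularity for the Stokes pair and for the two decoupled Poisson equations, then Sobolev embedding/interpolation) is the same route the paper takes: its proof of this lemma consists precisely of invoking the scaled Stokes estimates of \cite{YGuo2} for $(u^\epsilon,v^\epsilon,p^\epsilon)$ and standard elliptic theory \cite{GT} for $(h^\epsilon,g^\epsilon)$. However, the one step that carries the entire content of the lemma --- why the loss is only $\epsilon^{-\gamma/4}$ --- is asserted rather than derived in your plan, and the balance you describe does not come out. In the scaled variables every quantity you propose to interpolate carries a negative power of $\epsilon$: $\Vert \nabla_{x,Y}(U,V)\Vert_{L^2}=\epsilon^{-1/4}\Vert \nabla_\epsilon (u^\epsilon,v^\epsilon)\Vert_{L^2}$, while the scaled force, after dividing by $\nu\epsilon$ and localizing in $Y$ (for $q\le 2$ the measure factor gives $\epsilon^{-1}\Vert \tilde F_1\Vert_{L^q}\lesssim \epsilon^{-3/4}\Vert F_1\Vert_{L^2}$, essentially independently of $q$), enters at $\epsilon^{-3/4}$. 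Any Gagliardo--Nirenberg interpolation of $L^\infty$ between these gives at best an $\epsilon^{-1/4}$ loss, and taking $q$ close to $1$ makes it worse, not better, since the admissible weight on the $W^{2,q}$ factor is $\theta=q/(3q-2)\to 1$ as $q\to 1$. So, as written, your bookkeeping can only yield $\epsilon^{1/4}\Vert u^\epsilon\Vert_{L^\infty}\lesssim(\cdots)$, which is strictly weaker than (\ref{5.029}) (where $\gamma/4<1/16$). The missing idea is the anisotropic gain coming from the boundary condition $u^\epsilon|_{x=0}=0$ and the divergence-free constraints, which make the zeroth-order and $x$-derivative quantities cost no negative power of $\epsilon$ (e.g. $\Vert u^\epsilon\Vert_{L^2}\le L\Vert \partial_y v^\epsilon\Vert_{L^2}$ and $\Vert \partial_x u^\epsilon\Vert_{L^2}=\Vert \partial_y v^\epsilon\Vert_{L^2}$, similarly for $h^\epsilon,g^\epsilon$); the $\epsilon^{-\gamma/4}$ loss is obtained only by an anisotropic embedding that puts almost all weight on these quantities and on $\Vert \partial_y u^\epsilon\Vert_{L^2}$, and an arbitrarily small weight on the $W^{2,q}$ piece furnished by the equation --- which is exactly what the argument of \cite{YGuo2} that the paper cites exploits.

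Two further points you flag but do not resolve are genuine as well. First, for $q<2$ the norms $\Vert F_i\Vert_{L^q}$ on the unbounded strip are not controlled by $\Vert F_i\Vert_{L^2}$, and the lemma assumes no decay of the $F_i$ (in the application they contain terms like $u_s\partial_x u^\epsilon$ that are merely $L^2$), so the unboundedness must be handled by localization in $y$ with a supremum over unit cells, not by ``decay of the solution inherited from the data''. Second, Cattabriga's theorem is a Dirichlet result; the present problem has Neumann data for $h^\epsilon$ at $y=0$, stress-type outflow conditions at $x=L$, and corners where the boundary condition changes type, and justifying $W^{2,q}$ regularity there (for the admissible range of $q$) is precisely the part that ``modifying the proof in \cite{YGuo2}'' must supply; eliminating the pressure through $\Delta P=\operatorname{div}(\text{forcing})$ does not bypass this, because the boundary conditions for $P$ are then themselves nontrivial.
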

\begin{proof}
Note that each of $u^\epsilon, v^\epsilon$ satisfies a Stokes equation, then the estimates for the $u^\epsilon, v^\epsilon$ can be obtained by modifying the proof in \cite{YGuo2}. For the estimates of $h^\epsilon,g^\epsilon$, they obey the Possion equations, therefore the desired estimates can be obtained by the standard arguments for the elliptic system \cite{GT}.
\end{proof}

\subsection{Proof of the Theorem \ref{mainresult}}\label{pfofmainthm}
With the above preparation, it is ready to prove the Theorem \ref{mainresult}.

\begin{proof}[Proof of the Theorem \ref{mainresult}]
Consider the nonlinear scaled viscous MHD system (\ref{1.6}), and suppose the solutions to the viscous MHD can be decomposed as
\begin{equation}\label{5.030}
\begin{aligned}
(U^\epsilon,V^\epsilon,H^\epsilon,G^\epsilon,P^\epsilon)=&
(u_{app},v_{app},h_{app},g_{app},p_{app})(x,y)\\
&+\epsilon^{\frac{1}{2}+\gamma}(u^\epsilon,v^\epsilon,h^\epsilon,g^\epsilon,p^\epsilon)(x,y),
\end{aligned}
\end{equation}
where the approximate solutions are constructed as in the previous section. Recall that
\begin{equation}\label{5.031}
\left \{
\begin{aligned}
u_s(x,y)=&u^0_e(\sqrt{\epsilon}y)+u^0_p(x,y)+\sqrt{\epsilon}u^1_e(x,\sqrt{\epsilon}y),\\
v_s(x,y)=&v^0_p(x,y)+v^1_e(x,\sqrt{\epsilon }y),\\
h_s(x,y)=&h^0_e(\sqrt{\epsilon}y)+h^0_p(x,y)+\sqrt{\epsilon}h^1_e(x,\sqrt{\epsilon}y),\\
g_s(x,y)=&g^0_p(x,y)+g^1_e(x,\sqrt{\epsilon }y),
\end{aligned}
\right.
\end{equation}
then the remainder terms of $(u^\epsilon,v^\epsilon,h^\epsilon,g^\epsilon,p^\epsilon)$ satisfy
\begin{equation}\label{5.032}
\left \{
\begin{aligned}
u_s\partial_x u^\epsilon&+u^\epsilon\partial_x u_s+v_s\partial_y u^\epsilon+v^\epsilon\partial_y u_s+\partial_x p^\epsilon-\nu \Delta_\epsilon u^\epsilon\\
& -(h_s\partial_x h^\epsilon+h^\epsilon\partial_x h_s+g_s\partial_y h^\epsilon+g^\epsilon\partial_y h_s)=R_1(u^\epsilon,v^\epsilon,h^\epsilon,g^\epsilon),\\
u_s\partial_x v^\epsilon&+u^\epsilon\partial_x v_s+v_s\partial_y v^\epsilon+v^\epsilon\partial_y v_s+\frac{\partial_y p^\epsilon}{\epsilon}-\nu \Delta_\epsilon v^\epsilon\\
&-(h_s\partial_x g^\epsilon+h^\epsilon\partial_x g_s+g_s\partial_y g^\epsilon+g^\epsilon\partial_y g_s)=R_2(u^\epsilon,v^\epsilon,h^\epsilon,g^\epsilon),\\
u_s\partial_x h^\epsilon&+u^\epsilon\partial_x h_s+v_s\partial_y h^\epsilon+v^\epsilon\partial_y h_s-\kappa \Delta_\epsilon h^\epsilon\\
& -(h_s\partial_x u^\epsilon+h^\epsilon\partial_x u_s+g_s\partial_y u^\epsilon+g^\epsilon\partial_y u_s)=R_3(u^\epsilon,v^\epsilon,h^\epsilon,g^\epsilon),\\
u_s\partial_x g^\epsilon&+u^\epsilon\partial_x g_s+v_s\partial_y g^\epsilon+v^\epsilon\partial_y g_s-\kappa \Delta_\epsilon g^\epsilon\\
& -(h_s\partial_x v^\epsilon+h^\epsilon\partial_x v_s+g_s\partial_y v^\epsilon+g^\epsilon\partial_y v_s)=R_4(u^\epsilon,v^\epsilon,h^\epsilon,g^\epsilon),\\
\partial_x u^\epsilon+&\partial_yv^\epsilon=\partial_xh^\epsilon+\partial_yg^\epsilon=0,
\end{aligned}
\right.
\end{equation}
in which the source terms $R_i\ (i=1,2,3,4)$ are given by
\begin{equation}\label{5.033}
\left \{
\begin{aligned}
R_1:&=\epsilon^{-\frac{1}{2}-\gamma}R^1_{app}-\sqrt{\epsilon}\big[(u^1_p+\epsilon^\gamma u^\epsilon)\partial_xu^\epsilon+u^\epsilon \partial_x u^1_p+(v^1_p+\epsilon^\gamma v^\epsilon)\partial_y u^\epsilon\\
&+v^\epsilon \partial_y u^1_p-(h^1_p+\epsilon^\gamma h^\epsilon)\partial_xh^\epsilon-h^\epsilon \partial_x h^1_p-(g^1_p+\epsilon^\gamma g^\epsilon)\partial_y h^\epsilon-g^\epsilon \partial_y h^1_p\big],\\
R_2:&=\epsilon^{-\frac{1}{2}-\gamma}R^2_{app}-\sqrt{\epsilon}\big[(u^1_p+\epsilon^\gamma u^\epsilon)\partial_xv^\epsilon+u^\epsilon \partial_x v^1_p+(v^1_p+\epsilon^\gamma v^\epsilon)\partial_y v^\epsilon\\
&+v^\epsilon \partial_y v^1_p-(h^1_p+\epsilon^\gamma h^\epsilon)\partial_xg^\epsilon-h^\epsilon \partial_x g^1_p-(g^1_p+\epsilon^\gamma g^\epsilon)\partial_y g^\epsilon-g^\epsilon \partial_y g^1_p\big],\\
R_3:&=\epsilon^{-\frac{1}{2}-\gamma}R^3_{app}-\sqrt{\epsilon}\big[(u^1_p+\epsilon^\gamma u^\epsilon)\partial_xh^\epsilon+u^\epsilon \partial_x h^1_p+(v^1_p+\epsilon^\gamma v^\epsilon)\partial_y h^\epsilon\\
&+v^\epsilon \partial_y h^1_p-(h^1_p+\epsilon^\gamma h^\epsilon)\partial_xu^\epsilon-h^\epsilon \partial_x u^1_p-(g^1_p+\epsilon^\gamma g^\epsilon)\partial_y u^\epsilon-g^\epsilon \partial_y u^1_p\big],\\
R_4:&=\epsilon^{-\frac{1}{2}-\gamma}R^4_{app}-\sqrt{\epsilon}\big[(u^1_p+\epsilon^\gamma u^\epsilon)\partial_xg^\epsilon+u^\epsilon \partial_x g^1_p+(v^1_p+\epsilon^\gamma v^\epsilon)\partial_y g^\epsilon\\
&+v^\epsilon \partial_y g^1_p-(h^1_p+\epsilon^\gamma h^\epsilon)\partial_xv^\epsilon-h^\epsilon \partial_x v^1_p-(g^1_p+\epsilon^\gamma g^\epsilon)\partial_y v^\epsilon-g^\epsilon \partial_y v^1_p\big],\\
\end{aligned}
\right.
\end{equation}
here $R^i_{app}\ (i=1,2,3,4)$ are the error terms caused by approximation solutions, which are estimated in Proposition \ref{approximatesolution}.

To perform the standard contraction mapping argument, we define the function space $\mathcal{S}$ with the norm
\begin{equation}\label{5.034}
\begin{aligned}
\Vert (u^\epsilon,v^\epsilon,h^\epsilon,&g^\epsilon)\Vert_{\mathcal{S}}\\
:=&\Vert \nabla_\epsilon u^\epsilon\Vert_{L^2}+\Vert \nabla_\epsilon v^\epsilon\Vert_{L^2}+\Vert \nabla_\epsilon h^\epsilon\Vert_{L^2}+\Vert \nabla_\epsilon g^\epsilon\Vert_{L^2}\\
&+\epsilon^{\frac{\gamma}{2}}\Vert u^\epsilon\Vert_{L^\infty}+\epsilon^{\frac{\gamma}{2}+\frac{1}{2}}\Vert v^\epsilon\Vert_{L^\infty}+\epsilon^{\frac{\gamma}{2}}\Vert h^\epsilon\Vert_{L^\infty}+\epsilon^{\frac{\gamma}{2}+\frac{1}{2}}\Vert g^\epsilon\Vert_{L^\infty}.
\end{aligned}
\end{equation}

For each $(\overline{u}^\epsilon,\overline{v}^\epsilon,\overline{h}^\epsilon,\overline{g}^\epsilon)\in \mathcal{S}$, we will solve the following linear problem for $(u^\epsilon,v^\epsilon,h^\epsilon,g^\epsilon)$:
\begin{equation}\label{5.035}
\left \{
\begin{aligned}
u_s\partial_x u^\epsilon&+u^\epsilon\partial_x u_s+v_s\partial_y u^\epsilon+v^\epsilon\partial_y u_s+\partial_x p^\epsilon-\nu \Delta_\epsilon u^\epsilon\\
& -(h_s\partial_x h^\epsilon+h^\epsilon\partial_x h_s+g_s\partial_y h^\epsilon+g^\epsilon\partial_y h_s)=R_1(\overline{u}^\epsilon,\overline{v}^\epsilon,\overline{h}^\epsilon,\overline{g}^\epsilon),\\
u_s\partial_x v^\epsilon&+u^\epsilon\partial_x v_s+v_s\partial_y v^\epsilon+v^\epsilon\partial_y v_s+\frac{\partial_y p^\epsilon}{\epsilon}-\nu \Delta_\epsilon v^\epsilon\\
&-(h_s\partial_x g^\epsilon+h^\epsilon\partial_x g_s+g_s\partial_y g^\epsilon+g^\epsilon\partial_y g_s)=R_2(\overline{u}^\epsilon,\overline{v}^\epsilon,\overline{h}^\epsilon,\overline{g}^\epsilon),\\
u_s\partial_x h^\epsilon&+u^\epsilon\partial_x h_s+v_s\partial_y h^\epsilon+v^\epsilon\partial_y h_s-\kappa \Delta_\epsilon h^\epsilon\\
& -(h_s\partial_x u^\epsilon+h^\epsilon\partial_x u_s+g_s\partial_y u^\epsilon+g^\epsilon\partial_y u_s)=R_3(\overline{u}^\epsilon,\overline{v}^\epsilon,\overline{h}^\epsilon,\overline{g}^\epsilon),\\
u_s\partial_x g^\epsilon&+u^\epsilon\partial_x g_s+v_s\partial_y g^\epsilon+v^\epsilon\partial_y g_s-\kappa \Delta_\epsilon g^\epsilon\\
& -(h_s\partial_x v^\epsilon+h^\epsilon\partial_x v_s+g_s\partial_y v^\epsilon+g^\epsilon\partial_y v_s)=R_4(\overline{u}^\epsilon,\overline{v}^\epsilon,\overline{h}^\epsilon,\overline{g}^\epsilon),\\
\partial_x u^\epsilon+&\partial_yv^\epsilon=\partial_xh^\epsilon+\partial_yg^\epsilon=0.
\end{aligned}
\right.
\end{equation}
Applying the Proposition \ref{linearstability} to the linear system (\ref{5.035}), one gets that
\begin{equation}\label{5.036}
\begin{aligned}
\Vert \nabla_\epsilon u^\epsilon\Vert_{L^2}&+\Vert \nabla_\epsilon v^\epsilon\Vert_{L^2}+\Vert \nabla_\epsilon h^\epsilon\Vert_{L^2}+\Vert \nabla_\epsilon g^\epsilon\Vert_{L^2}\\
\lesssim &\Vert R_1(\overline{u}^\epsilon,\overline{v}^\epsilon,\overline{h}^\epsilon,\overline{g}^\epsilon)\Vert_{L^2}+\Vert R_3(\overline{u}^\epsilon,\overline{v}^\epsilon,\overline{h}^\epsilon,\overline{g}^\epsilon)\Vert_{L^2}\\
&+\sqrt{\epsilon}(\Vert  R_2(\overline{u}^\epsilon,\overline{v}^\epsilon,\overline{h}^\epsilon,\overline{g}^\epsilon)\Vert_{L^2}+\Vert R_4(\overline{u}^\epsilon,\overline{v}^\epsilon,\overline{h}^\epsilon,\overline{g}^\epsilon)\Vert_{L^2}).
\end{aligned}
\end{equation}
It remains to estimate every remainder $R_i\ (i=1,2,3,4)$.

First, from the Proposition \ref{approximatesolution}, it is direct to derive that
\begin{equation}\nonumber
\begin{aligned}
\epsilon^{-\gamma-\frac{1}{2}}\bigg[\Vert R^1_{app}\Vert_{L^2}&+\Vert R^3_{app}\Vert_{L^2}+\sqrt{\epsilon}(\Vert R^2_{app}\Vert_{L^2}+\Vert R^4_{app}\Vert_{L^2})\bigg]
\leq  C(L)\epsilon^{-\gamma-\zeta+\frac{1}{4}},
\end{aligned}
\end{equation}
where $\zeta>0$ is an arbitrary small constant. In what follows, we take any $\gamma<\frac14$ and $\zeta$ such that $\gamma+\zeta<\frac{1}{4}$.

For $R_1$, we have
\begin{align*}
&\sqrt{\epsilon}\Vert (u^1_p+\epsilon^\gamma \overline{u}^\epsilon)\partial_x\overline{u}^\epsilon\Vert_{L^2}\\
\leq &\sqrt{\epsilon}(\Vert u^1_p\Vert_{L^\infty}+\epsilon^\gamma\Vert \overline{u}^\epsilon\Vert_{L^\infty})\Vert \partial_x\overline{u}^\epsilon \Vert_{L^2}\\
\leq & \epsilon^{\frac{1}{2}-\zeta}C(u^1_p)+\epsilon^{\frac{\gamma}{2}}\Vert (\overline{u}^\epsilon,\overline{v}^\epsilon,\overline{h}^\epsilon,\overline{g}^\epsilon)\Vert_{\mathcal{S}}^2\\
\end{align*}
and
\begin{align*}
&\sqrt{\epsilon}\Vert (v^1_p+\epsilon^\gamma \overline{v}^\epsilon)\partial_y\overline{u}^\epsilon\Vert_{L^2}\\
\leq &\sqrt{\epsilon}(\Vert v^1_p\Vert_{L^\infty}+\epsilon^\gamma\Vert \overline{v}^\epsilon\Vert_{L^\infty})\Vert \partial_y\overline{u}^\epsilon \Vert_{L^2}\\
\leq & \epsilon^{\frac{1}{2}-\zeta}C(v^1_p)+\epsilon^{\frac{\gamma}{2}}\Vert (\overline{u}^\epsilon,\overline{v}^\epsilon,\overline{h}^\epsilon,\overline{g}^\epsilon)\Vert_{\mathcal{S}}^2.\\
\end{align*}
Similarly,
\begin{align*}
&\Vert(h^1_p+\epsilon^\gamma \overline{h}^\epsilon)\partial_x\overline{h}^\epsilon\Vert_{L^2}\\
\leq &\sqrt{\epsilon}(\Vert h^1_p\Vert_{L^\infty}+\epsilon^\gamma\Vert \overline{h}^\epsilon\Vert_{L^\infty})\Vert \partial_x\overline{h}^\epsilon \Vert_{L^2}\\
\leq & \left(\epsilon^{\frac{1}{2}-\zeta}C(h^1_p)+\epsilon^{\frac{\gamma}{2}}\Vert (\overline{u}^\epsilon,\overline{v}^\epsilon,\overline{h}^\epsilon,\overline{g}^\epsilon)\Vert_{\mathcal{S}}\right)\Vert (\overline{u}^\epsilon,\overline{v}^\epsilon,\overline{h}^\epsilon,\overline{g}^\epsilon)\Vert_{\mathcal{S}}
\end{align*}
and
\begin{align*}
&\sqrt{\epsilon}\Vert (g^1_p+\epsilon^\gamma \overline{g}^\epsilon)\partial_y\overline{h}^\epsilon\Vert_{L^2}\\
\leq &\sqrt{\epsilon}(\Vert g^1_p\Vert_{L^\infty}+\epsilon^\gamma\Vert \overline{g}^\epsilon\Vert_{L^\infty})\Vert \partial_y\overline{h}^\epsilon \Vert_{L^2}\\
\leq & \left(\epsilon^{\frac{1}{2}-\zeta}C(g^1_p)+\epsilon^{\frac{\gamma}{2}}\Vert (\overline{u}^\epsilon,\overline{v}^\epsilon,\overline{h}^\epsilon,\overline{g}^\epsilon)\Vert_{\mathcal{S}}\right)\Vert (\overline{u}^\epsilon,\overline{v}^\epsilon,\overline{h}^\epsilon,\overline{g}^\epsilon)\Vert_{\mathcal{S}}.
\end{align*}
Also, notice that $|(\overline{u}^\epsilon,\overline{v}^\epsilon,\overline{h}^\epsilon,\overline{g}^\epsilon)|\leq \sqrt{y}\Vert \partial_y(\overline{u}^\epsilon,\overline{v}^\epsilon,\overline{h}^\epsilon,\overline{g}^\epsilon)\Vert_{L^2(0,\infty)}$ and the weighted $H^1$ bounds on $u^1_p$ and $h^1_p$ in (\ref{4.048})-(\ref{4.049}),
 we deduce that
\begin{equation}\nonumber
\begin{aligned}
&\sqrt{\epsilon}\Vert \overline{u}^\epsilon\partial_x u^1_p+\overline{v}^\epsilon\partial_y u^1_p\Vert_{L^2}\\
&\leq \sqrt{\epsilon}\sup_x \left(\Vert \langle y\rangle^n \partial_x u^1_p\Vert_{L^2(0,\infty)}+\Vert \langle y\rangle^n \partial_y u^1_p\Vert_{L^2(0,\infty)}\right)\Vert \partial_y (\overline{u}^\epsilon,\overline{v}^\epsilon)\Vert_{L^2}\\
&\leq C(u^1_p)\epsilon^{\frac{1}{2}-\zeta}\Vert (\overline{u}^\epsilon,\overline{v}^\epsilon,\overline{h}^\epsilon,\overline{g}^\epsilon)\Vert_{\mathcal{S}}\\
\end{aligned}
\end{equation}
and
\begin{equation}\nonumber
\begin{aligned}
&\sqrt{\epsilon}\Vert \overline{h}^\epsilon\partial_x h^1_p+\overline{g}^\epsilon\partial_y h^1_p\Vert_{L^2}\\
&\leq \sqrt{\epsilon}\sup_x \left(\Vert \langle y\rangle^n \partial_x h^1_p\Vert_{L^2(0,\infty)}+\Vert \langle y\rangle^n \partial_y h^1_p\Vert_{L^2(0,\infty)}\right)\Vert \partial_y (\overline{h}^\epsilon,\overline{g}^\epsilon)\Vert_{L^2}\\
&\leq C(h^1_p)\epsilon^{\frac{1}{2}-\zeta}\Vert (\overline{h}^\epsilon,\overline{v}^\epsilon,\overline{h}^\epsilon,\overline{g}^\epsilon)\Vert_{\mathcal{S}}.
\end{aligned}
\end{equation}
For $R_2$, we get that
\begin{align*}
&\sqrt{\epsilon}\Vert (u^1_p+\epsilon^\gamma \overline{u}^\epsilon)\partial_x\overline{v}^\epsilon\Vert_{L^2}\\
\leq &\sqrt{\epsilon}(\Vert u^1_p\Vert_{L^\infty}+\epsilon^\gamma\Vert \overline{u}^\epsilon\Vert_{L^\infty})\Vert \partial_x\overline{v}^\epsilon \Vert_{L^2}\\
\leq & \left(\epsilon^{\frac{1}{2}-\zeta}C(u^1_p)+\epsilon^{\frac{\gamma}{2}}\Vert (\overline{u}^\epsilon,\overline{v}^\epsilon,\overline{h}^\epsilon,\overline{g}^\epsilon)\Vert_{\mathcal{S}}\right)\Vert (\overline{u}^\epsilon,\overline{v}^\epsilon,\overline{h}^\epsilon,\overline{g}^\epsilon)\Vert_{\mathcal{S}},
\end{align*}
and
\begin{align*}
&\sqrt{\epsilon}\Vert (v^1_p+\epsilon^\gamma \overline{v}^\epsilon)\partial_y\overline{v}^\epsilon\Vert_{L^2}\\
\leq &\sqrt{\epsilon}(\Vert v^1_p\Vert_{L^\infty}+\epsilon^\gamma\Vert \overline{v}^\epsilon\Vert_{L^\infty})\Vert \partial_y\overline{v}^\epsilon \Vert_{L^2}\\
\leq & \left(\epsilon^{\frac{1}{2}-\zeta}C(v^1_p)+\epsilon^{\frac{\gamma}{2}}\Vert (\overline{u}^\epsilon,\overline{v}^\epsilon,\overline{h}^\epsilon,\overline{g}^\epsilon)\Vert_{\mathcal{S}}\right)\Vert (\overline{u}^\epsilon,\overline{v}^\epsilon,\overline{h}^\epsilon,\overline{g}^\epsilon)\Vert_{\mathcal{S}}.
\end{align*}
Similarly,
\begin{align*}
&\Vert(h^1_p+\epsilon^\gamma \overline{h}^\epsilon)\partial_x\overline{g}^\epsilon\Vert_{L^2}\\
\leq &\sqrt{\epsilon}(\Vert h^1_p\Vert_{L^\infty}+\epsilon^\gamma\Vert \overline{h}^\epsilon\Vert_{L^\infty})\Vert \partial_x\overline{g}^\epsilon \Vert_{L^2}\\
\leq & \left(\epsilon^{\frac{1}{2}-\zeta}C(h^1_p)+\epsilon^{\frac{\gamma}{2}}\Vert (\overline{u}^\epsilon,\overline{v}^\epsilon,\overline{h}^\epsilon,\overline{g}^\epsilon)\Vert_{\mathcal{S}}\right)\Vert (\overline{u}^\epsilon,\overline{v}^\epsilon,\overline{h}^\epsilon,\overline{g}^\epsilon)\Vert_{\mathcal{S}},
\end{align*}
and
\begin{equation}\nonumber
\begin{aligned}
&\sqrt{\epsilon}\Vert (g^1_p+\epsilon^\gamma \overline{g}^\epsilon)\partial_y\overline{g}^\epsilon\Vert_{L^2}\\
\leq &\sqrt{\epsilon}(\Vert g^1_p\Vert_{L^\infty}+\epsilon^\gamma\Vert \overline{g}^\epsilon\Vert_{L^\infty})\Vert \partial_y\overline{g}^\epsilon \Vert_{L^2}\\
\leq & \left(\epsilon^{\frac{1}{2}-\zeta}C(g^1_p)+\epsilon^{\frac{\gamma}{2}}\Vert (\overline{u}^\epsilon,\overline{v}^\epsilon,\overline{h}^\epsilon,\overline{g}^\epsilon)\Vert_{\mathcal{S}}\right)\Vert (\overline{u}^\epsilon,\overline{v}^\epsilon,\overline{h}^\epsilon,\overline{g}^\epsilon)\Vert_{\mathcal{S}}.
\end{aligned}
\end{equation}
For the other terms in $R_2$, we have
\begin{equation}\nonumber
\begin{aligned}
&\sqrt{\epsilon}\Vert \overline{u}^\epsilon\partial_x v^1_p+\overline{v}^\epsilon\partial_y v^1_p\Vert_{L^2}\\
&\leq \sqrt{\epsilon}\bigg(\Vert \overline{u}^\epsilon\Vert_{L^\infty}\Vert \partial_x v^1_p\Vert_{L^2}+\sup_x\Vert \langle y\rangle^n\partial_y v^1_p\Vert_{L^2(0,\infty)}\Vert\partial_y \overline{v}^\epsilon\Vert_{L^2}\bigg)\\
&\leq C(v^1_p)\epsilon^{\frac{1}{4}-\frac{\gamma}{2}-\zeta}\Vert (\overline{u}^\epsilon,\overline{v}^\epsilon,\overline{h}^\epsilon,\overline{g}^\epsilon)\Vert_{\mathcal{S}}
\end{aligned}
\end{equation}
and
\begin{equation}\nonumber
\begin{aligned}
&\sqrt{\epsilon}\Vert \overline{h}^\epsilon\partial_x g^1_p+\overline{g}^\epsilon\partial_y g^1_p\Vert_{L^2}\\
&\leq \sqrt{\epsilon}\bigg(\Vert \overline{h}^\epsilon\Vert_{L^\infty}\Vert \partial_x g^1_p\Vert_{L^2}+\sup_x\Vert \langle y\rangle^n\partial_y g^1_p\Vert_{L^2(0,\infty)}\Vert\partial_y \overline{g}^\epsilon\Vert_{L^2}\bigg)\\
&\leq C(v^1_p)\epsilon^{\frac{1}{4}-\frac{\gamma}{2}-\zeta}\Vert (\overline{u}^\epsilon,\overline{v}^\epsilon,\overline{h}^\epsilon,\overline{g}^\epsilon)\Vert_{\mathcal{S}}.
\end{aligned}
\end{equation}
Therefore, we conclude that
\begin{equation}\label{5.037}
\begin{aligned}
&\Vert R_1(\overline{u}^\epsilon,\overline{v}^\epsilon,\overline{h}^\epsilon,\overline{g}^\epsilon)\Vert_{L^2}+\sqrt{\epsilon}\Vert R_2(\overline{u}^\epsilon,\overline{v}^\epsilon,\overline{h}^\epsilon,\overline{g}^\epsilon)\Vert_{L^2}\\
\leq& C(u_s,v_s,h_s,g_s)\epsilon^{-\gamma-\zeta+\frac{1}{4}}\\
&+C(u^1_p,v^1_p,h^1_p,g^1_p)\epsilon^{\frac{1}{2}+\frac{\gamma}{2}}\bigg(\Vert (\overline{u}^\epsilon,\overline{v}^\epsilon,\overline{h}^\epsilon,\overline{g}^\epsilon)\Vert_{\mathcal{S}}+\Vert (\overline{u}^\epsilon,\overline{v}^\epsilon,\overline{h}^\epsilon,\overline{g}^\epsilon)\Vert_{\mathcal{S}}^2\bigg).
\end{aligned}
\end{equation}
Similar arguments yield that
\begin{equation}\label{5.038}
\begin{aligned}
&\Vert R_3(\overline{u}^\epsilon,\overline{v}^\epsilon,\overline{h}^\epsilon,\overline{g}^\epsilon)\Vert_{L^2}+\sqrt{\epsilon}\Vert R_4(\overline{u}^\epsilon,\overline{v}^\epsilon,\overline{h}^\epsilon,\overline{g}^\epsilon)\Vert_{L^2}\\
\leq& C(u_s,v_s,h_s,g_s)\epsilon^{-\gamma-\zeta+\frac{1}{4}}\\
&+C(u^1_p,v^1_p,h^1_p,g^1_p)\epsilon^{\frac{1}{2}+\frac{\gamma}{2}}\bigg(\Vert (\overline{u}^\epsilon,\overline{v}^\epsilon,\overline{h}^\epsilon,\overline{g}^\epsilon)\Vert_{\mathcal{S}}+\Vert (\overline{u}^\epsilon,\overline{v}^\epsilon,\overline{h}^\epsilon,\overline{g}^\epsilon)\Vert_{\mathcal{S}}^2\bigg).
\end{aligned}
\end{equation}
It remains to estimate the $L^\infty$ norm. By the Lemma \ref{linfity}, we take
\begin{equation}\nonumber
\left \{
\begin{aligned}
F_1:=&-(u_s\partial_x u^\epsilon+u^\epsilon\partial_x u_s+v_s\partial_y u^\epsilon+v^\epsilon\partial_y u_s)\\
& +(h_s\partial_x h^\epsilon+h^\epsilon\partial_x h_s+g_s\partial_y h^\epsilon+g^\epsilon\partial_y h_s)+R_1,\\
F_2:=&-(u_s\partial_x v^\epsilon+u^\epsilon\partial_x v_s+v_s\partial_y v^\epsilon+v^\epsilon\partial_y v_s)\\
&+(h_s\partial_x g^\epsilon+h^\epsilon\partial_x g_s+g_s\partial_y g^\epsilon+g^\epsilon\partial_y g_s)+R_2,\\
F_3:=&-(u_s\partial_x h^\epsilon+u^\epsilon\partial_x h_s+v_s\partial_y h^\epsilon+v^\epsilon\partial_y h_s)\\
&+(h_s\partial_x u^\epsilon+h^\epsilon\partial_x u_s+g_s\partial_y u^\epsilon+g^\epsilon\partial_y u_s)+R_3,\\
F_4:=&-(u_s\partial_x g^\epsilon+u^\epsilon\partial_x g_s+v_s\partial_y g^\epsilon+v^\epsilon\partial_y g_s)\\
& +(h_s\partial_x v^\epsilon+h^\epsilon\partial_x v_s+g_s\partial_y v^\epsilon+g^\epsilon\partial_y v_s)+R_4,
\end{aligned}
\right.
\end{equation}
then we have
\begin{equation}\label{5.038}
\begin{aligned}
\epsilon^{\frac{\gamma}{2}}&(\Vert u^\epsilon\Vert_{L^\infty}+\Vert h^\epsilon\Vert_{L^\infty})+\epsilon^{\frac{\gamma}{2}+\frac{1}{2}}(\Vert v^\epsilon\Vert_{L^\infty}+\Vert g^\epsilon\Vert_{L^\infty})\\
\lesssim &\epsilon^{\frac{\gamma}{4}}\bigg(\Vert \nabla_\epsilon u^\epsilon\Vert_{L^2}+\Vert \nabla_\epsilon v^\epsilon\Vert_{L^2}+\Vert \nabla_\epsilon h^\epsilon\Vert_{L^2}+\Vert \nabla_\epsilon g^\epsilon\Vert_{L^2}\bigg)\\
&+\epsilon^{\frac{\gamma}{4}}\bigg(\Vert F_1\Vert_{L^2}+\Vert F_3\Vert_{L^2}\bigg)+\epsilon^{\frac{\gamma}{4}+\frac{1}{2}}\bigg(\Vert F_2\Vert_{L^2}+\Vert F_4\Vert_{L^2}\bigg).\\
\end{aligned}
\end{equation}
Note that the estimates for $\nabla_\epsilon (u^\epsilon,v^\epsilon,h^\epsilon,g^\epsilon)$ and $R_i\ (i=1,2,3,4)$ have been done, and it remains to estimate the other terms involving $u_s,v_s,h_s,g_s$ in $F_i\ (i=1,2,3,4)$. For these terms, we have
\begin{equation}\nonumber
\begin{aligned}
\epsilon^{\frac{\gamma}{4}}\Vert u_s\partial_x u^\epsilon+v_s \partial_y u^\epsilon\Vert_{L^2} \leq& \epsilon^{\frac{\gamma}{4}-\zeta}C(L,u_s,v_s)(\Vert \partial_x u^\epsilon \Vert_{L^2}+\Vert \partial_y u^\epsilon \Vert_{L^2})\\
\leq &\epsilon^{\frac{\gamma}{4}-\zeta}C(L,u_s,v_s)\Vert (u^\epsilon,v^\epsilon,h^\epsilon,g^\epsilon)\Vert_{\mathcal{S}},\\
\epsilon^{\frac{\gamma}{4}}\Vert u^\epsilon \partial_x u_s +v^\epsilon\partial_y u_s  \Vert_{L^2} \leq& \epsilon^{\frac{\gamma}{4}}\bigg(\Vert \partial_y u^\epsilon\Vert_{L^2}\sup_x \Vert \sqrt{y}\partial_x u_s\Vert_{L^2(0,\infty)}\\
&+\Vert \partial_y v^\epsilon\Vert_{L^2}\sup_x\Vert \sqrt{y}\partial_y u_s\Vert_{L^2(0,\infty)}\bigg)\\
\leq & C\epsilon^{\frac{\gamma}{4}-\zeta}\Vert (u^\epsilon,v^\epsilon,h^\epsilon,g^\epsilon)\Vert_{\mathcal{S}},\\
\epsilon^{\frac{\gamma}{4}}\Vert h_s\partial_x h^\epsilon+g_s \partial_y h^\epsilon\Vert_{L^2} \leq& \epsilon^{\frac{\gamma}{4}-\zeta}C(L,h_s,g_s)(\Vert \partial_x h^\epsilon \Vert_{L^2}+\Vert \partial_y h^\epsilon \Vert_{L^2})\\
\leq &\epsilon^{\frac{\gamma}{4}-\zeta}C(L,h_s,g_s)\Vert (u^\epsilon,v^\epsilon,h^\epsilon,g^\epsilon)\Vert_{\mathcal{S}},
\end{aligned}
\end{equation}
and
\begin{equation}\nonumber
\begin{aligned}
\epsilon^{\frac{\gamma}{4}}\Vert h^\epsilon \partial_x h_s +g^\epsilon\partial_y h_s  \Vert_{L^2} \leq& \epsilon^{\frac{\gamma}{4}}\bigg(\Vert \partial_y h^\epsilon\Vert_{L^2}\sup_x \Vert \sqrt{y}\partial_x h_s\Vert_{L^2(0,\infty)}\\
&+\Vert \partial_y g^\epsilon\Vert_{L^2}\sup_x\Vert \sqrt{y}\partial_y h_s\Vert_{L^2(0,\infty)}\bigg)\\
\leq & C\epsilon^{\frac{\gamma}{4}-\zeta}\Vert (u^\epsilon,v^\epsilon,h^\epsilon,g^\epsilon)\Vert_{\mathcal{S}}.
\end{aligned}
\end{equation}

Similarly, one has
\begin{equation}\nonumber
\begin{aligned}
\epsilon^{\frac{\gamma}{4}+\frac{1}{2}}\Vert u_s\partial_x v^\epsilon+v_s \partial_y v^\epsilon\Vert_{L^2} \leq& \epsilon^{\frac{\gamma}{4}+\frac{1}{2}-\zeta}C(L,u_s,v_s)(\Vert \partial_x v^\epsilon \Vert_{L^2}+\Vert \partial_y v^\epsilon \Vert_{L^2})\\
\leq &\epsilon^{\frac{\gamma}{4}}C(L,u_s,v_s)\Vert (u^\epsilon,v^\epsilon,h^\epsilon,g^\epsilon)\Vert_{\mathcal{S}},\\
\epsilon^{\frac{\gamma}{4}+\frac{1}{2}}\Vert u^\epsilon \partial_x v_s +v^\epsilon\partial_y v_s  \Vert_{L^2} \leq& \epsilon^{\frac{\gamma}{4}+\frac{1}{2}}\bigg(\Vert \partial_y u^\epsilon\Vert_{L^2}\sup_x \Vert \sqrt{y}\partial_x v_s\Vert_{L^2(0,\infty)}\\
&+\Vert \partial_y v^\epsilon\Vert_{L^2}\sup_x\Vert \sqrt{y}\partial_y v_s\Vert_{L^2(0,\infty)}\bigg)\\
\leq & C\epsilon^{\frac{\gamma}{4}-\zeta}\Vert (u^\epsilon,v^\epsilon,h^\epsilon,g^\epsilon)\Vert_{\mathcal{S}},\\
\epsilon^{\frac{\gamma}{4}+\frac{1}{2}}\Vert h_s\partial_x g^\epsilon+g_s \partial_y g^\epsilon\Vert_{L^2} \leq& \epsilon^{\frac{\gamma}{4}+\frac{1}{2}}C(L,h_s,g_s)(\Vert \partial_x g^\epsilon \Vert_{L^2}+\Vert \partial_y g^\epsilon \Vert_{L^2})\\
\leq &\epsilon^{\frac{\gamma}{4}-\zeta}C(L,h_s,g_s)\Vert (u^\epsilon,v^\epsilon,h^\epsilon,g^\epsilon)\Vert_{\mathcal{S}},
\end{aligned}
\end{equation}
and
\begin{equation}\nonumber
\begin{aligned}
\epsilon^{\frac{\gamma}{4}+\frac{1}{2}}\Vert h^\epsilon \partial_x g_s +g^\epsilon\partial_y g_s  \Vert_{L^2} \leq& \epsilon^{\frac{\gamma}{4}+\frac{1}{2}}\bigg(\Vert \partial_y h^\epsilon\Vert_{L^2}\sup_x \Vert \sqrt{y}\partial_x g_s\Vert_{L^2(0,\infty)}\\
&+\Vert \partial_y g^\epsilon\Vert_{L^2}\sup_x\Vert \sqrt{y}\partial_y g_s\Vert_{L^2(0,\infty)}\bigg)\\
\leq & C\epsilon^{\frac{\gamma}{4}-\zeta}\Vert (u^\epsilon,v^\epsilon,h^\epsilon,g^\epsilon)\Vert_{\mathcal{S}}.
\end{aligned}
\end{equation}
Moreover, similar arguments can be applied to estimate the terms in $F_3,F_4$. Consequently, we conclude that
\begin{equation}\label{5.038}
\begin{aligned}
\epsilon^{\frac{\gamma}{2}}&(\Vert u^\epsilon\Vert_{L^\infty}+\Vert h^\epsilon\Vert_{L^\infty})+\epsilon^{\frac{\gamma}{2}+\frac{1}{2}}(\Vert v^\epsilon\Vert_{L^\infty}+\Vert g^\epsilon\Vert_{L^\infty})\\
\leq &C(u_s,v_s,h_s,g_s)\epsilon^{-\frac{3\gamma}{4}+\frac{1}{4}-\zeta}+C(L,u_s,v_s,h_s,g_s)\epsilon^{\frac{\gamma}{4}-\zeta}\Vert (u^\epsilon,v^\epsilon,h^\epsilon,g^\epsilon)\Vert_{\mathcal{S}}\\
&+C(u_s,v_s,h_s,g_s)\epsilon^{-\frac{\gamma}{4}+\frac{1}{2}-\zeta}\left(\Vert (\overline{u}^\epsilon,\overline{v}^\epsilon,\overline{h}^\epsilon,\overline{g}^\epsilon)\Vert_{\mathcal{S}}+\Vert (\overline{u}^\epsilon,\overline{v}^\epsilon,\overline{h}^\epsilon,\overline{g}^\epsilon)\Vert_{\mathcal{S}}^2\right).
\end{aligned}
\end{equation}
Taking $\gamma +\zeta \leq \frac{1}{4}$ and $\zeta <\frac{\gamma}{4}$, we have
\begin{equation}\label{5.039}
\begin{aligned}
\Vert (u^\epsilon,v^\epsilon,h^\epsilon,g^\epsilon)\Vert_{\mathcal{S}} \lesssim 1+\epsilon^{\frac{\gamma}{4}-\zeta}\Vert (\overline{u}^\epsilon,\overline{v}^\epsilon,\overline{h}^\epsilon,\overline{g}^\epsilon)\Vert_{\mathcal{S}}+\epsilon^{\frac{3}{8}}\Vert (\overline{u}^\epsilon,\overline{v}^\epsilon,\overline{h}^\epsilon,\overline{g}^\epsilon)\Vert_{\mathcal{S}}^2.
\end{aligned}
\end{equation}
This estimate shows that the operator $(\overline{u}^\epsilon,\overline{v}^\epsilon,\overline{h}^\epsilon,\overline{g}^\epsilon) \mapsto (u^\epsilon,v^\epsilon,h^\epsilon,g^\epsilon)$ via solving the problem (\ref{5.035}) maps the closed ball $B=\{\Vert (u^\epsilon,v^\epsilon,h^\epsilon,g^\epsilon)\Vert_{\mathcal{S}}\leq 4C(u_s,v_s,h_s,g_s)=:K\}$ in $\mathcal{S}$ into itself for small enough $\epsilon$. Furthermore, we have
\begin{equation}\label{5.040}
\begin{aligned}
\Vert &( u^\epsilon_1-u^\epsilon_2,v^\epsilon_1-v^\epsilon_2,h^\epsilon_1-h^\epsilon_2,g^\epsilon_1-g^\epsilon_2)\Vert_{\mathcal{S}} \\
&\leq 2KC(L,u_s,v_s,h_s,g_s)\epsilon^{\frac{\gamma}{4}-\zeta}\Vert( \overline{u}^\epsilon_1-\overline{u}^\epsilon_2,\overline{v}^\epsilon_1-\overline{v}^\epsilon_2,\overline{h}^\epsilon_1-\overline{h}^\epsilon_2
,\overline{g}^\epsilon_1-\overline{g}^\epsilon_2)\Vert_{\mathcal{S}}.
\end{aligned}
\end{equation}
Then the existence of the solutions to (\ref{5.035}) follows via the standard contraction mapping theorem for small enough $\epsilon$. This completes the proof.
\end{proof}

\appendix
\section{Well-posedness of solutions to the MHD boundary layer system (\ref{newequation})}\label{ap1}
To prove the Proposition \ref{wellposedness}, we shall focus on \emph{a priori estimates} of solutions to the system of equations (\ref{newequation}) in Sobolev spaces. Below, we derive the weighted estimates for $D^\alpha u$ and $D^\alpha h$, $|\alpha| \leq m$.
\begin{proposition}\label{pro1}
Let $m \geq 5$ be an integer and $l\in \mathbb{R}$ with $l \geq 0$, and suppose that $(u^0_e,h^0_e)$ satisfy the hypotheses in Proposition \ref{wellposedness}. Assume that \begin{equation}\label{c1}
u_e+u^0_p (0,y)>h_e+h^0_p (0,y)\geq \vartheta_0>0
\end{equation}
uniform in $y$ for some $\vartheta_0>0$, also if
\begin{equation}\label{c2}
\begin{aligned}
&| \langle y\rangle^{l+1}\partial_y (u_e+u^0_p,h_e+h^0_p)(0,y)| \leq \frac{1}{2}\sigma_0,\\
&| \langle y\rangle^{l+1}\partial_y^2 (u_e+u^0_p,h_e+h^0_p)(0,y)| \leq \frac{1}{2}\vartheta_0^{-1},
\end{aligned}
\end{equation}
uniform in $y$, here $\sigma_0>0$ is a suitably small constant. Then there exists a small $L>0$, such that the problem (\ref{newequation}) admits a classical solution $(u,v,h,g)$ in $[0,L]\times[0, +\infty)$, which satisfies
$$(u,h)\in L^\infty(0,L;H^m_l(0,\infty)), \ \partial_y (u,h)\in L^2(0,L;H^m_l(0,\infty)),$$
and the following estimates hold for small $L>0$,
\begin{equation}\label{estimate01}
\begin{aligned}
(i) \sup\limits_{x \in [0,L] }&\Vert (u,h)\Vert_{H^m_l}\\
\leq C&\vartheta_0^{-4}\left(P(C(u_e,u_b,h_e)+\Vert (u_0,h_0)\Vert_{H^m_l})+Cx\right)^{1/2}\\
&\cdot\left\{1-C\vartheta_0^{-24}\left(P(C(u_e,u_b,h_e)+\Vert (u_0,h_0)\Vert_{H^m_l})+Cx\right)^2x\right\}^{-1/4},
\end{aligned}
\end{equation}
\begin{equation}\label{estimate02}
\begin{aligned}
(ii)\ \Vert &\langle y\rangle^{l+1}\partial_y^i (u,h)\Vert_{L^\infty}\leq  \Vert \langle y\rangle^{l+1}\partial_y^i (u_0,h_0)\Vert_{L^\infty}\\
&\quad +Cx\vartheta_0^{-4}\left(P(C(u_e,u_b,h_e)+\Vert (u_0,h_0)\Vert_{H^m_l})+Cx\right)^{1/2}\\
&\cdot\left\{1-C\vartheta_0^{-24}\left(P(C(u_e,u_b,h_e)+\Vert (u_0,h_0)\Vert_{H^m_l})+Cx\right)^2x\right\}^{-1/4},  \ i=1,2,
\end{aligned}
\end{equation}
\begin{equation}\label{estimates04}
\begin{aligned}
(iii)\ h(x,y)& \geq  h_0-Cx\vartheta_0^{-4}\left(P(C(u_e,u_b,h_e)+\Vert (u_0,h_0)\Vert_{H^m_l})+Cx\right)^{1/2}\\
&\cdot\left\{1-C\vartheta_0^{-24}\left(P(C(u_e,u_b,h_e)+\Vert (u_0,h_0)\Vert_{H^m_l})+Cx\right)^2x\right\}^{-1/4},
\end{aligned}
\end{equation}
and
\begin{equation}\label{estimate05}
\begin{aligned}
(iv)\ u-h \geq& (u_0-h_0)\\
&-Cx\vartheta_0^{-4}\left(P(C(u_e,u_b,h_e)+\Vert (u_0,h_0)\Vert_{H^m_l})+Cx\right)^{1/2}\\
&\cdot\left\{1-C\vartheta_0^{-24}\left(P(C(u_e,u_b,h_e)+\Vert (u_0,h_0)\Vert_{H^m_l})+Cx\right)^2x\right\}^{-1/4}.
\end{aligned}
\end{equation}
Moreover, for $(x,y)\in [0,L]\times [0,\infty)$, it holds that
\begin{equation}\label{conditions}
\begin{aligned}
(v)\ & h(x,y)+h_e \phi(y) \geq \frac{\vartheta_0}{2}>0,\\
&u(x,y)+u_e\phi(y)+u_b(1-\phi(y))>h(x,y)+h_e\phi(y),\\
&\Vert \langle y\rangle^{l+1}\partial_y  (u,h)\Vert_{L^\infty} \leq \sigma_0,\\
&\Vert \langle y\rangle^{l+1}\partial_y^2  (u,h)\Vert_{L^\infty} \leq \vartheta_0^{-1}.
\end{aligned}
\end{equation}
\end{proposition}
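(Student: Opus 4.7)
The plan is to establish a priori estimates in $H^m_l$ for smooth solutions of (\ref{newequation}) by a modified energy method tailored to the loss-of-$x$-derivatives feature of the steady MHD boundary layer system, and then to pass to existence by a standard linearized iteration (or parabolic regularization in $x$) scheme combined with the compactness that these estimates provide. Throughout, $x$ plays the role of a ``time'' variable, so (\ref{c1})--(\ref{c2}) are initial hypotheses that must be propagated. I split the weighted estimates into two stages, treating tangential derivatives $D^\alpha=\partial_x^\beta\partial_y^k$ with $\beta\le m-1$ separately from the top-order tangential derivative $\partial_x^\alpha$, $|\alpha|=m$.

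\textbf{Lower-order estimates.} For $|\alpha|\le m$ with $\beta\le m-1$, apply $D^\alpha$ to the two evolution equations in (\ref{newequation}), test respectively against $\langle y\rangle^{2(l+k)}D^\alpha u$ and $\langle y\rangle^{2(l+k)}D^\alpha h$, and integrate over $y\in[0,\infty)$. The boundary terms at $y=0$ vanish because $(u,v,\partial_y h,g)|_{y=0}=(0,0,0,0)$, and the source contributions from $r_1,r_2$ are controlled by (\ref{r}). Every term involving the nonlocal quantities $v=-\int_0^y\partial_x u\,dz$ and $g=-\int_0^y\partial_x h\,dz$ only costs $\beta+1\le m$ derivatives in $x$, hence is bounded by $\|(u,h)\|_{H^m_l}$ up to weighted Hardy and Sobolev embeddings in $y$. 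The strictly positive coefficient $u_e+u_p^0\phi+\cdots$ and the smallness in (\ref{c2}) enter only as benign lower-order data. The outcome is a polynomial-in-energy differential inequality controlling all $\beta\le m-1$ derivatives.

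\textbf{Top-order cancellation via the magnetic stream function.} The essential difficulty arises for $\alpha=(m,0)$: applying $\partial_x^m$ to (\ref{newequation}) produces $\partial_x^m v\cdot\partial_y u_s$ and $\partial_x^m g\cdot\partial_y h_s$, which lose one $x$-derivative. To remove this loss, I introduce the magnetic stream function $\tilde\psi(x,y):=\int_0^y (h+h_e\phi)(x,z)\,dz$, so that $\partial_y\tilde\psi=h+h_e\phi$ and, using $\partial_x h+\partial_y g=0$ together with $g|_{y=0}=0$, $\partial_x\tilde\psi=-g$. Integrating the second equation of (\ref{newequation}) once in $y$ yields an evolution equation for $\tilde\psi$ of parabolic type. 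Following the same device as in Subsection \ref{1stmhdbdprofile} and \cite{CLiu2}, I define the cancelled unknowns
\begin{equation}\nonumber
u^\alpha:=\partial_x^m u-\frac{\partial_y u}{h+h_e\phi}\partial_x^m\tilde\psi,\qquad h^\alpha:=\partial_x^m h-\frac{\partial_y h}{h+h_e\phi}\partial_x^m\tilde\psi.
\end{equation}
A direct calculation combining the equations for $\partial_x^m u,\partial_x^m h$ with the one for $\partial_x^m\tilde\psi$ shows that the dangerous $\partial_x^m v$ and $\partial_x^m g$ terms cancel identically, leaving a system for $(u^\alpha,h^\alpha)$ whose coefficients involve only lower-order quantities. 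A weighted $L^2_l$ energy estimate on this cancelled system then closes the top-order part. The lower bound $h+h_e\phi\ge\vartheta_0/2$ is indispensable here to keep $\partial_y u/(h+h_e\phi)$ and $\partial_y h/(h+h_e\phi)$ bounded in $L^\infty$, and (\ref{c2}) ensures that these coefficients remain uniformly small along the evolution. This top-order cancellation is the main obstacle and the core of the proof.

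\textbf{Closure, pointwise bounds, and propagation of structural conditions.} To convert the bound on $(u^\alpha,h^\alpha)$ into a bound on $\partial_x^m(u,h)$, I establish the norm equivalence $\|\partial_x^m(u,h)\|_{L^2_l}\simeq \|(u^\alpha,h^\alpha)\|_{L^2_l}+\text{lower order}$, which follows from a weighted Hardy-type inequality together with the $L^\infty$ control of the multipliers guaranteed by (\ref{conditions}). Combining the two stages produces a closed differential inequality for $E(x):=\|(u,h)\|_{H^m_l}^2+C(u_e,u_b,h_e)$ of the form $E'\le C\,E^q$ for some $q>1$, and explicit integration gives precisely the quantitative bound (\ref{estimate01}). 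The $L^\infty$ estimates (\ref{estimate02}) are obtained by integrating the equations along $x$ and using Sobolev embedding in $y$ on the tangentially-differentiated system; (\ref{estimates04}) and (\ref{estimate05}) follow by integrating $\partial_x h$ and $\partial_x(u-h)$ in $x$, using the $H^m_l$ bound. Finally, the structural conditions in (\ref{conditions}) hold strictly at $x=0$ by hypothesis and are continuous in $x$; the quantitative estimates above show they persist on a time interval $[0,L]$ whose length is determined by the initial data and $\vartheta_0$, after which a standard continuation argument bootstraps them. Existence of the classical solution then follows by the aforementioned iteration scheme from these a priori estimates, completing the proof of Proposition \ref{wellposedness}.
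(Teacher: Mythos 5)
Your overall strategy coincides with the paper's: weighted $H^m_l$ estimates split into derivatives with $\beta\le m-1$ and the top tangential derivative $\partial_x^m$, a magnetic stream function to cancel the loss of $x$-derivative, a norm-equivalence step, a Gronwall-type closure giving the explicit bounds, Newton--Leibniz in $x$ for (ii)--(iv), and persistence of the structural conditions for small $L$. However, two concrete steps would fail as written. First, in the lower-order stage you assert that the boundary terms at $y=0$ vanish because $(u,v,\partial_y h,g)|_{y=0}=0$. This is only true for $k=0$: for $D^\alpha=\partial_x^\beta\partial_y^k$ with $k\ge1$, the term $\nu\,\partial_yD^\alpha u\cdot D^\alpha u|_{y=0}$ (and its analogue for $h$) does not vanish, and controlling it is a substantial part of the paper's Lemma \ref{estimate1}: for $|\alpha|\le m-1$ it is absorbed by trace-type interpolation, while for $|\alpha|=m$, $k\ge1$ one must use the equation itself to rewrite $\nu\,\partial_y D^\alpha u|_{y=0}=D^\gamma\big[\big((u+u_b)\partial_x+v\partial_y\big)u-(h\partial_x+g\partial_y)h\big]|_{y=0}$ (with $\gamma=(\beta,k-1)$) and then estimate the resulting nonlinear boundary traces; simply dropping these terms leaves the estimate unjustified.

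Second, your cancellation multipliers are mis-specified. After homogenization, the coefficients of the dangerous terms in \eqref{newequation} are $(\partial_y u+(u_e-u_b)\phi')\partial_x^m v-(\partial_y h+h_e\phi')\partial_x^m g$ (and symmetrically in the $h$-equation), because $v\partial_y$ and $g\partial_y$ act on the full profiles $u+u_e\phi+u_b(1-\phi)$ and $h+h_e\phi$. With your choice $u^\alpha=\partial_x^m u-\frac{\partial_y u}{h+h_e\phi}\partial_x^m\tilde\psi$, $h^\alpha=\partial_x^m h-\frac{\partial_y h}{h+h_e\phi}\partial_x^m\tilde\psi$, the subtraction removes only $\partial_y u\,\partial_x^m v$ and $\partial_y h\,\partial_x^m g$; the residuals $(u_e-u_b)\phi'\partial_x^m v$ and $h_e\phi'\partial_x^m g$ survive and still cost one extra $x$-derivative (localization in $y$ on the support of $\phi'$ does not help), so the claimed identical cancellation fails. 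The correct multipliers are $\eta_1=\frac{\partial_y u+(u_e-u_b)\phi'}{h+h_e\phi}$ and $\eta_2=\frac{\partial_y h+h_e\phi'}{h+h_e\phi}$, as in the paper's Step 2; this is an easy repair but must be made. Finally, when closing both stages you should make explicit where the hypothesis $u+u_e\phi+u_b(1-\phi)>h+h_e\phi>0$ enters: the symmetric coupling produces the exact $x$-derivative $\frac{\mathrm{d}}{\mathrm{d}x}\int(h+h_e\phi)\langle y\rangle^{2(l+k)}D^\alpha u\,D^\alpha h$, whose integrated contribution is absorbed into the weighted energy $s(x)$ precisely because $|h+h_e\phi|$ is dominated by the weight $u+u_e\phi+u_b(1-\phi)$; without this observation the energy inequality does not close.
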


In what follows, we will establish the \emph{a priori estimates} in Proposition \ref{pro1} by two steps. It is noted that all of the energy estimates will be derived based on the \emph{a priori assumptions} (\ref{conditions}). And the \emph{a priori assumptions} (\ref{conditions}) can be verified by the energy estimates established and the fact that $L$ is suitably small.

\textbf{Step 1: Estimates for $D^\alpha (u,h), |\alpha|\leq m, D^\alpha=\partial_x^\beta \partial_y^k, \beta \leq m-1$.}

\begin{lemma}[Weighted estimates for $D^\alpha (u,h)$ with $|\alpha|\leq m, \beta\leq m-1$]\label{estimate1}
Let $m \geq 5$ be an integer and $l\in \mathbb{R}$ with $l \geq 0$, and suppose that $(u^0_e,h^0_e)$ satisfy the hypotheses in Proposition \ref{wellposedness}. Assume that $(u,v,h,g)$ is a classical solution to the problem (\ref{newequation}) in $[0,L]$ and satisfies
$$(u,h)\in L^\infty(0,L;H^m_l(0,\infty)), \ \partial_y (u,h)\in L^2(0,L;H^m_l(0,\infty)).$$
Then, there exists a constant $C>0$, depending on $m,l,\phi,$ such that for any small $0<\delta_1<1$, it holds that
\begin{equation}\label{estimate0}
\begin{aligned}
&\sum_{\alpha\in\{\alpha=(\beta,k):|\alpha| \leq m,\beta\leq m-1\}}\bigg(s(x)+\nu\int_0^x \Vert \partial_y D^\alpha u\Vert_{L^2_l}^2+\kappa\int_0^x \Vert \partial_y D^\alpha h \Vert_{L^2_l}^2 \bigg)\\
\leq &C\delta_1 \int_0^x \Vert \partial_y (u,h)\Vert_{H^m_0}^2+C\delta_1^{-1}\int_0^x E_{u,h}^2 \left(1+E_{u,h}^2\right)+\int_0^x C(u_b,u_e,h_e)+s(0),
\end{aligned}
\end{equation}
where
$$s(x)=\Vert (u+u_e\phi(y)+u_b(1-\phi(y)))^{\frac{1}{2}}\langle y\rangle^{l+k}D^\alpha (u,h) \Vert_{L^2_y}^2$$
and
$$E_{u,h}^2=\sum_{|\alpha |\leq m}\Vert (u+u_e\phi(y)+u_b(1-\phi(y)))^{\frac{1}{2}}\langle y\rangle^{l+k}D^\alpha (u,h)\Vert_{L^2_y}^2.$$
\end{lemma}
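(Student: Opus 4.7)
The plan is a standard weighted energy estimate, carried out by applying $D^\alpha$ to the first two equations in (\ref{newequation}), multiplying by $\langle y\rangle^{2(l+k)}D^\alpha u$ and $\langle y\rangle^{2(l+k)}D^\alpha h$ respectively, integrating in $y\in[0,\infty)$, and then in $x$ over $[0,x]$. The restriction $\beta\leq m-1$ guarantees $k\geq 1$, i.e.\ at least one normal derivative is present in $D^\alpha$; this is the essential structural feature that will allow absorption of the otherwise critical $x$-derivative loss coming from the vertical components $v$ and $g$ via the divergence-free relations $v=-\int_0^y\partial_x u$ and $g=-\int_0^y\partial_x h$.

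After applying $D^\alpha$, the leading transport operators are $[(u+u_e\phi+u_b(1-\phi))\partial_x+v\partial_y]$ acting on $D^\alpha u$ and the corresponding magnetic operator acting on $D^\alpha h$. The divergence-free identities $\partial_x(u+u_e\phi+u_b(1-\phi))+\partial_y v=0$ and $\partial_x(h+h_e\phi)+\partial_y g=0$ allow integration by parts in $y$ to produce $\tfrac{d}{dx}s(x)$ on the left, up to weight corrections in which $\partial_y\langle y\rangle^{2(l+k)}$ multiplies $v,g$; these corrections are controlled using the a priori pointwise bounds on $\langle y\rangle\partial_y(u,h)$. The viscous and resistive terms $-\nu\partial_y^2 D^\alpha u$ and $-\kappa\partial_y^2 D^\alpha h$ integrate by parts to yield the dissipation $\nu\|\partial_y D^\alpha u\|_{L^2_l}^2+\kappa\|\partial_y D^\alpha h\|_{L^2_l}^2$, with boundary contributions at $y=0$ vanishing thanks to the homogeneous conditions $(u,v,\partial_y h,g)|_{y=0}=0$. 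The magnetic coupling terms from both equations combine, after an integration by parts in $x$ or $y$ and use of the divergence-free conditions, into a skew-symmetric structure modulo lower-order commutators. The forcing terms $r_1,r_2$ are smooth and compactly supported in $y$, contributing the constant $C(u_b,u_e,h_e)$.

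The commutators produced by the Leibniz rule split into two categories. The benign ones, in which derivatives distribute across the factors without concentrating on the vertical component, are estimated by Moser-type weighted product inequalities and yield the nonlinear right-hand side $C\delta_1^{-1}E_{u,h}^2(1+E_{u,h}^2)$. The dangerous commutators are those in which $D^\alpha$ lands entirely on $v$ or $g$ in the terms $v\partial_y u$, $g\partial_y h$, $v\partial_y u^1_e$-type, etc.; rewriting $D^\alpha v=-\partial_x^{\beta+1}\partial_y^{k-1}u$ (where $k\geq 1$ is used here in a crucial way) transfers one $y$-derivative to $u$, exhibiting the factor $\partial_y^{k-1}\partial_x^{\beta+1}u$, i.e., an $(|\alpha|+1)$-order $x$-derivative but with one $y$-derivative. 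Pairing with the remaining $\partial_y u$ and integrating by parts in $y$ once more places a $\partial_y$ on one of the factors, so that Cauchy-Schwarz together with Young's inequality (splitting with weight $\delta_1$) absorbs the top-order piece into $C\delta_1\|\partial_y(u,h)\|_{H^m_0}^2$ while leaving the remainder in $C\delta_1^{-1}E_{u,h}^2(1+E_{u,h}^2)$. The analogous analysis for $g\partial_y h$ uses the second divergence-free identity.

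The main obstacle is precisely this last step: exhibiting the cancellation that prevents a genuine loss of derivatives in the critical commutator. Without the restriction $\beta\leq m-1$, i.e.\ in the purely tangential case, the trick of transferring a $y$-derivative to restore regularity fails and one must instead invoke the magnetic stream-function cancellation developed in \cite{CLiu2}, which is treated in the complementary estimate in Step 2 of the proof of Proposition \ref{wellposedness}. The remaining bookkeeping — tracking the weights $\langle y\rangle^{l+k}$ through the Leibniz expansion, verifying that all weight-shift errors fall within the announced right-hand side, and checking that the lower-order piece of the magnetic cross-term $(h+h_e\phi)\partial_x D^\alpha h\cdot D^\alpha u$ is controlled by $E_{u,h}^2$ thanks to the a priori assumption $h+h_e\phi\geq \vartheta_0/2$ — is routine but tedious, and is the same type of calculation carried out in \cite{CLiu2,CLiu3}.
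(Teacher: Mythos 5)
Your overall skeleton (apply $D^\alpha$, test with $\langle y\rangle^{2(l+k)}D^\alpha u$ and $\langle y\rangle^{2(l+k)}D^\alpha h$, exploit the divergence-free structure and the cross-term cancellation via $h+h_e\phi< u+u_e\phi+u_b(1-\phi)$, estimate commutators with weighted product/Hardy inequalities) matches the paper, but there are two genuine errors, and the first one removes precisely the part of the argument that produces the $\delta_1$-terms in \eqref{estimate0}. You assert that after integrating the diffusive terms by parts "boundary contributions at $y=0$ vanish thanks to $(u,v,\partial_y h,g)|_{y=0}=0$". This is false for general $D^\alpha=\partial_x^\beta\partial_y^k$: the boundary conditions only kill $u$, $v$, $g$ and $\partial_y h$ at $y=0$, not $D^\alpha u|_{y=0}$ or $\partial_y D^\alpha u|_{y=0}$ (already for $D^\alpha=\partial_y$ the term $\nu\,\partial_y^2u\cdot\partial_y u|_{y=0}$ survives). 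Controlling these boundary terms is the core of the paper's proof: for $|\alpha|\leq m-1$ a trace inequality suffices, while for $|\alpha|=m$ (where $\beta\le m-1$ forces $k\ge1$) one writes $\nu\,\partial_yD^\alpha u|_{y=0}=D^{\gamma}\partial_y^2(\nu u)|_{y=0}$ with $\gamma=(\beta,k-1)$, substitutes the equation (using that $\phi\equiv0$ near $y=0$ and $v|_{y=0}=g|_{y=0}=0$), and estimates the resulting products by traces; this is exactly where $C\delta_1\int_0^x\|\partial_y(u,h)\|_{H^m_0}^2$ and $C\delta_1^{-1}\int_0^x E_{u,h}^2(1+E_{u,h}^2)$ come from. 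Your proposal never generates these terms, and your alternative attribution of the $\delta_1$-split to absorbing top-order commutator pieces does not replace this missing step.

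Second, your structural premise "$\beta\leq m-1$ guarantees $k\geq1$" is wrong: $\alpha=(m-1,0)$ is admissible with $k=0$, so the rewriting $D^\alpha v=-\partial_x^{\beta+1}\partial_y^{k-1}u$ on which your treatment of the dangerous commutators rests is not available for all admissible $\alpha$. The correct use of $\beta\leq m-1$ is different: when $k=0$ one writes $D^{\tilde\alpha}v=-\partial_y^{-1}\partial_x^{\tilde\beta+1}u$ and, since $\tilde\beta+1\leq m$, controls it by $E_u$ through a Hardy-type inequality (no derivative is lost at all in this range), whereas $k\geq1$ is only guaranteed, and only needed, in the top-order case $|\alpha|=m$ (both for the commutators and for the boundary-term substitution above). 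A smaller misattribution: the cross-term $\int(h+h_e\phi)\langle y\rangle^{2(l+k)}D^\alpha u\,D^\alpha h$ is absorbed into $\tfrac12 s(x)$ using the upper bound $h+h_e\phi<u+u_e\phi+u_b(1-\phi)$, not the lower bound $h+h_e\phi\geq\vartheta_0/2$ (the latter is needed only in the purely tangential Step 2 via the stream function). As written, the proposal therefore has a genuine gap at the boundary terms and a flawed justification of the commutator estimates, even though the intended energy framework is the right one.
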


\begin{proof}
Applying the operator $D^\alpha,\ \alpha=(\beta, k)$ with $\beta\leq m-1$ on the equations in (\ref{newequation}), then we have
\begin{equation}\label{newequation1}
\left \{
\begin{array}{lll}
\big[\big(u+u_e\phi(y)+u_b(1-\phi(y))\big)\partial_x +v\partial_y\big]D^\alpha u-\big[(h+h_e\phi(y))\partial_x+g\partial_y\big]D^\alpha h\\
\quad -\nu \partial_y^2D^\alpha u
=-[D^\alpha,\big(u+u_e\phi(y)+u_b(1-\phi(y))\big)\partial_x +v\partial_y]u+D^\alpha (gh_e\phi'(y))\\
\quad  +[D^\alpha,(h+h_e\phi(y))\partial_x+g\partial_y]h-D^\alpha(v(u_e-u_b)\phi'(y))+D^\alpha r_1,\\
\big[
\big(u+u_e\phi(y)+u_b(1-\phi(y))\big)\partial_x +v\partial_y\big]D^\alpha h-\big[(h+h_e\phi(y))\partial_x+g\partial_y\big]D^\alpha u\\
\quad-\kappa \partial_y^2D^\alpha h
=-[D^\alpha,\big(u+u_e\phi(y)+u_b(1-\phi(y))\big)\partial_x +v\partial_y]h\\
\quad +D^\alpha (g(u_e-u_b)\phi'(y))+[D^\alpha,(h+h_e\phi(y))\partial_x+g\partial_y]u\\
\quad-D^\alpha(vh_e\phi'(y))+D^\alpha r_2.
\end{array}
\right.
\end{equation}
Multiplying the first and second equations in (\ref{newequation1}) by $\langle y \rangle^{2(l+k)}D^\alpha u,\ \langle y \rangle^{2(l+k)}D^\alpha h$, respectively, integrating them over $[0,\infty)$ with respect to the variable $y$, and adding them together, we obtain that
\begin{equation}\label{2.00}
\begin{aligned}
\frac{1}{2} \frac{\mathrm{d}}{\mathrm{d}x}\Vert &\big(u+u_e\phi(y)+u_b(1-\phi(y))\big)^{\frac{1}{2}}\langle y\rangle^{l+k}D^\alpha (u,h)\Vert_{L^2_y(0,\infty)}^2\\
=&\int_0^\infty \left(D^\alpha r_1 \cdot \langle y\rangle^{2(l+k)}D^\alpha u+D^\alpha r_2 \cdot \langle y\rangle^{2(l+k)} D^\alpha h\right)\mathrm{d}y\\
&+\nu \int_0^\infty \partial_y^2 D^\alpha u\cdot \langle y \rangle^{2(l+k)}D^\alpha u\mathrm{d}y+\kappa\int_0^\infty \partial_y^2 D^\alpha h\cdot \langle y \rangle^{2(l+k)}D^\alpha h\mathrm{d}y\\
&+\int_0^\infty y \langle y\rangle^{2(l+k)-2}v\left(|D^\alpha u|^2+|D^\alpha h|^2\right)\mathrm{d}y\\
&-\int_0^\infty \left(I_1\cdot \langle y\rangle^{2(l+k)}D^\alpha u+I_2\cdot \langle y \rangle^{2(l+k)}D^\alpha h\right)\mathrm{d}y,
\end{aligned}
\end{equation}
where
\begin{equation}\label{2.01}
\left \{
\begin{aligned}
I_1=& [D^\alpha,\big(u+u_e\phi(y)+u_b(1-\phi(y))\big)\partial_x +v\partial_y]u\\
&-\big[(h+h_e\phi(y))\partial_x+g\partial_y\big]D^\alpha h-[D^\alpha,(h+h_e\phi(y))\partial_x+g\partial_y]h\\
&+D^\alpha (v(u_e-u_b)\phi'-gh_e\phi'),\\
I_2=& [D^\alpha,\big(u+u_e\phi(y)+u_b(1-\phi(y))\big)\partial_x +v\partial_y]h\\
&-\big[(h+h_e\phi(y))\partial_x+g\partial_y\big]D^\alpha u-[D^\alpha,(h+h_e\phi(y))\partial_x+g\partial_y]u\\
&+D^\alpha (vh_e\phi'-g(u_e-u_b)\phi').
\end{aligned}
\right.
\end{equation}

First, based on the following {\it a priori assumption},
\begin{align}
\label{AA}
u+u_e\phi(y)+u_b(1-\phi(y)) \geq \tilde{c}>0
\end{align}
for some $\tilde{c}>0$, we have
\begin{equation}\label{2.02}
\begin{aligned}
\int_0^\infty & \left(D^\alpha r_1 \cdot \langle y\rangle^{2(l+k)}D^\alpha u+D^\alpha r_2 \cdot \langle y\rangle^{2(l+k)} D^\alpha h\right)\mathrm{d}y\\
\leq & \frac{1}{2}\Vert \big(u+u_e\phi(y)+u_b(1-\phi(y))\big)^{\frac{1}{2}}\langle y\rangle^{l+k} D^\alpha (u,h)\Vert_{L^2_y}^2\\
&+\frac{1}{2}C(\vartheta_0,\tilde{c})\Vert \langle y\rangle^{l+k} D^\alpha (r_1,r_2)\Vert_{L^2_y}^2.
\end{aligned}
\end{equation}
Next, we will estimate the other terms in (\ref{2.00}). Since the term $\nu \int_0^\infty \partial_y^2 D^\alpha u\cdot \langle y \rangle^{2(l+k)}D^\alpha u\mathrm{d}y$ and the term $\kappa\int_0^\infty \partial_y^2 D^\alpha h\cdot \langle y \rangle^{2(l+k)}D^\alpha h\mathrm{d}y$ can be estimated similarly, we only handle the first one.
\begin{equation}\label{2.03}
\begin{aligned}
\nu \int_0^\infty &\partial_y^2 D^\alpha u\cdot \langle y \rangle^{2(l+k)}D^\alpha u\mathrm{d}y\\
=&-\nu \Vert \langle y\rangle^{l+k}\partial_y D^\alpha u\Vert_{L^2_y}^2\\
&+\nu (\partial_y D^\alpha u\cdot D^\alpha u)|_{y=0}+2\nu (l+k)\int_0^\infty \langle y \rangle^{2(l+k)-2} y\partial_y D^\alpha u\cdot D^\alpha u\mathrm{d}y.
\end{aligned}
\end{equation}
The last term in (\ref{2.03}) can be estimated as
\begin{equation}\label{2.04}
\begin{aligned}
2\nu (l+k)& \int_0^\infty \langle y \rangle^{2(l+k)-2} y\partial_y D^\alpha u\cdot D^\alpha u\mathrm{d}y\\
& \leq \frac{\nu}{14}\Vert \langle y\rangle^{l+k}\partial_y D^\alpha u\Vert_{L^2_y}^2+14\nu (l+k)^2\Vert \langle y\rangle^{l+k} D^\alpha u\Vert_{L^2_y}^2,
\end{aligned}
\end{equation}
it suffices to control the boundary integral term, and it will be treated by the following two cases.

\emph{Case 1: $|\alpha|\leq m-1$}.

In this case, it is easy to deduce that for any small $0<\delta_1 <1$,
\begin{equation}\label{2.05}
\begin{aligned}
\nu (\partial_y D^\alpha u\cdot D^\alpha u)|_{y=0} \leq & \nu \Vert \partial_y^2 D^\alpha u\Vert_{L^2_y}\Vert D^\alpha u\Vert_{L^2_y}+\nu \Vert \partial_y D^\alpha u\Vert_{L^2_y}^2\\
\leq &\delta_1 \Vert \partial_y^2D^\alpha u\Vert_{L^2_y}^2+\nu \Vert \partial_y D^\alpha u\Vert_{L^2_y}^2\\
&+\frac{\nu^2}{4\delta_1\tilde{c}}\Vert \big(u+u_e\phi(y)+u_b(1-\phi(y))\big)^{\frac{1}{2}}D^\alpha u \Vert_{L^2_y}^2\\
\leq &\delta_1 \Vert \partial_y  u\Vert_{H^m_0}^2+C\delta_1^{-1} E_u^2,
\end{aligned}
\end{equation}
where
$$E_u^2=\sum_{|\alpha |\leq m}\Vert (u+u_e\phi(y)+u_b(1-\phi(y)))^{\frac{1}{2}}\langle y\rangle^{l+k}D^\alpha u\Vert_{L^2_y}^2.$$

\emph{Case 2: $|\alpha|=\beta+k=m$.}

In this case, one has $k \geq 1$ from $\beta \leq m-1$. Denote $\gamma=(\beta,k-1)$, then$|\gamma|=m-1$ and $\partial_y D^\alpha=D^\gamma \partial_y^2$. Therefore,
\begin{equation}\nonumber
\begin{aligned}
\nu \partial_y D^\alpha u=\nu D^\gamma\partial^2_y u=D^\gamma &\bigg\{\big[(u+u_e\phi(y)+u_b(1-\phi(y)))\partial_x +v\partial_y\big]u\\
&-\big[(h+h_e\phi(y))\partial_x+g\partial_y\big]h\\
&-gh_e\phi'(y)+v(u_e-u_b)\phi'(y)-r_1 \bigg\}.
\end{aligned}
\end{equation}
Note that $\phi \equiv 0$ near $\{y=0\}$, then on $y=0$, one gets that
\begin{equation}\label{2.05}
\begin{aligned}
\nu \partial_y D^\alpha u|_{y=0}=&D^\gamma \big[\big((u+u_b)\partial_x +v\partial_y\big)u-\big(h\partial_x+g\partial_y\big)h \big]|_{y=0}\\
=& \bigg[D^\gamma  \left( (u+u_b)\partial_x u-h\partial_x h\right)+D^\gamma \left(v \partial_y u-g\partial_y h\right)\bigg]|_{y=0}.
\end{aligned}
\end{equation}
By Leibnitz formula, it is direct to calculate that
$$D^\gamma ((u+u_b)\partial_x u)=\sum_{\tilde{\gamma}\leq \gamma}\begin{pmatrix} \gamma  \\ \tilde{\gamma} \end{pmatrix}(D^{\tilde{\gamma}}(u+u_b)\cdot D^{\gamma-\tilde{\gamma}+(1,0)}u),$$
then
\begin{equation}\label{2.06}
\begin{aligned}
\big|&(D^\gamma ((u+u_b)\partial_x u)\cdot D^\alpha u)|_{y=0}\big| \\
\leq &C \sum_{\tilde{\gamma}\leq \gamma}\bigg(\Vert \partial_y (D^{\tilde{\gamma}}(u+u_b)\cdot D^{\gamma-\tilde{\gamma}+(1,0)}u)\Vert_{L^2_y}\Vert D^\alpha u\Vert_{L^2_y}\\
&+\Vert D^{\tilde{\gamma}}(u+u_b)\cdot D^{\gamma-\tilde{\gamma}+(1,0)}u \Vert_{L^2_y}\Vert \partial_y D^\alpha u\Vert_{L^2_y}\bigg).
\end{aligned}
\end{equation}
Note that $|\gamma|=m-1 \geq 3$, then we have
\begin{equation}\label{2.07}
\begin{aligned}
\Vert \partial_y &(D^{\tilde{\gamma}}(u+u_b)\cdot D^{\gamma-\tilde{\gamma}+(1,0)}u)\Vert_{L^2_y} \\
\leq &\Vert \partial_y D^{\tilde{\gamma}}(u+u_b)\cdot D^{\gamma-\tilde{\gamma}+(1,0)}u\Vert_{L^2_y}+\Vert D^{\tilde{\gamma}}(u+u_b)\cdot D^{\gamma-\tilde{\gamma}+(1,0)}\partial_y u\Vert_{L^2_y}\\
\leq &\Vert \partial_y u\Vert_{H^{m-1}_0}\Vert \partial_x u\Vert_{H^{m-1}_0}+\Vert u\Vert_{H^{m-1}_0}\Vert \partial_{xy} u\Vert_{H^{m-1}_0}+|u_b|\Vert \partial_{xy} u\Vert_{H^{m-1}_0}\\
\leq & C \sum_{|\alpha |\leq m}\Vert (u+u_e\phi(y)+u_b(1-\phi(y)))^{\frac{1}{2}}\langle y\rangle^{l+k}D^\alpha u\Vert_{L^2_y}\Vert \partial_y u \Vert_{H^m_0}\\
&+C\sum_{|\alpha |\leq m}\Vert (u+u_e\phi(y)+u_b(1-\phi(y)))^{\frac{1}{2}}\langle y\rangle^{l+k}D^\alpha u\Vert_{L^2_y}^2\\
&+|u_b|\Vert \partial_{y} u\Vert_{H^{m}_0}
\end{aligned}
\end{equation}
and
\begin{equation}\label{2.08}
\begin{aligned}
\Vert &D^{\tilde{\gamma}}(u+u_b)\cdot D^{\gamma-\tilde{\gamma}+(1,0)}u \Vert_{L^2_y} \\
&\leq C\sum_{|\alpha |\leq m}\Vert (u+u_e\phi(y)+u_b(1-\phi(y)))^{\frac{1}{2}}\langle y\rangle^{l+k}D^\alpha u\Vert_{L^2_y}^2+|u_b|\Vert  u\Vert_{H^{m}_0}.
\end{aligned}
\end{equation}
Therefore we get that
\begin{equation}\label{2.09}
\begin{aligned}
\bigg|(D^\gamma& ((u+u_b)\partial_x u)\cdot D^\alpha u)|_{y=0}\bigg| \\
\leq &C\sum_{\tilde{\gamma}\leq \gamma}\bigg\{\bigg(\sum_{|\alpha|\leq m}\Vert (u+u_e\phi(y)+u_b(1-\phi(y)))^{\frac{1}{2}}\langle y\rangle^{l+k}D^\alpha u\Vert_{L^2_y}\Vert \partial_y u \Vert_{H^m_0}\\
&+\sum_{|\alpha |\leq m}\Vert (u+u_e\phi(y)+u_b(1-\phi(y)))^{\frac{1}{2}}\langle y\rangle^{l+k}D^\alpha u\Vert_{L^2_y}^2+|u_b|\Vert \partial_{y} u\Vert_{H^{m}_0}\bigg)\Vert D^\alpha u\Vert_{L^2_y}\\
&+\sum_{|\alpha |\leq m}\Vert (u+u_e\phi(y)+u_b(1-\phi(y)))^{\frac{1}{2}}\langle y\rangle^{l+k}D^\alpha u\Vert_{L^2_y}^2\Vert \partial_y D^\alpha u\Vert_{L^2_y}\bigg\}\\
\leq & \frac{\delta_1}{3}\Vert \partial_y u\Vert_{H^m_0}^2+\frac{\nu}{14}\Vert \partial_y D^\alpha u\Vert_{L^2_y}^2+C\delta_1^{-1}E_u^4+CE_u^2+C(u_b),
\end{aligned}
\end{equation}
where
$$E_u^2=\sum_{|\alpha |\leq m}\Vert (u+u_e\phi(y)+u_b(1-\phi(y)))^{\frac{1}{2}}\langle y\rangle^{l+k}D^\alpha u\Vert_{L^2_y}^2.$$

Similarly, we have
\begin{equation}\label{2.010}
\begin{aligned}
\bigg|(D^\gamma& (h\partial_x h)\cdot D^\alpha u)|_{y=0}\bigg| \\
\leq & \frac{\delta_1}{3}\Vert \partial_y (u,h) \Vert_{H^m_0}^2+\frac{\nu}{14}\Vert \partial_y D^\alpha u\Vert_{L^2_y}^2+C\delta_1^{-1}E_{u,h}^4+CE_{u,h}^2,
\end{aligned}
\end{equation}
where
$$E_{u,h}^2=\sum_{|\alpha |\leq m}\Vert (u+u_e\phi(y)+u_b(1-\phi(y)))^{\frac{1}{2}}\langle y\rangle^{l+k}D^\alpha (u,h)\Vert_{L^2_y}^2.$$

Now we turn to control the term
$$\bigg|(D^\gamma (v\partial_yu)\cdot D^\alpha u)|_{y=0}\bigg|.$$
Note that $D^\gamma=\partial_x^\beta\partial_y^{k-1}$ and $v|_{y=0}=0$, then we have
\begin{equation}\label{2.011}
\begin{aligned}
D^\gamma (v\partial_y u)=&\partial_x^\beta \left(v \partial_y^k u+\sum_{i=1}^{k-1} \begin{pmatrix} k-1  \\ i \end{pmatrix}\partial_y^i v\cdot \partial_y^{k-i}u  \right)\\
=&-\sum_{\tilde{\beta}\leq \beta, 0\leq j\leq k-2} \begin{pmatrix} k-1  \\ j+1 \end{pmatrix} \begin{pmatrix} \beta  \\  \tilde{\beta} \end{pmatrix}
\left( \partial_x^{\tilde{\beta}+1}\partial_y^j u\cdot \partial_x^{\beta-\tilde{\beta}}\partial_y^{k-j-1}u\right),
\end{aligned}
\end{equation}
where we have denoted $\begin{pmatrix} j  \\ i \end{pmatrix}=0$ for $i>j$. When $k=1$ the right hand side will disappear, we only need to consider the case $k\geq 2$. By the formula (\ref{2.011}), we have
\begin{equation}\label{2.012}
\begin{aligned}
\bigg| &\left( D^\gamma (v\partial_y u)\cdot D^\alpha u\right)|_{y=0}\bigg| \\
\leq &C \sum_{\tilde{\beta}\leq \beta, 0\leq j\leq k-2} \bigg(\Vert \partial_y (\partial_x^{\tilde{\beta}+1}\partial_y^ju\cdot\partial_x^{\beta-\tilde{\beta}}\partial_y^{k-j-1}u)\Vert_{L^2_y}\Vert D^\alpha u\Vert_{L^2_y}\\
&+\Vert \partial_x^{\tilde{\beta}+1}\partial_y^ju\cdot \partial_x^{\beta-\tilde{\beta}}\partial_y^{k-j-1}u\Vert_{L^2_y}\Vert \partial_y D^\alpha u\Vert_{L^2_y}\bigg).
\end{aligned}
\end{equation}
Since $0 \leq j \leq k-2$, we have
\begin{equation}\label{2.013}
\begin{aligned}
\Vert \partial_y &(\partial_x^{\tilde{\beta}+1}\partial_y^j u\cdot \partial_x^{\beta-\tilde{\beta}}\partial_y^{k-j-1}u)\Vert_{L^2_y} \\
\leq&\Vert \partial_x^{\tilde{\beta}+1}\partial_y^{j+1}u\cdot \partial_x^{\beta-\tilde{\beta}}\partial_y^{k-j-1}u\Vert_{L^2_y}+\Vert \partial_x^{\tilde{\beta}+1}\partial_y^ju\cdot \partial_x^{\beta-\tilde{\beta}}\partial_y ^{k-j}u\Vert_{L^2_y}\\
\leq &C\Vert \partial_y u\Vert_{H^{m-1}_l}^2+C\Vert \partial_x u\Vert_{H^{m-1}_l}\Vert \partial_y u\Vert_{H^{m-1}_l}\\
\leq &C\Vert (u+u_e\phi(y)+u_b(1-\phi(y)))^{\frac{1}{2}}\langle y\rangle^{l+k}D^\alpha u \Vert_{L^2_y}^2
\end{aligned}
\end{equation}
and
\begin{equation}\label{2.014}
\begin{aligned}
\Vert & \partial_x^{\tilde{\beta}+1}\partial_y^ju\cdot \partial_x^{\beta-\tilde{\beta}}\partial_y^{k-j-1}u\Vert_{L^2_y}\\
 &\leq C\Vert (u+u_e\phi(y)+u_b(1-\phi(y)))^{\frac{1}{2}}\langle y\rangle^{l+k}D^\alpha u \Vert_{L^2_y}^2,
\end{aligned}
\end{equation}
provided that $\beta+k=|\alpha|=m$. Therefore, we get that
\begin{equation}\label{2.015}
\begin{aligned}
\bigg| &\left( D^\gamma (v\partial_y u)\cdot D^\alpha u\right)|_{y=0}\bigg| \\
\leq &C \sum_{\tilde{\beta}\leq \beta, 0\leq j\leq k-2}  \bigg(E_u^2 \Vert D^\alpha u\Vert_{L^2_y}+E_u^2\Vert \partial_y D^\alpha u\Vert_{L^2_y}\bigg)\\
\leq & \frac{\nu}{14}\Vert \partial_y D^\alpha u\Vert_{L^2_y}^2+CE_u^4+C E_u^2,
\end{aligned}
\end{equation}
where $E_u$ is defined as before.

Applying the similar arguments to yield that
\begin{equation}\label{2.016}
\begin{aligned}
\bigg| \left( D^\gamma (g\partial_y h)\cdot D^\alpha u\right)|_{y=0}\bigg|
\leq  \frac{\nu}{14}\Vert \partial_y D^\alpha u\Vert_{L^2_y}^2+CE_{u,h}^4+C E_{u,h}^2.
\end{aligned}
\end{equation}
Consequently, for $|\alpha|=\beta+k\leq m$ with $\beta \leq m-1$, we obtain
\begin{equation}\label{2.017}
\begin{aligned}
\bigg| \left(\nu\partial_y D^\alpha u\cdot D^\alpha u\right)|_{y=0}\bigg| \leq & \delta_1\Vert \partial_y (u,h)\Vert_{H^m_0}^2+\frac{3\nu}{7}\Vert \partial_y D^\alpha u\Vert_{L^2_y}^2\\
& +C\delta_1^{-1} E_{u,h}^2(1+E_{u,h}^2)+C(u_b).
\end{aligned}
\end{equation}
Putting the above estimates into (\ref{2.03}), we arrive at
\begin{equation}\label{2.018}
\begin{aligned}
\nu \int_0^\infty \partial_y^2& D^\alpha u\cdot \langle y \rangle^{2(l+k)}D^\alpha u\mathrm{d}y\leq -\frac{\nu}{2}\Vert \langle y\rangle^{l+k}\partial_yD^\alpha u\Vert_{L^2_y}^2\\
&+\delta_1\Vert \partial_y (u,h)\Vert_{H^m_0}^2+C\delta_1^{-1} E_{u,h}^2(1+E_{u,h}^2)+C(u_b).
\end{aligned}
\end{equation}
By similar arguments, one achieves that
\begin{equation}\label{2.019}
\begin{aligned}
\kappa \int_0^\infty \partial_y^2& D^\alpha h\cdot \langle y \rangle^{2(l+k)}D^\alpha h\mathrm{d}y\\
\leq& -\frac{\kappa}{2}\Vert \langle y\rangle^{l+k}\partial_yD^\alpha h\Vert_{L^2_y}^2+\delta_1\Vert \partial_y (u,h)\Vert_{H^m_0}^2\\
&+C\delta_1^{-1} E_{u,h}^2(1+E_{u,h}^2).
\end{aligned}
\end{equation}

It remains to control the following terms
$$-\int_0^\infty (I_1\cdot \langle y\rangle^{l+k}D^\alpha u+I_2\cdot \langle y\rangle^{l+k}D^\alpha h)\mathrm{d}y$$
and
$$\int_0^\infty (l+k)y\langle y\rangle^{2(l+k)-2}\cdot v\left(|D^\alpha u|^2+|D^\alpha h|^2\right)\mathrm{d}y.$$
First of all, it is easy to see that

\begin{equation}\label{2.020}
\begin{aligned}
\int_0^\infty &(l+k)y\langle y\rangle^{2(l+k)-2}\cdot v\left(|D^\alpha u|^2+|D^\alpha h|^2\right)\mathrm{d}y \\
\leq & C \left\Vert \frac{v}{\langle y\rangle}\right\Vert_{L^\infty_y} \Vert (u+u_e\phi(y)+u_b(1-\phi(y)))^{\frac{1}{2}}\langle y\rangle^{l+k}D^\alpha (u,h) \Vert_{L^2_y}^2\\
\leq &C \Vert u_x \Vert_{L^\infty_y} \Vert (u+u_e\phi(y)+u_b(1-\phi(y)))^{\frac{1}{2}}\langle y\rangle^{l+k}D^\alpha (u,h) \Vert_{L^2_y}^2\\
\leq &C \Vert u\Vert_{H^3_0} \Vert (u+u_e\phi(y)+u_b(1-\phi(y)))^{\frac{1}{2}}\langle y\rangle^{l+k}D^\alpha (u,h) \Vert_{L^2_y}^2.
\end{aligned}
\end{equation}

For $I_1$ and $I_2$, we have
\begin{equation}\nonumber
\begin{aligned}
I_1
=&-[(h+h_e\phi(y))\partial_x+g\partial_y]D^\alpha h\\
&+[D^\alpha,[(u+u_e\phi(y)+u_b(1-\phi(y)))\partial_x +v\partial_y ]-[D^\alpha,(h+h_e\phi)\partial_x+g\partial_y ]h\\
&+D^\alpha (-gh_e\phi'(y)+v(u_e-u_b)\phi'(y))\\
=:&I^1_1+I^2_1+I^3_1,
\end{aligned}
\end{equation}
and
\begin{equation}\nonumber
\begin{aligned}
I_2=&-[(h+h_e\phi(y))\partial_x+g\partial_y]D^\alpha u\\
&+[D^\alpha,[(u+u_e\phi(y)+u_b(1-\phi(y)))\partial_x +v\partial_y ]h-[D^\alpha,(h+h_e\phi)\partial_x+g\partial_y ]u\\
&+D^\alpha (-g(u_e-u_b)\phi'(y)+vh_e\phi'(y))\\
=:&I^1_2+I^2_2+I^3_2.
\end{aligned}
\end{equation}
Therefore, we only need to control the following three parts
\begin{equation}\label{2.021}
\begin{aligned}
-&\int_0^\infty(I_1\cdot \langle y\rangle^{2(l+k)}D^\alpha u+I_2\cdot \langle y\rangle^{2(l+k)}D^\alpha h)\mathrm{d}y\\
&=-\sum_{i=1}^3 \int_0^\infty (I_1^i\cdot \langle y\rangle^{2(l+k)}D^\alpha u+I^i_2\cdot \langle y\rangle^{2(l+k)}D^\alpha h)\mathrm{d}y\\
&=:J_1+J_2+J_3.
\end{aligned}
\end{equation}
By the definition of $\phi(y)$ in (\ref{cutoff}), then we have
$$\Vert \langle y\rangle^{i-1}\phi^{(i)}(y)\Vert_{L^\infty_y}, \ \Vert \langle y\rangle^\lambda \phi^{(j)}(y)\Vert_{L^\infty_y}\leq C$$
for $i=0,1,j\geq 2, \lambda \in \mathbb{R}$.

\textbf{\emph{Estimate of $J_1.$}}

\begin{equation}\nonumber
\begin{aligned}
J_1=&\int_0^\infty [(h+h_e\phi(y))\partial_x+g\partial_y]D^\alpha h\cdot\langle y\rangle^{2(l+k)}D^\alpha u\mathrm{d}y\\
&+\int_0^\infty [(h+h_e\phi(y))\partial_x+g\partial_y]D^\alpha u\cdot \langle y\rangle^{2(l+k)}D^\alpha h\mathrm{d}y.
\end{aligned}
\end{equation}
By integrating by parts, we have
\begin{equation}\nonumber
\begin{aligned}
J_1=&\frac{\mathrm{d}}{\mathrm{d}x}\int_0^\infty (h+h_e\phi)D^\alpha h\cdot \langle y\rangle^{2(l+k)}D^\alpha u \mathrm{d}y\\
&-\int_0^\infty (l+k)gD^\alpha h\cdot \langle y \rangle^{2(l+k)-2}2yD^\alpha u\mathrm{d}y.
\end{aligned}
\end{equation}
The second term can be estimated as
\begin{equation}\nonumber
\begin{aligned}
-\int_0^\infty& gD^\alpha h\cdot \langle y \rangle^{2(l+k)-2}2yD^\alpha u\mathrm{d}y\\
\leq &C\left\Vert \frac{g}{\langle y\rangle}\right\Vert_{L^\infty_y} \Vert(u+u_e\phi(y)+u_b(1-\phi(y)))^{\frac{1}{2}}\langle y\rangle^{l+k}D^\alpha (u,h) \Vert_{L^2_y}^2\\
\leq &C \Vert h_x \Vert_{L^\infty_y} \Vert (u+u_e\phi(y)+u_b(1-\phi(y)))^{\frac{1}{2}}\langle y\rangle^{l+k}D^\alpha (u,h) \Vert_{L^2_y}^2\\
\leq &C \Vert h \Vert_{H^3_0} \Vert (u+u_e\phi(y)+u_b(1-\phi(y)))^{\frac{1}{2}}\langle y\rangle^{l+k}D^\alpha (u,h) \Vert_{L^2_y}^2.
\end{aligned}
\end{equation}

\textbf{\emph{Estimate of $J_2$.}}

It is easy to find that
\begin{equation}\nonumber
\begin{aligned}
J_2 \leq &C\Vert \langle y\rangle^{l+k} I^2_1\Vert_{L^2_y}\Vert (u+u_e\phi(y)+u_b(1-\phi(y)))^{\frac{1}{2}}\langle y\rangle^{l+k}D^\alpha u \Vert_{L^2_y}\\
&+C\Vert \langle y\rangle^{l+k} I^2_2\Vert_{L^2_y}\Vert (u+u_e\phi(y)+u_b(1-\phi(y)))^{\frac{1}{2}}\langle y\rangle^{l+k}D^\alpha h \Vert_{L^2_y},
\end{aligned}
\end{equation}
thus it remains to estimate $\Vert \langle y\rangle^{l+k} I^2_i\Vert_{L^2_y}\ (i=1,2)$. Keep this goal in mind, we will only handle the term $\Vert \langle y\rangle^{l+k} I^2_1\Vert_{L^2_y}$, and the term of $\Vert \langle y\rangle^{l+k} I^2_2\Vert_{L^2_y}$ can be done by the similar arguments.

Recall that
\begin{equation}\nonumber
\begin{aligned}
I_1^2=&[D^\alpha,u\partial_x+v\partial_y]u-[D^\alpha,h\partial_x+g\partial_y]h\\
&+[D^\alpha,(u_e\phi+u_b(1-\phi))\partial_x]u-[D^\alpha,h_e\phi\partial_x]h\\
=:&I^2_{1,1}+I^2_{1,2}.
\end{aligned}
\end{equation}
So, we will estimate $\Vert \langle y\rangle^{l+k} I^2_{1,i}\Vert_{L^2_y} (i=1,2)$ respectively in the subsequent parts.

For $I_{1,1}^2$,
\begin{equation}\label{2.022}
\begin{aligned}
I^2_{1,1}=\sum_{0<\tilde{\alpha}\leq \alpha}&\begin{pmatrix} \alpha  \\ \tilde{\alpha} \end{pmatrix}\bigg\{(D^{\tilde{\alpha}}u\partial_x+D^{\tilde{\alpha}}v\partial_y)D^{\alpha-\tilde{\alpha}}u\\
&-(D^{\tilde{\alpha}}h\partial_x+D^{\tilde{\alpha}}g\partial_y)D^{\alpha-\tilde{\alpha}}h\bigg\}.
\end{aligned}
\end{equation}

Denote $\tilde{\alpha}=(\tilde{\beta},\tilde{k})$, the above terms in (\ref{2.022}) will be classified into two cases.

\emph{Case A: $\tilde{k}=0$.}

In this case, $D^{\tilde{\alpha}}=\partial_x^{\tilde{\beta}}, \tilde{\beta}\geq1$. Then, we have
\begin{equation}\nonumber
\begin{aligned}
\Vert \langle y\rangle^{l+k}D^{\tilde{\alpha}}u\cdot D^{\alpha-\tilde{\alpha}} \partial_x u\Vert_{L^2_y}&=\Vert\langle y\rangle^{l+k} \partial_x^{\tilde{\beta}-1}(\partial_x u)\cdot D^{\alpha-\tilde{\alpha}}(\partial_x u)\Vert_{L^2_y}\\
&\leq C\Vert  u\Vert_{H^{m}_l}^2\\
&\lesssim E_u^2,
\end{aligned}
\end{equation}
provided that $m-1 \geq 3$.

Similar arguments yield that
\begin{equation}\nonumber
\begin{aligned}
\Vert \langle y\rangle^{(l+k)}D^{\tilde{\alpha}}h\cdot D^{\alpha-\tilde{\alpha}} \partial_x h\Vert_{L^2_y}\lesssim E_h^2,
\end{aligned}
\end{equation}
where
$$E_h^2=\sum_{|\alpha |\leq m}\Vert (u+u_e\phi(y)+u_b(1-\phi(y)))^{\frac{1}{2}}\langle y\rangle^{l+k}D^\alpha h\Vert_{L^2_y}^2.$$

On the other hand, since $v=-\partial_y^{-1}\partial_x u$, one has
$$D^{\tilde{\alpha}}v\cdot \partial_y D^{\alpha-\tilde{\alpha}}u=-\partial_x^{\tilde{\beta}}\partial_y^{-1}\partial_xu\cdot \partial_x^{\beta-\tilde{\beta}}\partial_y^{k+1}u.$$
Thus, for $|\alpha|=\beta+k\leq m-1$ with $m-1\geq 3$, we get
\begin{equation}\nonumber
\begin{aligned}
\Vert \langle y\rangle^{l+k}D^{\tilde{\alpha}}v\cdot D^{\alpha-\tilde{\alpha}} \partial_y u \Vert_{L^2_y}=&\Vert \langle y\rangle^{l+k}\partial_x^{\tilde{\beta}}\partial_y^{-1}(\partial_x u)\cdot \partial_x^{\beta-\tilde{\beta}}\partial_y^k(\partial_y u) \Vert_{L^2_y}\\
\leq &C \Vert \partial_x u\Vert_{H^{m-1}_0}\Vert \partial_y u\Vert_{H^{m-1}_l}\\
\lesssim &E_u^2.
\end{aligned}
\end{equation}
When $|\alpha|=m$, it yields that $k\geq 1$ since $\beta\leq m-1$, therefore,
\begin{equation}\nonumber
\begin{aligned}
\Vert &\langle y\rangle^{l+k}D^{\tilde{\alpha}}v\cdot D^{\alpha-\tilde{\alpha}} \partial_y u \Vert_{L^2_y}\\
=&\Vert \langle y\rangle^{l+k}\partial_x^{\tilde{\beta}-1}\partial_y^{-1}(\partial_x^2 u)\cdot \partial_x^{\beta-\tilde{\beta}}\partial_y^{k-1}(\partial_y^2 u) \Vert_{L^2_y}\\
\leq &C \Vert \partial_x^2 u\Vert_{H^{m-2}_0}\Vert \partial_y^2 u\Vert_{H^{m-2}_l}\\
\lesssim &E_u^2.
\end{aligned}
\end{equation}
provided that $m-2 \geq 3$. Hence, for $|\alpha|\leq m, \beta \leq m-1$, we have
\begin{equation}\nonumber
\begin{aligned}
\Vert \langle y\rangle^{l+k}D^{\tilde{\alpha}}v\cdot D^{\alpha-\tilde{\alpha}} \partial_y u \Vert_{L^2_y}
\lesssim E_u^2.
\end{aligned}
\end{equation}
Similarly, one obtains
\begin{equation}\nonumber
\begin{aligned}
\Vert \langle y\rangle^{l+k}D^{\tilde{\alpha}}g\cdot D^{\alpha-\tilde{\alpha}} \partial_y h \Vert_{L^2_y}
\lesssim E_h^2.
\end{aligned}
\end{equation}

Consequently, for $\tilde{\alpha}=(\tilde{\beta},\tilde{k})$ with $\tilde{k}=0$, it holds that
\begin{equation}\label{2.023}
\begin{aligned}
\big\Vert \langle y\rangle^{l+k} \{(D^{\tilde{\alpha}}u\partial_x&+D^{\tilde{\alpha}}v\partial_y)D^{\alpha-\tilde{\alpha}}u\\
&-(D^{\tilde{\alpha}}h\partial_x+D^{\tilde{\alpha}}g\partial_y)D^{\alpha-\tilde{\alpha}}h\}\big\Vert_{L^2_y}\\
&\lesssim E_{u,h}^2.
\end{aligned}
\end{equation}

\emph{Case B: $\tilde{k} \geq 1$.}

For this case, we have
\begin{equation}\nonumber
\begin{aligned}
(D^{\tilde{\alpha}}u\partial_x&+D^{\tilde{\alpha}}v\partial_y)(D^{\alpha-\tilde{\alpha}}u)
-(D^{\tilde{\alpha}}h\partial_x+D^{\tilde{\alpha}}g\partial_y)(D^{\alpha-\tilde{\alpha}}h)\\
=&(D^{\tilde{\alpha}}u\partial_x-D^{\tilde{\alpha}-(0,1)}(\partial_x u)\partial_y)(D^{\alpha-\tilde{\alpha}}u)\\
&-(D^{\tilde{\alpha}}h\partial_x-D^{\tilde{\alpha}-(0,1)}(\partial_x h)\partial_y)(D^{\alpha-\tilde{\alpha}}h).
\end{aligned}
\end{equation}
Then, each term of the right-hand side of the above equality is estimated as follows.
\begin{equation}\nonumber
\begin{aligned}
\Vert D^{\tilde{\alpha}}u\partial_x D^{\alpha-\tilde{\alpha}}u\Vert_{L^2_{l+k}}=&\Vert D^{\tilde{\alpha}-(0,1)}\partial_y u\partial_x D^{\alpha-\tilde{\alpha}}u\Vert_{L^2_{l+1+(k-1)}}\\
\leq & C\Vert \partial_y u\Vert_{H^{m-1}_{l+1}}\Vert \partial_x u\Vert_{H^{m-1}_0}\\
\lesssim & E_u^2,
\end{aligned}
\end{equation}
and
\begin{equation}\nonumber
\begin{aligned}
\Vert D^{\tilde{\alpha}-(0,1)}(\partial_x u)\partial_yD^{\alpha-\tilde{\alpha}}u\Vert_{L^2_{l+k}}=&\Vert D^{\tilde{\alpha}-(0,1)}(\partial_x u)\partial_yD^{\alpha-\tilde{\alpha}}u\Vert_{L^2_{l+1+(k-1)}}\\
\leq & C\Vert \partial_y u\Vert_{H^{m-1}_{l+1}}\Vert \partial_x u\Vert_{H^{m-1}_0}\\
\lesssim & E_u^2.
\end{aligned}
\end{equation}
Similarly,
$$\Vert(D^{\tilde{\alpha}}h\partial_x-D^{\tilde{\alpha}-(0,1)}(\partial_x h)\partial_y)(D^{\alpha-\tilde{\alpha}}h)\Vert_{L^2_{l+k}} \lesssim E_h^2.$$

Collecting the above estimates leads to
\begin{equation}\label{2.024}
\begin{aligned}
\Vert \langle y\rangle^{l+k} I^2_{1,1}\Vert_{L^2_y} \leq C E_{u,h}^2.
\end{aligned}
\end{equation}

Next, we are in position to estimate the term $I^2_{1,2}$. Rewrite the term $I^2_{1,2}$ as follows:
\begin{equation}\nonumber
\begin{aligned}
I^2_{1,2}=\sum_{0 <\tilde{\alpha}\leq \alpha}\begin{pmatrix} \alpha  \\ \tilde{\alpha} \end{pmatrix}\bigg\{D^{\tilde{\alpha}}(u_e\phi+u_b(1-\phi))D^{\alpha-\tilde{\alpha}}(\partial_x u)-D^{\tilde{\alpha}}(h_e\phi)D^{\alpha-\tilde{\alpha}}(\partial_x h)\bigg\}.
\end{aligned}
\end{equation}
Therefore, one has
\begin{equation}\label{2.025}
\begin{aligned}
\Vert \langle y\rangle^{l+k} I^2_{1,2}\Vert_{L^2_y} \leq C(u_e,u_b,h_e) E_{u,h}.
\end{aligned}
\end{equation}

Consequently, from the above estimates, we achieve that
\begin{equation}\label{2.026}
\begin{aligned}
\Vert \langle y\rangle^{l+k} I^2_{1}\Vert_{L^2_y} \leq C(C(u_e,u_b,h_e)+E_{u,h})E_{u,h}.
\end{aligned}
\end{equation}
By similar arguments, we also obtain
\begin{equation}\label{2.027}
\begin{aligned}
\Vert \langle y\rangle^{l+k} I^2_{2}\Vert_{L^2_y} \leq C(C(u_e,u_b,h_e)+E_{u,h})E_{u,h}.
\end{aligned}
\end{equation}
It is noted that the constant $C(u_e,u_b,h_e)>0$ depends only on the given data $u_e,u_b,h_e$ in the above estimates.

Finally, one gets that
\begin{equation}\label{2.028}
\begin{aligned}
J_2 \leq &C(C(u_e,u_b,h_e)+E_{u,h})E_{u,h}\\
&\cdot \Vert (u+u_e\phi(y)+u_b(1-\phi(y)))^{\frac{1}{2}}\langle y\rangle^{l+k}D^\alpha (u,h) \Vert_{L^2_y}\\
\leq &C(C(u_e,u_b,h_e)+E_{u,h})E_{u,h}^2.
\end{aligned}
\end{equation}

\textbf{\emph{Estimate of $J_3$.}}

For $J_3$, we have
\begin{equation}\label{2.029}
\begin{aligned}
J_3 \leq &\Vert \langle y\rangle^{l+k}I^3_1\Vert_{L^2_y}\Vert \langle y\rangle^{l+k}D^\alpha u\Vert_{L^2_y}+\Vert \langle y\rangle^{l+k}I^3_2\Vert_{L^2_y}\Vert \langle y\rangle^{l+k}D^\alpha h\Vert_{L^2_y}.
\end{aligned}
\end{equation}
It is left to estimate the terms $\Vert \langle y\rangle^{l+k}I^3_i\Vert_{L^2_y}\ (i=1,2)$. And it suffices to give the weighted estimate on $I^3_1$, the estimate on the other term $I^3_2$ can be derived by the similar arguments.

Since
\begin{equation}\label{2.030}
\begin{aligned}
D^\alpha&(gh_e\phi'-v(u_e-u_b)\phi')\\
&=\sum_{\tilde{\alpha}\leq \alpha}\left(h_eD^{\tilde{\alpha}}g D^{\alpha-\tilde{\alpha}}\phi' -(u_e-u_b)D^{\tilde{\alpha}}vD^{\alpha-\tilde{\alpha}}\phi'\right),
\end{aligned}
\end{equation}
notice that $\alpha=(\beta, k), |\alpha|\leq m, \beta\leq m-1$, we have
\begin{equation}\label{2.031}
\begin{aligned}
\Vert \langle y\rangle^{l+k}D^\alpha(gh_e\phi'-v(u_e-u_b)\phi')\Vert_{L^2_y}\leq CC(u_e,u_b,h_e)E_{u,h}.
\end{aligned}
\end{equation}
Furthermore, one gets that
\begin{equation}\label{2.031}
\begin{aligned}
J_3 \lesssim E_{u,h}^2.
\end{aligned}
\end{equation}

Collecting the above estimates, integrating it with respect to $x$ variable and summing over $|\alpha|\leq m$, we have

\begin{equation}\label{estimate}
\begin{aligned}
&\sum_{\alpha\in\{\alpha=(\beta,k):|\alpha| \leq m,\beta\leq m-1\}}\bigg(s(x)+\nu\int_0^x \Vert \partial_y D^\alpha u\Vert_{L^2_l}^2+\kappa\int_0^x \Vert \partial_y D^\alpha h \Vert_{L^2_l}^2 \bigg)\\
\leq &C\delta_1 \int_0^x \Vert \partial_y (u,h)\Vert_{H^m_0}^2\\
&+C\delta_1^{-1}\int_0^x E_{u,h}^2 \left(1+E_{u,h}^2\right)+\int_0^x C(u_b,u_e,h_e)\\
&+\int_0^\infty (h+h_e\phi(y))\langle y\rangle^{2(l+k)}D^\alpha u D^\alpha h +s(0),
\end{aligned}
\end{equation}
where
$$s(x)=\Vert (u+u_e\phi(y)+u_b(1-\phi(y)))^{\frac{1}{2}}\langle y\rangle^{l+k}D^\alpha (u,h) \Vert_{L^2_y}^2.$$

Recall that
$$u(x,y)+u_e\phi(y)+u_b(1-\phi(y))>h(x,y)+h_e\phi(y)>0,$$
it follows that
$$\int_0^\infty (h+h_e\phi(y))\langle y\rangle^{2(l+k)}D^\alpha u D^\alpha h \leq \frac{1}{2} s(x),$$
which leads to
\begin{equation}\label{estimate1end}
\begin{aligned}
&\sum_{\alpha\in\{\alpha=(\beta,k):|\alpha|\leq m, \beta\leq m-1\}}\bigg(s(x)+\nu\int_0^x \Vert \partial_y D^\alpha u\Vert_{L^2_l}^2+\kappa\int_0^x \Vert \partial_y D^\alpha h \Vert_{L^2_l}^2 \bigg)\\
\leq &C\delta_1 \int_0^x \Vert \partial_y (u,h)\Vert_{H^m_0}^2+C\delta_1^{-1}\int_0^x E_{u,h}^2 \left(1+E_{u,h}^2\right)+\int_0^x C(u_b,u_e,h_e)+s(0).
\end{aligned}
\end{equation}
The proof is completed.
\end{proof}

\textbf{Step 2: Estimate for $\partial_x^\beta(u,h), \beta =m.$}

It is well-known that the essential difficulty to solve the MHD Prandtl boundary layer equations in finite regularity function spaces is to deal with the terms of loss of derivative in the tangential variable $x$. Precisely, the two functions $v=-\partial_y^{-1}\partial_x u$ and $g=-\partial_y^{-1}\partial_x h$ will cause the loss of $x$-derivative, which prevents us from applying the standard energy methods to reach the closure of energy estimates. Let us explain the main idea used to overcome the difficulty here.

First, by using the divergence free conditions, the equation of $h$ (\ref{newequation}) can be rewritten as
\begin{align*}
\partial_y \left[v(h+h_e\phi)-g(u+u_e\phi+u_b(1-\phi))\right]-\kappa \partial_y^2 h=\kappa \phi''(y)h_e,
\end{align*}
Integrating the above equation from $[0, y]$ and using the boundary conditions $v|_{y=0}=g|_{y=0}=\partial_yh|_{y=0}=\phi'|_{y=0}=0$, we have
\begin{align*}
v(h+h_e\phi)-g(u+u_e\phi+u_b(1-\phi))-\kappa \partial_y h=\kappa \phi'(y)h_e.
\end{align*}
Since $\partial_x h+\partial_yg=0$, then there exists a stream function $\psi$, such that
\begin{align*}
h=\partial_y \psi,\  g=-\partial_x \psi,\  \psi|_{y=0}=0.
\end{align*}
Then, the equation of $\psi$ reads as
\begin{align}
\label{SE}
\big[\big(u+u_e\phi(y)+u_b(1-\phi(y))\big)\partial_x +v\partial_y\big]\psi+vh_e\phi-\kappa \partial_y^2\psi=\kappa h_e \phi'(y).
\end{align}
Applying the operator $\partial_x^\beta, \beta=m$ on (\ref{SE}) to yield that
\begin{equation}\label{phi}
\begin{aligned}
\big[\big(u+u_e\phi(y)+u_b(1-\phi(y))\big)\partial_x +v\partial_y\big]\partial_x^\beta \psi+(h+h_e\phi(y))\partial_x^\beta v-\kappa\partial_y^2 \partial_x^\beta \psi\\
=-[\partial_x^\beta,\big(u+u_e\phi(y)+u_b(1-\phi(y))\big)\partial_x]\psi-\sum\limits_{0 <\tilde{\beta}<\beta}C_\beta^{\tilde{\beta}}(\partial_x^{\tilde{\beta}}v \partial_x^{\beta-\tilde{\beta}}\partial_y \psi)=:R^\beta_\psi.
\end{aligned}
\end{equation}

And applying $\partial_x^\beta, \beta=m$ on the equations in (\ref{newequation}), we have
\begin{equation}\label{1}
\left \{
\begin{aligned}
&\big[\big(u+u_e\phi(y)+u_b(1-\phi(y))\big)\partial_x +v\partial_y\big]\partial_x^\beta u-\big[(h+h_e\phi(y))\partial_x+g\partial_y\big]\partial_x^\beta h\\
&\quad +(\partial_y u+(u_e-u_b)\phi'(y))\partial_x^\beta v-(\partial_y h+h_e\phi'(y))\partial_x^\beta g-\nu \partial_y^2\partial_x^\beta u\\
&=-[\partial_x^\beta,\big(u+u_e\phi(y)+u_b(1-\phi(y))\big)\partial_x]u +[\partial_x^\beta,(h+h_e\phi(y))\partial_x]h\\
&\quad-\sum\limits_{0 <\tilde{\beta}<\beta}C_\beta^{\tilde{\beta}}(\partial_x^{\tilde{\beta}}v \partial_x^{\beta-\tilde{\beta}}\partial_y u-\partial_x^{\tilde{\beta}}g\partial_y^{\beta-\tilde{\beta}}h)=:R^\beta_u,\\
&\big[
\big(u+u_e\phi(y)+u_b(1-\phi(y))\big)\partial_x +v\partial_y\big]\partial_x^\beta h-\big[(h+h_e\phi(y))\partial_x+g\partial_y\big]\partial_x^\beta u\\
&\quad+(\partial_y h+h_e\phi'(y))\partial_x^\beta v-(\partial_y u+(u_e-u_b)\phi'(y))\partial_x^\beta g-\kappa \partial_y^2\partial_x^\beta h\\
&=-[\partial_x^\beta,\big(u+u_e\phi(y)+u_b(1-\phi(y))\big)\partial_x]h +[\partial_x^\beta,(h+h_e\phi(y))\partial_x]u\\
&\quad-\sum\limits_{0 <\tilde{\beta}<\beta}C_\beta^{\tilde{\beta}}(\partial_x^{\tilde{\beta}}v \partial_x^{\beta-\tilde{\beta}}\partial_y h-\partial_x^{\tilde{\beta}}g\partial_y^{\beta-\tilde{\beta}}u)=:R^\beta_h\\
&(u,v,\partial_y h,g)|_{y=0}=(0,0,0,0),\\
&(u,h) \to (0,0) \ \ \mathrm{as} \ \ y\to \infty,\\
&(u,h)|_{x=0}=(\overline{u}_0(y)+(u_e-u_b)(1-\phi(y)), \overline{h}_0(y)+h_e(1-\phi(y)).
\end{aligned}
\right.
\end{equation}
Recall that for $(x,y)\in [0,L]\times [0,\infty)$,
$$h+h_e\phi \geq \frac{\vartheta_0}{2}>0,$$
then we define
$$\eta_1=\frac{\partial_y u+(u_e-u_b)\phi'}{h+h_e\phi},\ \eta_2=\frac{\partial_y h+h_e\phi'}{h+h_e\phi},$$
and introduce
$$u_\beta=\partial_x^\beta u-\eta_1\partial_x^\beta\psi, \ h_\beta=\partial_x^\beta h-\eta_2 \partial_x^\beta\psi.$$
Then, the equations of $(u_\beta,h_\beta)$ can be derived from (\ref{phi}) and (\ref{1}), which are described as follows.
\begin{equation}\label{2}
\left \{
\begin{array}{lll}
\big[\big(u+u_e\phi(y)+u_b(1-\phi(y))\big)\partial_x +v\partial_y\big]u_\beta-\big[(h+h_e\phi(y))\partial_x+g\partial_y\big]h_\beta\\
\quad -\nu \partial_y^2u_\beta+(\kappa-\nu)\eta_1\partial_yh_\beta
=R^\beta_1,\\
\big[
\big(u+u_e\phi(y)+u_b(1-\phi(y))\big)\partial_x +v\partial_y\big]h^\beta-\big[(h+h_e\phi(y))\partial_x+g\partial_y\big]u_\beta\\
\quad-\kappa \partial_y^2 h_\beta
=R^\beta_2,
\end{array}
\right.
\end{equation}
where
\begin{equation}\label{3}
\left \{
\begin{aligned}
R^\beta_1=&R^\beta_u-\eta_1 R^\beta_\psi-\partial_x^\beta\psi \zeta_1+2\nu \partial_y \eta_1 \partial_x^\beta h\\
&+g\eta_2\partial_x^\beta h+(\kappa-\nu)\eta_1\eta_2\partial_x^\beta  h,\\
R^\beta_2=&R^\beta_h-\eta_2 R^\beta_\psi-\partial_x^\beta\psi \zeta_2+2\kappa  \eta_2\partial_y \partial_x^\beta h+g\eta_1 \partial_x^\beta h,
\end{aligned}
\right.
\end{equation}
in which
\begin{equation}\label{3.00}
\left \{
\begin{array}{lll}
\zeta_1=&\big[\big(u+u_e\phi(y)+u_b(1-\phi(y))\big)\partial_x +v\partial_y\big]\eta_1\\
&-\big[(h+h_e\phi(y))\partial_x+g\partial_y\big]\eta_2-\nu\partial_y^2 \eta_1+(\kappa-\nu)\eta_1\partial_y\eta_2,\\
\zeta_2=&\big[\big(u+u_e\phi(y)+u_b(1-\phi(y))\big)\partial_x +v\partial_y\big]\eta_2\\
&-\big[(h+h_e\phi(y))\partial_x+g\partial_y\big]\eta_1-\kappa\partial_y^2 \eta_2.
\end{array}
\right.
\end{equation}
The direct calculations yield that the boundary conditions and ``initial data" (the data at $x=0$) of $(u_\beta, h_\beta)$ can be written as follows.
\begin{equation}\label{3.01}
\left \{
\begin{aligned}
u_\beta|_{x=0}=&\partial_x^\beta u(0,y)-\frac{\partial_y u_0(y)+(u_e-u_b)\phi'}{h_0(y)+h_e\phi}\int_0^y\partial_x^\beta h(0,z)\mathrm{d}z\\
\triangleq &u_{\beta0}(y),\\
h_\beta|_{x=0}=&\partial_x^\beta h(0,y)-\frac{\partial_y h_0(y)+h_e\phi'}{h_0(y)+h_e\phi}\int_0^y\partial_x^\beta h(0,z)\mathrm{d}z\\
\triangleq &h_{\beta0}(y),\\
(u_\beta,\partial_y h_\beta)&|_{y=0}=(0,0).
\end{aligned}
\right.
\end{equation}

Therefore, we obtain the initial boundary value problem of $(u_{\beta},h_{\beta})$ as follows.
\begin{equation}\label{3.02}
\left \{
\begin{aligned}
\big[\big(u&+u_e\phi(y)+u_b(1-\phi(y))\big)\partial_x +v\partial_y\big]u_\beta-\big[(h+h_e\phi(y))\partial_x+g\partial_y\big]h_\beta\\
&-\nu \partial_y^2u_\beta+(\kappa-\nu)\eta_1\partial_yh_\beta
=R^\beta_1,\\
\big[
\big(u&+u_e\phi(y)+u_b(1-\phi(y))\big)\partial_x +v\partial_y\big]h^\beta-\big[(h+h_e\phi(y))\partial_x+g\partial_y\big]u_\beta\\
&-\kappa \partial_y^2 h_\beta
=R^\beta_2,\\
(u_{\beta},&h_{\beta})|_{x=0}=(u_{\beta 0},h_{\beta 0})(y),\\
(u_\beta,&\partial_y h_\beta)|_{y=0}=(0,0).
\end{aligned}
\right.
\end{equation}
Since $\psi=\partial_y^{-1}h$, we have
\begin{equation}\label{3.03}
\Vert \langle y\rangle^{-1}\partial_x^\beta \psi\Vert_{L^2_y} \lesssim \Vert \partial_x^\beta h\Vert_{L^2_y}.
\end{equation}
due to the Hardy-type inequality.  From the definitions of $\eta_i\ (i=1,2)$, it follows from the Hardy-type inequality and Sobolev embedding that for $\lambda \in \mathbb{R}$ and $i=1,2$,
\begin{equation}\label{3.04}
\begin{aligned}
\Vert &\langle y\rangle^\lambda \eta_i \Vert_{L^\infty_y} \lesssim \vartheta_0^{-1}\bigg(C(u_e,h_e,u_b)\\
&+\sum_{|\alpha|\leq 3}\Vert(u+u_e\phi(y)+u_b(1-\phi(y))^{\frac{1}{2}}\langle y\rangle^{\lambda-1}D^\alpha (u,h)\Vert_{L^2_y}\bigg),\\
\Vert &\langle y\rangle^\lambda \partial_y \eta_i \Vert_{L^\infty_y} \lesssim \vartheta_0^{-2}\bigg(C(u_e,h_e,u_b)\\
&+\sum_{|\alpha|\leq 4}\Vert(u+u_e\phi(y)+u_b(1-\phi(y))^{\frac{1}{2}}\langle y\rangle^{\lambda-1}D^\alpha (u,h)\Vert_{L^2_y}\bigg)^2,\\
\end{aligned}
\end{equation}
and
\begin{equation}\label{3.05}
\begin{aligned}
\Vert &\langle y\rangle^\lambda \zeta_i \Vert_{L^\infty_y} \lesssim \vartheta_0^{-3}\bigg(C(u_e,h_e,u_b)\\
&+\sum_{|\alpha|\leq 5}\Vert(u+u_e\phi(y)+u_b(1-\phi(y))^{\frac{1}{2}}\langle y\rangle^{\lambda-1}D^\alpha (u,h)\Vert_{L^2_y}\bigg)^3.
\end{aligned}
\end{equation}
Therefore,  for $\beta=m\geq 5, l\geq 0$, the following inequality holds true.
\begin{equation}\label{3.06}
\begin{aligned}
\Vert R^\beta_1\Vert_{L^2_l} \leq &\Vert R^\beta_u\Vert_{L^2_l}+\Vert \langle y \rangle^{l+1}\eta_1\Vert_{L^\infty_y}\Vert \langle y\rangle^{-1} R^\beta_\psi\Vert_{L^2_y}\\
&+\Vert \langle y\rangle^{l+1}\zeta_1\Vert_{L^\infty_y}\Vert \langle y\rangle^{-1}\partial_x^\beta\psi\Vert_{L^2_y}\\
 &+\bigg(\Vert 2\nu \partial_y \eta_1 +(\kappa-\nu)\eta_1\eta_2\Vert_{L^\infty_y}\\
 &+\Vert \langle y\rangle^{-1}g\Vert_{L^\infty_y}\Vert \langle y\rangle\eta_2\Vert_{L^\infty_y}\bigg)\Vert \langle y\rangle^l \partial_x^\beta  h\Vert_{L^2_y}\\
 \leq  & C\vartheta_0^{-3}\bigg(C(u_e,h_e.u_b)+E_{u,h}\bigg)^3E_{u,h}.
\end{aligned}
\end{equation}
Similarly,
\begin{equation}\label{3.07}
\begin{aligned}
\Vert R^\beta_2\Vert_{L^2_l} \leq C\vartheta_0^{-3}\bigg(C(u_e,h_e.u_b)+E_{u,h}\bigg)^3E_{u,h}.
\end{aligned}
\end{equation}
Now, it is ready to give the weighted estimates of $(u_\beta,h_\beta)$.
\begin{lemma}[Weighted estimates of $(u_\beta,h_\beta)$]\label{estimatestep2}
Under the assumptions in Proposition \ref{pro1}, for any $x\in [0,L]$, there holds that
\begin{equation}\label{estimatetan}
\begin{aligned}
&s_\beta(x)+\nu\int_0^x \Vert \partial_y  u_\beta \Vert_{L^2_l}^2+\kappa\int_0^x \Vert \partial_y h_\beta \Vert_{L^2_l}^2\\
\leq &C\vartheta_0^{-2}\int_0^x \left(C(u_e,h_e,u_b)+E_{u,h}\right)^2\cdot s_\beta(x)\\
&+C\vartheta_0^{-4}\int_0^x \left(C(u_e,h_e,u_b)+E_{u,h}\right)^4\cdot E_{u,h}^2+\int_0^x C(u_b,u_e,h_e)+s_\beta(0),
\end{aligned}
\end{equation}
where
$$s_\beta(x)=\Vert (u+u_e\phi(y)+u_b(1-\phi(y)))^{\frac{1}{2}}\langle y\rangle^{l}(u_\beta,h_\beta) \Vert_{L^2_y}^2$$
and
$$E_{u,h}^2=\sum_{|\alpha |\leq m}\Vert (u+u_e\phi(y)+u_b(1-\phi(y)))^{\frac{1}{2}}\langle y\rangle^{l+k}D^\alpha (u,h)\Vert_{L^2_y}^2.$$
\end{lemma}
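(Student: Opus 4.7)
The plan is to perform weighted $L^2$ energy estimates directly on the system (\ref{3.02}), exploiting its crucial feature: unlike the equations (\ref{1}) obtained by applying $\partial_x^\beta$ directly, the equations for $(u_\beta,h_\beta)$ no longer contain the problematic terms $(\partial_y u+(u_e-u_b)\phi')\partial_x^\beta v$ or $(\partial_y h+h_e\phi')\partial_x^\beta g$ that would entail loss of tangential derivatives. This cancellation is engineered precisely by the choice $u_\beta=\partial_x^\beta u-\eta_1\partial_x^\beta\psi$, $h_\beta=\partial_x^\beta h-\eta_2\partial_x^\beta\psi$ with the stream-function equation (\ref{phi}). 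I would multiply the first equation in (\ref{3.02}) by $\langle y\rangle^{2l}u_\beta$, the second by $\langle y\rangle^{2l}h_\beta$, integrate in $y$, and sum. The hydrodynamic convection $[(u+u_e\phi+u_b(1-\phi))\partial_x+v\partial_y]$ acting on $\tfrac{1}{2}(u_\beta^2+h_\beta^2)$ produces, after integration by parts via $\partial_x u+\partial_y v=0$, the term $\tfrac{1}{2}\tfrac{d}{dx}s_\beta(x)$ together with a weight-derivative remainder $2l\int y\langle y\rangle^{2l-2}v(u_\beta^2+h_\beta^2)$ controlled by $\Vert v/\langle y\rangle\Vert_{L^\infty_y}\lesssim\Vert u\Vert_{H^3_0}\lesssim E_{u,h}$.

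Next, the magnetic cross-terms $-(h+h_e\phi)\partial_x h_\beta\cdot u_\beta-(h+h_e\phi)\partial_x u_\beta\cdot h_\beta-g\partial_y h_\beta\cdot u_\beta-g\partial_y u_\beta\cdot h_\beta$ (weighted by $\langle y\rangle^{2l}$) reorganize, using $\partial_x h+\partial_y g=0$, into a total $x$-derivative $-\tfrac{d}{dx}\int(h+h_e\phi)u_\beta h_\beta\langle y\rangle^{2l}$ plus a lower-order piece involving $\partial_x h\cdot u_\beta h_\beta$ and weight derivatives. After integrating from $0$ to $x$, the endpoint contribution $\int(h+h_e\phi)u_\beta h_\beta\langle y\rangle^{2l}$ is absorbed into $\tfrac{1}{2}s_\beta(x)$ using the hypothesis $u+u_e\phi+u_b(1-\phi)>h+h_e\phi$ from (\ref{conditions}); this absorption is the single most delicate point of the scheme. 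The viscous terms integrate by parts to the desired dissipation $\nu\Vert\partial_y u_\beta\Vert_{L^2_l}^2+\kappa\Vert\partial_y h_\beta\Vert_{L^2_l}^2$, with the boundary traces at $y=0$ vanishing thanks to the boundary conditions $u_\beta|_{y=0}=0$ and $\partial_y h_\beta|_{y=0}=0$ in (\ref{3.01}); the weight-derivative pieces are absorbed by Young's inequality into a small fraction of the dissipation. The cross coupling $(\kappa-\nu)\eta_1\partial_y h_\beta\cdot u_\beta\langle y\rangle^{2l}$ is handled similarly, using the pointwise bound $\Vert\langle y\rangle^{l+1}\eta_1\Vert_{L^\infty_y}\lesssim\vartheta_0^{-1}$ from (\ref{3.04}).

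The source side is bounded via (\ref{3.06})--(\ref{3.07}): $\Vert R^\beta_i\Vert_{L^2_l}\leq C\vartheta_0^{-3}(C(u_e,h_e,u_b)+E_{u,h})^3 E_{u,h}$, which after Cauchy--Schwarz pairing with $\Vert\langle y\rangle^l(u_\beta,h_\beta)\Vert_{L^2_y}\lesssim E_{u,h}$ and integration in $x$ produces exactly the nonlinear term $C\vartheta_0^{-4}\int_0^x(C(u_e,h_e,u_b)+E_{u,h})^4 E_{u,h}^2$ on the right of (\ref{estimatetan}); the additional factor $\vartheta_0^{-1}$ over (\ref{3.06}) arises when one is forced to use $\Vert\langle y\rangle^{l+1}\zeta_1\Vert_{L^\infty_y}\lesssim\vartheta_0^{-3}(\cdots)^3$ together with a Young inequality absorbing an $E_{u,h}$ into $s_\beta$. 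The term $s_\beta(x)$ itself appears on the right of (\ref{estimatetan}) with coefficient $C\vartheta_0^{-2}(C(u_e,h_e,u_b)+E_{u,h})^2$ because one cannot pull $s_\beta$ through the $\tfrac{d}{dx}$ identity without paying a factor controlled by $\Vert\partial_x(u+u_e\phi+u_b(1-\phi))\Vert_{L^\infty}$ in the reorganization of the magnetic cross-terms. The principal obstacle is the endpoint-absorption step: it forces a careful ordering of the integration by parts (tangential derivative before dissipation) and uses the non-degeneracy hypothesis $u+u_e\phi+u_b(1-\phi)-(h+h_e\phi)\geq c_0>0$, which is itself valid only on a small $x$-interval $[0,L]$, thus explaining the restriction on $L$ in the final estimate.
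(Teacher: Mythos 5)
Your proposal follows essentially the same route as the paper's proof: test the system (\ref{3.02}) for the good unknowns $(u_\beta,h_\beta)$ with $\langle y\rangle^{2l}u_\beta$ and $\langle y\rangle^{2l}h_\beta$, recombine the magnetic convection terms via $\partial_x h+\partial_y g=0$ into a total $x$-derivative that is absorbed into $\tfrac12 s_\beta(x)$ using $u+u_e\phi+u_b(1-\phi)>h+h_e\phi>0$, handle the $(\kappa-\nu)\eta_1\partial_y h_\beta\cdot u_\beta$ coupling and weight terms by the bounds (\ref{3.04}) and Young's inequality, and close with the source estimates (\ref{3.06})--(\ref{3.07}). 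This matches the paper's derivation of (\ref{3.08})--(\ref{3.014}) in both structure and constants, so the proposal is correct.
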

\begin{proof}
Multiplying the first and second equations in (\ref{3.02}) by $\langle y\rangle^{2l}u_\beta$ and $\langle y\rangle^{2l}h_\beta$, respectively, and integrating by parts over $y \in [0,\infty)$, we have
\begin{equation}\label{3.08}
\begin{aligned}
\frac{1}{2}\frac{\mathrm{d}}{\mathrm{d}x}&s_\beta(x)+\nu\Vert \langle y\rangle^l \partial_y u_\beta\Vert_{L^2_y}^2+\kappa \Vert \langle y\rangle^l\partial_y h_\beta\Vert_{L^2_y}^2\\
=&l\int_0^\infty \langle y\rangle^{2l-2}yv\cdot (|u_\beta|^2+|h_\beta|^2)\mathrm{d}y\\
&+\int_0^\infty \partial_x\left(\langle y\rangle^{2l} (h+h_e\phi)u_\beta h_\beta\right)\mathrm{d}y-\int_0^\infty 2l\langle y\rangle^{2l-2}ygu_\beta h_\beta\mathrm{d}y\\
&+(\nu -\kappa)\int_0^\infty \langle y\rangle^{2l}(\eta_1\partial_y h_\beta\cdot u_\beta)\mathrm{d}y\\
&+\int_0^\infty \langle y\rangle^{2l}(u_\beta R^\beta_1+h_\beta R^\beta_2)\mathrm{d}y\\
&-l\int_0^\infty \langle y\rangle^{2l-2}2y(\nu u_\beta \partial_y u_\beta+\kappa h_\beta\partial_yh_\beta)\mathrm{d}y.
\end{aligned}
\end{equation}
First,
\begin{equation}\label{3.09}
\begin{aligned}
l\int_0^\infty & \langle y\rangle^{2l-2}yv\cdot (|u_\beta|^2+|h_\beta|^2)\mathrm{d}y-\int_0^\infty 2l\langle y\rangle^{2l-2}ygu_\beta h_\beta\mathrm{d}y\\
\leq & C \left(\left\Vert \frac{v}{1+y}\right\Vert_{L^\infty_y}+\left\Vert \frac{g}{1+y}\right\Vert_{L^\infty_y} \right)\Vert \langle y\rangle^l (u_\beta,h_\beta)\Vert_{L^2_y}^2\\
\leq &C \left(\Vert u_x\Vert_{L^\infty_y}+\Vert h_x\Vert_{L^\infty_y}\right)s_\beta(x)\lesssim E_{u,h}s_\beta(x).
\end{aligned}
\end{equation}
Notice that $u_\beta|_{y=0}=0$, then integrating by parts yields
\begin{equation}\label{3.010}
\begin{aligned}
(\nu -&\kappa)\int_0^\infty \langle y\rangle^{2l}(\eta_1\partial_y h_\beta\cdot u_\beta)\mathrm{d}y\\
=&-\nu \int_0^\infty h_\beta \partial_y \left(\langle y\rangle^{2l}\eta_1 u_\beta\right)\mathrm{d}y-\kappa \int_0^\infty \langle y\rangle^{2l}(\eta_1\partial_y h_\beta\cdot u_\beta)\mathrm{d}y\\
\leq &\frac{\nu}{4}\Vert \partial_y u_\beta\Vert_{L^2_l}^2+\frac{\kappa}{4}\Vert \partial_y h_\beta\Vert_{L^2_l}^2+C(1+\Vert \eta_1\Vert_{L^\infty}^2+\Vert \partial_y \eta_1\Vert_{L^\infty})s_\beta(x)\\
\leq &\frac{\nu}{4}\Vert \partial_y u_\beta\Vert_{L^2_l}^2+\frac{\kappa}{4}\Vert \partial_y h_\beta\Vert_{L^2_l}^2+C\vartheta_0^{-2}(C(u_e,h_e,u_b)+E_{u,h})^2s_\beta.
\end{aligned}
\end{equation}
By Cauchy-Schwartz inequality, (\ref{3.06}) and (\ref{3.07}), it follows that
\begin{equation}\label{3.011}
\begin{aligned}
\int_0^\infty \langle y\rangle^{2l}(u_\beta R^\beta_1+h_\beta R^\beta_2)\mathrm{d}y\leq &\Vert R^\beta_1\Vert_{L^2_l}\Vert u_\beta\Vert_{L^2_l}+\Vert R^\beta_2\Vert_{L^2_l}\Vert h_\beta\Vert_{L^2_l}\\
\leq &C\vartheta_0^{-2}\left(C(u_e,h_e,u_b)+E_{u,h}\right)^2\cdot s_\beta\\
&+C\vartheta_0^{-4}\left(C(u_e,h_e,u_b)+E_{u,h}\right)^4E_{u,h}^2.
\end{aligned}
\end{equation}
Similarly,
\begin{equation}\label{3.012}
\begin{aligned}
\bigg|-l\int_0^\infty &\langle y\rangle^{2l-2}2y(\nu u_\beta \partial_y u_\beta+\kappa h_\beta\partial_yh_\beta)\mathrm{d}y\bigg|\\
&\leq \frac{\nu}{4}\Vert \partial_y u_\beta\Vert_{L^2_l}^2+\frac{\kappa}{4}\Vert \partial_y h_\beta\Vert_{L^2_l}^2+Cs_\beta(x).
\end{aligned}
\end{equation}
Plugging (\ref{3.09})-(\ref{3.012}) into (\ref{3.08}), we have
\begin{equation}\label{3.013}
\begin{aligned}
\frac{\mathrm{d}}{\mathrm{d}x}&s_\beta(x)+\nu\Vert \langle y\rangle^l \partial_y u_\beta\Vert_{L^2_y}^2+\kappa \Vert \langle y\rangle^l\partial_y h_\beta\Vert_{L^2_y}^2\\
\leq &\int_0^\infty \partial_x\left(\langle y\rangle^{2l} (h+h_e\phi)u_\beta h_\beta\right)\mathrm{d}y\\
&+C\vartheta_0^{-2}\left(C(u_e,h_e,u_b)+E_{u,h}\right)^2\cdot s_\beta\\
&+C\vartheta_0^{-4}\left(C(u_e,h_e,u_b)+E_{u,h}\right)^4E_{u,h}^2.
\end{aligned}
\end{equation}
From the \emph{a priori assumption}, which will be verified latter,
$$u(x,y)+u_e\phi(y)+u_b(1-\phi(y))>h(x,y)+h_e\phi(y) >0,$$
one has
$$\int_0^\infty (h+h_e\phi(y))\langle y\rangle^{2l}u_\beta h_\beta \leq \frac{1}{2} s_\beta(x).$$
Then, integrating (\ref{3.013}) with respect to $x$ variable yields that
\begin{equation}\label{3.014}
\begin{aligned}
&s_\beta(x)+\nu\int_0^x \Vert \partial_y  u_\beta \Vert_{L^2_l}^2+\kappa\int_0^x \Vert \partial_y h_\beta \Vert_{L^2_l}^2 \\
\leq &C\vartheta_0^{-2}\int_0^x \left(C(u_e,h_e,u_b)+E_{u,h}\right)^2\cdot \sum_{\beta=m}s_\beta(x)\\
&+C\vartheta_0^{-4}\int_0^x \left(C(u_e,h_e,u_b)+E_{u,h}\right)^4\cdot E_{u,h}^2+\int_0^x C(u_b,u_e,h_e)+s_\beta(0).
\end{aligned}
\end{equation}
This completes the proof.
\end{proof}

Then, it is left to show the equivalence in the weighed Sobolev space $L^2_l$ between $\partial_x^\beta(u,h)$ and $(u_\beta,h_\beta)$. Which  is summarized in the following lemma.
\begin{lemma}\label{equivalence}
Under the assumptions in Proposition \ref{pro1}, we have
\begin{equation}\label{3.015}
\begin{aligned}
M(x)^{-1}&\Vert(u+u_e\phi(y)+u_b(1-\phi(y)))^{\frac{1}{2}}\langle y\rangle^{l} \partial_x^\beta (u,h)\Vert_{L^2_y} \\
\leq &\Vert (u+u_e\phi(y)+u_b(1-\phi(y)))^{\frac{1}{2}}\langle y\rangle^{l}(u_\beta,h_\beta) \Vert_{L^2_y}\\
 \leq &M(x) \Vert (u+u_e\phi(y)+u_b(1-\phi(y)))^{\frac{1}{2}}\langle y\rangle^{l} \partial_x^\beta (u,h)\Vert_{L^2_y},
\end{aligned}
\end{equation}
and
\begin{equation}\label{3.0151}
\begin{aligned}
&\Vert\partial_y \partial_x^\beta (u,h) \Vert_{L^2_l} \leq  \Vert \partial_y (u_\beta,h_\beta)\Vert_{L^2_l}+M(x)\Vert h_\beta\Vert_{L^2_l},
\end{aligned}
\end{equation}
where
\begin{equation}\label{3.016}
\begin{aligned}
M(x):=\vartheta_0^{-1}\left(C(u_e,h_e,u_b)+\Vert \langle y\rangle^{l+1}\partial_y (u,h)\Vert_{L^\infty}+\Vert \langle y\rangle^{l+1}\partial_y^2 (u,h)\Vert_{L^\infty}\right).
\end{aligned}
\end{equation}
\end{lemma}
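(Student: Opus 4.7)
The plan is to derive the equivalence by inverting the definitions of $u_\beta$ and $h_\beta$. Recall
\[
u_\beta=\partial_x^\beta u-\eta_1\partial_x^\beta\psi,\qquad h_\beta=\partial_x^\beta h-\eta_2\partial_x^\beta\psi,\qquad \partial_x^\beta\psi=\partial_y^{-1}\partial_x^\beta h,
\]
so $\partial_x^\beta\psi(x,0)=0$ and $\partial_y\partial_x^\beta\psi=\partial_x^\beta h$. The forward direction (upper bound on $\|(u_\beta,h_\beta)\|$) is straightforward: bound $\eta_i\partial_x^\beta\psi$ by $\|\langle y\rangle^{l+1}\eta_i\|_{L^\infty}\cdot\|\langle y\rangle^{-1}\partial_x^\beta\psi\|_{L^2_y}$, then use the Hardy inequality together with $\partial_y\partial_x^\beta\psi=\partial_x^\beta h$ to obtain $\|\langle y\rangle^{-1}\partial_x^\beta\psi\|_{L^2_y}\lesssim \|\partial_x^\beta h\|_{L^2_y}$. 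Combined with the a priori bounds \eqref{conditions} on $u+u_e\phi+u_b(1-\phi)$, this yields the right inequality in \eqref{3.015} with constant $\lesssim M(x)$.

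For the backward direction, the key algebraic observation is that $\eta_2=\partial_y\log(h+h_e\phi)$, so the equation $h_\beta=\partial_x^\beta h-\eta_2\partial_x^\beta\psi=\partial_y\partial_x^\beta\psi-\eta_2\partial_x^\beta\psi$ can be rewritten as the exact relation
\[
\frac{h_\beta}{h+h_e\phi}=\partial_y\!\left(\frac{\partial_x^\beta\psi}{h+h_e\phi}\right).
\]
Integrating from $0$ to $y$ and using $\partial_x^\beta\psi(x,0)=0$ produces the representation
\[
\partial_x^\beta\psi(x,y)=(h+h_e\phi)(x,y)\int_0^y\frac{h_\beta(x,z)}{(h+h_e\phi)(x,z)}\,\mathrm{d}z,
\]
which by the lower bound $h+h_e\phi\ge\vartheta_0/2$ gives $|\partial_x^\beta\psi(x,y)|\lesssim \vartheta_0^{-1}(h+h_e\phi)(x,y)\int_0^y|h_\beta|\,\mathrm{d}z$. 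A standard Hardy-type inequality then yields $\|\langle y\rangle^{-1}\partial_x^\beta\psi\|_{L^2_y}\lesssim\vartheta_0^{-1}\|h_\beta\|_{L^2_y}$.

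Plugging this back into $\partial_x^\beta h=h_\beta+\eta_2\partial_x^\beta\psi$ and $\partial_x^\beta u=u_\beta+\eta_1\partial_x^\beta\psi$, together with the weighted $L^\infty$ bounds on $\eta_i$ coming from \eqref{3.04} and the definition of $M(x)$, gives $\|\partial_x^\beta(u,h)\|_{L^2_l}\lesssim M(x)\|(u_\beta,h_\beta)\|_{L^2_l}$, completing \eqref{3.015}. For the derivative estimate \eqref{3.0151}, differentiate $\partial_x^\beta h=h_\beta+\eta_2\partial_x^\beta\psi$ in $y$ to get $\partial_y\partial_x^\beta h=\partial_y h_\beta+\partial_y\eta_2\cdot\partial_x^\beta\psi+\eta_2\partial_x^\beta h$, and similarly for $\partial_y\partial_x^\beta u$. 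Each error term is then controlled by $\|\langle y\rangle^{l+1}\partial_y\eta_i\|_{L^\infty}\|\langle y\rangle^{-1}\partial_x^\beta\psi\|_{L^2_y}$ and $\|\eta_i\|_{L^\infty}\|\partial_x^\beta(u,h)\|_{L^2_l}$, both of which reduce to $M(x)\|h_\beta\|_{L^2_l}$ via \eqref{3.015} and the derived bound on $\partial_x^\beta\psi$.

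The main obstacle is the backward direction. Without exploiting the $\eta_2=\partial_y\log(h+h_e\phi)$ structure, one cannot express $\partial_x^\beta\psi$ purely in terms of $h_\beta$: the naive bound $\|\langle y\rangle^{-1}\partial_x^\beta\psi\|_{L^2_y}\lesssim\|\partial_x^\beta h\|_{L^2_y}$ would circularly involve the quantity we are trying to estimate. Recognizing the first-order linear ODE in $y$ satisfied by $\partial_x^\beta\psi/(h+h_e\phi)$ is what unlocks the explicit integral representation and makes the whole argument close; this is also precisely where the positive lower bound $h+h_e\phi\ge\vartheta_0/2$ from Proposition \ref{wellposedness}(v) is essential, as the representation would break down if $h+h_e\phi$ could vanish.
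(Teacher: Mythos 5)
Your proposal is correct and is essentially the argument the paper leaves out (the authors only say the lemma follows ``directly from the definitions'' as in \cite{CLiu2}): the decisive step, recognizing $\eta_2=\partial_y\log(h+h_e\phi)$ so that $h_\beta/(h+h_e\phi)=\partial_y\bigl(\partial_x^\beta\psi/(h+h_e\phi)\bigr)$ and hence $\partial_x^\beta\psi=(h+h_e\phi)\int_0^y h_\beta/(h+h_e\phi)\,\mathrm{d}z$, combined with Hardy's inequality, the weighted $L^\infty$ bounds on $\eta_1,\eta_2$ and the lower bound $h+h_e\phi\ge\vartheta_0/2$, is exactly the cancellation mechanism of \cite{CLiu2} that the paper invokes. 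One small bookkeeping remark: as written, your backward step inserts $\Vert h+h_e\phi\Vert_{L^\infty}$ and produces constants like $M(x)^2$ (e.g.\ from $\eta_2\,\partial_x^\beta h$), rather than literally $M(x)$; this is harmless for how the lemma is used in \eqref{3.017}--\eqref{3.018}, and can be avoided altogether by estimating the combined quantities $\eta_i\,\partial_x^\beta\psi=\bigl(\partial_y u+(u_e-u_b)\phi',\,\partial_y(h+h_e\phi)\bigr)\int_0^y h_\beta/(h+h_e\phi)\,\mathrm{d}z$ directly, so the factors of $h+h_e\phi$ cancel before any sup bound is taken.
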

\begin{proof}
This lemma can be proved directly by the definitions of $u_\beta,h_\beta$, the main idea to prove Lemma \ref{equivalence} is similar as what was proposed in \cite{CLiu2}. We omitted it here for simplicity of presentation.
\end{proof}

\textbf{Step 3: Completeness of a priori estimates.}
Now we will prove the Proposition \ref{pro1}. First of all, based on the {\it a priori assumption}
$$\Vert \langle y\rangle^{l+1}\partial_y  (u,h)\Vert_{L^\infty} \leq \sigma_0, $$
$$\Vert \langle y\rangle^{l+1}\partial_y^2  (u,h)\Vert_{L^\infty} \leq \vartheta_0^{-1}, $$
then if we choose $\vartheta_0>0$ small enough, we have
\begin{equation}\nonumber
\begin{aligned}
&\Vert \langle y\rangle^{l+1}\eta_i\Vert_{L^\infty} \leq 2\vartheta_0^{-1}\sigma_0\leq 2\vartheta_0^{-2},\\
&M(x)\leq \vartheta_0^{-1}(C(u_e,h_e,u_b)+\vartheta_0^{-1}+\sigma_0)\leq 5 \vartheta_0^{-2}.
\end{aligned}
\end{equation}
Therefore, it follows that
\begin{equation}\label{3.017}
\begin{aligned}
E_{u,h}^2=&\sum_{\alpha\in\{\alpha=(\beta,k):|\alpha|\leq m, \beta\leq m-1\}} s(x)\\
&+\Vert (u+u_e\phi(y)+u_b(1-\phi(y)))^{\frac{1}{2}}\langle y\rangle^{l}\partial_x^m (u,h) \Vert_{L^2_y}^2\\
\leq &\sum_{\alpha\in\{\alpha=(\beta,k):|\alpha|\leq m, \beta\leq m-1\}} s(x)+25\vartheta_0^{-4}s_m(x),
\end{aligned}
\end{equation}
and
\begin{equation}\label{3.018}
\begin{aligned}
\Vert \partial_y (u,h)\Vert_{H^m_l}^2 \leq  &\sum_{\alpha\in\{\alpha=(\beta,k):|\alpha|\leq m, \beta\leq m-1\}} \Vert \partial_y D^\alpha (u,h)\Vert_{L^2_l}^2\\
&+2\Vert \partial_y (u_m,h_m)\Vert_{L^2_l}^2+50\vartheta_0^{-4}\Vert h_m\Vert_{L^2_l}^2.
\end{aligned}
\end{equation}

With the above estimates in hand, we can derive the desired a priori estimates of $(u,h)$ for the boundary layer problem (\ref{newequation}). Thanks to Lemma \ref{estimate1} and \ref{estimatestep2}, it follows from (\ref{3.017})-(\ref{3.018}) that for $m\geq 5$, small enough $\delta_1>0$ and any $x\in[0,L]$,
\begin{equation}\label{3.019}
\begin{aligned}
&\sum_{\alpha\in\{\alpha=(\beta,k):|\alpha|\leq m, \beta\leq m-1\}} s(x)+25\vartheta_0^{-4} s_m(x)\\
&+\int_0^x\big(\sum_{\alpha\in\{\alpha=(\beta,k):|\alpha|\leq m, \beta\leq m-1\}} \Vert \partial_y D^\alpha (u,h)\Vert_{L^2_l}^2+25\vartheta_0^{-4} \Vert \partial_y (u_m,h_m)\Vert_{L^2_l}^2\big)\\
\leq &\sum_{\alpha\in\{\alpha=(\beta,k):|\alpha|\leq m, \beta\leq m-1\}} s(0)+25\vartheta_0^{-4} s_m(0)+\int_0^x C(u_e,u_b,h_e,\vartheta_0,\sigma_0) \\
&+C\vartheta_0^{-8} \int_0^x\bigg(\sum_{\alpha\in\{\alpha=(\beta,k):|\alpha|\leq m, \beta\leq m-1\}} s(x)+25\vartheta_0^{-4} s_m(x)\bigg)^3.
\end{aligned}
\end{equation}
Denote
$$F_0:=\sum_{\alpha\in\{\alpha=(\beta,k):|\alpha|\leq m, \beta\leq m-1\}} s(0)+25\vartheta_0^{-4} s_m(0)$$
and
$$F:=C(u_e,u_b,h_e,\vartheta_0,\sigma_0),$$
then it follows from the Gronwall inequality that
\begin{equation}\label{3.020}
\begin{aligned}
&\sum_{\alpha\in\{\alpha=(\beta,k):|\alpha|\leq m, \beta\leq m-1\}} s(x)+25\vartheta_0^{-4} s_m(x) \\
&\leq \left(F_0+\int_0^x F\right)\left\{1-2C\vartheta_0^{-8}\left(F_0+\int_0^x F\right)^2x\right\}^{-1/2},
\end{aligned}
\end{equation}
which gives that
\begin{equation}\label{3.020}
\begin{aligned}
\sup\limits_{x \in [0,L] }E_{u.h}\leq \left(F_0+\int_0^x F\right)^{1/2}\left\{1-2C\vartheta_0^{-8}\left(F_0+\int_0^x F\right)^2x\right\}^{-1/4}.
\end{aligned}
\end{equation}
Moreover, under the assumptions in Proposition \ref{pro1}, we obtain
\begin{equation}\label{3.021}
\begin{aligned}
\sup\limits_{x \in [0,L] }&\Vert (u,h)\Vert_{H^m_l}\\
\leq C&\left(F_0+\int_0^x F\right)^{1/2}\left\{1-2C\vartheta_0^{-8}\left(F_0+\int_0^x F\right)^2x\right\}^{-1/4}.
\end{aligned}
\end{equation}
For $i=1,2$, by Newton-Leibnitz formula, we have
$$\langle y\rangle^{l+1}\partial_y^i (u,h)=\langle y\rangle^{l+1}\partial_y^i (u_0,h_0)+\int_0^x \langle y\rangle^{l+1}\partial_x \partial_y (u,h),$$
$$u-h=u_0-h_0+\int_0^x \partial_x (u-h),$$
and
$$(u,h)=(u_0,h_0)+\int_0^x \partial_x (u,h).$$
Therefore, we deduce from the Sobolev embedding and (\ref{3.021}) that
\begin{equation}\label{3.022}
\begin{aligned}
\Vert &\langle y\rangle^{l+1}\partial_y^i (u,h)\Vert_{L^\infty}\\
\leq &\Vert \langle y\rangle^{l+1}\partial_y^i (u_0,h_0)\Vert_{L^\infty}+\int_0^x \Vert\langle y\rangle^{l+1}\partial_x \partial_y (u,h)\Vert_{L^\infty}\\
\leq &\Vert \langle y\rangle^{l+1}\partial_y^i (u_0,h_0)\Vert_{L^\infty}+Cx\sup\limits_{x\in [0,L]}\Vert (u,h)\Vert_{H^5_l}\\
\leq  &\Vert \langle y\rangle^{l+1}\partial_y^i (u_0,h_0)\Vert_{L^\infty}\\
&+Cx\left(F_0+\int_0^x F\right)^{1/2}\left\{1-2C\vartheta_0^{-8}\left(F_0+\int_0^x F\right)^2x\right\}^{-1/4}.
\end{aligned}
\end{equation}
Similarly,
\begin{equation}\label{3.023}
\begin{aligned}
h(x,y) \geq &h_0(x,y)-\int_0^x \Vert \partial_x h\Vert_{L^\infty}\\
\geq &h_0-Cx\cdot \sup\limits_{x\in[0,L]}\Vert h\Vert_{H^3_l}\\
\geq &h_0-Cx\left(F_0+\int_0^x F\right)^{1/2}\left\{1-2C\vartheta_0^{-8}\left(F_0+\int_0^x F\right)^2x\right\}^{-1/4}
\end{aligned}
\end{equation}
and
\begin{equation}\label{3.025}
\begin{aligned}
u-h \geq& (u_0-h_0)\\
&-2Cx\left(F_0+\int_0^x F\right)^{1/2}\left\{1-2C\vartheta_0^{-8}\left(F_0+\int_0^x F\right)^2x\right\}^{-1/4}.
\end{aligned}
\end{equation}
\begin{proof}[Proof of the Proposition \ref{pro1}]
Notice that
$$\int_0^x F\leq C(u_e,u_b,h_e,\vartheta_0,\sigma_0)x,$$
and $F_0$ is a polynomial of $\Vert (u_0,h_0)\Vert_{H^m_l}$. Precisely,
$$F_0 \leq C\vartheta_0^{-8}P(C(u_e,u_b,h_e)+\Vert (u_0,h_0)\Vert_{H^m_l}).$$
Putting the above estimates into (\ref{3.021})-(\ref{3.025}), the Proposition \ref{pro1} follows.
\end{proof}
We are in a position to prove the Proposition \ref{wellposedness}.
\begin{proof}[Proof of Proposition \ref{wellposedness}]
The existence of solutions to the problem (\ref{newequation}) can be achieved by the classical Picard iteration scheme and the fixed point theorem. And the uniform a priori energy estimates of solutions to the linear problem are derived similarly as those in (\ref{3.021})-(\ref{3.025}) for the nonlinear problem, which also guarantee the lifespan of the solution sequences to the linear problem will not shrink to zero. We refer to \cite{CLiu2,Masmoudi} for more details for instance.  Here we omit it here for simplicity of presentation.
\end{proof}

\section{Proof of Lemma \ref{standardenergy}}\label{ap2}
Now we will prove the Lemma \ref{standardenergy} in this Appendix.
\begin{proof}[Proof of Lemma \ref{standardenergy}]
Multiplying the equations in (\ref{5.02}) by $u^\epsilon,\epsilon v^\epsilon,h^\epsilon,\epsilon g^\epsilon$ respectively, integrating by parts on $[0,L]\times [0,+\infty)$ and adding all resulting equalities together, we obtain that
\begin{equation}\label{5.05}
\begin{aligned}
&\int_{x=L}u_s\frac{|u^\epsilon|^2+\epsilon|v^\epsilon|^2}{2}+\int_{x=L}u_s\frac{|h^\epsilon|^2+\epsilon|g^\epsilon|^2}{2}\\
&+\nu\iint \left[2\epsilon |u^\epsilon_x|^2+(u^\epsilon_y+\epsilon v^\epsilon_x)^2+2\epsilon |v^\epsilon_y|^2\right]\\
&+\kappa \iint \left[2\epsilon |h^\epsilon_x|^2+(h^\epsilon_y+\epsilon g^\epsilon_x)^2+2\epsilon |g^\epsilon_y|^2\right]\\
=&-\bigg(\iint \partial_x u_s |u^\epsilon|^2+\iint v^\epsilon\partial_y u_s u^\epsilon-\iint h_s \partial_x h^\epsilon u^\epsilon\\
&-\iint g_s\partial_y h^\epsilon u^\epsilon-\iint g^\epsilon\partial_y h_s u^\epsilon+\iint u^\epsilon \partial_x v_s \epsilon v^\epsilon\\
&+\epsilon\iint\partial_y v_s |v^\epsilon|^2-\iint h_s \partial_x g^\epsilon \epsilon v^\epsilon-\iint h^\epsilon \partial_x g_s \epsilon v^\epsilon\\
&+\iint g_s \partial_y g^\epsilon \epsilon v^\epsilon+\iint v^\epsilon \partial_yh_s h^\epsilon-\iint h_s\partial_x u^\epsilon h^\epsilon\\
&-\iint \partial_x u_s |h^\epsilon|^2-\iint g_s \partial_y u^\epsilon h^\epsilon-\iint g^\epsilon \partial_y u_s h^\epsilon\\
&+\iint u^\epsilon \partial_x g_s \epsilon g^\epsilon-\iint h_s \partial_x v^\epsilon \epsilon g^\epsilon-\iint h^\epsilon \partial_x v_s \epsilon g^\epsilon\\
&-\iint g_s \partial_y v^\epsilon \epsilon g^\epsilon-\iint \epsilon |g^\epsilon|^2\partial_yv_s\bigg)\\
&+\iint (f_1 u^\epsilon+f_3h^\epsilon)+\epsilon\iint (f_2 v^\epsilon+f_4g^\epsilon).
\end{aligned}
\end{equation}
Notice that
\begin{equation}\label{5.06}
\begin{aligned}
&\iint \left[2\epsilon |u^\epsilon_x|^2+(u^\epsilon_y+\epsilon v^\epsilon_x)^2+2\epsilon |v^\epsilon_y|^2\right] \\
&\geq \frac{1}{2}\Vert \nabla_\epsilon u^\epsilon\Vert_{L^2}^2-2\epsilon\Vert \nabla_\epsilon v^\epsilon\Vert_{L^2}^2,
\end{aligned}
\end{equation}
and
\begin{equation}\label{5.07}
\begin{aligned}
&\iint \left[2\epsilon |h^\epsilon_x|^2+(h^\epsilon_y+\epsilon g^\epsilon_x)^2+2\epsilon |g^\epsilon_y|^2\right] \\
&\geq \frac{1}{2}\Vert \nabla_\epsilon h^\epsilon\Vert_{L^2}^2-2\epsilon\Vert \nabla_\epsilon g^\epsilon\Vert_{L^2}^2.
\end{aligned}
\end{equation}

Since $(u^\epsilon,v^\epsilon,h^\epsilon,g^\epsilon)|_{x=0}=(0,0,0,0)$, the following Poincar\'{e} type inequalities hold.
$$\Vert u^\epsilon \Vert_{L^2} \leq L \Vert u^\epsilon_x\Vert_{L^2}=L \Vert v^\epsilon_y\Vert_{L^2}, \ \Vert h^\epsilon \Vert_{L^2} \leq L \Vert h^\epsilon_x\Vert_{L^2}=L \Vert g^\epsilon_y\Vert_{L^2},$$
$$\Vert v^\epsilon \Vert_{L^2} \leq L \Vert v^\epsilon_x\Vert_{L^2}, \ \Vert g^\epsilon \Vert_{L^2} \leq L \Vert g^\epsilon_x\Vert_{L^2}.$$
By using $(v^\epsilon,g^\epsilon)|_{y=0}=(0,0)$ again, one has
$$v^\epsilon=\int_0^y v^\epsilon_y \leq \sqrt{y}\left(\int_0^y |v^\epsilon_y|^2\right)^{1/2}, \ \ g^\epsilon=\int_0^y g^\epsilon_y \leq \sqrt{y}\left(\int_0^y |g^\epsilon_y|^2\right)^{1/2}.$$
Therefore, we obtain that
\begin{equation}\label{5.08}
\begin{aligned}
&\iint (f_1 u^\epsilon+f_3h^\epsilon)+\epsilon\iint (f_2 v^\epsilon+f_4g^\epsilon)\\
&\leq  \Vert u^\epsilon \Vert_{L^2}\Vert f_1\Vert_{L^2}+\Vert h^\epsilon \Vert_{L^2}\Vert f_3\Vert_{L^2}+\epsilon\left(\Vert v^\epsilon \Vert_{L^2}\Vert f_2\Vert_{L^2}+\Vert g^\epsilon \Vert_{L^2}\Vert f_4\Vert_{L^2}\right)\\
&\leq L^2\Vert \nabla_\epsilon (v^\epsilon,g^\epsilon)\Vert_{L^2}^2+\Vert (f_1,f_3)\Vert_{L^2}^2+\epsilon \Vert(f_2,f_4)\Vert_{L^2}^2.
\end{aligned}
\end{equation}
Moreover, one has the following estimates.
\begin{equation}\label{5.09}
\begin{aligned}
\iint \partial_x u_s |u^\epsilon|^2 &\leq \Vert\partial_x u_s\Vert_{L^\infty} \Vert u^\epsilon\Vert_{L^2}^2\\
&\leq L^2 \Vert\partial_x u_s\Vert_{L^\infty}\Vert u^\epsilon_x \Vert_{L^2}^2= L^2\Vert\partial_x u_s\Vert_{L^\infty}\Vert v^\epsilon_y \Vert_{L^2}^2,
\end{aligned}
\end{equation}
and
\begin{equation}\label{5.010}
\begin{aligned}
\iint v^\epsilon \partial_y u_s u^\epsilon &\leq L \Vert v^\epsilon_y\Vert_{L^2}\Vert y\partial_y u_s\Vert_{L^\infty}\Vert u^\epsilon_x\Vert_{L^2}\\
&= L \Vert y\partial_y u_s\Vert_{L^\infty}\Vert v^\epsilon_y\Vert_{L^2}^2,
\end{aligned}
\end{equation}
where $u_x^\epsilon+v^\epsilon_y=0$ is used in the last equality. Similarly,
\begin{equation}\label{5.011}
\begin{aligned}
-\iint h_s \partial_x h^\epsilon u^\epsilon &\leq \Vert h_s \Vert_{L^\infty}\Vert u^\epsilon\Vert_{L^2}\Vert \partial_x h^\epsilon\Vert_{L^2}\\
&\leq L\Vert h_s \Vert_{L^\infty}\Vert u^\epsilon_x\Vert_{L^2}\Vert g^\epsilon_y\Vert_{L^2}=L\Vert h_s \Vert_{L^\infty}\Vert v^\epsilon_y\Vert_{L^2}\Vert g^\epsilon_y\Vert_{L^2},
\end{aligned}
\end{equation}
\begin{equation}\label{5.012}
\begin{aligned}
-\iint g_s \partial_y h^\epsilon u^\epsilon &\leq \Vert g_s \Vert_{L^\infty}\Vert u^\epsilon\Vert_{L^2}\Vert \partial_y h^\epsilon\Vert_{L^2}\\
&\leq L\Vert g_s \Vert_{L^\infty}\Vert v^\epsilon_y\Vert_{L^2}\Vert h^\epsilon_y\Vert_{L^2},
\end{aligned}
\end{equation}
\begin{equation}\label{5.013}
\begin{aligned}
-\iint g^\epsilon \partial_y h_s u^\epsilon &\leq L \Vert y\partial_y h_s \Vert_{L^\infty}\Vert u^\epsilon_x \Vert_{L^2}\Vert g^\epsilon_y\Vert_{L^2}\\
&=L \Vert y\partial_y h_s \Vert_{L^\infty}\Vert v^\epsilon_y  \Vert_{L^2}\Vert g^\epsilon_y\Vert_{L^2},
\end{aligned}
\end{equation}
\begin{equation}\label{5.014}
\begin{aligned}
\iint u^\epsilon \partial_x v_s \epsilon v^\epsilon &\leq L\epsilon \Vert v_{sx} \Vert_{L^\infty}\Vert v^\epsilon_x \Vert_{L^2}\Vert v^\epsilon_y\Vert_{L^2},
\end{aligned}
\end{equation}
\begin{equation}\label{5.015}
\begin{aligned}
\iint g_s \partial_y g^\epsilon \epsilon v^\epsilon &\leq \epsilon \Vert g_s \Vert_{L^\infty}\Vert g^\epsilon_y \Vert_{L^2}\Vert v^\epsilon\Vert_{L^2}\\
&\leq \epsilon L\Vert g_s \Vert_{L^\infty}\Vert g^\epsilon_y \Vert_{L^2}\Vert v^\epsilon_x\Vert_{L^2},
\end{aligned}
\end{equation}
\begin{equation}\label{5.016}
\begin{aligned}
\iint v^\epsilon \partial_y h_s h^\epsilon &\leq  L \Vert y \partial_y h_{s} \Vert_{L^\infty}\Vert g^\epsilon_y \Vert_{L^2}\Vert v^\epsilon_y\Vert_{L^2},
\end{aligned}
\end{equation}
\begin{equation}\label{5.017}
\begin{aligned}
-\iint h_s \partial_x u^\epsilon h^\epsilon &\leq   \Vert h_{s} \Vert_{L^\infty}\Vert u^\epsilon_x \Vert_{L^2}\Vert h^\epsilon\Vert_{L^2}\\
&\leq L  \Vert h_{s} \Vert_{L^\infty}\Vert u^\epsilon_x \Vert_{L^2}\Vert h^\epsilon_x\Vert_{L^2}= L \Vert h_{s} \Vert_{L^\infty}\Vert v^\epsilon_y \Vert_{L^2}\Vert g^\epsilon_y\Vert_{L^2},
\end{aligned}
\end{equation}
\begin{equation}\label{5.018}
\begin{aligned}
-\iint \partial_x u_s |h^\epsilon|^2 &\leq   \Vert u_{sx}\Vert_{L^\infty}\Vert h^\epsilon\Vert_{L^2}^2\\
&\leq L^2 \Vert u_{sx}\Vert_{L^\infty}\Vert h^\epsilon_x\Vert_{L^2}^2=L^2 \Vert u_{sx}\Vert_{L^\infty}\Vert g^\epsilon_y\Vert_{L^2}^2,
\end{aligned}
\end{equation}
\begin{equation}\label{5.019}
\begin{aligned}
-\iint g_s \partial_y u^\epsilon h^\epsilon &\leq  L \Vert g_{s} \Vert_{L^\infty}\Vert h^\epsilon_x\Vert_{L^2}\Vert u^\epsilon_y \Vert_{L^2}= L \Vert g_{s} \Vert_{L^\infty}\Vert g^\epsilon_y\Vert_{L^2}\Vert u^\epsilon_y \Vert_{L^2},
\end{aligned}
\end{equation}
\begin{equation}\label{5.020}
\begin{aligned}
-\iint g^\epsilon \partial_y u_s h^\epsilon &\leq L\Vert g^\epsilon_y\Vert_{L^2} \Vert y\partial_y u_s\Vert_{L^\infty}\Vert h^\epsilon_x\Vert_{L^2}\leq
 L \Vert yu_{sy} \Vert_{L^\infty}\Vert g^\epsilon_y\Vert_{L^2}^2,
\end{aligned}
\end{equation}
\begin{equation}\label{5.021}
\begin{aligned}
\iint u^\epsilon \partial_x g_s \epsilon g^\epsilon &\leq  \epsilon \Vert g_{sx} \Vert_{L^\infty}\Vert u^\epsilon\Vert_{L^2}\Vert g^\epsilon\Vert_{L^2}\\
&\leq \epsilon L^2 \Vert g_{sx} \Vert_{L^\infty}\Vert u^\epsilon_x \Vert_{L^2}\Vert g^\epsilon_x\Vert_{L^2}= \epsilon L^2 \Vert g_{sx} \Vert_{L^\infty}\Vert v^\epsilon_y \Vert_{L^2}\Vert g^\epsilon_x\Vert_{L^2},
\end{aligned}
\end{equation}
\begin{equation}\label{5.022}
\begin{aligned}
\iint h_s  \partial_x v^\epsilon \epsilon g^\epsilon &\leq  \epsilon \Vert h_s \Vert_{L^\infty}\Vert g^\epsilon\Vert_{L^2}\Vert v^\epsilon_x\Vert_{L^2}\leq \epsilon L \Vert h_s \Vert_{L^\infty}\Vert v^\epsilon_x \Vert_{L^2}\Vert g^\epsilon_x\Vert_{L^2},
\end{aligned}
\end{equation}
\begin{equation}\label{5.023}
\begin{aligned}
\iint h^\epsilon  \partial_x v_s \epsilon g^\epsilon &\leq  L^2 \epsilon \Vert v_{sx} \Vert_{L^\infty}\Vert h^\epsilon_x\Vert_{L^2}\Vert g^\epsilon_x\Vert_{L^2}= \epsilon L^2 \Vert v_{sx} \Vert_{L^\infty}\Vert g^\epsilon_y \Vert_{L^2}\Vert g^\epsilon_x\Vert_{L^2},
\end{aligned}
\end{equation}
\begin{equation}\label{5.024}
\begin{aligned}
-\iint g_s  \partial_y v^\epsilon \epsilon g^\epsilon &\leq  \epsilon \Vert g_{s} \Vert_{L^\infty}\Vert v^\epsilon_y \Vert_{L^2}\Vert g^\epsilon\Vert_{L^2}\leq \epsilon L \Vert g_{s} \Vert_{L^\infty}\Vert v^\epsilon_y \Vert_{L^2}\Vert g^\epsilon_x\Vert_{L^2},
\end{aligned}
\end{equation}
\begin{equation}\label{5.025}
\begin{aligned}
-\iint \epsilon v_{sy}|g^\epsilon|^2 &\leq  \epsilon \Vert v_{sy} \Vert_{L^\infty}\Vert g^\epsilon\Vert_{L^2}^2\leq \epsilon L^2\Vert v_{sy} \Vert_{L^\infty}\Vert g^\epsilon_x\Vert_{L^2}^2.
\end{aligned}
\end{equation}

Collecting all estimates above together, using the estimates of $u_s,v_s,h_s,g_s$ and the facts that $L<<1$ and choosing $\epsilon<L$, the desired conclusion (\ref{5.03}) is obtained.
\end{proof}

\section{Proof of Lemma \ref{positivity}}\label{ap3}
It is left to give the proof of Lemma \ref{positivity}.
\begin{proof}[Proof of Lemma \ref{positivity}]
Recall that $u_s$ has a strictly positivity lower bound. We begin with (\ref{5.02})$_1\times \partial_y \left(\frac{v^\epsilon}{u_s}\right)-\epsilon$(\ref{5.02})$_2\times \partial_x \left(\frac{v^\epsilon}{u_s}\right)+$(\ref{5.02})$_3\times \partial_y \left(\frac{g^\epsilon}{u_s}\right)-\epsilon$(\ref{5.02})$_4\times \partial_x \left(\frac{g^\epsilon}{u_s}\right)$ to get that
\begin{equation}\label{5.027}
\begin{aligned}
&\iint \partial_y\left(\frac{v^\epsilon}{u_s}\right)\left(u_s\partial_x u^\epsilon+u^\epsilon\partial_x u_s+v_s\partial_y u^\epsilon+v^\epsilon\partial_y u_s+\partial_x p^\epsilon-\nu \Delta_\epsilon u^\epsilon\right)\\
&-\iint \partial_y\left(\frac{v^\epsilon}{u_s}\right)(h_s\partial_x h^\epsilon+h^\epsilon\partial_x h_s+g_s\partial_y h^\epsilon+g^\epsilon\partial_y h_s)\\
&-\iint \epsilon \partial_x\left(\frac{v^\epsilon}{u_s}\right)\left(u_s\partial_x v^\epsilon+u^\epsilon\partial_x v_s+v_s\partial_y v^\epsilon+v^\epsilon\partial_y v_s+\frac{\partial_y p^\epsilon}{\epsilon}-\nu \Delta_\epsilon v^\epsilon\right)\\
&+\iint \epsilon \partial_x\left(\frac{v^\epsilon}{u_s}\right)(h_s\partial_x g^\epsilon+h^\epsilon\partial_x g_s+g_s\partial_y g^\epsilon+g^\epsilon\partial_y g_s)\\
&+\iint \partial_y \left(\frac{g^\epsilon}{u_s}\right)\left(u_s\partial_x h^\epsilon+u^\epsilon\partial_x h_s+v_s\partial_y h^\epsilon+v^\epsilon\partial_y h_s-\kappa \Delta_\epsilon h^\epsilon\right)\\
&-\iint \partial_y \left(\frac{g^\epsilon}{u_s}\right)(h_s\partial_x u^\epsilon+h^\epsilon\partial_x u_s+g_s\partial_y u^\epsilon+g^\epsilon\partial_y u_s)\\
&-\epsilon \iint \partial_x \left(\frac{g^\epsilon}{u_s}\right)\left(u_s\partial_x g^\epsilon+u^\epsilon\partial_x g_s+v_s\partial_y g^\epsilon+v^\epsilon\partial_y g_s-\kappa \Delta_\epsilon g^\epsilon\right)\\
&+\epsilon \iint \partial_x \left(\frac{g^\epsilon}{u_s}\right)(h_s\partial_x v^\epsilon+h^\epsilon\partial_x v_s+g_s\partial_y v^\epsilon+g^\epsilon\partial_y v_s)\\
=&\iint \partial_y \left(\frac{v^\epsilon}{u_s}\right)f_1-\epsilon\iint \partial_x \left(\frac{v^\epsilon}{u_s}\right)f_2\\
&+\iint \partial_y \left(\frac{g^\epsilon}{u_s}\right)f_3-\epsilon\iint \partial_x \left(\frac{g^\epsilon}{u_s}\right)f_4=:F.
\end{aligned}
\end{equation}

Following the similar arguments as in \cite{YGuo2}, we derive that
$$F \lesssim (\Vert f_1\Vert_{L^2}+\sqrt{\epsilon}\Vert f_2\Vert_{L^2})\Vert \nabla_\epsilon v^\epsilon\Vert_{L^2}+(\Vert f_3\Vert_{L^2}+\sqrt{\epsilon}\Vert f_4\Vert_{L^2})\Vert \nabla_\epsilon g^\epsilon\Vert_{L^2}$$
and
\begin{equation}\nonumber
\begin{aligned}
&\iint \partial_y\left(\frac{v^\epsilon}{u_s}\right)\left(u_s\partial_x u^\epsilon+u^\epsilon\partial_x u_s+v_s\partial_y u^\epsilon+v^\epsilon\partial_y u_s+\partial_x p^\epsilon-\nu \Delta_\epsilon u^\epsilon\right)\\
&-\iint \epsilon \partial_x\left(\frac{v^\epsilon}{u_s}\right)\left(u_s\partial_x v^\epsilon+u^\epsilon\partial_x v_s+v_s\partial_y v^\epsilon+v^\epsilon\partial_y v_s+\frac{\partial_y p^\epsilon}{\epsilon}-\nu \Delta_\epsilon v^\epsilon\right)\\
&+\iint \partial_y \left(\frac{g^\epsilon}{u_s}\right)\left(u_s\partial_x h^\epsilon-h^\epsilon\partial_x u_s+v_s\partial_y h^\epsilon-g^\epsilon\partial_y u_s-\kappa \Delta_\epsilon h^\epsilon\right)\\
&-\epsilon \iint \partial_x \left(\frac{g^\epsilon}{u_s}\right)\left(u_s\partial_x g^\epsilon-h^\epsilon\partial_x v_s+v_s\partial_y g^\epsilon-g^\epsilon\partial_y v_s-\kappa \Delta_\epsilon g^\epsilon\right)\\
\lesssim &-\iint |\nabla_\epsilon v^\epsilon |^2-\iint |\nabla_\epsilon g^\epsilon |^2+\Vert \nabla_\epsilon (u^\epsilon,h^\epsilon)\Vert_{L^2}^2+L\Vert \nabla_\epsilon(v^\epsilon,g^\epsilon)\Vert_{L^2}^2\\
&-\int_{x=0} \frac{\epsilon^2(\nu|v^\epsilon_x|^2+\kappa|g^\epsilon_x|^2)}{2u_s}-\epsilon\int_{x=L}\frac{|v^\epsilon_y|^2}{4u_s}
+\iint \frac{|g^\epsilon|^2|\partial_y u_s|^2}{u_s^2}\\
&+\Vert \nabla_\epsilon (u^\epsilon,h^\epsilon)\Vert_{L^2}\Vert \nabla_\epsilon (v^\epsilon,g^\epsilon)\Vert_{L^2}.
\end{aligned}
\end{equation}
It remains to estimate the other terms in LHS of (\ref{5.027}) .

Here we only give the estimates of the terms which are new and different from those in \cite{YGuo2}, and other terms can be estimated by the similar arguments, see also \cite{YGuo2}. Note that $u_s$ has a strictly positivity lower bound, then the new and different terms are estimated as follows:
\begin{equation}\nonumber
\begin{aligned}
\iint \partial_y\left(\frac{v^\epsilon}{u_s}\right)h_s\partial_x h^\epsilon=&\iint \frac{\partial_y v^\epsilon}{u_s}h_s\partial_x h^\epsilon-\iint \frac{\partial_y u_s v^\epsilon}{u_s^2}h_s\partial_x h^\epsilon\\
\lesssim &\left\Vert \frac{h_s}{u_s}\right\Vert_{L^\infty}\Vert \partial_x h^\epsilon\Vert_{L^2}\Vert \partial_yv^\epsilon\Vert_{L^2}\\
&+\Vert y \partial_y u_s\Vert_{L^\infty}\Vert \partial_y v^\epsilon\Vert_{L^2}\Vert \partial_x h^\epsilon\Vert_{L^2}\\
\lesssim &\left\Vert \frac{h_s}{u_s}\right\Vert_{L^\infty}\Vert \partial_y g^\epsilon\Vert_{L^2}\Vert \partial_yv^\epsilon\Vert_{L^2}\\
&+\Vert y \partial_y u_s\Vert_{L^\infty}\Vert \partial_y v^\epsilon\Vert_{L^2}\Vert \partial_y g^\epsilon\Vert_{L^2},
\end{aligned}
\end{equation}

\begin{equation}\nonumber
\begin{aligned}
\iint \partial_y\left(\frac{v^\epsilon}{u_s}\right)g^\epsilon\partial_y h_s=&\iint \frac{\partial_y v^\epsilon}{u_s}g^\epsilon \partial_y h_s-\iint \frac{\partial_y u_s v^\epsilon}{u_s^2}g^\epsilon\partial_y h_s\\
\leq &C\Vert y \partial_y h_s\Vert_{L^\infty}\Vert \partial_y g^\epsilon\Vert_{L^2}\Vert \partial_yv^\epsilon\Vert_{L^2}\\
&+C\Vert y\partial_y u_s\Vert_{L^\infty}\Vert y\partial_y h_s\Vert_{L^\infty}\Vert \partial_y g^\epsilon\Vert_{L^2}\Vert \partial_yv^\epsilon\Vert_{L^2},
\end{aligned}
\end{equation}
\begin{equation}\nonumber
\begin{aligned}
-\epsilon\iint \partial_x\left(\frac{v^\epsilon}{u_s}\right)h_s \partial_x g^\epsilon=&-\epsilon\iint \frac{\partial_x v^\epsilon}{u_s}h_s\partial_x g^\epsilon+\epsilon\iint \frac{\partial_x u_s v^\epsilon}{u_s^2}h_s \partial_x g^\epsilon\\
\leq &C\epsilon\left\Vert \frac{h_s}{u_s}\right\Vert_{L^\infty}\Vert \partial_x v^\epsilon\Vert_{L^2}\Vert \partial_x g^\epsilon\Vert_{L^2}\\
&+C\epsilon L |h_s|\Vert \partial_x u_s\Vert_{L^\infty}\Vert \partial_x v^\epsilon\Vert_{L^2}\Vert \partial_x g^\epsilon\Vert_{L^2},
\end{aligned}
\end{equation}
\begin{equation}\nonumber
\begin{aligned}
\iint \partial_y\left(\frac{g^\epsilon}{u_s}\right)h_s \partial_x u^\epsilon=&\iint \frac{\partial_y g^\epsilon}{u_s}h_s\partial_x u^\epsilon-\iint \frac{\partial_y u_s g^\epsilon}{u_s^2}h_s \partial_x u^\epsilon\\
\leq &\left\Vert \frac{h_s}{u_s}\right\Vert_{L^\infty}\Vert \partial_y g^\epsilon \Vert_{L^2}\Vert \partial_x u^\epsilon \Vert_{L^2}\\
&+C|h_s|\Vert y \partial_y u_s\Vert_{L^\infty}\Vert \partial_y g^\epsilon \Vert_{L^2}\Vert \partial_x u^\epsilon \Vert_{L^2}\\
\leq &\left\Vert \frac{h_s}{u_s}\right\Vert_{L^\infty}\Vert \partial_y g^\epsilon \Vert_{L^2}\Vert \partial_y v^\epsilon \Vert_{L^2}\\
&+C|h_s|\Vert y \partial_y u_s\Vert_{L^\infty}\Vert \partial_y g^\epsilon \Vert_{L^2}\Vert \partial_y v^\epsilon \Vert_{L^2},
\end{aligned}
\end{equation}
\begin{equation}\nonumber
\begin{aligned}
\epsilon \iint \partial_x\left(\frac{g^\epsilon}{u_s}\right)h_s \partial_x v^\epsilon=&\epsilon\iint \frac{\partial_x g^\epsilon}{u_s}h_s\partial_x v^\epsilon-\epsilon\iint \frac{\partial_x u_s g^\epsilon}{u_s^2}h_s \partial_x v^\epsilon\\
\leq &C\epsilon \left\Vert \frac{h_s}{u_s}\right\Vert_{L^\infty}\Vert \partial_x v^\epsilon\Vert_{L^2}\Vert \partial_x g^\epsilon\Vert_{L^2}\\
&+C\epsilon L|h_s|\Vert \partial_x u_s\Vert_{L^\infty}\Vert \partial_x v^\epsilon\Vert_{L^2}\Vert \partial_x g^\epsilon\Vert_{L^2},
\end{aligned}
\end{equation}
\begin{equation}\nonumber
\begin{aligned}
\iint\partial_y&\left(\frac{g^\epsilon}{u_s}\right)v^\epsilon\partial_yh_s=\iint\frac{u_s\partial_y g^\epsilon-g^\epsilon \partial_y u_s}{u_s^2}v^\epsilon \partial_y h_s\\
\leq &C(\Vert u_s\Vert_{L^\infty}+\Vert y\partial_y u_s\Vert_{L^\infty})\Vert y\partial_y h_s\Vert_{L^\infty}\Vert \partial_y g^\epsilon\Vert_{L^2}\Vert \partial_y v^\epsilon\Vert_{L^2},
\end{aligned}
\end{equation}
\begin{equation}\nonumber
\begin{aligned}
\iint \frac{|g^\epsilon|^2|\partial_y u_s|^2}{u_s^2}\lesssim \Vert y\partial_y u_s\Vert_{L^\infty}^2\Vert \partial_y g^\epsilon\Vert_{L^2}^2.
\end{aligned}
\end{equation}
Combining the above estimates together, using the smallness of the $\left\Vert \frac{h_s}{u_s}\right\Vert_{L^\infty}$ and $\Vert y\partial_y (u_s,h_s)\Vert_{L^\infty}$, then the desired estimate (\ref{5.026}) follows.
\end{proof}

\smallskip
{\bf Acknowledgment.}

The authors thank to Professor Yan Guo for valuable discussions and suggestions. S. Ding  was supported by the National Natural Science Foundation of China (No.11371152, No.11571117, No.11871005 and No.11771155) and Guangdong Provincial Natural Science Foundation (No.2017A030313003). F. Xie was partially supported by National Natural Science Foundation of China (Grant No.11571231, 11831003). Part of this work was conducted during Z. Lin visited the School of Mathematical Science at Shanghai Jiao Tong University. Z. Lin would like to thank the school of mathematical sciences for the hospitality.

\bigskip


\begin{thebibliography}{99}

\bibitem{Alexander} R. Alexander, Y. Wang, C. Xu, T. Yang, Well posedness of the Prandtl equation in
Sobolev spaces, J. Amer. Math. Soc. 28(2014) 745-784.



\bibitem{Gerard1}D. G\'{e}rard-Varet, E. Dormy, On the ill-posedness of the Prandtl equations, J. Amer.
Math. Soc. 23(2010) 591-609.

\bibitem{G-P}
D. G\'erard-Varet,  M. Prestipino, Formal Derivation and Stability Analysis of Boundary Layer Models in MHD. Z. Angew. Math. Phys. (2017) 68:76.

\bibitem{Gerard2}D. G\'{e}rard-Varet, Y. Maekawa, Sobolev stability of Prandtl expansions for the steady
Navier-Stokes equations, Arch. Ration. Mech. Anal. 233(3) (2019)  1319-1382.

\bibitem{GMM}
D. G\'erard-Varet, Y. Maekawa, N. Masmoudi, Gevrey stability of Prandtl expansions for 2D Navier-Stokes flows.  Duke Math. J. 167(13) (2018) 2531-2631.



\bibitem{GT}
D. Gilbarg, N.S. Trudinger, Elliptic partial differential equations of second order. Second edition. Grundlehren der Mathematischen Wissenschaften [Fundamental Principles of Mathematical Sciences], 224. Springer-Verlag, Berlin, 1983. xiii+513 pp. ISBN: 3-540-13025-X

\bibitem{Grinier}E. Grenier, On the nonlinear instability of Euler and Prandtl equations, Commun.Pure
Appl.Math. 53(9) (2000) 1067-1091.

\bibitem{YGuo1}Y. Guo, S. Iyer, Validity of Steady Prandtl Layer Expansions, preprint (2018) arXiv:1805.05891.

\bibitem{YGuo2}Y. Guo, T. Nguyen, Prandtl boundary layer expansions of steady Navier-Stokes
os over a moving plate, Ann. PDE. 3(10)(2017) 1-58.


\bibitem{Iyer1}S. Iyer, Steady Prandtl boundary layer expansions over a rotating disk, Arch. Ration.
Mech. Anal. 224(2)(2017) 421-469.

\bibitem{Iyer2}
S. Iyer, Global Steady Prandtl Expansion over a Moving Boundary I, Peking Math J. 2(2)(2019) 155-238.

\bibitem{Iyer3}
S. Iyer, Global Steady Prandtl Expansion over a Moving Boundary II,  Peking Math J. (2019) https://doi.org/10.1007/s42543-019-00014-1.

\bibitem{Iyer4}
S. Iyer, Global Steady Prandtl Expansion over a Moving Boundary III, Peking Math J. (2019) https://doi.org/10.1007/s42543-019-00015-0.

\bibitem{Li}
Q. Li, S. Ding, Symmetrical Prandtl boundary layer expansions of steady Navier-Stokes equations on bounded domain, J. Diff. Equn (2019), https://doi.org/10.1016/j.jde.2019.09.030.


\bibitem{CLiu}C. Liu, Y. Wang, T. Yang, A well-posedness theory for the Prandtl equations in three
space variables, Adv. Math. 308 (2017) 1074-1126.

\bibitem{CLiu1}C. Liu, Y. Wang, T. Yang, On the ill-posedness of the Prandtl equations in three space
dimensions, Arch. Ration. Mech. Anal. 220 (2016) 83-108.

\bibitem{CLiu2} C. Liu, F. Xie, T. Yang, MHD boundary layers theory in Sobolev spaces without monotonicity.
I. well-posedness theory, Comm. Pure and Appl. Math. 72(1) (2019) 63-121.

\bibitem{CLiu3} C. Liu, F. Xie, T. Yang, Justification of Prandtl ansatz for MHD boundary layer,  SIAM J. Math. Anal. 51(3) (2019) 2748-2791.


\bibitem{Maekawa}Y. Maekawa, On the inviscid limit problem of the vorticity equations for viscous incompressible
flows in the half-plane, Commun. Pure Appl. Math. 67(2014) 1045-1128.

\bibitem{Masmoudi} N. Masmoudi, T. K. Wong, Local-in-time existence and uniqueness of solutions to the
Prandtl equations by energy methods, Commun.Pure Appl.Math. 68(2015) 1683-1741.


\bibitem{Oleinik}O. A. Oleinik, The Prandtl system of equations in boundary layer theory, Dokl. Akad.
Nauk SSR (1963) 585-586.

\bibitem{Oleinik1}O. A. Oleinik, V. N. Samokhin, Mathematical models in boundary layers theory, Chapman
and Hall/CRC, 1999.

\bibitem{Sammartino1}
M. Sammartino, R.E. Caflisch, Zero viscosity limit for analytic solutions of the Navier-Stokes equation on a half space, I. Existence for Euler and Prandtl
equations, Comm. Math. Phys. 192 (1998) 433-461.

\bibitem{Sammartino2}
M. Sammartino, R.E. Caflisch, Zero viscosity limit for analytic solutions of the Navier-Stokes equation on a half space, II. Construction of the Navier-tokes solution, Comm. Math. Phys. 192 (1998) 463-491.



\bibitem{Prandtl} L. Prandtl, \"{U}ber
fl\"{u}ssigkeitsbewegungen bei sehr kleiner reibung, Verhaldlg III Int.
Math. Kong, (1905) 484-491.



\bibitem{Wang} C. Wang, Y. Wang, Z. Zhang, Zero-viscosity limit of the Navier-Stokes equations in the
analytic setting, Arch. Ration. Mech. Anal. 224(2017) 555-595.

\bibitem{JWang}
J. Wang, S. Ma, On the steady Prandtl type equations with magnetic effects arising from 2D incompressible MHD equations in a half plane, J.  Mathematical Physics 59(12) (2018): 121508.


\bibitem{X-Y2}
F. Xie, T. Yang,  Lifespan of Solutions to MHD Boundary Layer Equations with Analytic Perturbation of General Shear Flow. Acta Math. Appl. Sin. Engl. Ser. 5(1) (2019) 209-229.



\bibitem{Xin2}Z. Xin, L. Zhang, On the global existence of solutions to the Prandtl system, Adv. Math.
181(2004) 88-133.

\end{thebibliography}
\end{document}